\definecolor{darkgreen}{rgb}{0,0.45,0}
\newcommand{\cat}[1]{\mathbf{#1}}
\newcommand{\op}{\mathrm{op}}
\newcommand{\id}{\mathrm{id}}
\newcommand{\thg}{{\mathord{\text{--}}}}
\newcommand{\spn}[1]{{\langle{#1}\rangle}}
\newcommand{\defeq}{\mathrel{\mathop:}=}
\newcommand{\cd}[2][]{\vcenter{\hbox{\xymatrix#1{#2}}}}
\renewcommand{\phi}{\varphi}
\newcommand{\A}{{\mathcal A}}
\newcommand{\B}{{\mathcal B}}
\newcommand{\C}{{\mathcal C}}
\newcommand{\D}{{\mathcal D}}
\newcommand{\E}{{\mathcal E}}
\newcommand{\F}{{\mathcal F}}
\newcommand{\I}{{\mathcal I}}
\newcommand{\J}{{\mathcal J}}
\newcommand{\K}{{\mathcal K}}
\renewcommand{\L}{{\mathcal L}}
\renewcommand{\P}{{\mathcal P}}
\newcommand{\R}{{\mathcal R}}
\newcommand{\Ss}{{\mathcal S}}
\newcommand{\V}{{\mathcal V}}
\newcommand{\xtor}[1]{\cdl[@1]{{} \ar[r]|-{\object@{|}}^{#1} & {}}}
\def\hookleftarrowfill@{\arrowfill@\leftarrow\relbar{\relbar\joinrel\rhook}}
\def\twoheadleftarrowfill@{\arrowfill@\twoheadleftarrow\relbar\relbar}
\def\leftbararrowfill@{\arrowdoublefill@{\leftarrow\mkern-5mu}\relbar\mapstochar\relbar\relbar}
\def\Leftbararrowfill@{\arrowdoublefill@{\Leftarrow\mkern-2mu}\Relbar\Mapstochar\Relbar\Relbar}
\def\leftringarrowfill@{\arrowdoublefill@{\leftarrow\mkern-3mu}\relbar{\mkern-3mu\circ\mkern-2mu}\relbar\relbar}
\def\lefttriarrowfill@{\arrowfill@{\mathrel\triangleleft\mkern0.5mu\joinrel\relbar}\relbar\relbar}
\def\Lefttriarrowfill@{\arrowfill@{\mathrel\triangleleft\mkern1mu\joinrel\Relbar}\Relbar\Relbar}
\def\hookrightarrowfill@{\arrowfill@{\lhook\joinrel\relbar}\relbar\rightarrow}
\def\twoheadrightarrowfill@{\arrowfill@\relbar\relbar\twoheadrightarrow}
\def\rightbararrowfill@{\arrowdoublefill@{\relbar\mkern-0.5mu}\relbar\mapstochar\relbar\rightarrow}
\def\Rightbararrowfill@{\arrowdoublefill@{\Relbar\mkern-2mu}\Relbar\Mapstochar\Relbar\Rightarrow}
\def\rightringarrowfill@{\arrowdoublefill@\relbar\relbar{\mkern-2mu\circ\mkern-3mu}\relbar{\mkern-3mu\rightarrow}}
\def\righttriarrowfill@{\arrowfill@\relbar\relbar{\relbar\joinrel\mkern0.5mu\mathrel\triangleright}}
\def\Righttriarrowfill@{\arrowfill@\Relbar\Relbar{\Relbar\joinrel\mkern1mu\mathrel\triangleright}}
\def\leftrightarrowfill@{\arrowfill@\leftarrow\relbar\rightarrow}
\def\mapstofill@{\arrowfill@{\mapstochar\relbar}\relbar\rightarrow}
\renewcommand*\xleftarrow[2][]{\ext@arrow 20{20}0\leftarrowfill@{#1}{#2}}
\providecommand*\xLeftarrow[2][]{\ext@arrow 60{22}0{\Leftarrowfill@}{#1}{#2}}
\providecommand*\xhookleftarrow[2][]{\ext@arrow 10{20}0\hookleftarrowfill@{#1}{#2}}
\providecommand*\xtwoheadleftarrow[2][]{\ext@arrow 60{20}0\twoheadleftarrowfill@{#1}{#2}}
\providecommand*\xleftbararrow[2][]{\ext@arrow 10{22}0\leftbararrowfill@{#1}{#2}}
\providecommand*\xLeftbararrow[2][]{\ext@arrow 50{24}0\Leftbararrowfill@{#1}{#2}}
\providecommand*\xleftringarrow[2][]{\ext@arrow 10{26}0\leftringarrowfill@{#1}{#2}}
\providecommand*\xlefttriarrow[2][]{\ext@arrow 80{24}0\lefttriarrowfill@{#1}{#2}}
\providecommand*\xLefttriarrow[2][]{\ext@arrow 80{24}0\Lefttriarrowfill@{#1}{#2}}
\renewcommand*\xrightarrow[2][]{\ext@arrow 01{20}0\rightarrowfill@{#1}{#2}}
\providecommand*\xRightarrow[2][]{\ext@arrow 04{22}0{\Rightarrowfill@}{#1}{#2}}
\providecommand*\xhookrightarrow[2][]{\ext@arrow 00{20}0\hookrightarrowfill@{#1}{#2}}
\providecommand*\xtwoheadrightarrow[2][]{\ext@arrow 03{20}0\twoheadrightarrowfill@{#1}{#2}}
\providecommand*\xrightbararrow[2][]{\ext@arrow 01{22}0\rightbararrowfill@{#1}{#2}}
\providecommand*\xRightbararrow[2][]{\ext@arrow 04{24}0\Rightbararrowfill@{#1}{#2}}
\providecommand*\xrightringarrow[2][]{\ext@arrow 01{26}0\rightringarrowfill@{#1}{#2}}
\providecommand*\xrighttriarrow[2][]{\ext@arrow 07{24}0\righttriarrowfill@{#1}{#2}}
\providecommand*\xRighttriarrow[2][]{\ext@arrow 07{24}0\Righttriarrowfill@{#1}{#2}}
\providecommand*\xmapsto[2][]{\ext@arrow 01{20}0\mapstofill@{#1}{#2}}
\providecommand*\xleftrightarrow[2][]{\ext@arrow 10{22}0\leftrightarrowfill@{#1}{#2}}
\providecommand*\xLeftrightarrow[2][]{\ext@arrow 10{27}0{\Leftrightarrowfill@}{#1}{#2}}
\newcommand{\twocong}[2][0.5]{\ar@{}[#2] \save ?(#1)*{\cong}\restore}
\newcommand{\twoeq}[2][0.5]{\ar@{}[#2] \save ?(#1)*{=}\restore}
\newcommand{\rtwocell}[3][0.5]{\ar@{}[#2] \ar@{=>}?(#1)+/l 0.2cm/;?(#1)+/r 0.2cm/^{#3}}
\newcommand{\ltwocell}[3][0.5]{\ar@{}[#2] \ar@{=>}?(#1)+/r 0.2cm/;?(#1)+/l 0.2cm/^{#3}}
\newcommand{\ltwocello}[3][0.5]{\ar@{}[#2] \ar@{=>}?(#1)+/r 0.2cm/;?(#1)+/l 0.2cm/_{#3}}
\newcommand{\dtwocell}[3][0.5]{\ar@{}[#2] \ar@{=>}?(#1)+/u  0.2cm/;?(#1)+/d 0.2cm/^{#3}}
\newcommand{\dltwocell}[3][0.5]{\ar@{}[#2] \ar@{=>}?(#1)+/ur  0.2cm/;?(#1)+/dl 0.2cm/^{#3}}
\newcommand{\drtwocell}[3][0.5]{\ar@{}[#2] \ar@{=>}?(#1)+/ul  0.2cm/;?(#1)+/dr 0.2cm/^{#3}}
\newcommand{\dthreecell}[3][0.5]{\ar@{}[#2] \ar@3{->}?(#1)+/u  0.2cm/;?(#1)+/d 0.2cm/^{#3}}
\newcommand{\utwocell}[3][0.5]{\ar@{}[#2] \ar@{=>}?(#1)+/d 0.2cm/;?(#1)+/u 0.2cm/_{#3}}
\newcommand{\dtwocelltarg}[3][0.5]{\ar@{}#2 \ar@{=>}?(#1)+/u  0.2cm/;?(#1)+/d 0.2cm/^{#3}}
\newcommand{\utwocelltarg}[3][0.5]{\ar@{}#2 \ar@{=>}?(#1)+/d  0.2cm/;?(#1)+/u 0.2cm/_{#3}}
\newcommand{\pushoutcorner}[1][dr]{\save*!/#1-1.2pc/#1:(-1,1)@^{|-}\restore}
\theoremstyle{definition}
\theoremstyle{plain}
\newtheorem{Thm}{Theorem}
\newtheorem{Prop}[Thm]{Proposition}
\newtheorem{Cor}[Thm]{Corollary}
\newtheorem{Lemma}[Thm]{Lemma}
\numberwithin{equation}{section}
\theoremstyle{definition}
\newtheorem{Ex}[Thm]{Example}
\newtheorem{Rk}[Thm]{Remark}
\newcommand{\Lan}{\mathrm{Lan}}
\newcommand{\Ran}{\mathrm{Ran}}
\begin{document}
\leftmargini=2em
\title{Two-dimensional regularity and exactness}
\author{John Bourke}
\address{Department of Mathematics and Statistics, Masaryk University, Kotl\'a\v rsk\'a 2, Brno 60000, Czech Republic}
\email{bourkej@math.muni.cz} 
\subjclass[2000]{Primary: 18D05, 18C15}
\author{Richard Garner}
\address{Department of Computing, Macquarie University, NSW 2109, Australia}
\email{richard.garner@mq.edu.au}

\date{\today}

\thanks{The first author acknowledges the support of the Grant agency of the Czech Republic, grant number
P201/12/G028.
The second author acknowledges the support of an Australian Research Council Discovery Project, grant number DP110102360.}

\begin{abstract}
We define notions of regularity and (Barr-)exactness for $2$-categories. In fact, we define \emph{three} notions of regularity and exactness, each based on one of the three canonical ways of factorising a functor in $\cat{Cat}$: as (surjective on objects, injective on objects and fully faithful),
as (bijective on objects, fully faithful), and as
(bijective on objects and full, faithful). The correctness of our notions is justified using the theory of lex colimits~\cite{Garner2011Lex-colimits} introduced by Lack and the second author.
Along the way, we develop an abstract theory of regularity and exactness relative to a kernel--quotient factorisation, extending earlier work of Street and others~\cite{Street1982Exact,Betti1995Factorizations}.
\end{abstract}

\maketitle
\section{Introduction}\label{sec:introduction}
This paper is concerned with two-dimensional generalisations of the notions of \emph{regular} and \emph{Barr-exact} category. There are in fact a plurality of such generalisations and a corresponding plurality of articles exploring these generalisations---see~\cite{Benabou1989Some,Betti1995Factorizations,Bourn20102-categories,Carboni1994Modulated,Dupont2008Abelian,Dupont2003Proper,Street1982Characterizations,Street1982Two-dimensional}, for example. Examining this body of work, one finds a clear consensus as to the form such generalised notions should take: one considers a $2$-category or bicategory equipped with finite limits and with a certain class of colimits, and requires certain ``exactness'' conditions to hold between the finite limits and the specified colimits. For generalised regularity, the finite limits are used to form the ``kernel'' of an arrow; the specified colimits are just those needed to form ``quotients'' of such kernels; and the exactness conditions ensure that the process of factoring an arrow through the quotient of its kernel gives rise to a well-behaved factorisation system on the $2$-category or bicategory in question. For generalised Barr-exactness, the finite limits are used to specify ``congruences'' (the maximal finite-limit structure of which all ``kernels'' are instances); the specified colimits are those required to form quotients of congruences; and the exactness conditions extend those for regularity by demanding that every congruence be ``effective''---the kernel of its quotient.

Whilst this general schema for regularity and Barr-exactness notions is clear enough (and as we shall soon see, makes sense in a more general setting than just that of $2$-categories), the details in individual cases are less so. The main complication lies in ascertaining the right exactness conditions to impose between the finite limits and the specified colimits. Previous authors have done so in an essentially ad hoc manner, guided by intuition and a careful balancing of the opposing constraints of sufficient examples and sufficient theorems. The first contribution of this paper is to show that any instance of the schema admits a canonical, well-justified choice of exactness conditions, which automatically implies many of the desirable properties that a generalised regular or Barr-exact category should have.

 We obtain this canonical choice from the theory of \emph{lex colimits} developed by the second author and Lack in~\cite{Garner2011Lex-colimits}. This is a framework for dealing with $\V$-categorical structures involving limits, colimits, and exactness between the two;
one of the key insights is that, for a given class of colimits, the appropriate exactness conditions to impose are just those which hold between finite limits and the given colimits in the base $\V$-category $\V$; more generally, in any ``$\V$-topos'' (lex-reflective subcategory of a presheaf $\V$-category). Applied in the case $\V = \cat{Set}$, this theory justifies the exactness conditions for the notions of regular and Barr-exact category as well as those of extensive, coherent or adhesive categories; applied in the case $\V = \cat{Cat}$, it will \emph{provide} us with the exactness conditions for our generalised regularity and Barr-exactness notions.

The second contribution of this paper is to study in detail three particular notions of two-dimensional regularity and Barr-exactness. As we have said, there are a range of such notions; in fact, there is one for each well-behaved orthogonal factorisation system on $\cat{Cat}$, and the three examples we consider arise from the following ways of factorising a functor:
\begin{enumerate}[(i)]
\item (surjective on objects, injective on objects and fully faithful);
\item (bijective on objects, fully faithful); and
\item (bijective on objects and full, faithful).
\end{enumerate}
Of course, many other choices are possible---interesting ones for further investigation would be (final, discrete opfibration)~\cite{Street1973The-comprehensive} and (strong liberal, conservative)~\cite{Carboni1994Modulated}---but amongst all possible choices, these three are the most evident and in some sense the most fundamental. 
For (i), the notion of regularity we obtain is more or less that defined in~\cite[\S1.19]{Street1982Two-dimensional}; the exactness conditions amount simply to the stability under pullback of the quotient morphisms. In the case (ii), we obtain the folklore construction of (bijective on objects, fully faithful) factorisations via the codescent object of a higher kernel; see~\cite[\S3]{Street2004Categorical}, for example. However, the exactness conditions required do \emph{not} simply amount to stability under pullback of codescent morphisms; one must also impose the extra condition that, if $A \to B$ is a codescent morphism, then so also is the diagonal map $A \to A \times_B A$. This condition, forced by the general theory of~\cite{Garner2011Lex-colimits}, has not been noted previously and is moreover, substantive: for example, the category $\cat{Set}$, seen as a locally discrete $2$-category, satisfies all the other prerequisites for regularity in this sense, but \emph{not}  this final condition. Finally, the regularity notion associated with the factorisation system (iii) appears to be new, although an abelian version of it is considered in~\cite{Kasangian2000Factorization}. The corresponding analogues of Barr-exactness for (i), (ii) and (iii) supplement the regularity notions by requiring effective quotients of appropriate kinds of congruences: for (i), these are the congruences discussed in~\cite[\S1.8]{Street1982Two-dimensional}; for (ii) they are the~\emph{cateads} of~\cite{Bourn20102-categories}; whilst for (iii), they are internal analogues of the notion of category equipped with an equivalence relation on each hom-set, compatible with composition in each variable.

%For each of our two-dimensional analogues of regularity, there will be a corresponding two-dimensional analogue of Barr-exactness. Recall that a regular category is called \emph{Barr-exact} if every equivalence relation admits a coequaliser of which it is the kernel-pair. For each of our two-dimensional correlates of regularity, there will be a corresponding notion of ``equivalence relation'', which we term a \emph{congruence}; and the appropriate analogue of Barr-exactness in each case amounts  to the requirement that each congruence should admit a quotient of which it is the kernel. These conditions are, once again, just those required to obtain a $\cat{Cat}$-enriched exactness notion in the sense of~\cite{Garner2011Lex-colimits}.
%
We find that there are many $2$-categories which are regular or exact in the senses we define. $\cat{Cat}$ is so essentially by definition; and this implies the same result for any presheaf $2$-category $[\C^\op, \cat{Cat}]$. The category of algebras for any $2$-monad on $\cat{Cat}$ which is \emph{strongly finitary} in the sense of~\cite{Kelly1993Finite-product-preserving} is again regular and exact in all senses; which encompasses such examples as the $2$-category of monoidal categories and strict monoidal functors; the $2$-category of categories equipped with a monad; the $2$-category of categories with finite products and strict product-preserving functors; and so on. Another source of examples comes from internal category theory. If $\E$ is a category with finite limits, then $\cat{Cat}(\E)$ is \emph{always} regular and exact relative to the factorisation system (ii); if $\E$ is moreover regular or Barr-exact in the usual $1$-categorical sense, then $\cat{Cat}(\E)$ will be regular or exact relative to (i) and (iii) also. Finally, we may combine the above examples in various ways: thus, for instance, the $2$-category of internal monoidal categories in any Barr-exact category $\E$ is regular and exact in all three senses. 

As we mentioned in passing above, there is nothing inherently two-dimensional about the schema for generalised regularity and exactness; it therefore seems appropriate to work---at least initially---in a more general setting. Over an arbitrary enrichment base $\V$, one may define a notion of \emph{kernel--quotient system} whose basic datum is a small $\V$-category $\F$ describing the shape of an ``exact fork'': the motivating example takes $\V = \cat{Set}$ and $\F = \bullet \rightrightarrows \bullet \rightarrow \bullet$. Given only this $\F$, one may define analogues of all the basic constituents of the theory of regular and Barr-exact categories; the particular examples of interest to us will arise from three suitable choices of $\F$ in the case $\V = \cat{Cat}$. The theory of kernel--quotient systems was first investigated by Street in unpublished work~\cite{Street1982Exact}, and developed further in a preprint of Betti and Schumacher~\cite{Betti1995Factorizations}; a published account of some of their work may be found in~\cite{Dupont2008Abelian}. As indicated above, the new element we bring is the use of the ideas of~\cite{Garner2011Lex-colimits} to justify the exactness conditions appearing in the notions of $\F$-regularity and $\F$-exactness.

Finally, let us remark on what we do \emph{not} do in this paper. All the two-dimensional exactness notions we consider will be strict $2$-categorical ones; thus we work with $2$-categories rather than bicategories, $2$-functors rather than homomorphisms, weighted $2$-limits rather than bilimits, and so on. In other words, we are working within the context of $\cat{Cat}$-enriched category theory; this allows us to apply the theory of~\cite{Garner2011Lex-colimits} directly, and noticeably simplifies various other aspects of our investigations. There are bicategorical analogues of our results, which are conceptually no more difficult but are more technically involved; we have therefore chosen to present the $2$-categorical case here, reserving the bicategorical analogue for future work.

We now describe the contents of this paper. We begin in Section~\ref{sec:kerquot} by defining kernel--quotient systems and developing aspects of their theory; as explained above, this material draws on~\cite{Street1982Exact} and~\cite{Betti1995Factorizations}. We do not yet define the notions of regularity and exactness relative to a kernel--quotient system $\F$; before doing so, we must recall, in Section~\ref{sec:phi-lex-cocompleteness}, the relevant aspects of the lex colimits of~\cite{Garner2011Lex-colimits}. This then allows us, in Section~\ref{sec:freg}, to complete the definitions of $\F$-regularity and $\F$-exactness, and to show that many of the desirable properties of an $\F$-regular or $\F$-exact category follow already at this level of generality.

This completes the first main objective of the paper; in Section~\ref{sec:2dkerquot}, we commence the second, by introducing the 
 two-dimensional kernel--quotient systems corresponding to (i)--(iii) above, and studying their properties. In Section~\ref{sec:elementary}, we describe in elementary terms the notions of two-dimensional regularity and exactness associated to these systems, using the two-dimensional sheaf theory of~\cite{Street1982Two-dimensional}; in the penultimate Section~\ref{sec:relationships}, we consider the interrelationships between these notions; and finally, in Section~\ref{sec:examples}, we describe in more detail the range of examples outlined above.

\section{Kernel--quotient systems}\label{sec:kerquot}
In this section and the following two, we work in the context of the $\V$-category theory of~\cite{Kelly1982Basic}, for $\V$ some locally finitely presentable symmetric monoidal closed category; in the final four sections, we will specialise to the case $\V = \cat{Cat}$. Before starting our exposition proper, let us recall the notion of $\V$-orthogonality: a map $f \colon A \to B$ of a $\V$-category $\C$ is said to be \emph{$\V$-orthogonal} to $g \colon C \to D$---written $f \mathbin \bot g$---if the square
\begin{equation}\label{eq:orthogonality-square}
\cd[@-0.5em]{
  \C(B,C) \ar[r]^{\C(B,g)} \ar[d]_{\C(f,C)} &
  \C(B,D) \ar[d]^{\C(f,D)} \\
  \C(A,C) \ar[r]_{\C(A,g)} & \C(A,D)
}
\end{equation}
is a pullback in $\V$. A map $f$ is orthogonal to an object $C$, written $f \mathbin \bot C$, if $\C(f, C) \colon \C(B,C) \to \C(A,C)$ is invertible, and similarly $A \mathbin \bot g$ if $\C(A, g)$ is invertible.
\begin{Lemma}\label{lem:orth}
\begin{enumerate}[(i)]
\item $f \colon A \to B$ is invertible if and only if $f \mathbin \bot C$ for all $C \in \C$;
\item If $L \dashv R \colon \C \to \D$ then $Lf \mathbin \bot g$ if and only if $f \mathbin \bot Rg$;
\item Given $f \colon A \to B$ and $g \colon C \to D$ in $\C$, we have $f \mathbin \bot g$ in $\C$ if and only if the object $f$ is orthogonal to the map $(1_C, g) \colon 1_C \to g$ in $[\mathbf 2, \C]$.
\end{enumerate}
\end{Lemma}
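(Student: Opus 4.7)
All three claims are formal consequences of unpacking the definition of $\V$-orthogonality in~\eqref{eq:orthogonality-square} and combining it with a standard enriched-categorical tool (Yoneda for (i), the adjunction hom-iso for (ii), and the pointwise formula for homs in a functor $\V$-category for (iii)). I do not expect any real difficulty; the one point that requires care is the identification of the relevant hom-objects in the arrow $\V$-category $[\mathbf 2, \C]$ for part (iii).

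For (i), the condition $f \mathbin \bot C$ says exactly that $\C(f, C) \colon \C(B, C) \to \C(A, C)$ is invertible in $\V$. The claim is therefore that a morphism is invertible iff it becomes so after applying every representable $\V$-functor $\C(-, C)$, and this is the standard corollary of the $\V$-enriched Yoneda lemma, which asserts that the Yoneda embedding $\C^\op \to [\C, \V]$ is fully faithful and hence reflects isomorphisms. For (ii), the adjunction $L \dashv R$ provides a $\V$-natural isomorphism $\D(L-, -) \cong \C(-, R-)$ of hom-objects. Applying this simultaneously to each of the four corners of the orthogonality square~\eqref{eq:orthogonality-square} written for $Lf$ and $g$ in $\D$ gives an isomorphic square in $\V$, which is exactly the orthogonality square for $f$ and $Rg$ in $\C$. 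Since one square is a pullback iff the isomorphic one is, both directions follow at once.

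For (iii), the plan is to compute the two hom-objects $[\mathbf 2, \C](f, g)$ and $[\mathbf 2, \C](f, 1_C)$ and observe that the condition $f \mathbin \bot (1_C, g)$ in $[\mathbf 2, \C]$ literally transcribes the pullback condition in~\eqref{eq:orthogonality-square}. Using the pointwise end formula, $[\mathbf 2, \C](f, g)$ is canonically the pullback of $\C(f, D)$ along $\C(A, g)$, namely the lower-right corner of the diagram in~\eqref{eq:orthogonality-square}; while $[\mathbf 2, \C](f, 1_C) \cong \C(B, C)$, since a commuting square whose bottom edge is an identity is fully determined by its right leg. Under these identifications, the map
\[
[\mathbf 2, \C]\bigl(f, (1_C, g)\bigr) \colon [\mathbf 2, \C](f, 1_C) \longrightarrow [\mathbf 2, \C](f, g)
\]
is precisely the canonical comparison $\C(B, C) \to \C(A, C) \times_{\C(A, D)} \C(B, D)$ induced by $\C(f, C)$ and $\C(B, g)$; its invertibility is by definition the assertion that~\eqref{eq:orthogonality-square} is a pullback, i.e.\ that $f \mathbin \bot g$ in $\C$, and is equally by definition the assertion that $f \mathbin \bot (1_C, g)$ in $[\mathbf 2, \C]$, completing the argument.
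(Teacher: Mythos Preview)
Your proof is correct and follows essentially the same approach as the paper's, which dismisses (i) and (ii) as trivial and argues (iii) by identifying $[\mathbf 2,\C](f,g)$ with the pullback in~\eqref{eq:orthogonality-square} and $[\mathbf 2,\C](f,1_C)$ with $\C(B,C)$, so that postcomposition with $(1_C,g)$ becomes the comparison map. One tiny slip of phrasing: the pullback $\C(A,C)\times_{\C(A,D)}\C(B,D)$ is not the ``lower-right corner'' of the square (that is $\C(A,D)$) but the object replacing the upper-left corner $\C(B,C)$ in the limit cone; this doesn't affect the argument.
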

\begin{proof}
Only (iii) is non-trivial. Note that in~\eqref{eq:orthogonality-square}, the pullback of $\C(A,g)$ and $\C(f,C)$ is the hom-object $[\mathbf 2, \C](f, g)$, whilst $\C(B,C)$ is isomorphic to $[\mathbf 2, \C](f, 1_C)$; in these terms, the induced comparison map is given by postcomposing with $(1_C, g)$. Thus to say that $f \mathbin \bot g$ is equally to say that $f \mathbin 
\bot (1_C, g)$ in $[\mathbf 2, \C]$.
\end{proof}

We now turn the main object of study of this section: a notion of \emph{kernel--quotient system} which captures the abstract properties of the kernel-pair--coequaliser construction central to the notions of regular and Barr-exact category.  
 As noted in the introduction, the material of this section is based on~\cite{Street1982Exact,Betti1995Factorizations}.
 
The basic data for a kernel--quotient system is a finitely presentable $\V$-category $\F$ that contains as a full subcategory $\cat 2$, the free $\V$-category on an arrow $1 \rightarrow 0$\footnote{In fact, we can weaken the requirement of finite presentability of $\F$; we really only need that each hom-object $\F(x,1)$ and $\F(x,0)$ be finitely presentable in $\V$, which is what is needed to ensure that right Kan extension along $J \colon \mathbf 2 \to \F$ can be computed using finite limits.}. Given such an $\F$, we write $\K$ for the full subcategory of $\F$ on all objects except $0$ and write 
$I \colon \K \rightarrow \F \leftarrow  \mathbf 2 \colon J$ for the induced pair of full inclusions. We think of the category $\K$ as the shape of ``kernel-data'' and the category $\F$ as the shape of an ``exact fork'', with the functors $I$ and $J$ indicating how a kernel and a quotient morphism sit inside such a fork.
% We assume, moreover, that the pullback of $I$ and $J$ is of the form
%\begin{equation}\label{eq:pullback-ker-quot}
%\cd{
%\mathbf I \ar[r]^{D} \ar[d]_{} & \mathbf 2 \ar[d]^J \\
%\K \ar[r]_I & \F
%}
%\end{equation}
%where $\mathbf I$ is the $\V$-category free on a single object, and $D$ picks out the domain of the free arrow.
The motivating case is that corresponding to the one-dimensional regular factorisation: we take $\V = \cat{Set}$, and $\F$ to be the category generated by the graph $2 \rightrightarrows 1 \rightarrow 0$ subject to the relation identifying the two composites $2 \rightrightarrows 0$. Another basic example is that which underlies abelian categories: we take $\V = \cat{Ab}$ and $\F$ the $\cat{Ab}$-category generated by the graph $2 \to 1 \to 0$ subject to the relation that the composite $2 \to 0$ is the zero map.
%\begin{equation*}
%(2 \rightrightarrows 1) \quad \longrightarrow \quad (2 \rightrightarrows 1 \rightarrow 0) \quad \longleftarrow \quad (1 \rightarrow 0)\rlap{ .}
%\end{equation*}

Given an $\F$ of this kind, we obtain for any sufficiently complete and cocomplete $\V$-category $\C$ an adjunction
between morphisms in $\C$ and kernel-data in $\C$ as on the left in
\begin{equation*}
%\cd{
% [\mathbf 2, \C] \ar@<-6pt>[r]_{K} \ar@{}[r]|{\bot} &
% [\K, \C] \ar@<-6pt>[l]_{Q}
%}
%\quad \defeq \qquad
\cd{
 [\mathbf 2, \C] \ar@<-6pt>[r]_{\Ran_J} \ar@{}[r]|{\bot} &
 [\F, \C] \ar@<-6pt>[r]_{I^\ast} \ar@<-6pt>[l]_{J^\ast} \ar@{}[r]|{\bot} &
 [\K, \C] \ar@<-6pt>[l]_{\Lan_I}
}
\qquad \qquad 
\cd{
 [\mathbf 2, \C] \ar@<-6pt>[r]_{K} \ar@{}[r]|{\bot} &
 [\K,\C]' \ar@<-6pt>[l]_{Q}
}\rlap{ .}
\end{equation*}
In practice, though we will always assume that $\C$ is finitely complete---which suffices to assure the existence of the right adjoint $K$---we will not assume the existence of all colimits necessary to construct the left adjoint. Nonetheless, if we write $[\K, \C]' \subset [\K, \C]$ for the full subcategory of objects $X$ for which $\Lan_I X$ exists, then we obtain a $\V$-functor $Q \colon [\K, \C]' \to [\mathbf 2, \C]$ which is left adjoint to $K$ insofar as it is defined. 
In particular, if the left adjoint exists at every $X \in [\K, \C]$ which is an $\F$-\emph{kernel}---that is, in the image of $K$---then we obtain an adjunction as on the right above, and say that $\C$ \emph{admits the kernel--quotient adjunction for $\F$}.

Now  we define a morphism $f \colon A \to B$ in the finitely complete $\C$ to be:
\begin{itemize}
\item \emph{$\F$-monic} if the morphism $K(1_A,f) \colon K(1_A) \to  K(f)$ is an isomorphism;
\item \emph{$\F$-strong epi} if $f \mathbin \bot g$ for every $\F$-monic $g \colon C \to D$;
\item an \emph{$\F$-quotient map} if it lies in the essential image of $Q$;
\item an \emph{effective $\F$-quotient map} if $QKf$ exists and the counit map $QKf \to f$ is invertible; equivalently, if the identity map $Kf \to Kf$ exhibits $f$ as $QKf$.
\end{itemize}

%\begin{equation}\label{eq:orthog-restate}(1_C, g) \circ (\thg) \colon [\mathbf 2, \C](f, 1_C) \to [\mathbf 2, \C](f,g)\rlap{ .}\end{equation}

%Note that, by the representability of limits and the Yoneda lemma, a morphism $f \in \C$ is $\F$-monic if and only $\C(X,f)$ is $\F$-monic in $\V$ for every $X \in \C$. This allows us to define $\F$-monics and $\F$-strong epis in $\V$-categories which lack finite limits.
%The assumption that $\C$ admits the kernel--quotient adjunction is merely a convenience. In an arbitrary $\V$-category $\C$, we can define a morphism $f$ to be $\F$-monic just when $\C(X, f)$ is $\F$-monic in $\V$ for every $X \in \C$; by the representability of limits and the Yoneda lemma, this agrees with the above definition in the case of a finitely complete $\C$. The definitions of $\F$-strong epi and $\F$-quotient carry over unchanged to an arbitrary $\V$-category.

%In the motivating example, the functor $K$ sends an arrow $f \colon X \to Y$ to its kernel-pair $X \to X \times_Y X$, whilst $Q$ sends a parallel pair of arrows to it coequaliser.

In the motivating one-dimensional example, the $\F$-monics and $\F$-strong epis are the monics and the strong epis, whilst the $\F$-quotient maps and  effective $\F$-quotient maps are the regular epis; the following proposition generalises some well-known properties of these classes to the case of a general $\F$.
\begin{Prop}\label{prop:f-map-props}
Let $\C$ be a finitely complete $\V$-category.
\begin{enumerate}[(a)]
\item $g \colon C \to D$ is $\F$-monic in $\C$ if and only if $\C(X, g)$ is $\F$-monic in $\V$ for all $X \in \C$.
\item Every $\F$-quotient map in $\C$ is an $\F$-strong epi.
\item If $\C$ admits the kernel--quotient adjunction for $\F$, then $g \colon C \to D$ is $\F$-monic if and only $f \mathbin \bot g$ for every $\F$-quotient  map $f$.
\item $\F$-strong epis in $\C$ are closed under composition and identities, under pushout along arbitrary morphisms, and under colimits in $\C^\mathbf 2$; moreover, if $h = gf$ is $\F$-strong epi, and $f$ is either $\F$-strong epi or epi, then $g$ is $\F$-strong epi.
\item $\F$-monics in $\C$ are closed under composition and identities, under pullback along arbitrary morphisms, and under limits in $\C^\mathbf 2$; moreover if $h = gf$ is $\F$-monic, and $g$ is either $\F$-monic or monic, then $f$ is $\F$-monic.
\item Any $\V$-functor preserving finite limits preserves $\F$-monics; any left adjoint $\V$-functor preserves $\F$-strong epis and $\F$-quotient maps, and will also preserve effective $\F$-quotient maps so long as it preserves finite limits.
\end{enumerate}
\end{Prop}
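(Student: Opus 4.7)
The plan is to deduce all six parts from three formal ingredients: the functor $K$, being $I^\ast \circ \Ran_J$, is a pointwise right Kan extension computable by finite limits in $\C$ (since $\F$ is finitely presentable, each hom $\F(x,1)$ is finitely presentable in $\V$); where defined, $Q$ is left adjoint to $K$; and the properties of $\V$-orthogonality collected in Lemma~\ref{lem:orth}.

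Assertion (a) and the first clause of (f) follow from the first ingredient: the pointwise formula for $K$ involves only finite limits in the target variable, so any finite-limit-preserving $\V$-functor out of $\C$---in particular each representable $\C(X, {\thg})\colon \C \to \V$---commutes with $K$ up to isomorphism. For (b) and (c), the key observation is that, by Lemma~\ref{lem:orth}(iii) and~(ii), an $\F$-quotient $Qh$ is orthogonal to $g\colon C \to D$ if and only if $Qh$ is orthogonal to the morphism $(1_C, g)\colon 1_C \to g$ in $[\mathbf 2, \C]$, equivalently if and only if $h$ is orthogonal to $K(1_C, g)\colon K(1_C) \to Kg$ in $[\K, \C]$. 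When $g$ is $\F$-monic this last morphism is invertible, whence orthogonality is automatic by Lemma~\ref{lem:orth}(i), proving (b). For the converse in (c)---the main technical hurdle, since we must produce $\F$-monicity from an orthogonality hypothesis which only concerns morphisms in the essential image of $Q$---we assume the kernel--quotient adjunction, and select the particular $\F$-quotient $QK(1_C, g)$ (permissible because $K(1_C)$ and $Kg$ are $\F$-kernels, hence lie in $[\K, \C]'$). Transposing $QK(1_C, g) \perp g$ along the adjunction yields $K(1_C, g) \perp K(1_C, g)$, and any morphism orthogonal to itself is invertible, as the pair $(1_A, 1_B)$ in the self-pullback square lifts uniquely to a two-sided inverse.

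The closure properties (d) and (e) are essentially formal. By definition the class of $\F$-strong epis is left orthogonal to the class of $\F$-monics, and so automatically enjoys the stated closure under composition, identities, pushouts, colimits in $\C^{\mathbf 2}$, and right cancellation by standard orthogonality arguments. Dually, for (e), closure of $\F$-monics under composition, identities, pullbacks, and limits in $\C^{\mathbf 2}$ follows from $K$ being a right adjoint, hence limit-preserving; the left cancellation assertion reduces via~(a) to the same statement in $\V$, which in turn is handled by factoring $(1_A, gf) = (1_A, g) \circ (1_A, f)$ in $[\mathbf 2, \V]$ and applying $K$.

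For the remaining claims of (f), a right adjoint $R\colon \D \to \C$ preserves all limits, hence $K$, hence $\F$-monics. Thus, for $L \dashv R$ and $f$ an $\F$-strong epi in $\C$, any $\F$-monic $g$ in $\D$ gives $Lf \perp g$ if and only if $f \perp Rg$ by Lemma~\ref{lem:orth}(ii); the latter holds since $Rg$ is $\F$-monic, so $L$ preserves $\F$-strong epis. Since left adjoints preserve left Kan extensions, $L$ preserves $Q$ wherever defined, and hence preserves $\F$-quotients. If moreover $L$ preserves finite limits, then it preserves $K$ as well, so it sends the counit $QKf \to f$ in $\C$ to the counit $QKLf \to Lf$ in $\D$, yielding preservation of effective $\F$-quotients.
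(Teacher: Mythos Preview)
Your proof is correct and follows essentially the same route as the paper: parts (a), (b), (d), (f) are argued identically, and (e) is handled by reduction to $\V$ via (a) in both cases.

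The only place worth a remark is (c). Your self-orthogonality manoeuvre is valid, but the phrasing is loose: $QK(1_C,g)$ is a \emph{morphism} in $[\mathbf 2,\C]$ between the two $\F$-quotient maps $QK1_C$ and $QKg$, not itself an $\F$-quotient map, so the hypothesis does not give ``$QK(1_C,g)\perp g$'' directly. What it does give is the two object-orthogonalities $QK1_C\perp(1_C,g)$ and $QKg\perp(1_C,g)$ in $[\mathbf 2,\C]$; since a morphism whose domain and codomain are both orthogonal to $(1_C,g)$ is itself orthogonal to $(1_C,g)$, you then obtain $QK(1_C,g)\perp(1_C,g)$, and Lemma~\ref{lem:orth}(ii) transposes this to $K(1_C,g)\perp K(1_C,g)$ as you claim. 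The paper packages the same content as ``$K(1_C,g)$ invertible iff $X\perp K(1_C,g)$ for all $X\in[\K,\C]'$'', which likewise relies implicitly on $K1_C$ and $Kg$ lying in $[\K,\C]'$; so the two arguments are really the same, yours just making the mechanism more explicit.
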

\begin{proof}
For (a), each representable $\C(X,\thg)$ preserves limits, and so commutes with the formation of kernels; the result is now immediate by the Yoneda lemma.

For (b), let $QX$ be an $\F$-quotient map in $\C$. For any $\F$-monic $g 
\colon C \to D$, we have $K(1_C, g)$ invertible, whence $X \mathbin \bot K(1_C, g)$ in $[\K, \C]$; whence $QX \mathbin \bot (1_C, g)$ in $[\mathbf 2, \C]$; whence $QX \mathbin \bot g$ in $\C$. So $QX$ is $\F$-strong epi.
%
%the map $K(1_C, g) \circ (\thg) \colon [\K, \C](X, K1_C) \to [\K, \C](X, Kg)$ is invertible since $K(1_C, g)$ is;
%whence by adjunction, $(1_C, g) \circ (\thg) \colon [\mathbf 2, \C](QX, 1_C) \to [\mathbf 2, \C](QX, g)$ is too. By the above observations, this is equally to say that $QX$ is orthogonal to each $\F$-monic $g$, or in other words, that $QX$ is $\F$-strong epi.

For (c),  $g \colon C \to D$ is $\F$-monic iff $K(1_C, g)$ is invertible, iff $X \mathbin \bot K(1_C, g)$ for all $X \in [\K, \C]'$, iff $QX \mathbin \bot (1_C,g)$ for all $X \in [\K, \C]'$, iff $QX \mathbin \bot g$ for all $X \in [\K, \C]'$.
%
%the ``only if'' direction is precisely (b); for the ``if'' direction, let $g \colon C \to D$ be orthogonal to every $QX$. Then each map $(1_C, g) \circ (\thg) \colon [\mathbf 2, \C](QX, 1_C) \to [\mathbf 2, \C](QX, g)$ is invertible, whence by adjunction, each $K(1_C, g) \circ (\thg) \colon [\K, \C]'(X, K1_C) \to [\K, \C]'(X, Kg)$ is so; thus by the Yoneda lemma, $K(1_C, g)$ is invertible, so that $g$ is $\F$-monic. 

Part (d) follows from the definition of $\F$-strong epis by an orthogonality property in $\C$; as for (e), observe that, since $\F$-monics and all the listed constructions are preserved and jointly reflected by the representables $\C(X, \thg)$, it suffices to prove the case $\C = \V$; and this follows from the orthogonality characterisation of $\F$-monics in $\V$ given in (c).

Finally, for (f), 
 any $\V$-functor preserving finite limits commutes with the construction of kernels and so preserves $\F$-monics. In particular, any right adjoint preserves $\F$-monics, whence by orthogonality, any left adjoint preserves $\F$-strong epis. Furthermore, any left adjoint $\V$-functor commutes with the construction of quotients $Q$, and so will preserve $\F$-quotient maps; if it also preserves finite limits, then it commutes with the construction of kernels, and so preserves effective $\F$-quotients.
\end{proof}

One point which distinguishes the general kernel--quotient system from the motivating kernel-pair--coequaliser system concerns the distinction between quotient maps and effective quotient maps. In the motivating case, \emph{every} $\F$-quotient map is effective, which is to say that every regular epimorphism is the coequaliser of its own kernel-pair. For the general kernel--quotient system this need not be the case; a counterexample is given in Proposition~\ref{prop:codescent-not-effective} below. However, we do have the following result:
\begin{Prop}\label{prop:effectivity-ker-quot}
For any $\C$ which admits the kernel--quotient adjunction for $\F$, the 
following are equivalent:
\begin{enumerate}[(a)]
\item Every $\F$-quotient map in $\C$ is effective;
\item Every $\F$-quotient of an $\F$-kernel in $\C$ is effective;
\item Every $\F$-kernel in $\C$ is effective.
\end{enumerate}
\end{Prop}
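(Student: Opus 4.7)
The three conditions are all equivalent formulations of the statement that the kernel--quotient adjunction $Q \dashv K$ is \emph{idempotent}, and my strategy is to prove the cycle $(a) \Rightarrow (b) \Rightarrow (c) \Rightarrow (a)$, using throughout only the triangle identities $\epsilon_{QX} \cdot Q\eta_X = 1_{QX}$ and $K\epsilon_Y \cdot \eta_{KY} = 1_{KY}$ together with the naturality of $\eta$ and $\epsilon$. The implication $(a) \Rightarrow (b)$ is immediate, since every $\F$-kernel $Kg$ lies in $[\K,\C]'$.

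For $(b) \Rightarrow (c)$, fix a map $g \in [\mathbf 2, \C]$. The triangle identity $\epsilon_{QKg} \cdot Q\eta_{Kg} = 1_{QKg}$, combined with the hypothesis that $\epsilon_{QKg}$ is invertible, shows that $Q\eta_{Kg}$ is the two-sided inverse of $\epsilon_{QKg}$ and so is itself invertible. Applying $Q$ to the second triangle identity $K\epsilon_g \cdot \eta_{Kg} = 1_{Kg}$ and cancelling the epimorphism $Q\eta_{Kg}$ yields the identity $QK\epsilon_g = \epsilon_{QKg}$; consequently $KQK\epsilon_g = K\epsilon_{QKg}$ is invertible, and a further triangle identity shows that $\eta_{KQKg}$ is its inverse and therefore also invertible. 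Naturality of $\eta$ applied to the map $K\epsilon_g$ now reads $\eta_{Kg} \cdot K\epsilon_g = KQK\epsilon_g \cdot \eta_{KQKg} = K\epsilon_{QKg} \cdot \eta_{KQKg} = 1_{KQKg}$, and together with $K\epsilon_g \cdot \eta_{Kg} = 1_{Kg}$ this exhibits $\eta_{Kg}$ and $K\epsilon_g$ as mutually inverse, giving (c).

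The implication $(c) \Rightarrow (a)$ is entirely dual. Given $X \in [\K,\C]'$, applying (c) to the map $g \defeq QX$ shows that $\eta_{KQX}$ is invertible, and the triangle identity identifies $K\epsilon_{QX}$ as its inverse. Applying $K$ to $\epsilon_{QX} \cdot Q\eta_X = 1_{QX}$ shows that $KQ\eta_X$ is also a right inverse of the isomorphism $K\epsilon_{QX}$, whence by uniqueness $KQ\eta_X = \eta_{KQX}$. Naturality of $\epsilon$ at $Q\eta_X$ then gives $Q\eta_X \cdot \epsilon_{QX} = \epsilon_{QKQX} \cdot QKQ\eta_X = \epsilon_{QKQX} \cdot Q\eta_{KQX} = 1_{QKQX}$, and combined with the triangle identity this proves that $\epsilon_{QX}$ is invertible. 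I expect the principal subtlety to be the step identifying $QK\epsilon_g$ with $\epsilon_{QKg}$ (and dually $KQ\eta_X$ with $\eta_{KQX}$): the cancellation and uniqueness-of-inverse arguments that force these identifications are precisely the content of the ``idempotent adjunction'' characterisation, and once these identifications are in hand the remaining manipulations are formal.
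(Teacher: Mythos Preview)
Your proof is correct and follows essentially the same approach as the paper: both recognise the three conditions as equivalent characterisations of idempotency of the adjunction $Q \dashv K$. The paper's argument is more compressed---it translates (a), (b), (c) into invertibility of the whiskered transformations $\epsilon Q$ (for the full adjunction and for its restriction to $\mathbf{Ker}(\C)$) and $\eta K$, and then invokes as a ``standard fact about adjunctions'' that $\epsilon L$ is invertible iff $\eta R$ is---whereas you unpack precisely that standard fact via the triangle identities and naturality. The only organisational difference is that the paper handles the link between (a) and (b) by passing to the restricted adjunction on $\mathbf{Ker}(\C)$, while you absorb this into the cycle by treating (b) as the special case $X = Kg$ of (a); both are equally valid.
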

In the statement of this result, we call $X \in [\K, \C]'$  \emph{effective} if the unit $X \to KQX$ of the kernel--quotient adjunction at $X$ is invertible.
%Recall that an adjunction is called \emph{idempotent} if the monad it generates (equivalently, the comonad it generates) is idempotent.
\begin{proof}
As before, we have the adjunction $Q \dashv K$ on the left below, and this restricts to an adjunction as on the right; here we write $\cat{Ker}(\C) \subset [\K, \C]'$ for the full sub-$\V$-category spanned by the $\F$-kernels.
\begin{equation*}
\cd{
 [\mathbf 2, \C] \ar@<-6pt>[r]_{K} \ar@{}[r]|{\bot} &
 [\K,\C]' \ar@<-6pt>[l]_{Q}
}\qquad \qquad \quad
\cd{
 [\mathbf 2, \C] \ar@<-6pt>[r]_{K} \ar@{}[r]|{\bot} &
 \mathbf{Ker}(\C)\rlap{ .} \ar@<-6pt>[l]_{Q}
}
\end{equation*}
Condition (a) says that the whiskered counit $\epsilon Q \colon QKQ \Rightarrow Q$ of the left-hand adjunction is invertible; (b) that the corresponding $\epsilon Q$ for the right-hand adjunction is invertible; and (c) that the unit $\eta K \colon K \Rightarrow KQK$ of either adjunction is invertible.
The equivalence of (a) and (c) is now a standard fact about adjunctions; likewise that of (b) and (c).
%
%
%These are both equivalent to the requirement that the adjunction $K \dashv Q$ be idempotent.
\end{proof}
%These equivalent conditions are always satisfied for the kernel-pair--coequaliser factorisation; the fundamental reason for this is the following result.
%\begin{Prop}
%Let $\C$ be a finitely complete $\V$-category, and for each $X \in \C$, let $\cat{Ker}_X(\C)$ denote the full subcategory category of 
%\end{Prop}
%
%
%
% is well-known; however, we shall see in Proposition~\ref{prop:codescent-not-effective} below that this is not the case for the higher-kernel--codescent factorisation.
%
%

Let us next see how kernel--quotient systems give rise to factorisation systems. Let $\C$ be a $\V$-category admitting the kernel--quotient adjunction for $\F$. Observe that since $I \colon \K \to \F$ and $J \colon \mathbf 2 \to \F$ are injective on objects and fully faithful, the functors $\Lan_I$ and $\Ran_J$, insofar as they are defined, may be taken to be strict sections of $I^\ast$ and $J^\ast$ respectively; whence the kernel--quotient adjunction $Q \dashv K \colon [\mathbf 2, \C] \to [\K, \C]'$ may be taken so as to commute strictly with the functors $[\mathbf 2, \C] \to \C$ and $[\K, \C]' \to \C$ given by evaluation at the object~$1$. Consequently, the counit of this adjunction at $f \in [\mathbf 2, \C]$ may be taken to be of the form
\begin{equation}\label{eq:factorisation}
\cd[@-0.3em]{
A \ar[r]^{1_A} \ar[d]_{QKf} & A \ar[d]^f \\
\bullet \ar[r]_{\epsilon_f} & B\rlap{ .}
}
\end{equation}
We thus have a factorisation $f = \epsilon_f \circ QKf$ of each map of $\C$. The first factor $QKf$ is always an $\F$-quotient map; if the second factor $\epsilon_f$ is always an $\F$-monic, we shall say that \emph{$\F$-kernel--quotient factorisations in $\C$ converge immediately}. 

\begin{Prop}\label{prop:fkera}
If $\F$-kernel--quotient factorisations converge immediately in $\C$, then it admits an ($\F$-quotient, $\F$-monic) factorisation system, and the classes of $\F$-strong epis, $\F$-quotients and effective $\F$-quotients coincide.
\end{Prop}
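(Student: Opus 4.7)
The plan is to exploit the factorisation $f = \epsilon_f \circ QKf$ of~\eqref{eq:factorisation}: by construction its first factor is an $\F$-quotient, and by our hypothesis of immediate convergence the second is an $\F$-monic. Combined with the orthogonality of every $\F$-quotient to every $\F$-monic supplied by Proposition~\ref{prop:f-map-props}(c), this at once produces an $(\F$-quotient, $\F$-monic$)$ orthogonal factorisation system on $\C$: closure under composition of $\F$-monics is Proposition~\ref{prop:f-map-props}(e); closure of both classes under isomorphism is immediate from their definitions (the essential image of $Q$ is closed under iso on one side, and $K$ of an iso is an iso on the other); and closure of $\F$-quotients under composition follows as usual from orthogonality together with the existence of factorisations.

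For the coincidence of $\F$-strong epis with $\F$-quotients, one inclusion is Proposition~\ref{prop:f-map-props}(b). Conversely, suppose $f \colon A \to B$ is $\F$-strong epi, and factor $f = m \circ e$ with $e$ an $\F$-quotient and $m$ an $\F$-monic; since both $f$ and $e$ are $\F$-strong epi, the cancellation clause of Proposition~\ref{prop:f-map-props}(d) forces $m$ to be $\F$-strong epi as well. Being simultaneously $\F$-strong epi and $\F$-monic, $m$ is orthogonal to itself, and applying $m \mathbin \bot m$ to the commutative square with identity top and bottom exhibits $m$ as invertible. Thus $f \cong e$ is itself an $\F$-quotient.

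Effective $\F$-quotients are $\F$-quotients by definition. For the converse, if $f$ is an $\F$-quotient, then the counit map $QKf \to f$ in $[\mathbf 2, \C]$ is $(1_A, \epsilon_f)$ by~\eqref{eq:factorisation}, so its invertibility is equivalent to invertibility of $\epsilon_f$; and this is obtained by the self-orthogonality argument of the previous paragraph, applied now with $\epsilon_f$ (which is $\F$-monic by hypothesis, and $\F$-strong epi by cancellation from $f = \epsilon_f \circ QKf$) in place of $m$. The proof is largely bookkeeping: the conceptual content has all been packed into Proposition~\ref{prop:f-map-props}, and the main delicate point is simply to apply the cancellation clause of~(d) in the right direction to get $\epsilon_f$ into both the $\F$-strong epi and $\F$-monic classes simultaneously.
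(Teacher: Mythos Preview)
Your proof is correct and follows essentially the same approach as the paper's. The paper's version is more compressed: it simply observes that the existence of ($\F$-quotient, $\F$-monic) factorisations together with the orthogonality of these two classes forces them to constitute a factorisation system, and then uses essential uniqueness of factorisations to compare $f = 1 \circ f$ with $f = \epsilon_f \circ QKf$ when $f$ is $\F$-strong epi, concluding $\epsilon_f$ invertible in one step. You instead unwind these standard facts explicitly via the cancellation clause of Proposition~\ref{prop:f-map-props}(d) and self-orthogonality, which is fine; note though that your second paragraph already proves effectivity (your $m$ there \emph{is} $\epsilon_f$), so the third paragraph is redoing the same argument.
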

\begin{proof}
If kernel--quotient factorisation converge immediately in $\C$, then every map admits an ($\F$-quotient, $\F$-monic) factorisation; since these two classes of maps are orthogonal, they must therefore comprise the two classes of a factorisation system. It remains to show that every $\F$-strong epi $f$ is an effective $\F$-quotient. Now both $f = 1 \circ f$ and $f = \epsilon_f \circ QKf$ are ($\F$-strong epi, $\F$-monic) factorisations of $f$, whence by the essential-uniqueness of such factorisations, $\epsilon_f$ is invertible, which is to say that $f$ is an effective $\F$-quotient as required.
\end{proof}
\begin{Rk}\label{rk:not-immediate}
It is of course possible for $\F$-kernel--quotient factorisations to exist in some $\C$ without converging immediately. For example, take $\V = \cat{Set}$, $\F$ to be the kernel-pair--coequaliser system, and $\C = \cat{Cat}$, and consider the functor
\[
\cd{ a \ar[r] \ar[d] & b & b' \ar[d] \\
c & c' \ar[r] & d} \qquad \longrightarrow \qquad
\cd{ a \ar[r] \ar[d] & b \ar[d] \\ c \ar[r] & d}
\]
into the generic commuting square $\mathbf 2 \times \mathbf 2$ which identifies $b$ with $b'$ and $c$ with $c'$. The kernel-pair--coequaliser factorisation of this functor maps through the generic \emph{non-commuting} square; the second half of this factorisation is clearly not monic and so kernel-pair--coequaliser factorisations do not converge immediately in $\cat{Cat}$. What \emph{is} true for this $\F$ is that kernel-pair--quotient factorisations converge immediately in any regular category; a suitable generalisation of this fact will be given in Proposition~\ref{prop:ker-quot-immediate-fregular} below.
\end{Rk}

We now describe how a kernel--quotient system $\F$ gives rise to an associated notion of $\F$-congruence in each finitely complete $\V$-category $\C$. 
First, let us define an $\F$-\emph{congruence axiom} to be a map $h \colon \phi \to \psi$ between finitely presentable objects in $[\K, \V]$ 
such that $h \mathbin \bot X$ for every $\F$-kernel $X \in [\K, \V]$. Now an \emph{$\F$-congruence} in $\V$ is an object $X \in [\K, \V]$ such that $h \mathbin \bot X$ for every  every $\F$-congruence axiom $h$; more generally, 
an \emph{$\F$-congruence} in a finitely complete category $\C$ is an object $X \in [\K, \C]$ such that, for each $\F$-congruence axiom $h$, the morphism $\{h, X\} \colon \{\psi, X\} \to \{\phi, X\}$ between weighted limits is invertible. By the Yoneda lemma and the representability of limits, $X$ is an $\F$-congruence in $\C$ if and only if $\C(A, X)$ is one in $\V$ for each $A \in \C$. 
%We write $\cat{Cong}(\C) \subseteq [\K, \C]$ for the full subcategory spanned by the congruences.
%\begin{Rk}
%In~\cite{Street1982Exact,Betti1995Factorizations}, the authors adopt a different definition of congruence: they take $X \in [\K, \C]$ to be an $\F$-congruence just when each $\C(A,X)$ is an $\F$-kernel in $\V$. Comparing with our definition, we see that the gap between the two is closed if every $\F$-congruence in $\V$ happens to be an $\F$-kernel; which will be so in all our examples, but is not \emph{a priori} so. Let us see why. Assuming that $\F$-quotients are effective in $\V$, we see that the $\F$-kernels form a subcategory of $[\K, \V]$ which is reflective and closed under filtered colimits. The set $\Sigma$ of congruence axioms is precisely the set of maps between finitely presentable objects that are inverted by the reflector, and so the question boils down to: is every reflective, filtered-colimit-closed, subcategory of a presheaf category determined by the class of maps between finitely presentable objects it inverts? The (perhaps surprising) answer is no; see~\cite{}. Thus the two definitions need not coincide in general; but for want of an example where the divergence is verified, we have chosen to 
%\end{Rk}

\begin{Prop}\label{prop:f-ker-is-f-cong}
Every $\F$-kernel is an $\F$-congruence.
\end{Prop}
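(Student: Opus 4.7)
The plan is to reduce the statement to the case $\C = \V$ via representables, and then observe that in $\V$ the result is essentially a tautology that follows from the very definition of an $\F$-congruence axiom.

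First I would invoke the reduction remarked in the text immediately after the definition: $X \in [\K, \C]$ is an $\F$-congruence in $\C$ if and only if $\C(A, X)$ is an $\F$-congruence in $\V$ for every $A \in \C$. This follows from the defining universal property $\C(A, \{h, X\}) \cong \{h, \C(A, X)\}$ of the weighted limit, together with the fact that for $\C = \V$ the weighted limit $\{h, X\}$ is simply the hom $[\K, \V](h, X)$, whose invertibility is precisely the orthogonality condition $h \mathbin \bot X$.

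Next I would verify that representables transport $\F$-kernels in $\C$ to $\F$-kernels in $\V$. Since $K = I^{\ast} \circ \Ran_J$, each value $(Kf)(k)$ is a weighted limit $\{\F(Ik, J\thg), f\}$ in $\C$; the representable $\C(A, \thg)$ preserves this, yielding a canonical isomorphism $\C(A, Kf) \cong K(\C(A, f))$ of objects of $[\K, \V]$, so that the representable image of any $\F$-kernel is an $\F$-kernel.

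Combining these two observations, it suffices to show that every $\F$-kernel $X \in [\K, \V]$ is an $\F$-congruence in $\V$. But this is immediate, for an $\F$-congruence axiom $h$ was \emph{defined} precisely as a map between finitely presentable objects of $[\K, \V]$ satisfying $h \mathbin \bot X'$ for every $\F$-kernel $X'$; taking $X' = X$ gives exactly the orthogonality needed for $X$ to be an $\F$-congruence. The only care required is in matching the two formulations of $\F$-congruence (via orthogonality in $\V$, and via weighted limits in general $\C$); no substantive obstacle arises.
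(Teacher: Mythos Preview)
Your proposal is correct and follows essentially the same approach as the paper's proof: reduce to $\V$ via the representable characterisation of $\F$-congruences, use that representables preserve $\F$-kernels (since these are finite limits), and observe that in $\V$ the result is tautological from the definition of $\F$-congruence axiom. The paper's proof is simply a terser version of what you wrote.
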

\begin{proof}
The result is clearly true in $\V$, whilst if $X$ is the $\F$-kernel of $f$ in $\C$, then for each $A \in \C$ we have $\C(A, X)$ the $\F$-kernel of $\C(A,f)$ in $\V$ and so an $\F$-congruence.
\end{proof}

In practice, it will be convenient to describe $\F$-congruences in terms of a generating set $\Ss$ of $\F$-congruence axioms; here, we call a set $\Ss$ \emph{generating} if $\V$-orthogonality of $X \in [\K, \V]$ to all congruence axioms in $\Ss$ implies $\V$-orthogonality of $X$ to \emph{every} congruence axiom. Observe that as there are only a small set of isomorphism-classes of arrows $\phi \to \psi$ between finitely presentable objects in $[\K, \V]$, every kernel--quotient system admits a \emph{small} generating set; we use this fact in the proof of Proposition~\ref{prop:ker-quot-lex-weights} below.

\begin{Prop}\label{prop:f-congruence-generating}
If $\Ss$ is a generating set of $\F$-congruence axioms, then an object $X \in [\K, \C]$ is an $\F$-congruence if and only if $\{h, X\}$ is invertible for every $h \in \Ss$. 
\end{Prop}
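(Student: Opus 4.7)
The plan is to reduce the proposition to the case $\C = \V$ by applying representables, and then to invoke the definition of a generating set directly. The reduction uses only that each hom-functor $\C(A, \thg) \colon \C \to \V$ preserves weighted limits; this same idea is invoked in the sentence preceding the proposition (``$X$ is an $\F$-congruence in $\C$ if and only if $\C(A,X)$ is one in $\V$''), so it will handle the generating-set version as well.

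Concretely, for a congruence axiom $h \colon \phi \to \psi$ in $[\K, \V]$ and any $A \in \C$, the isomorphisms $\C(A, \{\phi, X\}) \cong [\K, \V](\phi, \C(A, X))$ and $\C(A, \{\psi, X\}) \cong [\K, \V](\psi, \C(A, X))$ identify $\C(A, \{h, X\})$ with $[\K, \V](h, \C(A, X))$. Thus $\{h, X\}$ is invertible in $\C$ if and only if $[\K, \V](h, \C(A, X))$ is invertible for every $A \in \C$, which is precisely to say that $h \mathbin \bot \C(A, X)$ in $[\K, \V]$ for every $A$.

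Applying this equivalence uniformly in $h$, the object $X \in [\K, \C]$ is an $\F$-congruence if and only if $\C(A, X)$ is orthogonal to every $\F$-congruence axiom for each $A$; equivalently, if and only if $\C(A, X)$ is an $\F$-congruence in $\V$ for each $A$. In the same way, $\{h, X\}$ is invertible for every $h \in \Ss$ if and only if $\C(A, X)$ is orthogonal to every $h \in \Ss$, for each $A$. The defining property of a generating set identifies these two conditions on $\C(A, X)$, so the two conditions on $X$ coincide, as required.

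There is no real obstacle here: the content of the result lies entirely in the reduction to $\V$, which is formal via Yoneda and the representability of weighted limits, combined with the definition of ``generating''.
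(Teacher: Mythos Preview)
Your proof is correct and follows essentially the same approach as the paper: reduce to $\V$ via the representables $\C(A,\thg)$, use that $\{h,X\}$ is invertible precisely when each $\C(A,X)$ is orthogonal to $h$, and then invoke the definition of a generating set. The paper's proof is simply a more compressed version of the same chain of equivalences.
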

\begin{proof}
$X \in [\K, \C]$ is an $\F$-congruence if and only if each $\C(A, X)$ is an $\F$-congruence in $\V$; if and only if each $\C(A, X)$ is orthogonal to all maps in $\Ss$; if and only if $\{h, X\}$ is invertible for all $h \in \Ss$.
\end{proof}

\begin{Prop}\label{prop:f-effective-cong}
If $\Ss$ is a set of $\F$-congruence axioms such that every $X \in [\K, \V]$ orthogonal to $\Ss$ is effective, then $\Ss$ is a generating set and every $\F$-congruence in $\V$ is effective. 
\end{Prop}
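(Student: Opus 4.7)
The plan is to exploit the chain of implications: orthogonality to $\Ss$ implies effectivity (by hypothesis), effectivity implies being isomorphic to an $\F$-kernel, every $\F$-kernel is an $\F$-congruence (Proposition~\ref{prop:f-ker-is-f-cong}), and $\F$-congruences are orthogonal to every $\F$-congruence axiom (by definition). This immediately collapses both claims into one short argument.

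First I would verify that $\Ss$ is generating. Let $X \in [\K, \V]$ be orthogonal to every map in $\Ss$. By hypothesis, $X$ is then effective, meaning the unit $X \to KQX$ is invertible, so $X$ is (up to isomorphism) the $\F$-kernel $KQX$. By Proposition~\ref{prop:f-ker-is-f-cong}, $KQX$ is an $\F$-congruence; hence $X$ is orthogonal to every $\F$-congruence axiom, not just those in $\Ss$. This is exactly the definition of $\Ss$ being a generating set.

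Next I would deduce effectivity of arbitrary $\F$-congruences. If $X \in [\K, \V]$ is an $\F$-congruence, then by definition $X$ is orthogonal to every $\F$-congruence axiom, in particular to every element of $\Ss \subseteq \text{(axioms)}$; so by hypothesis $X$ is effective, completing the proof.

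The only potential subtlety is ensuring $QX$ exists for every $X \in [\K, \V]$, so that the notion of effectivity is defined; but $\V$ is cocomplete and $\F$ is small, so $\Lan_I X$ exists for all $X \in [\K, \V]$ and there is no issue. I expect no substantive obstacle—the whole argument is a two-line unwinding of definitions combined with Proposition~\ref{prop:f-ker-is-f-cong}.
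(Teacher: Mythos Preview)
Your proof is correct and follows essentially the same route as the paper's: orthogonality to $\Ss$ gives effectivity, hence $X$ is an $\F$-kernel, hence orthogonal to every congruence axiom (the paper invokes this directly from the definition of congruence axiom rather than via Proposition~\ref{prop:f-ker-is-f-cong}, but that proposition is just a restatement of the same fact). The second claim then follows immediately, as you note.
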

\begin{proof}
If every $X \in [\K, \V]$ orthogonal to $\Ss$ is effective, then every such $X$ is an $\F$-kernel, and hence by definition orthogonal to every $\F$-congruence axiom. Thus $\Ss$ is a generating set, and our hypothesis says that every $\F$-congruence in $\V$ is effective.
\end{proof}

\begin{Ex}
Consider the motivating kernel-pair--coequaliser system. The category of kernel-data in $\cat{Set}$ is the category of directed graphs $\cat{Set}^{\mathbb P}$; in which we have congruence axioms
\begin{equation*}
\cd{
\bullet \ar@<3pt>[r] \ar@<-3pt>[r]_{}="a" & \bullet  \\
\bullet \ar[r]^{}="b" & \bullet
\ar@{-->} "a";"b"
}
\qquad \cd{
\bullet \ar@{-->}[d] \\
\bullet \ar@(ur,dr)[]
} \qquad \quad
\cd{
\bullet \ar[r]_{}="a" & \bullet  \\
\bullet \ar@<-3pt>[r] \ar@{<-}@<3pt>[r]^{}="b" & \bullet
\ar@{-->} "a";"b"
}\qquad \quad
\cd[@C-1em]{
\bullet \ar[r] & \bullet \ar@{-->}[d] \ar[r] & \bullet \\
\bullet \ar@/_9pt/[rr] \ar[r] & \bullet \ar[r] & \bullet
}
\end{equation*}
\vskip0.5\baselineskip \noindent
expressing that every kernel-pair $A \times_B A \rightrightarrows A$ is a binary relation which is reflexive, symmetric and transitive; i.e., an equivalence relation. Since
equivalence relations in $\cat{Set}$ are effective, it follows from the preceding two propositions that the $\F$-congruences in any finitely complete $\C$ are the equivalence relations.
\end{Ex}

Recall that a finite-limit preserving functor between regular (respectively, Barr-exact) categories preserves coequalisers of kernel-pairs (respectively, equivalence relations) if and only if it preserves regular epimorphisms. Our final result in this section generalises this result to an arbitrary kernel--quotient factorisation system.

\begin{Prop}\label{prop:preserve-quotients-preserve-colims}
Let $\C$ and $\D$ be finitely complete categories and $F \colon \C \to \D$ a finite-limit-preserving functor.
\begin{enumerate}[(a)]
\item If $\F$-kernels in $\C$ and $\D$ admit $\F$-quotients and are effective, then $F$ preserves $\F$-quotients of $\F$-kernels if and only if it preserves $\F$-quotient morphisms.
\vskip0.5\baselineskip
\item If $\F$-congruences in $\C$ and $\D$ admit $\F$-quotients and are effective, then $F$ preserves $\F$-quotients of $\F$-congruences if and only if it preserves $\F$-quotient morphisms.
\end{enumerate}
\end{Prop}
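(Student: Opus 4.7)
The plan rests on two observations. First, since $F$ preserves finite limits and the functor $K \colon [\mathbf 2, \C] \to [\K, \C]$ is defined pointwise by finite limits (right Kan extension along $J$), there is a canonical isomorphism $KF \cong FK$. Second, by Proposition~\ref{prop:effectivity-ker-quot}, the hypothesis that $\F$-kernels (resp.~$\F$-congruences) in $\C$ and $\D$ are effective is equivalent to the counit $QKf \to f$ being invertible for every $\F$-quotient morphism $f$ in either category.

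For part~(a), in the direction ``$F$ preserves $\F$-quotients of $\F$-kernels implies $F$ preserves $\F$-quotient morphisms'', I would take an $\F$-quotient morphism $f$ in $\C$ and use effectivity to write $f \cong QKf$. Then
\[
Ff \,\cong\, F(QKf) \,\cong\, Q(FKf) \,\cong\, Q(K Ff)\rlap{ ,}
\]
which exhibits $Ff$ as the $\F$-quotient of the $\F$-kernel $K(Ff)$ in $\D$, hence as an $\F$-quotient morphism. Conversely, assuming $F$ preserves $\F$-quotient morphisms and given an $\F$-kernel $X$ in $\C$, the morphism $QX$ is an $\F$-quotient morphism, so $F(QX)$ is one too; effectivity in $\D$ followed by $X \cong KQX$ in $\C$ then yields
\[
F(QX) \,\cong\, QK(F(QX)) \,\cong\, QF(KQX) \,\cong\, Q(FX)\rlap{ ,}
\]
so $F$ preserves the $\F$-quotient of $X$.

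Part~(b) proceeds by the same argument, with ``$\F$-kernel'' replaced by ``$\F$-congruence'' throughout. The only additional ingredient needed for the first direction is that $Kf$ is not merely an $\F$-kernel but also an $\F$-congruence, which is Proposition~\ref{prop:f-ker-is-f-cong}; this ensures that the hypothesis of preservation of $\F$-quotients of $\F$-congruences applies to the map $QKf$. Otherwise the chains of isomorphisms are identical, and the terminal morphism $Q(KFf)$ remains an $\F$-quotient morphism since an $\F$-kernel is in particular an $\F$-congruence.

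I do not expect a serious obstacle: once the identification $FK \cong KF$ and the counit-form of effectivity are in place, both directions reduce to a few applications of the kernel--quotient adjunction and Proposition~\ref{prop:effectivity-ker-quot}. The underlying conceptual point is that under the stated effectivity hypotheses, the $\F$-quotient morphisms coincide up to isomorphism with the $\F$-quotients of $\F$-kernels (resp.~of $\F$-congruences), so preservation of either class by $F$ is tantamount to preservation of the other.
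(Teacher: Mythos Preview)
Your proof is correct and follows essentially the same approach as the paper: both directions of (a) rest on the identification $FK \cong KF$ together with effectivity in $\C$ and $\D$, and your chains of isomorphisms match the paper's almost verbatim. The only minor difference is in part (b): the paper observes that under the hypothesis every $\F$-congruence, being effective, is actually an $\F$-kernel (the $\F$-kernel of its own $\F$-quotient), so that the classes of $\F$-kernels and $\F$-congruences coincide and (b) reduces immediately to (a); you instead re-run the argument with ``congruence'' in place of ``kernel'', which is equally valid but slightly less economical.
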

%The limit- and colimit-preservation conditions in the statement of this result amount to the requirement that the canonical $2$-cells in the respective squares
%\begin{equation*}
%\cd{
%[\mathbf 2, \C] \ar[r]^{\Ran_J} \ar[d]_{[\mathbf 2, F]} \dtwocell{dr}{}  &
%[\F, \C] \ar[d]^{[\F, F]} \\
%[\mathbf 2, \D] \ar[r]_{\Ran_J} &
%[\F, \D]
%} \qquad \qquad
%\cd{
%[\K, \C]' \ar[r]^{\Lan_I} \ar[d]_{[\mathbf 2, F]} \utwocell{dr}{} &
%[\F, \C] \ar[d]^{[\F, F]} \\
%[\K, \D]' \ar[r]_{\Lan_I} &
%[\F, \D]
%}
%\end{equation*}
%be invertible.
\begin{proof}
We first prove (a). Suppose first that $F$ preserves $\F$-quotients of $\F$-kernels. Given an $\F$-quotient morphism $f$, we have by effectivity that it is the $\F$-quotient of its own $\F$-kernel $Kf$; whence $Ff$ is the $\F$-quotient of $F(Kf)$, and in particular, an $\F$-quotient morphism.
Conversely, suppose that $F$ preserves $\F$-quotient morphisms. Given a morphism $f \in \C$, we form its kernel $Kf$ and the cocone exhibiting $QKf$ as the $\F$-quotient of $Kf$; we must show that the image under $F$ of this cocone exhibits $F(QKf)$ as the $\F$-quotient of $F(Kf)$. Now $QKf$ is an $\F$-quotient morphism; hence so too is $F(QKf)$, and so by effectivity in $\D$, must be the $\F$-quotient of its own $\F$-kernel $K(F(QKf))$. But $K(F(QKf)) \cong F(K(QKf)) \cong F(Kf)$, since $F$ preserves $\F$-kernels and all kernels in $\C$ are effective, and so $F(QKf)$ is the $\F$-quotient of $F(Kf)$ as required.

As for (b), if $\F$-congruences are effective in $\C$ and $\D$, then every $\F$-congruence is in fact an $\F$-kernel (the $\F$-kernel of its own $\F$-quotient).
Since $\F$-kernels are $\F$-congruences by Proposition~\ref{prop:f-ker-is-f-cong}, the result now follows from (a).
\end{proof}

\section{Revision of lex colimits}\label{sec:phi-lex-cocompleteness}
Our objective now is to define analogues of regularity and (Barr-)exactness with respect to a given kernel--quotient system $\F$. A finitely complete $\V$-category will be \emph{$\F$-regular} if it admits $\F$-quotients of $\F$-kernels and these behave well with respect to finite limits, in the sense of interacting with them in the same way as in the base $\V$-category $\V$. Similarly, a finitely complete $\V$-category will be called \emph{$\F$-exact} if it admits $\F$-quotients of $\F$-congruences which behave well with respect to finite limits.
As explained in the introduction, we shall give precise form to the good behaviour that is expected to hold using the theory of \emph{lex colimits} developed in~\cite{Garner2011Lex-colimits}. In this section we revise the necessary results from that theory.

By a \emph{class of weights for lex colimits}, or more briefly, a \emph{class of lex-weights}~\cite[Section~3]{Garner2011Lex-colimits}, we mean a collection $\Phi$ of $\V$-functors $\{\phi \colon \I^\op \to \V\}$ where the domain of each $\phi \in \Phi$ is a small and finitely complete $\V$-category (note that the functors $\phi$ are not expected to preserve finite limits). A finitely complete $\V$-category $\C$ is said to be \emph{$\Phi$-lex-cocomplete} when for every $\phi \colon \I^\op \to \V$ in $\Phi$ and every finite-limit-preserving $\V$-functor $D \colon \I \to \C$, the colimit $\phi \star D$ exists in $\C$. In the following section, we shall exhibit, for any kernel--quotient system $\F$, classes of lex-weights $\Phi_{\F}^{\mathrm{reg}}$ and $\Phi_\F^\mathrm{ex}$ such that a category is $\Phi_{\F}^{\mathrm{reg}}$- or $\Phi_\F^\mathrm{ex}$-lex-cocomplete just when  it admits $\F$-quotients of $\F$-kernels, respectively $\F$-congruences.

We now arrive at the crucial notion of \emph{$\Phi$-exact} category with respect to a class of lex-weights $\Phi$; this is a $\V$-category with finite limits and $\Phi$-lex-colimits in which the $\Phi$-lex-colimits are ``well behaved'' in the above sense with respect to finite limits. Applying this to the classes of lex-weights $\Phi_\F^\mathrm{reg}$ and $\Phi_\F^\mathrm{ex}$ associated to a kernel--quotient system $\F$ will yield our notions of $\F$-regular and $\F$-exact $\V$-category.

As explained in~\cite{Garner2011Lex-colimits}, the key to describing the nature of $\Phi$-exactness is the construction of the \emph{$\Phi$-exact completion} of a finitely complete $\V$-category. Recall first that any $\V$-category $\C$ has a \emph{free cocompletion} $\P \C$, whose objects are those $F \colon \C^\op \to \V$ which are small colimits of representables, and whose hom-objects are given by the usual end formula; see~\cite{Kelly1982Basic}, for example. $\P \C$ is always cocomplete, and the results of~\cite{Day2007Limits} show that it is moreover finitely complete whenever $\C$ is so. For a finitely complete $\C$, we now construct its $\Phi$-exact completion $\Phi(\C)$ as the smallest full, replete, sub-$\V$-category of $\P \C$ which contains the representables, contains the limit of any finitely-weighted diagram whenever it contains the diagram, and for any $\phi \in \Phi$, contains the colimit $\phi \star D$ of a finite-limit-preserving $D \colon \I \to [\C^\op, \V]$ whenever it contains each $DI$. 

The assignation $\C \mapsto \Phi(\C)$ is the action on objects of a pseudomonad $\Phi$ on the $2$-category $\V\text-\cat{LEX}$ of finitely complete $\V$-categories,  finite-limit preserving $\V$-functors, and $\V$-natural transformations; and we call a finitely complete $\V$-category \emph{$\Phi$-exact}~\cite[Proposition~3.4]{Garner2011Lex-colimits} when it admits pseudoalgebra structure for this pseudomonad. The pseudomonad $\Phi$ is of the kind which is sometimes called \emph{Kock-Z\"oberlein}~\cite{Kock1995Monads}, so that for the finitely complete $\C$ to admit pseudoalgebra structure is equally well for the restricted Yoneda embedding $\C \to \Phi(\C)$ to admit a finite-limit-preserving left adjoint. This implies, in particular, that a $\C$ admitting pseudoalgebra structure is $\Phi$-lex-cocomplete; however, the requirement that the left adjoint preserve finite limits forces the additional  compatibilities between finite limits and $\Phi$-lex-colimits which constitute the nature of $\Phi$-exactness.

In order to work efficiently with notions of $\Phi$-exactness, we shall make heavy use of \emph{embedding theorems}. Let us agree to call a $\V$-category $\C$ a \emph{$\V$-topos} if it is reflective in some $[\B^\op, \V]$ (with $\B$ small) via a finite-limit-preserving reflector. 
%The desired $\Phi$-exactness then follows from the preceding result and the following one.
$\V$-toposes provide us with a basic source of $\Phi$-exact categories.
\begin{Prop}\label{prop:v-topos-phi-exact}
A $\V$-topos is $\Phi$-exact for any class of lex-weights $\Phi$.
\end{Prop}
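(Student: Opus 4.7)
The strategy is to apply the Kock-Z\"oberlein characterization of $\Phi$-exactness recalled just above the statement: since $\Phi$ is a KZ pseudomonad, a finitely complete $\C$ is $\Phi$-exact if and only if the restricted Yoneda embedding $Y \colon \C \to \Phi(\C)$ admits a finite-limit-preserving left adjoint. My task reduces to constructing such a left adjoint whenever $\C$ is a $\V$-topos.

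The first step is to reduce further: it suffices to produce a finite-limit-preserving left adjoint to the \emph{full} Yoneda $\widetilde Y \colon \C \to \P\C$. Indeed, by the very definition of $\Phi(\C)$ as a subcategory of $\P\C$ closed under finite limits, the inclusion $\iota \colon \Phi(\C) \hookrightarrow \P\C$ is fully faithful and preserves finite limits, and $\widetilde Y = \iota \circ Y$. Thus, if $L \colon \P\C \to \C$ is a lex left adjoint to $\widetilde Y$, then $L \circ \iota$ is a lex left adjoint to $Y$: preservation of finite limits is immediate from that of $L$ and $\iota$, while the adjunction follows from the Yoneda-style calculation $\C(L\iota X, C) \cong \P\C(\iota X, \widetilde Y C) \cong \Phi(\C)(X, Y C)$, where the last isomorphism uses fullness of $\iota$.

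Since any $\V$-topos is cocomplete (being reflective in a cocomplete presheaf category), the left adjoint $L$ exists automatically and sends $F \in \P\C$ to the weighted colimit $F \star 1_\C$. The main and essentially only substantive obstacle is to show that this $L$ preserves finite limits---this is where the $\V$-topos hypothesis is used in an essential way.

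I would dispatch this obstacle in two stages. First, treat the archetypal case $\C = [\B^\op, \V]$ of a presheaf $\V$-category: here a direct argument shows that $L$ is lex, by using that small presheaves on $[\B^\op,\V]$ are determined, via the dense inclusion $\B \hookrightarrow [\B^\op,\V]$, by their restrictions to $\B$; this reduces the claim to lex-ness of the canonical colimit-of-representables functor $\P\B \to [\B^\op, \V]$, which holds by Yoneda. Second, for a general $\V$-topos $\C$ presented as a lex-reflective subcategory $r \dashv j \colon \C \rightleftarrows [\B^\op, \V]$, transport the lex left adjoint across the reflection using the pseudo-functoriality of $\P$ on $\V\text-\cat{LEX}$ together with the fact that both $r$ and $j$ preserve finite limits; the composite thus obtained furnishes the required lex left adjoint $L$ for $\C$, completing the proof.
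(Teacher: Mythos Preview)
Your overall strategy is sound and the paper itself simply cites~\cite[Proposition~2.6]{Garner2011Lex-colimits} without giving any argument, so you are in fact supplying more than the paper does. Steps~1--3 and the presheaf case in Step~4 are correct: for $\C=[\B^\op,\V]$ the left adjoint $L$ really is restriction along the dense Yoneda $\B\hookrightarrow\C$, hence preserves all limits.

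The final paragraph, however, is too compressed and rests on an assertion that needs justification. You invoke ``pseudo-functoriality of $\P$ on $\V\text-\cat{LEX}$'' as though it were known that $\P$ sends lex functors to lex functors; but $\P(F)=\Lan_{F^\op}$ is a left Kan extension, and left Kan extensions do not preserve finite limits for free. The statement is in fact true---for $F$ lex between finitely complete $\V$-categories each $\D(d,F{-})$ is lex, hence flat, so the coend computing $\Lan_{F^\op}$ commutes with finite limits---but this is a substantive claim that you should either prove or cite. Alternatively, and more simply in your situation, you can avoid the general claim entirely: since $j$ has a left adjoint $r$, one computes directly that $\Lan_{j^\op}(F)(d)\cong\int^{c}\D(d,jc)\otimes F(c)\cong\int^{c}\C(rd,c)\otimes F(c)\cong F(rd)$, so $\P(j)$ is precomposition with $r^\op$ and therefore preserves all limits. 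With this in hand, the composite $r\circ L_{[\B^\op,\V]}\circ\P(j)$ is visibly lex and agrees with $L_\C$ on representables (both send $Y_c$ to $c$), hence everywhere by cocontinuity. You should name this composite explicitly rather than gesturing at ``transporting across the reflection''.
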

\begin{proof}
See~\cite[Proposition 2.6]{Garner2011Lex-colimits}.
\end{proof}
The embedding theorem we shall make use of characterises general $\Phi$-exact categories in terms of their relation to $\V$-toposes. \begin{Thm}\label{thm:phi-exact-embedding}
%Let $\C$ and $\E$ be finitely complete and $\Phi$-lex-cocomplete and let $J \colon \C \to \E$ be fully faithful and preserve those limits and colimits. If $\E$ is $\Phi$-exact then so is $\C$.
For $\C$  
a small, finitely complete and $\Phi$-lex-cocomplete $\V$-category, the following are equivalent:
\begin{enumerate}[(a)]
\item $\C$ is $\Phi$-exact;
\item $\C$ admits a  finite-limit- and $\Phi$-lex-colimit-preserving full embedding into a $\Phi$-exact category;
\item $\C$ admits a  finite-limit- and $\Phi$-lex-colimit-preserving full embedding into a $\V$-topos.
\end{enumerate}
\end{Thm}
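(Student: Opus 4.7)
The plan is to cycle the implications (a) $\Rightarrow$ (b) $\Rightarrow$ (a) and (a) $\Rightarrow$ (c) $\Rightarrow$ (b). Two implications are immediate: (a) $\Rightarrow$ (b) is witnessed by the identity embedding $\C \hookrightarrow \C$, and (c) $\Rightarrow$ (b) follows from Proposition~\ref{prop:v-topos-phi-exact}, since every $\V$-topos is $\Phi$-exact.

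For (a) $\Rightarrow$ (c), I would use the composite $\C \xrightarrow{Y_\C} \Phi(\C) \hookrightarrow [\C^\op, \V]$. The target is trivially a $\V$-topos (reflective in itself via the identity), and the inclusion $\Phi(\C) \hookrightarrow [\C^\op, \V]$ preserves finite limits and $\Phi$-lex-colimits by the very definition of $\Phi(\C)$ as the closure under these operations. The restricted Yoneda $Y_\C$ is fully faithful and preserves finite limits as the right adjoint to the algebra structure $L_\C \colon \Phi(\C) \to \C$ provided by $\Phi$-exactness; preservation of $\Phi$-lex-colimits is the standard fact for Kock--Z\"oberlein doctrines that algebras are closed, inside the free algebra, under the operations introduced by the monad.

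The principal direction, and the one I expect to be the main obstacle, is (b) $\Rightarrow$ (a). Given a full embedding $i \colon \C \hookrightarrow \E$ preserving finite limits and $\Phi$-lex-colimits with $\E$ $\Phi$-exact, I would invoke the universal property of $\Phi(\C)$ as the free $\Phi$-exact completion to extend $i$ to a lex, $\Phi$-cocontinuous functor $\tilde i \colon \Phi(\C) \to \E$ with $\tilde i Y_\C \cong i$. Every object of $\Phi(\C)$ arises from representables by iterated finite limits and $\Phi$-lex-colimits, and the essential image of $\C$ in $\E$ is closed under these operations (using that $i$ is full and preserves them); hence $\tilde i \cong iL$ for an essentially unique $L \colon \Phi(\C) \to \C$, satisfying $LY_\C \cong \id_\C$, and with $L$ inheriting lex preservation from $\tilde i$ by the fully-faithfulness of $i$.

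To complete the proof and exhibit $\C$ as $\Phi$-exact, I must verify the adjunction $L \dashv Y_\C$. For this, I would factor $\tilde i = L_\E \circ \Phi(i)$ through the $\Phi$-exact reflection $L_\E \dashv Y_\E$ of $\E$ and the pseudofunctorial action $\Phi(i) \colon \Phi(\C) \to \Phi(\E)$, and compute, for $X \in \Phi(\C)$ and $C \in \C$:
\[
\C(LX, C) \;\cong\; \E(\tilde i X, iC) \;\cong\; \Phi(\E)(\Phi(i)X, Y_\E iC) \;\cong\; \Phi(\C)(X, Y_\C C),
\]
where the first step uses full faithfulness of $i$, the second the adjunction $L_\E \dashv Y_\E$, and the third the naturality isomorphism $Y_\E i \cong \Phi(i) Y_\C$ together with full faithfulness of $\Phi(i)$, which holds since $\Phi(\C) \subset \P\C$ and the corresponding presheaf functor $\P(i)$ is fully faithful.
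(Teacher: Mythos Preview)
The paper does not give an independent proof of this theorem: its ``proof'' is simply a reference to~\cite[Theorem~4.1]{Garner2011Lex-colimits}. Your proposal therefore goes well beyond what the paper provides, and in fact reconstructs the essential line of argument from the Garner--Lack paper.

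Your cycle of implications is correct and the main direction (b) $\Rightarrow$ (a) is handled along the right lines: extending $i$ to $\tilde i \colon \Phi(\C) \to \E$ via the universal property of the $\Phi$-exact completion, factoring through the essential image of $i$ to obtain $L$, and then verifying $L \dashv Y_\C$ by the chain of isomorphisms you describe. Two steps deserve more care, however. First, in (a) $\Rightarrow$ (c), the assertion that $Y_\C \colon \C \to \Phi(\C)$ preserves $\Phi$-lex-colimits is not quite a generic ``standard fact for KZ doctrines''; it uses that $\Phi(\C)$ is itself $\Phi$-exact and that the essential image of $Y_\C$ is closed under $\Phi$-lex-colimits, which requires an inductive argument over the construction of $\Phi(\C)$ (this is established explicitly in~\cite{Garner2011Lex-colimits}). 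Second, the full faithfulness of $\Phi(i)$ that you invoke in the final isomorphism is correct---it follows from that of $\P(i) = \Lan_{i^\op}$ for $i$ fully faithful---but you should check that $\Phi(i)$ really is the restriction of $\P(i)$, which again depends on the specific construction of $\Phi$ as a sub-pseudomonad of $\P$.

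In short: your argument is essentially that of the cited reference, and is sound modulo these two points of justification; the paper itself offers no comparison point beyond the citation.
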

\begin{proof}
See~\cite[Theorem~4.1]{Garner2011Lex-colimits}.
\end{proof}
This embedding theorem will be used to, amongst other things, give elementary characterisations of particular $\Phi$-exactness notions. A prior, these characterisations will only be valid for small $\V$-categories; however, we may extend this validity to large $\V$-categories using the following result.
\begin{Prop}\label{prop:phi-small-reduction}
If $\Phi$ is a small class of lex-weights, then a finitely complete and $\Phi$-lex-cocomplete $\V$-category $\C$ is $\Phi$-exact if and only if each small, full, replete subcategory closed under finite limits and $\Phi$-lex-colimits is $\Phi$-exact.
\end{Prop}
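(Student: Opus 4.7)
The forward direction is immediate from Theorem~\ref{thm:phi-exact-embedding}. If $\C$ is $\Phi$-exact and $\C' \subseteq \C$ is a small, full, replete subcategory closed under finite limits and $\Phi$-lex-colimits, then the inclusion $\C' \hookrightarrow \C$ is a fully faithful finite-limit- and $\Phi$-lex-colimit-preserving embedding of the small $\C'$ into a $\Phi$-exact $\V$-category; so (b)$\Rightarrow$(a) of that theorem gives $\Phi$-exactness of $\C'$.

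For the backward direction, the first step is a closure lemma: any small collection of objects of $\C$ is contained in some small, full, replete subcategory $\C_0 \subseteq \C$ closed under finite limits and $\Phi$-lex-colimits. This is a standard transfinite iteration---at each stage adjoining all finite weighted limits and all $\phi$-weighted colimits, for $\phi \in \Phi$, of diagrams already present---which terminates at a regular cardinal sufficiently larger than the cardinality of the starting data and of $|\Phi|$ and $|\bigsqcup_{\phi \in \Phi} \ob \I_\phi|$. This is where the smallness of $\Phi$ is used essentially.

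The second step is to articulate $\Phi$-exactness of the (possibly large) $\C$ as a conjunction of ``small'' conditions. Since $\Phi$ is Kock-Z\"oberlein, $\C$ is $\Phi$-exact if and only if the restricted Yoneda embedding $y \colon \C \to \Phi(\C)$ admits a finite-limit-preserving left adjoint $L$. Existence of $L$ at an object $Y \in \Phi(\C)$ requires the existence in $\C$ of a specific colimit indexed by a small diagram, while finite-limit-preservation by $L$ amounts to the invertibility of certain canonical comparison maps between small weighted limits and small $\Phi$-lex-colimits. Each such existence-or-invertibility condition involves only a small amount of data in $\C$.

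Combining the two steps, any such condition lives in a small, full, replete $\C_0 \subseteq \C$ closed under finite limits and $\Phi$-lex-colimits, which by hypothesis is $\Phi$-exact; hence the condition holds in $\C_0$, and because the inclusion $\C_0 \hookrightarrow \C$ preserves finite limits and $\Phi$-lex-colimits, it transfers to $\C$. The main obstacle is the reformulation in the second step: one must verify carefully that the pseudoalgebra structure on $\C$---which a priori mentions the large category $\Phi(\C)$---is in fact equivalent to an intrinsically-stated family of small diagrammatic conditions on $\C$. Once this reformulation is in place, the smallness of $\Phi$ does the rest of the work via the closure lemma.
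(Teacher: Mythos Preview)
The paper's own proof is simply a citation: ``See~\cite[Proposition~4.2]{Garner2011Lex-colimits}.'' So there is no in-paper argument to compare against; you have supplied more detail than the paper itself does.

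Your forward direction is clean and correct: the inclusion of a small $\C'$ closed under finite limits and $\Phi$-lex-colimits into the $\Phi$-exact $\C$ is exactly the kind of embedding to which Theorem~\ref{thm:phi-exact-embedding}(b)$\Rightarrow$(a) applies.

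For the backward direction, your two-step strategy (closure lemma plus reduction of $\Phi$-exactness to small diagrammatic conditions) is the right shape, and the closure lemma is standard. But you yourself flag the second step as the ``main obstacle'' and do not carry it out: you assert that pseudoalgebra structure for the Kock--Z\"oberlein monad $\Phi$ is equivalent to a family of small conditions, each involving only set-much data from $\C$, without actually exhibiting such a reformulation. This is precisely the content of the cited result in~\cite{Garner2011Lex-colimits}, and it does require work---one must show, for instance, that the reflection of each $Y \in \Phi(\C)$ and the lex-preservation of the reflector can be detected entirely inside a small subcategory containing the data from which $Y$ was built. Your phrasing ``existence of $L$ at an object $Y$ requires the existence in $\C$ of a specific colimit indexed by a small diagram'' glosses over the fact that one must also check that this colimit, computed in some small $\C_0$, remains the reflection when viewed in $\C$; this uses that $\C_0 \hookrightarrow \C$ preserves the relevant structure, which you note, but the argument that lex-preservation of $L$ transfers is not spelled out.

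In short: correct outline, honestly flagged gap, and the gap is exactly what the external reference fills.
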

\begin{proof}
See~\cite[Proposition~4.2]{Garner2011Lex-colimits}.
\end{proof}
The way we will use this result is as follows. Having found an elementary characterisation of $\Phi$-exactness that is valid for small $\C$, we observe that the nature of the characterisation in question is such that it will hold for a large $\C$ if and only if it does so for every small full, finite-limit- and $\Phi$-lex-colimit-closed subcategory. Applying the previous proposition, we conclude that the elementary characterisation holding for a small $\C$ remains valid for a large one.

\section{$\F$-regularity and $\F$-exactness}\label{sec:freg}
We now apply the theory of lex colimits to the study of regularity and exactness notions with respect to a kernel--quotient system. We begin by proving, as promised in the previous section:
\begin{Prop}\label{prop:ker-quot-lex-weights}
If $\F$ is a kernel--quotient system, then there are classes of lex-weights $\Phi_\F^\mathrm{reg}$ and $\Phi_\F^\mathrm{ex}$ such that a finitely complete category $\C$ is $\Phi_\F^\mathrm{reg}$-, respectively $\Phi_\F^\mathrm{ex}$-lex-cocomplete just when it admits $\F$-quotients of $\F$-kernels, respectively $\F$-congruences; and such that a finite-limit preserving functor between two such categories preserves $\Phi_\F^\mathrm{reg}$-, respectively $\Phi_\F^\mathrm{ex}$-lex-colimits just when it preserves $\F$-quotients of $\F$-kernels, respectively $\F$-congruences.
\end{Prop}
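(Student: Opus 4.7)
The plan is to manufacture, for each kernel--quotient system $\F$, singleton classes $\Phi_\F^\mathrm{reg} = \{\phi_\mathrm{reg}\}$ and $\Phi_\F^\mathrm{ex} = \{\phi_\mathrm{ex}\}$ of lex-weights whose colimits of lex diagrams compute, respectively, the $\F$-quotient of the $\F$-kernel of a morphism and the $\F$-quotient of an $\F$-congruence. The passage from these singletons to the full statement (including preservation) will then be formal.

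For the regular case, take $\I_\mathrm{reg}$ to be the free finite-limit $\V$-completion of $\mathbf 2$, realised as the opposite of the full subcategory of $[\mathbf 2, \V]$ on its finitely presentable objects. This is small and finitely complete, and by the universal property, finite-limit-preserving $\V$-functors $D \colon \I_\mathrm{reg} \to \C$ into a finitely complete $\C$ correspond bijectively to morphisms $f_D$ of $\C$. Inside $\I_\mathrm{reg}$ sits the universal morphism $\iota$; the hypothesis that each $\F(Ik, J0)$ and $\F(Ik, J1)$ is finitely presentable makes each component $(K\iota)(k) = \{\F(Ik, J-), \iota\}$ a genuine finite limit in $\I_\mathrm{reg}$, and these assemble into a $\K$-diagram $K\iota \colon \K \to \I_\mathrm{reg}$. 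Define
\[
\phi_\mathrm{reg} \;=\; \F(I-,1) \star_\K (y_{\I_\mathrm{reg}} \circ K\iota) \;\in\; [\I_\mathrm{reg}^\op, \V],
\]
with $y_{\I_\mathrm{reg}}$ the Yoneda embedding. Fubini for coends together with the Yoneda isomorphism $[\I_\mathrm{reg}^\op, \V](y_{\I_\mathrm{reg}}(-), F) \cong F(-)$ yields, for such a lex $D$ and any $Z \in \C$, natural isomorphisms
\[
\C(\phi_\mathrm{reg} \star D, Z) \;\cong\; [\K^\op, \V]\bigl(\F(I-,1),\, \C(D K\iota -, Z)\bigr) \;\cong\; \C\bigl(\F(I-,1) \star_\K K f_D,\, Z\bigr),
\]
where the last step uses $D K\iota \cong K f_D$ by lex-ness of $D$. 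Hence $\phi_\mathrm{reg} \star D$ exists precisely when $Q K f_D$ does, and the two agree; this gives the desired equivalence between $\Phi_\F^\mathrm{reg}$-lex-cocompleteness and the existence of $\F$-quotients of $\F$-kernels.

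For the exact case, use the remark preceding Proposition~\ref{prop:f-congruence-generating} to pick a small generating set $\Ss$ of $\F$-congruence axioms. Regard $\Ss$ as the equational part of a finite-limit sketch on $\K$; its models in a lex $\C$ are precisely the $\F$-congruences, so that the classifying lex $\V$-category $\I_\mathrm{ex}$ may be obtained from $\mathrm{Lex}(\K)$ by inverting the morphisms $\{h, Y\}$ for each $h \in \Ss$, with $Y$ the universal $\K$-diagram. Equivalently, $\I_\mathrm{ex}$ is the closure of the image of $\K$ under finite limits inside the opposite of the $\V$-topos $\mathbf{Cong} \subset [\K, \V]$ of $\F$-congruences in $\V$. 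Lex $\V$-functors $\I_\mathrm{ex} \to \C$ then biject with $\F$-congruences in $\C$, and defining
\[
\phi_\mathrm{ex} \;=\; \F(I-,1) \star_\K (y_{\I_\mathrm{ex}} \circ Y) \;\in\; [\I_\mathrm{ex}^\op, \V],
\]
the same coend calculation yields $\phi_\mathrm{ex} \star D \cong QD$ for every $\F$-congruence $D \colon \K \to \C$.

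The preservation statement is now immediate: composition with a lex $F \colon \C \to \D$ sends a lex diagram $D$ to the lex diagram $F D$, which classifies $F f_D$ in the regular case and the $\F$-congruence $F \circ D$ in the exact case. Thus $F$-preservation of $\phi_\mathrm{reg}$- (respectively $\phi_\mathrm{ex}$-) colimits of lex diagrams is exactly the condition that $F(QKf) \cong QK(Ff)$ naturally in $f$ (respectively $F(QX) \cong Q(FX)$ naturally in $X$), which is precisely preservation of $\F$-quotients of $\F$-kernels (respectively $\F$-congruences). The principal technical obstacle is the construction of $\I_\mathrm{ex}$ together with the verification that its lex functors to $\C$ correctly classify $\F$-congruences; this rests on the small generation result for $\F$-congruence axioms, together with the care needed to ensure that the relevant lex quotient of $\mathrm{Lex}(\K)$ remains small and finitely complete.
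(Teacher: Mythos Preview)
Your approach is essentially the paper's: for $\Phi_\F^\mathrm{reg}$ you take the single weight obtained by left Kan extending the quotient weight along the $\F$-kernel of the universal arrow in the free finitely complete $\V$-category on $\mathbf 2$, and for $\Phi_\F^\mathrm{ex}$ you do the same along a universal $\F$-congruence in a classifying finitely complete $\V$-category built from a small generating set of congruence axioms. The Fubini argument $\phi_\mathrm{reg} \star D \cong \phi \star_\K (D \circ K\iota) \cong \phi \star_\K K f_D$ is exactly the paper's $\Lan_{W^\op}\phi \star D \cong \phi \star DW$.

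There is, however, a consistent index slip that makes your formulae compute the wrong thing. In the paper's convention the arrow in $\mathbf 2 \subset \F$ runs $1 \to 0$, so that $0$ is the object \emph{omitted} from $\K$, and the weight for $\F$-quotients is $\phi = \F(I\thg, 0)$, not $\F(I\thg, 1)$. Since $1 \in \K$ and $I$ is fully faithful, your weight $\F(I\thg, 1) \cong \K(\thg, 1)$ is representable, whence $\F(I\thg, 1) \star_\K K f_D \cong (K f_D)(1)$, which is just the domain $A$ of $f_D$ rather than the quotient object. Replacing $1$ by $0$ throughout fixes everything.

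On the construction of $\I_\mathrm{ex}$: the paper builds it concretely as a coinverter in $\V\text-\cat{Lex}$, invoking the bicocompleteness of $\V\text-\cat{Lex}$ from~\cite{Blackwell1989Two-dimensional} and noting that the inclusion $\V\text-\cat{Lex} \hookrightarrow \V\text-\cat{LEX}$ preserves small bicolimits (so the universal property extends to large $\C$). Your first description---inverting the morphisms $\{h, Y\}$ in $\mathrm{Lex}(\K)$---is precisely this. Your second suggestion, taking the finite-limit closure of $\K$ inside the opposite of the $\V$-topos of congruences in $\V$, would need an additional argument that this closure has the correct universal property for \emph{all} finitely complete $\C$, not just $\V$-toposes; the coinverter description avoids this.
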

\begin{proof}
Considering first $\Phi_\F^\mathrm{reg}$, let $\I$ be the free $\V$-category with finite limits on an arrow $u \colon 1 \to 0$, let $W \colon \K \to \I$ be the $\F$-kernel of $u$, and let $\phi = \F(I\thg, 0) \in [\K^\op, \V]$ be the weight for $\F$-quotients; now take $\Phi_\F^\mathrm{reg} = \{\Lan_{W^\op} \phi \colon \I^\op \to \V\}$. For any finitely complete $\C$, to give a finite-limit preserving $D \colon \I \to \C$ is equally (to within isomorphism) to give the arrow $Du$ of $\C$. Since $D$ preserves finite limits, it preserves kernels, and so $DW \colon \K \to \C$ is the $\F$-kernel of $Du$. Thus $\Lan_{W^\op} \phi \star D \cong \phi \star DW$, if it exists, is the $\F$-quotient of the $\F$-kernel of $Du$.
%
%
%write $Y \colon \I \to [\I^\op, \V]$ for the Yoneda embedding. Let $\Phi_\F^\mathrm{reg}$ comprise the single object $\phi \in [\I^\op, \V]$ obtained by factorising $Yu \colon Y0 \to Y1$ through the $\F$-quotient of its $\F$-kernel; note that this $\F$-kernel $K(Yu)$ is isomorphic to $Y(Ku)$, since the Yoneda embedding preserves limits.
%Consider now a finite-limit preserving $\V$-functor $D \colon \I \to \C$; any such $D$ is determined to within unique isomorphism by choosing its value at the generating arrow $u$. The colimit functor $(\thg) \star D \colon [\I^\op, \V] \to \C$ is always defined on representables, being given by evaluation at the representing object; whence it sends the $\F$-kernel of $Yu$ to $Y(Ku) \star D \cong DKu \cong KDu$ (the last isomorphism arising because $D$ preserves finite limits). Now $\phi$ is the $\F$-quotient of the $\F$-kernel of $Yu$, whence $\phi \star D$, if it exists, must be the $\F$-quotient of the $\F$-kernel of $Du$.   
It follows that a finitely complete $\V$-category is $\Phi_\F^\mathrm{reg}$-lex-cocomplete just when it admits $\F$-quotients of $\F$-kernels, and that a functor between two such categories will preserve $\Phi_\F^\mathrm{reg}$-lex-colimits just when it preserves these $\F$-quotients.

We now turn to $\Phi_\F^\mathrm{ex}$. We claim that there is a universal $\F$-congruence: a small, finitely complete $\V$-category $\mathbb C[\F]$
and a congruence $V \colon \K \to \mathbb C[\F]$, composition with which induces equivalences of categories
\begin{equation*}
\V\text-\cat{LEX}(\mathbb C[\F],\, \C) \xrightarrow{\simeq} \cat{Cong}(\C)
\end{equation*}
for every finitely complete $\V$-category $\C$. Given this, we may take $\Phi_\F^\mathrm{ex}$ to comprise the single weight $
\Lan_{V^\op} \phi \in [\mathbb C[\F]^\op, \V]$; now the same argument as before shows that a category has, or a functor preserves, $\Phi_\F^\mathrm{ex}$-lex-colimits just when it has, respectively preserves, $\F$-quotients of $\F$-congruences.

It remains to construct the universal $\F$-congruence. This may be done in many ways; we include one possible argument for the sake of completeness. Consider a small generating set $\{h_x \colon \phi_x \to \psi_x \mid x \in \Ss\}$ of $\F$-congruence axioms, and let $J \colon \K \to F(\K)$ exhibit $F(\K)$ as the free category with finite limits on $\K$. Viewing $\Ss$ as a discrete $\V$-category, we have a diagram in $\V$-$\cat{Cat}$ as on the left below, where the functors $A$ and $B$ send $x$ to $\{\psi_x, J\}$ and $\{\phi_x, J\}$ respectively, and $\Gamma$ has components $\Gamma_x = \{h_x, J\}$. Letting $F(\Ss)$ be the free category with finite limits on $\Ss$, we obtain from this a diagram in $\V\text-\cat{Lex}$ as on the right, where $\bar A$, $\bar B$ and $\bar \Gamma$ are the essentially-unique finite-limit-preserving extensions of $A$, $B$ and $\Gamma$ respectively.
\begin{equation*}
\cd[@C+2em]{
\Ss \ar@/^12pt/[r]^{A} \ar@/_12pt/[r]_{B} \dtwocell{r}{\Gamma} & F(\K)
} \qquad \qquad
\cd[@C+2em]{
F(\Ss) \ar@/^12pt/[r]^{\bar A} \ar@/_12pt/[r]_{\bar B} \dtwocell{r}{\bar \Gamma} & F(\K)
}
\end{equation*}
Now to give a congruence in the finitely complete $\C$ is equally to give $X \in [\K, \C]$ whose essentially-unique extension to a finite-limit-preserving $\bar X \colon F(\K) \to \C$ inverts the $2$-cell $\bar \Gamma$. By the results of~\cite[Section~5]{Blackwell1989Two-dimensional}, the $2$-category $\V\text-\cat{Lex}$ is bicocomplete,  and so 
there is a universal $L \colon F(\K) \to \mathbb C[\F]$ which inverts $\bar \Gamma$; because the inclusion $\V\text-\cat{Lex} \to \V\text-\cat{LEX}$ preserves small bicolimits, this $L$ is universal also with respect to \emph{large} $\V$-categories, and it follows that 
the desired universal $\F$-congruence is obtained as the composite $LJ \colon \K \to \mathbb C[\F]$.
\end{proof}

We are thus led to define a finitely complete $\V$-category to be \emph{$\F$-regular} if it is $\Phi_\F^\mathrm{reg}$-exact, and \emph{$\F$-exact} if it is $\Phi_\F^\mathrm{ex}$-exact. We observe that:
\begin{Prop}\label{prop:f-exact-f-regular}
An $\F$-exact category is $\F$-regular.
\end{Prop}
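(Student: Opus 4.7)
The strategy is to leverage the embedding theorem (Theorem~\ref{thm:phi-exact-embedding}) together with the fact that $\V$-toposes are simultaneously $\Phi$-exact for every class of lex-weights $\Phi$ (Proposition~\ref{prop:v-topos-phi-exact}). The crucial input is Proposition~\ref{prop:f-ker-is-f-cong}: since every $\F$-kernel is an $\F$-congruence, the $\F$-quotient of an $\F$-kernel is a particular case of an $\F$-quotient of an $\F$-congruence. By Proposition~\ref{prop:ker-quot-lex-weights}, this has two consequences: any $\V$-category admitting $\F$-quotients of $\F$-congruences admits $\F$-quotients of $\F$-kernels (hence $\Phi_\F^\mathrm{ex}$-lex-cocompleteness implies $\Phi_\F^\mathrm{reg}$-lex-cocompleteness); and any finite-limit-preserving $\V$-functor preserving $\F$-quotients of $\F$-congruences also preserves $\F$-quotients of $\F$-kernels (so $\Phi_\F^\mathrm{ex}$-lex-colimit preservation implies $\Phi_\F^\mathrm{reg}$-lex-colimit preservation).

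Assume first that $\C$ is small. If $\C$ is $\F$-exact, then by Theorem~\ref{thm:phi-exact-embedding} applied to $\Phi=\Phi_\F^\mathrm{ex}$, there is a finite-limit- and $\Phi_\F^\mathrm{ex}$-lex-colimit-preserving full embedding $\C \hookrightarrow \E$ into a $\V$-topos $\E$. By the observation above, this embedding also preserves $\Phi_\F^\mathrm{reg}$-lex-colimits; and by Proposition~\ref{prop:v-topos-phi-exact}, the codomain $\E$ is $\Phi_\F^\mathrm{reg}$-exact. Thus $\C$ fits condition (c) of Theorem~\ref{thm:phi-exact-embedding} with $\Phi=\Phi_\F^\mathrm{reg}$, and we conclude that $\C$ is $\Phi_\F^\mathrm{reg}$-exact, i.e. $\F$-regular.

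For general $\C$, we reduce to the small case via Proposition~\ref{prop:phi-small-reduction}; this is applicable since both $\Phi_\F^\mathrm{reg}$ and $\Phi_\F^\mathrm{ex}$ are small (each consisting of a single weight, by the proof of Proposition~\ref{prop:ker-quot-lex-weights}). Given any small, full, replete subcategory $\D \subseteq \C$ closed under finite limits and $\Phi_\F^\mathrm{reg}$-lex-colimits, enlarge $\D$ by iteratively adjoining finite limits and $\Phi_\F^\mathrm{ex}$-lex-colimits computed in $\C$; since each step requires only small-indexed constructions, the result is a small full subcategory $\D' \supseteq \D$ closed under finite limits and $\Phi_\F^\mathrm{ex}$-lex-colimits in $\C$. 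By Proposition~\ref{prop:phi-small-reduction} for $\Phi_\F^\mathrm{ex}$, $\D'$ is $\Phi_\F^\mathrm{ex}$-exact, hence $\Phi_\F^\mathrm{reg}$-exact by the small case just proved. Finally, applying Proposition~\ref{prop:phi-small-reduction} for $\Phi_\F^\mathrm{reg}$ within $\D'$ to the subcategory $\D$ shows that $\D$ is itself $\Phi_\F^\mathrm{reg}$-exact; as this holds for all such $\D$, we conclude that $\C$ is $\Phi_\F^\mathrm{reg}$-exact.

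The main technical point is the size-reduction dance in the final paragraph, ensuring that the small embedding theorem can be applied to the large $\C$; once the machinery of Propositions~\ref{prop:phi-small-reduction}, \ref{prop:v-topos-phi-exact} and Theorem~\ref{thm:phi-exact-embedding} is available, the argument is essentially formal, with Proposition~\ref{prop:f-ker-is-f-cong} providing the one substantive input relating the two kernel--quotient structures.
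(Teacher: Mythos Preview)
Your proof is correct, but it takes a substantially more circuitous route than the paper's. The paper argues directly from the pseudoalgebra characterisation of $\Phi$-exactness: since every $\F$-kernel is an $\F$-congruence (Proposition~\ref{prop:f-ker-is-f-cong}), the completion $\Phi_\F^\mathrm{ex}(\C)$ is closed in $\P\C$ under $\F$-quotients of $\F$-kernels, whence $\Phi_\F^\mathrm{reg}(\C) \subseteq \Phi_\F^\mathrm{ex}(\C)$ as full finite-limit-closed subcategories of $\P\C$. If $\C$ is $\F$-exact, the embedding $\C \to \Phi_\F^\mathrm{ex}(\C)$ has a finite-limit-preserving left adjoint $L$, and composing with the inclusion $J \colon \Phi_\F^\mathrm{reg}(\C) \hookrightarrow \Phi_\F^\mathrm{ex}(\C)$ gives the required lex left adjoint $LJ$ to $\C \to \Phi_\F^\mathrm{reg}(\C)$.

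The advantage of the paper's argument is that it avoids the embedding theorem entirely and hence any small/large case distinction: the inclusion $\Phi_\F^\mathrm{reg}(\C) \subseteq \Phi_\F^\mathrm{ex}(\C)$ and the factorisation of the left adjoint work uniformly for $\C$ of any size. Your approach, by contrast, routes through Theorem~\ref{thm:phi-exact-embedding} and so must invoke Proposition~\ref{prop:phi-small-reduction} twice, together with an explicit enlargement step, to handle large $\C$. Both arguments ultimately rest on the same substantive input (Proposition~\ref{prop:f-ker-is-f-cong}), but the paper's is shorter and conceptually cleaner: it shows directly that the relevant KZ-pseudomonads satisfy $\Phi_\F^\mathrm{reg} \leqslant \Phi_\F^\mathrm{ex}$, from which the implication between algebra structures is immediate.
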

\begin{proof}
For any finitely complete $\C$, the $\V$-category $\Phi_\F^\mathrm{ex}(\C)$ is closed in $\P \C$ under finite limits and $\F$-quotients of $\F$-kernels, since by Proposition~\ref{prop:f-ker-is-f-cong}, every $\F$-kernel is an $\F$-congruence. Thus $\Phi_\F^\mathrm{reg}(\C) \subseteq \Phi_\F^\mathrm{ex}(\C)$ and the (full) inclusion $J$ preserves finite limits. Now if $\C$ is $\F$-exact, then the embedding $\C \to \Phi_\F^\mathrm{ex}(\C)$ admits a finite-limit-preserving left adjoint $L$, whence $\C \to \Phi_\F^\mathrm{reg}(\C)$ admits the finite-limit-preserving left adjoint $LJ$ and so $\C$ is $\F$-regular. 
\end{proof}

%In subsequent sections, we shall examine the notions of $\F$-regularity and $\F$-exactness for some particular choices of $\F$ over the base $\V = \cat{Cat}$; for the next result of this section,  we consider what may be said about these notions in the general case.

The following result shows that an $\F$-regular or $\F$-exact $\V$-category inherits many good properties that the $\F$-kernel--quotient system may have in $\V$.
%For its statement consider again, in a category $\C$ admitting the kernel--quotient adjunction for $\F$, the factorisation $f = \epsilon_f \circ QKf$ of each map through the $\F$-quotient of its $\F$-kernel. The first component $QKf$ of such a factorisation is always an $\F$-quotient; if the second factor $\epsilon_f$ is always an $\F$-monic, we shall say that \emph{kernel--quotient factorisations in $\C$ converge immediately}. 

\begin{Prop}\label{prop:ker-quot-immediate-fregular}
\begin{enumerate}[(a)]
\item If kernel--quotient factorisations for $\F$ converge immediately in $\V$, then they do so in every $\F$-regular $\V$-category.
\item If kernel--quotient factorisations are stable under pullback in $\V$, then they are also stable in any $\F$-regular $\V$-category; it follows that effective $\F$-quotient maps are stable under pullback.
\item If $\F$-congruences are effective in $\V$, then they are so in every $\F$-exact $\V$-category.
\end{enumerate}
\end{Prop}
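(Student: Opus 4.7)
My plan for all three parts is uniform and rests on the embedding theorem. First, I would apply Proposition~\ref{prop:phi-small-reduction} to reduce to small $\C$: each conclusion is a universally quantified statement about maps or objects, and any single instance lies inside a small, full, finite-limit- and $\Phi$-lex-colimit-closed subcategory of $\C$ (with $\Phi = \Phi_\F^{\mathrm{reg}}$ for (a), (b) and $\Phi_\F^{\mathrm{ex}}$ for (c)), which is itself $\F$-regular or $\F$-exact. Then I would invoke Theorem~\ref{thm:phi-exact-embedding} to obtain a fully faithful embedding $i \colon \C \to \E$ into a $\V$-topos $\E$ which preserves finite limits and the relevant lex-colimits; this $i$ preserves $K$, preserves $Q$ on $\F$-kernels (or $\F$-congruences for (c)), and hence preserves the whole kernel--quotient factorisation. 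Being fully faithful and lex, $i$ also reflects isomorphisms, and therefore $\F$-monicity (via Proposition~\ref{prop:f-map-props}(a)) and effectiveness of $\F$-kernels, $\F$-quotients, and $\F$-congruences.

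With this in place, each of (a), (b), (c) reduces to the analogous property in $\E$. For (a), the factorisation in $\C$ maps to the factorisation in $\E$, and the $\F$-monic status of its second factor reflects back along $i$. For (c), the unit $X \to KQX$ at an $\F$-congruence $X$ in $\C$ maps to the unit in $\E$, whose iso status reflects back. For (b), the plan is to appeal first to (a)---noting that the hypothesis of pullback-stability of factorisations in $\V$ implicitly assumes immediate convergence, since the very existence of a kernel--quotient factorisation presumes its second factor to be $\F$-monic---to identify $\F$-quotients with effective $\F$-quotients throughout; then pullback-stability of effective $\F$-quotients in $\E$ transfers back via preservation and reflection of effectiveness.

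The remaining task is to verify each property in the $\V$-topos $\E$. I would do this by realising $\E$ as a lex-reflective subcategory of some $[\B^\op, \V]$ via a reflector $L$; in the presheaf category everything is pointwise, so each property inherited from $\V$ holds there. The passage to $\E$ then rests on the identifications $K_\E = K^\mathrm{pre}\!\upharpoonright_\E$ and $Q_\E X \cong L Q^\mathrm{pre} X$ for $X \in [\K, \E]$, together with the lex-ness of $L$. Concretely: for (a), the $\E$-factorisation of $f$ is obtained by applying $L$ to the presheaf factorisation, whose $\F$-monic second factor is preserved by $L$ (Proposition~\ref{prop:f-map-props}(f)); for (c), pointwise effectiveness gives $K^\mathrm{pre} Q^\mathrm{pre} X \cong X$ for an $\F$-congruence $X \in [\K, \E]$, and applying $L$ (with $LX = X$ since $X \in \E$) yields $K_\E Q_\E X \cong X$; for (b), pointwise pullback-stability combined with $L$-preservation of pullbacks and $Q_\E = L Q^\mathrm{pre}$ shows that the pullback of an effective $\F$-quotient $Q_\E K_\E f \cong f$ in $\E$ is again $Q_\E K_\E$ of the pulled-back map.

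The main obstacle I anticipate is in (b): the interplay between the presheaf and reflective descriptions of $\E$ is delicate, since the presheaf factorisation of $f \in \E$ typically has its middle object outside $\E$, with only $L$ applied to it yielding the $\E$-factorisation. Navigating this---by working with the $\E$-factorisation throughout and commuting $L$ past the pullback construction---is where the hypothesis of pullback-stability of factorisations in $\V$ is really used.
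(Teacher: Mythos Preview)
Your approach is correct but takes a more indirect route than the paper's. You reduce to the small case via Proposition~\ref{prop:phi-small-reduction}, invoke the embedding theorem to land in a $\V$-topos $\E$, establish each property in $\E$ via its presheaf presentation, and then reflect back along the full embedding $i \colon \C \to \E$. The paper instead works directly from the \emph{definition} of $\F$-regularity: $\C$ is $\F$-regular precisely when the embedding $Z \colon \C \to \Phi_\F^\mathrm{reg}(\C)$ admits a lex left adjoint $L$. Since $\Phi_\F^\mathrm{reg}(\C)$ is closed in $\P\C$ under finite limits and $\F$-quotients of $\F$-kernels, and $\P\C$ inherits each property pointwise from $\V$, the property holds in $\Phi_\F^\mathrm{reg}(\C)$; the kernel--quotient factorisation of $f$ in $\C$ is then obtained by applying $L$ to that of $Zf$, and lex-ness of $L$ carries $\F$-monicity (for~(a)), pullback-stability (for~(b)), and effectiveness (for~(c)) back to $\C$. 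This avoids both the size reduction and the embedding theorem, and applies uniformly to large $\C$. Your final step---verifying each property in $\E$ via the lex reflector $[\B^\op,\V] \to \E$---is precisely the paper's argument transplanted one level up, applied to $\E$ rather than to $\C$ itself; so the core mechanism is identical, and you have simply interposed an extra layer.

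One small slip, harmless to your argument: in (b) you write that pullback-stability of kernel--quotient factorisations ``implicitly assumes immediate convergence, since the very existence of a kernel--quotient factorisation presumes its second factor to be $\F$-monic''. This is not so---the factorisation $f = \epsilon_f \circ QKf$ exists whenever $\C$ admits the kernel--quotient adjunction, regardless of whether $\epsilon_f$ is $\F$-monic---and indeed neither you nor the paper actually needs~(a) as an input to~(b).
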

\begin{proof}
For (a), if kernel--quotient factorisations converge immediately in $\V$, they also do so in any $\V$-category of the form $\P \C$, since finite limits and colimits there are computed pointwise. It follows that they converge immediately in any $\V$-category of the form $\Phi_\F^\mathrm{reg}(\C)$, since such a category is closed under the formation of $\F$-kernels and of $\F$-quotients of $\F$-kernels in $\P \C$. Finally, if $\C$ is $\F$-regular, so that the embedding $Z \colon \C \to \Phi_\F^\mathrm{reg}(\C)$ admits a left exact left adjoint $L$, then the kernel--quotient factorisation of $f$ in $\C$ may be computed by first forming the corresponding factorisation of $Zf$ in $\Phi_\F^\mathrm{reg}(\C)$ and then applying the reflector $L$. Since $L$ preserves finite limits, it preserves $\F$-monics, and so kernel--quotient factorisations in $\C$ converge immediately, since they do so in $\Phi_\F^\mathrm{reg}(\C)$.

For (b), a similar argument to the one just given shows that if kernel--quotient factorisations are stable under pullback in $\V$, then they are also stable in every $\P \C$, thus in every $\Phi_\F^\mathrm{reg}(\C)$, and thus in every $\F$-regular $\V$-category $\C$. Now if $f \colon A \to B$ is an effective quotient map in such a $\C$, then on forming its kernel--quotient factorisation $f = me$, the second component $m$ is invertible; pulling back along some $g \colon B' \to B$, we obtain a kernel--quotient factorisation $f' = m' e'$ of the pullback $f'$ of $f$ with second component invertible; whence $f'$, like $f$, is an effective $\F$-quotient.

For (c), if every $\F$-congruence is the $\F$-kernel of its $\F$-quotient in $\V$, then the same is true in every $\V$-category $\P \C$, since kernel-data $X \in [\K, \P \C]$ is an $\F$-congruence if and only if it is pointwise so. Arguing as before, it follows that the same is true in every $\Phi_\F^\mathrm{ex}(\C)$, and thus in every $\F$-exact $\C$, as required.
\end{proof}

\begin{Rk}
We saw in Remark~\ref{rk:not-immediate} that $\F$-kernel--quotient factorisations need not converge immediately in a general $\V$-category; the preceding result shows that they will do so in an $\F$-regular category---as in the motivating kernel pair--coequaliser case---\emph{so long as they converge immediately in $\V$ itself}. This is not automatic. For example, take $\V = \cat{Cat}$ and $\F$ the $2$-category generated by the graph
\begin{equation*}
\cd[@C+1em]{
 2 \ar@/^12pt/[r]^-{u} \ar@/_12pt/[r]_-{v} \dtwocell{r}{\alpha} & 1 \ar[r]^w & 0
}
\end{equation*}
subject to the condition that $w\alpha$ be invertible. In this case, the $\F$-monics in $\cat{Cat}$ are the conservative functors, whilst the $\F$-kernel--quotient factorisation of a functor $F \colon \C \to \D$ has as interposing object the localisation $\C[\Sigma^{-1}]$, where $\Sigma$ is the class of maps in $\C$ which are inverted by $F$. Now the example given in Remark~3.3 of~\cite{Carboni1994Modulated} provides an $F$ for which the second half  of this factorisation is not conservative. If we instead take $\V = \cat{Cat}_\mathrm{pb}$, the cartesian closed category of categories with pullbacks, then kernel--quotient factorisations for the $\cat{Cat}_\mathrm{pb}$-enriched version of the above $\F$ \emph{do} converge immediately; the paper~\cite{Benabou1989Some} develops these ideas further.
\end{Rk}

An important class of regular and Barr-exact categories are obtained from universal algebra. By an \emph{algebraic theory}, we mean a small category $\A$ with finite products, whilst a \emph{model} of $\A$ in a category $\C$ with finite products is a finite-product-preserving functor $\A \to \C$. It is easy to show that, if $\C$ is a regular or Barr-exact category, then so is the category $\cat{FP}(\A, \C)$ of $\A$-models in $\C$. The final result of this section generalises this fact.

\begin{Prop}\label{prop:sifted}
Let $\F$ be a kernel--quotient system whose $\F$-quotient morphisms are closed under finite products in $\V$, and let
$\A$ be a small $\V$-category with finite products.
\begin{enumerate}[(a)]
\item Let $\F$-quotients be effective in $\V$. If $\C$ is $\F$-regular, then so is $\cat{FP}(\A, \C)$.
\item Let $\F$-congruences be effective in $\V$. If $\C$ is $\F$-exact, then so is $\cat{FP}(\A, \C)$.
\end{enumerate}
\end{Prop}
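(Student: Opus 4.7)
The plan is to apply the embedding theorem (Theorem~\ref{thm:phi-exact-embedding}) to $\cat{FP}(\A, \C)$, after first reducing to the small case via Proposition~\ref{prop:phi-small-reduction}. Starting from $\F$-regularity (resp.\ $\F$-exactness) of $\C$, Theorem~\ref{thm:phi-exact-embedding} provides a full embedding $F \colon \C \to \E$ into a $\V$-topos $\E$ preserving finite limits and $\F$-quotients of $\F$-kernels (resp.\ $\F$-congruences). Postcomposition induces a fully faithful $\V$-functor $F_\ast \colon \cat{FP}(\A, \C) \to \cat{FP}(\A, \E)$, and it suffices to establish: (i) $\cat{FP}(\A, \E)$ is again a $\V$-topos; (ii) $\cat{FP}(\A, \C)$ admits $\F$-quotients of $\F$-kernels (resp.\ $\F$-congruences); and (iii) these are preserved by $F_\ast$.

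For (i), write $\E$ as a finite-limit-reflective subcategory of some $[\B^\op, \V]$; pointwise composition with this reflector exhibits $\cat{FP}(\A, \E)$ as a finite-limit-reflective subcategory of $\cat{FP}(\A, [\B^\op, \V])$, and the latter is cut out of the presheaf $\V$-category $[\A \otimes \B^\op, \V]$ by the standard finite-limit-preserving reflection from all $\V$-functors to those preserving finite products in the $\A$ variable. Composing the two reflections exhibits $\cat{FP}(\A, \E)$ as a $\V$-topos.

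For (ii) and (iii), finite limits in $\cat{FP}(\A, \C)$ are computed pointwise (as in $[\A, \C]$). Given an $\F$-kernel (resp.\ $\F$-congruence) $X \colon \K \to \cat{FP}(\A, \C)$, each evaluation $X_a$ is an $\F$-kernel (resp.\ $\F$-congruence) in $\C$, and one assembles the pointwise $\F$-quotients into a candidate functor $a \mapsto QX_a \colon \A \to \C$. The central point is to verify that this candidate preserves finite products---equivalently, that the canonical comparison $Q(X_a \times X_b) \to QX_a \times QX_b$ is invertible in $\C$ for the relevant kernel data. Granting this, the pointwise quotient lies in $\cat{FP}(\A, \C)$, is the $\F$-quotient of $X$ there, and $F_\ast$ preserves it because $F$ does so pointwise.

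The main obstacle is thus to transfer the product-closure of $\F$-quotients from $\V$ to $\C$. In $[\B^\op, \V]$ it holds pointwise from $\V$; it descends to $\E$ through the finite-limit- and $\F$-quotient-preserving reflector; and since $F \colon \C \to \E$ is fully faithful and preserves finite limits and the relevant $\F$-quotients, the isomorphism produced in $\E$ reflects back to the required isomorphism in $\C$. The effectivity hypotheses of (a) and (b) are used to phrase this closure in the form $Q(X \times Y) \cong QX \times QY$ by identifying $\F$-quotients with $\F$-quotients of their effective $\F$-kernels or $\F$-congruences; and they themselves transfer from $\V$ to $\C$ via Proposition~\ref{prop:ker-quot-immediate-fregular}(c) in case (b), and by an entirely analogous argument applied to $\F$-kernels in place of $\F$-congruences in case (a).
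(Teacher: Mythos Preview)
Your step (i) contains a genuine error: the reflection from $[\A, \V]$ to $\cat{FP}(\A, \V)$ does \emph{not} preserve finite limits in general, so $\cat{FP}(\A, \E)$ need not be a $\V$-topos. For a concrete counterexample take $\V = \cat{Set}$ and $\A$ the Lawvere theory of groups: then $\cat{FP}(\A, \cat{Set}) \simeq \cat{Grp}$, which is certainly not a Grothendieck topos, and the reflector $[\A, \cat{Set}] \to \cat{Grp}$ is essentially the free-group construction, which does not preserve finite limits. Your phrase ``the standard finite-limit-preserving reflection'' is simply describing something that does not exist.

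The paper's approach avoids this by choosing a different ambient category. Rather than passing through a $\V$-topos $\E$ and then to $\cat{FP}(\A, \E)$, it embeds $\cat{FP}(\A, \C)$ directly into $[\A, \C]$, which is $\F$-regular (resp.\ $\F$-exact) pointwise whenever $\C$ is; one then invokes part (b) rather than part (c) of Theorem~\ref{thm:phi-exact-embedding}. The task is thus reduced to showing that $\cat{FP}(\A, \C)$ is closed in $[\A, \C]$ under finite limits and under $\F$-quotients of $\F$-kernels (resp.\ $\F$-congruences). Closure under finite limits is immediate; closure under the relevant quotients amounts exactly to the statement that each product functor $\Pi \colon \C^n \to \C$ preserves them, which follows from Proposition~\ref{prop:preserve-quotients-preserve-colims} once one knows that $\Pi$ preserves $\F$-quotient \emph{maps} and that $\F$-kernels (resp.\ $\F$-congruences) are effective in $\C$---both of which transfer from $\V$ as in Proposition~\ref{prop:ker-quot-immediate-fregular}.

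You did correctly identify this last point as the crux, and your argument for transferring product-closure of $\F$-quotient maps from $\V$ to $\C$ is essentially right. The fix is simply to drop the detour through $\E$ and $\cat{FP}(\A, \E)$ and work with the inclusion $\cat{FP}(\A, \C) \hookrightarrow [\A, \C]$ instead.
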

The hypothesis on $\F$-quotient maps is most easily verified when kernel--quotient factorisations converge immediately in $\V$, so that $\F$-quotient maps in $\V$ coincide with $\F$-strong epis and are thus closed under composition and identities. Then the terminal object $\id_1 \colon 1 \to 1$ of $[\mathbf 2, \V]$ is always an $\F$-quotient map; and a familiar argument shows that $\F$-quotient maps are closed under binary products if they are stable under pullback. For if $f \colon A \to B$ and $g \colon C \to D$ are $\F$-quotients, then so are $f \times C$ and $B \times g$ (the pullbacks of $f$ and $g$ along $C \to 1$ and $B \to 1$), and hence also their composite $f \times g = (B \times g) \circ (f \times C)$.
\begin{proof}
We first prove (a). If $\C$ is $\F$-regular, then clearly so too is $[\A, \C]$; now by Theorem~\ref{thm:phi-exact-embedding}, it suffices to show that $\cat{FP}(\A, \C)$ is closed in $[\A, \C]$ under finite limits and $\F$-quotients of $\F$-kernels. Closure under finite limits is clear, since these commute with finite products in $\C$; closure under $\F$-quotients of $\F$-kernels will follow similarly if we can show that these also commute with finite products in $\C$, or equivalently, that each $n$-ary product functor $\Pi \colon \C^n \to \C$ preserves $\F$-quotients of $\F$-kernels.
By the argument of the previous proposition, $\F$-quotient morphisms are closed under finite products in the $\F$-regular $\C$, since they are so in $\V$. Thus
each  $\Pi \colon \C^n \to \C$ preserves $\F$-quotient maps as well as finite limits. Since $\C^n$ and $\C$ are $\F$-regular, they admit effective $\F$-quotients of $\F$-kernels, since $\V$ does; whence by Proposition~\ref{prop:preserve-quotients-preserve-colims}(a), each $\Pi \colon \C^n \to \C$ preserves $\F$-quotients of $\F$-kernels. 
The argument for (b) is identical, but now using the second part of Proposition~\ref{prop:preserve-quotients-preserve-colims}.
\end{proof}

\section{$2$-dimensional kernel--quotient systems}\label{sec:2dkerquot}
We now commence on the second objective of this paper: the study of particular notions of two-dimensional regularity and exactness. In this section, we will describe three $\cat{Cat}$-enriched kernel--quotient systems, whose induced factorisations in $\cat{Cat}$ correspond to the three factorisation systems described in the introduction; in the following section, we will identify the corresponding notions of $\F$-regularity and $\F$-exactness. 

\subsection{(Bijective on objects, fully faithful)}
Let $\F_\mathrm{bo}$ be the $2$-category generated by the graph
\begin{equation*}
\cd{
3 \ar@<6pt>[r]^{p} \ar[r]|{m} \ar@<-6pt>[r]_{q} &
2 \ar@<6pt>[r]^{d} \ar@{<-}[r]|{i} \ar@<-6pt>[r]_{c} &
1 \ar[r]^-w & 0
}
\end{equation*}
together with a $2$-cell $\theta \colon wd \Rightarrow wc$, subject to the simplicial identities $di = ci = 1$, $cp = dq$, $dm = dp$ and $cm = cq$, and the cocycle conditions $\theta i = 1_w$ and $(\theta q)(\theta p) = \theta m$. From this we obtain the following kernel--quotient system. The $\F_\mathrm{bo}$-kernel of a map 
$f \colon A \to B$ in a finitely complete $\C$ is given by:
\begin{equation*}
\cd{
f \mathord{\mid} f \mathord{\mid} f \ar@<6pt>[r]^-{p} \ar[r]|-{m} \ar@<-6pt>[r]_-{q} &
f \mathord{\mid} f \ar@<6pt>[r]^-{d} \ar@{<-}[r]|-{i} \ar@<-6pt>[r]_-{c} &
A\rlap{ .}
}
\end{equation*}
In the case $\C = \cat{Cat}$, the category $f \mathord{\mid} f$ has objects $(x, y \in A, \alpha \colon fx \to fy \in B)$, whilst $f \mathord{\mid} f \mathord{\mid} f$ has objects $(x,y,z \in A,\alpha \colon fx \to fy, \beta \colon fy \to fz \in B)$. The functors $d$ and $c$ send $(x,y,\alpha)$ to $x$ and $y$ respectively; $i$ sends $x$ to $(x,x,1_{fx})$; and $p$, $m$ and $q$ send $(x,y,z,\alpha,\beta)$ to $(x,y,\alpha)$, to $(x,z,\beta\alpha)$, and to $(y,z,\beta)$ respectively. The definition in a general $2$-category follows representably.

%the $2$-functor $W \in [\mathbf 2, \cat{Cat}]$ picking out the unique identity-on-objects functor 
The $\F_\mathrm{bo}$-quotient of kernel-data $X \in [\K_\mathrm{bo}, \C]$ is the universal \emph{codescent cocone} under $X$; such a cocone comprises an
object $Q \in \C$, a morphism $q \colon X1 \to Q$, and a $2$-cell $\theta \colon q.Xd \Rightarrow q.Xc$ satisfying the two cocycle conditions $\theta.Xi = 1$ and $(\theta.Xp)(\theta.Xq) = \theta.Xm$. In a sufficiently cocomplete $2$-category, we may construct the  $\F_\mathrm{bo}$-quotient of $X$ by first forming the coinserter of $Xd$ and $Xc$, and then taking two coequifiers imposing the cocycle conditions; note, however, that the quotient may still exist even if these intermediate colimits do not.

A morphism $f$ is an $\F_\mathrm{bo}$-monic just when the comparisons $\gamma_f \colon A^\mathbf 2 \to f \mathord{\mid} f$ and $\xi_f \colon A^\mathbf 3 \to f \mathord{\mid} f \mathord{\mid} f$, given in $\cat{Cat}$ by $\gamma_f(x,y,\alpha) = (x,y,f\alpha)$ and $\xi_f(x,y,z,\alpha,\beta) = (x,y,z,f\alpha,f\beta)$, are invertible; in fact, the invertibility of $\gamma_f$ implies that of $\xi_f$. We call such morphisms \emph{fully faithful}; when $\C = \cat{Cat}$ they are precisely the fully faithful functors, whilst in a general $2$-category $\C$, they are by Proposition~\ref{prop:f-map-props}(a) the morphisms $f$ such that $\C(X,f)$ is a fully faithful functor for every $X \in \C$.

\begin{Prop}\label{prop:ker-quot-bo-cat}
Kernel--quotient factorisations for $\F_\mathrm{bo}$ converge immediately in $\cat{Cat}$, where they yield the (bijective on objects, fully faithful) factorisation of a functor; in particular, they are stable under pullback.
\end{Prop}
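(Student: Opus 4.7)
The plan is to compute the codescent object of the $\F_\mathrm{bo}$-kernel of a functor $f \colon A \to B$ in $\cat{Cat}$ directly, and then identify the resulting factorisation with the classical (bijective on objects, fully faithful) factorisation of $f$.

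To give a codescent cocone under $Kf$ is equivalent to giving a functor $q \colon A \to Q$ together with a natural transformation $\theta \colon qd \Rightarrow qc \colon f \mathord{\mid} f \to Q$ satisfying $\theta i = 1_q$ and $(\theta q)(\theta p) = \theta m$. The components of $\theta$ assign to each $(x,y,\alpha) \in \ob(f \mathord{\mid} f)$ a morphism $[\alpha] \colon qx \to qy$ of $Q$; the cocycle conditions yield $[1_{fx}] = 1_{qx}$ and $[\beta \circ \alpha] = [\beta] \circ [\alpha]$; naturality of $\theta$ along the morphism $(u, 1_y) \colon (x, y, fu) \to (y, y, 1_{fy})$ of $f \mathord{\mid} f$ (for any $u \colon x \to y$ in $A$) forces $q(u) = [fu]$; and naturality along an arbitrary morphism $(u,v) \colon (x,y,\alpha) \to (x',y',\alpha')$ then reduces, using $q = [f(\thg)]$ and the composition rule for $[\thg]$, to the equation $[\alpha' \cdot fu] = [fv \cdot \alpha]$, which holds automatically by the defining compatibility $\alpha' \cdot fu = fv \cdot \alpha$ in $B$. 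It follows that the universal codescent cocone is the category $C$ with $\ob C = \ob A$ and $C(x,y) = B(fx, fy)$, composition inherited from $B$, together with $q \colon A \to C$ the identity on objects acting as $f$ on morphisms, and $\theta_{(x,y,\alpha)} = \alpha$.

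The mediating map $q_\ast \colon C \to B$ is then $f$ on objects and the identity on hom-sets, hence fully faithful---that is, $\F_\mathrm{bo}$-monic. Since $q$ is moreover (the identity on, and so) bijective on objects, kernel--quotient factorisations converge immediately in $\cat{Cat}$ and coincide with the standard (bo, ff) factorisation. Stability under pullback then follows: fully faithful functors are stable by Proposition~\ref{prop:f-map-props}(e), being $\F_\mathrm{bo}$-monic; and bijective-on-objects functors are stable because $\ob \colon \cat{Cat} \to \cat{Set}$ preserves pullbacks and bijections are stable under pullback in $\cat{Set}$. The principal technical obstacle is verifying that the category $C$ above really does realise the universal codescent cocone---that no further identifications collapse its hom-sets---which is handled by the naturality analysis in the preceding paragraph.
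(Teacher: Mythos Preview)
Your proof is correct and follows the same underlying computation as the paper, which simply defers to~\cite[Proposition~3.1]{Street2004Categorical} or to the explicit description of $\F_\mathrm{bo}$-quotients of cateads given in the proof of Proposition~\ref{prop:cateads}. Your direct cocone analysis is precisely the specialisation of that construction to the case where the $\F_\mathrm{bo}$-congruence is an $\F_\mathrm{bo}$-kernel; the only point left implicit is the 2-dimensional aspect of the universal property (that the bijection between cocones and functors out of $C$ extends to an isomorphism of categories), but this is routine.
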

\begin{proof} 
See, for example,~\cite[Proposition~3.1]{Street2004Categorical}; alternatively, this follows from the explicit description of $\F_\mathrm{bo}$-quotients of $\F_\mathrm{bo}$-congruences (and so in particular, $\F_\mathrm{bo}$-kernels) in $\cat{Cat}$ given in Proposition~\ref{prop:cateads} below.
\end{proof}
\begin{Cor}\label{cor:ker-quot-bo-2topos}
Kernel--quotient factorisations for $\F_\mathrm{bo}$ are stable and converge immediately in any $\F_\mathrm{bo}$-regular $2$-category (in particular, in any $2$-topos); thus, the classes of $\F_\mathrm{bo}$-strong epis, $\F_\mathrm{bo}$-quotients and effective $\F_\mathrm{bo}$-quotients coincide and are pullback-stable in any $\F_\mathrm{bo}$-regular $2$-category.
\end{Cor}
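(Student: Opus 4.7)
The plan is to package the corollary as a direct application of three previously established results: Proposition~\ref{prop:ker-quot-bo-cat} provides the hypothesis that kernel--quotient factorisations for $\F_\mathrm{bo}$ converge immediately and are pullback-stable in $\V = \cat{Cat}$; Proposition~\ref{prop:ker-quot-immediate-fregular} then propagates both of these properties from $\V$ to any $\F_\mathrm{bo}$-regular $2$-category; and Proposition~\ref{prop:fkera} supplies the collapse of the three classes of maps once immediate convergence is available.

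Concretely, I would proceed as follows. First, observe by Proposition~\ref{prop:ker-quot-bo-cat} that in $\V = \cat{Cat}$ the $\F_\mathrm{bo}$-kernel--quotient factorisations both converge immediately and are pullback-stable. Applying parts (a) and (b) of Proposition~\ref{prop:ker-quot-immediate-fregular} therefore yields that these properties transfer to every $\F_\mathrm{bo}$-regular $2$-category $\C$. In particular, in any such $\C$, the factorisation of an arrow $f$ through its $\F_\mathrm{bo}$-kernel yields an ($\F_\mathrm{bo}$-quotient, $\F_\mathrm{bo}$-monic) factorisation, and this factorisation is stable under pullback. The parenthetical ``in particular, in any $2$-topos'' clause then follows from Proposition~\ref{prop:v-topos-phi-exact}, which tells us that any $2$-topos is $\Phi$-exact for every class of lex-weights $\Phi$, and in particular is $\F_\mathrm{bo}$-regular (that is, $\Phi_{\F_\mathrm{bo}}^\mathrm{reg}$-exact).

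For the second sentence, the coincidence of the three classes is an immediate invocation of Proposition~\ref{prop:fkera}: once kernel--quotient factorisations converge immediately in $\C$, that result identifies the classes of $\F_\mathrm{bo}$-strong epis, $\F_\mathrm{bo}$-quotients, and effective $\F_\mathrm{bo}$-quotients. Pullback-stability of this common class is then precisely the second clause of Proposition~\ref{prop:ker-quot-immediate-fregular}(b), which asserts stability of effective $\F$-quotient maps under pullback whenever kernel--quotient factorisations are pullback-stable.

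There is really no main obstacle here; the proof is a straightforward assembly of results proven in Section~\ref{sec:freg} and the preceding Proposition~\ref{prop:ker-quot-bo-cat}. The only minor point to take care over is spelling out, for the ``in particular'' clause, that a $2$-topos is by definition a $\cat{Cat}$-topos and hence $\Phi_{\F_\mathrm{bo}}^\mathrm{reg}$-exact by Proposition~\ref{prop:v-topos-phi-exact}, so that the hypotheses of the first sentence apply to it.
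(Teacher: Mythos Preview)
Your proposal is correct and follows essentially the same approach as the paper, which simply cites Propositions~\ref{prop:v-topos-phi-exact}, \ref{prop:ker-quot-immediate-fregular}, and~\ref{prop:ker-quot-bo-cat}. You are more explicit than the paper in invoking Proposition~\ref{prop:fkera} to justify the coincidence of the three classes; the paper leaves this implicit, treating the second sentence as an immediate consequence (signalled by ``thus'') of the first.
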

\begin{proof}
By Propositions~\ref{prop:v-topos-phi-exact}, \ref{prop:ker-quot-immediate-fregular},  and \ref{prop:ker-quot-bo-cat}.
\end{proof}
However, in a general $2$-category, we have that:
 \begin{Prop}\label{prop:codescent-not-effective}
$\F_\mathrm{bo}$-quotient maps need not be effective, even in a locally finitely presentable (and hence complete and cocomplete) $2$-category.
\end{Prop}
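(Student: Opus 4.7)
The plan is to exhibit a specific locally finitely presentable $2$-category $\C$ containing an $\F_\mathrm{bo}$-quotient morphism which is not effective. By Proposition~\ref{prop:effectivity-ker-quot} (applied with $\F = \F_\mathrm{bo}$), this reduces to producing an $\F_\mathrm{bo}$-kernel $X \in [\K_\mathrm{bo}, \C]$ for which the unit $\eta_X \colon X \to KQX$ fails to be invertible.

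A preliminary observation narrows the search substantially. In any presheaf $2$-category $[\cat{J}^\op, \cat{Cat}]$, finite limits and codescent colimits are both computed pointwise, so by Proposition~\ref{prop:ker-quot-bo-cat} every $\F_\mathrm{bo}$-kernel is effective pointwise. Similarly, by Proposition~\ref{prop:v-topos-phi-exact}, a $2$-topos is $\F_\mathrm{bo}$-exact, whence $\F_\mathrm{bo}$-kernels there are always effective. The counterexample must therefore live in an lfp $2$-category outside both classes. A natural candidate setting is a $2$-category $T\text{-}\cat{Alg}_s$ of strict algebras for a $2$-monad $T$ on $\cat{Cat}$ which is finitary but \emph{not} strongly finitary: the forgetful functor $U \colon T\text{-}\cat{Alg}_s \to \cat{Cat}$ preserves finite limits and so transports $\F_\mathrm{bo}$-kernels faithfully, but typically fails to preserve codescent. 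Consequently, $QKf$ computed in $T\text{-}\cat{Alg}_s$ can strictly enlarge the bijective-on-objects image of $f$ in $\cat{Cat}$ by freely generating morphisms required by the $T$-algebra structure, and the higher kernel $KQKf$ will then acquire components absent from $Kf$.

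My plan of execution is first to choose $T$ so that the codescent construction inside $T\text{-}\cat{Alg}_s$ demonstrably introduces new morphisms; then to take $f$ to be a strict $T$-map whose underlying functor has a transparent bijective-on-objects, fully-faithful factorisation in $\cat{Cat}$; and finally to compute $Kf$, $QKf$ and $KQKf$ explicitly, isolating a specific $1$-cell or $2$-cell of $KQKf$ with no antecedent in $Kf$. The main obstacle is identifying a $T$ and an $f$ for which this failure can be witnessed by a small, transparent calculation; the coinserter and coequifier stages of the $\F_\mathrm{bo}$-codescent formula must interact with the algebra structure in a controllable yet non-trivial manner.
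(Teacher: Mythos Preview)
Your proposal is not a proof but a research plan: you identify sensible constraints (the counterexample cannot live in a presheaf $2$-category or a $2$-topos) and a plausible hunting ground ($T\text{-}\cat{Alg}_s$ for a finitary but not strongly finitary $2$-monad on $\cat{Cat}$), but you never choose $T$, never choose $f$, and never carry out the computation of $Kf$, $QKf$, or $KQKf$. The entire content of the proposition is the existence of a specific counterexample, and you have not supplied one. Saying that ``the codescent construction inside $T\text{-}\cat{Alg}_s$ demonstrably introduces new morphisms'' is exactly what needs to be \emph{demonstrated}; as stated it is an assertion, not an argument.

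For comparison, the paper's proof is entirely concrete and takes a different route from yours. It works not in $T\text{-}\cat{Alg}_s$ over $\cat{Cat}$ but in $\cat{Ab}\text{-}\cat{Cat}$, the $2$-category of $\cat{Ab}$-enriched categories, which is locally finitely presentable by~\cite[Theorem~4.5]{Kelly2001V-Cat}. One takes an $\cat{Ab}$-functor $\phi \colon \R \to \Ss$ between one-object $\cat{Ab}$-categories---that is, a ring homomorphism $R \to S$---and computes explicitly that the $\F_\mathrm{bo}$-quotient of the $\F_\mathrm{bo}$-kernel of $\phi$ is the one-object $\cat{Ab}$-category on the ring
\[
S' = R[x_s \mid s \in S] \big/ \langle x_s x_t - x_{st},\ x_{\phi(r)} - r\rangle\rlap{ .}
\]
Effectivity of the kernel of $\phi$ would force $S \cong S'$ as sets (since the object-set of $\psi | \psi$ is $S'$). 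Taking $R = \mathbb F_2$ and $S = \mathbb F_4 = R[x]/\langle x^2+x+1\rangle$, one finds $S' = R[x]/\langle x^3+1\rangle$, which has eight elements rather than four; so this kernel is not effective, and by Proposition~\ref{prop:effectivity-ker-quot} not every $\F_\mathrm{bo}$-quotient map in $\cat{Ab}\text{-}\cat{Cat}$ is effective. The virtue of this approach is that the codescent computation reduces to elementary ring theory, where the failure is visible by counting elements.
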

\begin{proof}
The category $\cat{Ab}$ of abelian groups is locally finitely presentable; whence, by~\cite[Theorem~4.5]{Kelly2001V-Cat}, so too is the $2$-category $\cat{Ab}\text-\cat{Cat}$. We will show that not every $\F_\mathrm{bo}$-kernel is effective in $\cat{Ab}\text-\cat{Cat}$; the result then follows from Proposition~\ref{prop:effectivity-ker-quot}. 

Let $\phi \colon \R \to \Ss$ be an $\cat{Ab}$-functor between one-object $\cat{Ab}$-categories. Writing $R$ and $S$ for the rings $\R(\star, \star)$ and $\Ss(\star, \star)$, we will compute the $\F_\mathrm{bo}$-kernel of $\phi$, and the $\F_\mathrm{bo}$-quotient of that.
%show that, for some suitable choice of $\phi$, the $\F_\mathrm{bo}$-kernel of $f$ is not effective. We begin by computing this kernel. 
The comma object $\phi | \phi$ is the $\cat{Ab}$-category whose objects are elements $s \in S$, and whose hom-objects are given by
\[
(\phi | \phi)(s_1, s_2) = \{(r_1, r_2) \in R \oplus R \,\mid\,\phi(r_2) s_1 = s_2 \phi(r_1)\}\rlap{ ;}
\]
the triple comma object $\phi | \phi | \phi$ has as objects, pairs $(s_1, s_2) \in S \times S$, and hom-objects given similarly to above; the maps $p, m, q \colon \phi|\phi|\phi \to \phi|\phi$ are given by first projection, multiplication and second projection, and the map $i \colon \R \to \phi | \phi$ picks out the multiplicative unit of $S$.
We now describe what it is to give a codescent cocone with vertex $\C$ under the $\F_\mathrm{bo}$-kernel of $\phi$. Firstly, we must give an $\cat{Ab}$-functor $\R \to \C$: which is equally to give an object $x \in \C$ and a ring homomorphism $\gamma \colon R \to \C(x,x)$. Next, we must give an $\cat{Ab}$-natural transformation 
\[
\cd[@R-1.8em]{
& \R \ar[dr] \\
f | f \ar[ur]^d \ar[dr]_c \dtwocell{rr}{\theta} & & \C \\ &
\R \ar[ur]}\rlap{ .}
\]
The components of this are elements $(\theta(s) \in \C(x,x) \mid s \in S)$, and naturality says that if $r_1,r_2 \in R$ and $s_1, s_2 \in S$ satisfy
$\phi(r_2) s_1 = s_2 \phi(r_1)$, then $\gamma(r_2) \theta(s_1) = \theta(s_2) \gamma(r_1)$. Finally, the two cocycle conditions $\theta i = 1$ and $\theta q . \theta p = \theta m$ say that $\theta(1) = 1$ and that $\theta(s_1s_2) = \theta(s_1)\theta(s_2)$ for all $s_1,s_2 \in S$. Note that, in the presence of these last two conditions, the naturality of $\theta$ is equivalent to the condition that $\theta(\phi(r)) = \gamma(r)$ for all $r \in R$, and this condition in turn implies that $\theta(1) = 1$. From this calculation, it follows that the $\F_\mathrm{bo}$-quotient of the $\F_\mathrm{bo}$-kernel of $\phi$ is the canonical $\cat{Ab}$-functor $\psi \colon \R \to \Ss'$ where $\Ss'$ is the one-object $\cat{Ab}$-category with $\Ss'(\star, \star)$ the ring 
\[S' = R[x_s \,|\, s \in S] / \spn {x_s x_t - x_{st} \mid s, t \in S} \cup \spn{x_{\phi(r)} - r \mid r \in R}\ \rlap.\]

Now, if the $\F_\mathrm{bo}$-kernel of $\phi$ is to be effective, then it must coincide with the $\F_\mathrm{bo}$-kernel of $\psi$;  and since $\psi | \psi$ has as objects elements of $S'$, this cannot happen unless $S \cong S'$ as sets. Consider now $\phi \colon \R \to \Ss$ obtained from the evident homomorphism between $R = \mathbb F_2 = \mathbb Z / \spn 2$ and $S = \mathbb F_4 = R[x] / \spn{x^2+x+1}$. In this case, we calculate that $S' = R[x] / \spn{x^3+1}$;  thus, since $S$ has four elements and $S'$ eight, the $\F_\mathrm{bo}$-kernel of this $\phi$ is not effective, as desired.
\end{proof}

The following result identifies the $\F_\mathrm{bo}$-congruences explicitly, showing that they are  the \emph{cateads} of~\cite{Bourn20102-categories,Bourke2010Codescent}.
\begin{Prop}\label{prop:cateads}
A diagram $X \in [\K_\mathrm{bo}, \C]$ is an $\F_\mathrm{bo}$-congruence if and only if:
\begin{enumerate}[(a)]
\item $X$ is the truncated nerve of an internal category in $\C$;
\item The span $Xd \colon X1 \leftarrow X2 \rightarrow X1 \colon Xc$ is a two-sided discrete fibration.
\end{enumerate}
$\F_\mathrm{bo}$-congruences are effective in $\cat{Cat}$, and hence in every $\F_\mathrm{bo}$-exact $2$-category.
\end{Prop}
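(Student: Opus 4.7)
The plan is to apply Propositions~\ref{prop:f-congruence-generating} and \ref{prop:f-effective-cong} in concert. Specifically, I will exhibit a small set $\Ss$ of maps in $[\K_\mathrm{bo}, \cat{Cat}]$ whose invertibility under $\{\thg, X\}$ captures exactly conditions (a) and (b), then verify that every $X \in [\K_\mathrm{bo}, \cat{Cat}]$ orthogonal to $\Ss$ is effective. By Proposition~\ref{prop:f-effective-cong}, this single verification simultaneously yields that $\Ss$ is a generating set of $\F_\mathrm{bo}$-congruence axioms and that $\F_\mathrm{bo}$-congruences in $\cat{Cat}$ are effective; Proposition~\ref{prop:f-congruence-generating} then gives the ``iff'' for arbitrary finitely complete $\C$, and Proposition~\ref{prop:ker-quot-immediate-fregular}(c) extends effectivity to every $\F_\mathrm{bo}$-exact $2$-category.

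The set $\Ss$ consists of: firstly, the Segal comparison $(Xp, Xq) \colon X3 \to X2 \times_{X1} X2$, which — together with the simplicial identities already encoded into $\K_\mathrm{bo}$ — captures (a); and secondly, the standard finitely-presentable orthogonality data asserting that $(Xd, Xc) \colon X2 \to X1 \times X1$ is a two-sided discrete fibration (i.e.\ unique cartesian lifts along $Xd$, unique opcartesian lifts along $Xc$, and compatibility between them). Each element of $\Ss$ is a map between finitely presentable objects of $[\K_\mathrm{bo}, \cat{Cat}]$; and each is easily seen, by direct inspection of the comma-object construction that underlies $Kf$ for a functor $f$ in $\cat{Cat}$, to be invertible on every $\F_\mathrm{bo}$-kernel. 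Thus each element of $\Ss$ is genuinely a congruence axiom.

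The heart of the argument is to show that every $X \in [\K_\mathrm{bo}, \cat{Cat}]$ satisfying (a) and (b) is effective, i.e.\ isomorphic to $K(QX)$. Given such an $X$, I would build a bijective-on-objects functor $q \colon X1 \to Y$ directly, rather than by forming a codescent colimit: take $\ob(Y) = \ob(X1)$, and let $Y(a,a')$ be the set of objects of the fibre of $(Xd, Xc)$ over $(a, a')$, which is discrete by (b). Composition in $Y$ is induced by $Xm \colon X3 \to X2$ through the identification $X3 \cong X2 \times_{X1} X2$ from (a), and identities from $Xi$. The functor $q$ is the identity on objects; and on a morphism $\alpha \colon a \to a'$ in $X1$, we set $q(\alpha)$ to be the common value of the two lifts of $\alpha$ — along $Xc$ starting from $Xi(a)$, and along $Xd$ ending at $Xi(a')$ — which agree by the compatibility clause of the two-sided discrete fibration axiom. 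A routine verification then shows that the canonical comparison $X \to K(q)$ is an isomorphism: bijection at level $2$ is by construction of $Y$, morphism-level bijectivity is immediate from the unique-lifting property of the two-sided discrete fibration, and level $3$ is forced by the Segal condition on both sides.

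The main technical obstacle is the construction and verification of $q$ and $Y$: although each step is elementary, one must carefully exploit \emph{both} halves of the two-sided discrete fibration to see that $q(\alpha)$ is unambiguously defined, that $q$ really is a functor, and that the morphism-level comparison $X2 \to q \mathord{\mid} q$ is bijective. Once this is in hand, the three propositions cited in the opening paragraph combine to give the full statement.
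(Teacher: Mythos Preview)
Your approach mirrors the paper's almost exactly: both exhibit a set $\Ss$ encoding (a) and (b), invoke Propositions~\ref{prop:f-congruence-generating} and~\ref{prop:f-effective-cong}, and construct the quotient category explicitly---your $Y$ is the paper's $Q$, the ordinary category obtained by applying $\mathrm{ob} \colon \cat{Cat} \to \cat{Set}$ to the internal category $X$, and your description of $q$ on morphisms via opcartesian lifting from $Xi(a)$ is the paper's as well.

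There is, however, a small but genuine gap. You verify that the comparison $X \to K(q)$ is invertible, but ``$X$ is effective'' means that the unit $X \to K(QX)$ is invertible, and you have not argued that your hand-built $q$ actually \emph{is} the codescent object $QX$. Exhibiting $X$ as \emph{some} kernel $Kq$ is not by itself enough: that only says $X$ is a kernel, and Proposition~\ref{prop:codescent-not-effective} shows kernels need not be effective in general. The paper closes this gap by asserting (with reference to~\cite[Proposition~2.83]{Bourke2010Codescent}) that the constructed cocone $(q,\theta)$ is the universal codescent cocone under $X$. You could patch your version just as easily by observing directly that any bijective-on-objects functor in $\cat{Cat}$ is an effective $\F_\mathrm{bo}$-quotient (a two-line check using your description of $Y$ applied to the case $X = Kq$), so that $X \cong Kq$ with $q$ bijective on objects forces $QX \cong QKq \cong q$ and hence $KQX \cong Kq \cong X$. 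Either route is short, but one of them must be included.
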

In the statement of this result, we recall that a span of functors $p \colon \C \leftarrow \E \rightarrow \D \colon q$ is called a \emph{two-sided discrete fibration} if:
\begin{itemize}
\item for every $e \in \E$ and $\alpha \colon c \to pe$ in $\C$, there exists a unique $\bar \alpha \colon \alpha^\ast e \to e$ in $\E$ with $p(\bar \alpha) = \alpha$ and $q(\bar \alpha) = 1_{qe}$, and moreover $\bar \alpha$ is cartesian for $p$;
\item for every $e \in \E$ and $\beta \colon qe \to d$ in $\D$, there exists a unique $\bar \beta \colon e \to \beta_\ast e$ in $\E$ with $p(\bar \beta) = 1_{pe}$ and $q(\bar \beta) = \beta$, and moreover $\bar \beta$ is opcartesian for $q$.
\end{itemize}
A span in the general $2$-category $\A$ is a two-sided discrete fibration if it is sent to one in $\cat{Cat}$ by each representable $\A(A, \thg)$.
\begin{proof}[Proof of Proposition~\ref{prop:cateads}]
It is a straightforward exercise to construct a set $\Ss$ of morphisms between finitely 
%
%
%First we observe that there is a set $\Ss$ of morphisms between finitely 
presentable objects of $[\K, \cat{Cat}]$ such that an object $X \in [\K, \C]$
inverts $\{h, X\}$ for each $h \in \Ss$ just when it satisfies (a) and (b). 
Since every $\F_\mathrm{bo}$-kernel in $\cat{Cat}$ is known to satisfy (a) and (b), the set $\Ss$ just described is in fact a set of $\F_\mathrm{bo}$-congruence axioms; and the desired characterisation of the $\F_\mathrm{bo}$-congruences will follow from Proposition~\ref{prop:f-congruence-generating} if we can show that $\Ss$ is generating. This will in turn follow from Proposition~\ref{prop:f-effective-cong} if we can show that every $X \in [\K, \cat{Cat}]$ satisfying (a) and (b) is in fact the $\F_\mathrm{bo}$-kernel of its own $\F_\mathrm{bo}$-quotient. 

This is proven, for example, in~\cite[Proposition~2.83]{Bourke2010Codescent}; we recall the outline of the proof. Given $X \in [\K, \cat{Cat}]$ satisfying (a) and (b), its $\F_\mathrm{bo}$-quotient is $q \colon X1 \to Q$, where $Q$ is the category obtained by applying the pullback-preserving functor $\mathrm{ob} \colon \cat{Cat} \to \cat{Set}$ to the internal category $X$, and $q$ is the identity on objects, and takes a morphism $\alpha \in X1(x,y)$ to the morphism of $Q$ (i.e., object of $X2$) obtained as the codomain of the unique opcartesian lifting of the map $\alpha \colon x \to y$ at the object $(Xr)(x) \in X2$. The codescent $2$-cell $\theta \colon q.Xd \Rightarrow q.Xc \colon X2 \to Q$ has component at $\gamma \in X2$ given by $\gamma$ itself, seen as a map $(Xd)(\gamma) \to (Xc)(\gamma)$ of $Q$. It is now not hard to see that $\theta$ in fact exhibits $X2$ as the comma object $q \mathord{\mid} q$, from which it follows that $X$ is the $\F_\mathrm{bo}$-kernel of its own $\F_\mathrm{bo}$-quotient, as required.
\end{proof}

\subsection{(Surjective on objects, injective on objects and fully faithful)}
Let $\F_{\mathrm{so}}$ be the $2$-category obtained from $\F_{\mathrm{bo}}$ by adjoining a new object $2'$, a new morphism $j \colon 2' \to 2$
and new equations $wdj = wcj$ and $\theta j = 1_{wdj}$.
%; thus the underlying $1$-category of $\F_\mathrm{so}$ is generated by
%
%generated by the graph
%\begin{equation*}
%\cd{
%& 2' \ar[d]_j
%\\
%3 \ar@<6pt>[r]^{p} \ar[r]|{m} \ar@<-6pt>[r]_{q} &
%2 \ar@<6pt>[r]^{d} \ar@{<-}[r]|{i} \ar@<-6pt>[r]_{c} &
%1 \ar[r]^-w & 0
%}
%\end{equation*}
%together with a $2$-cell $\theta \colon wd \Rightarrow wc$, subject to same simplicial identities and cocycle conditions as before, together with the conditions
%$wdj = wcj$ and $\theta j = 1_{wdj}$. 
From this we obtain the following kernel--quotient system. The $\F_\mathrm{so}$-kernel of a map 
$f \colon A \to B$ in a finitely complete $\C$ is given by:
%
%
%
%The kernel--quotient adjunction $K \dashv Q$ this induces can be described as follows. 
%The kernel of $f \colon A \to B$ is given by
\begin{equation*}
\cd{
& A \times_B A \ar[d]_j 
\\
f \mathord{\mid} f \mathord{\mid} f \ar@<6pt>[r]^-{p} \ar[r]|-{m} \ar@<-6pt>[r]_-{q} &
f \mathord{\mid} f \ar@<6pt>[r]^-{d} \ar@{<-}[r]|-{i} \ar@<-6pt>[r]_-{c} &
A}
%}
%\qquad \qquad
%\cd[@C+0.7em]{
%& \cat 1 \ar@{<-}[d]_{!} 
%\\
%\cat 3 \ar@{<-}@<6pt>[r]^-{0\leqslant 1} \ar@{<-}[r]|-{0\leqslant 2} \ar@{<-}@<-6pt>[r]_-{1\leqslant 2} &
%\cat 2 \ar@{<-}@<6pt>[r]^-{0} \ar[r]|-{!} \ar@{<-}@<-6pt>[r]_-{1} &
%\cat 1\rlap{ .}
%}
\end{equation*}
wherein the bottom row is the $\F_\mathrm{bo}$-kernel, $A \times_B A$ is the pullback of $f$ along itself, and $j$ is the morphism which in $\cat{Cat}$ sends $(x,y)$ with $fx = fy$ to $(x,y,1 \colon fx \to fy)$.

The $\F_\mathrm{so}$-quotient of kernel-data $X \in [\K_\mathrm{so}, \C]$ is a codescent cocone $(Q, q, \theta)$ under the underlying $\F_\mathrm{bo}$-kernel-data which is universal amongst cocones for which  $\theta.Xj$ is an identity $2$-cell. In a sufficiently cocomplete $2$-category, we may construct the $\F_\mathrm{so}$-quotient by first forming the $\F_\mathrm{bo}$-quotient of the underlying $\F_\mathrm{bo}$-kernel-data, and then taking a coidentifier enforcing the  compatibility with $Xj$.

A map $f$ is an $\F_\mathrm{so}$-monic just when the maps $\delta_f \colon A \to A \times_B A$ and $\gamma_f \colon 
A^\mathbf 2 \to f \mathord{\mid} f$, defined in $\cat{Cat}$ by $\delta_f(a) = (a,a)$ and $\gamma_f(\alpha \colon x \to y) = (x,y,f\alpha \colon fx \to fy)$, are invertible. We call such morphisms \emph{full monics}, since in $\cat{Cat}$, they are precisely the injective on objects and fully faithful functors; as before, their definition in a general $2$-category is representable. 
%and call $\F_\mathrm{so}$-quotient maps \emph{mixed codescent morphisms}. 
We will call $\F_\mathrm{so}$-strong epis \emph{acute} following~\cite{Street1982Two-dimensional}; the following cancellativity property of acute maps, enhancing Proposition~\ref{prop:f-map-props}(d), will come in useful in what follows.
\begin{Prop}\label{prop:strongcanc}
In a $2$-category $\C$, if $gf$ is acute then so too is $g$.
\end{Prop}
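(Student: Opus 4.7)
The plan is to show that $g$ is $\V$-orthogonal to every full monic $m \colon C \to D$, whence $g$ is acute by definition. The key observation, via Proposition~\ref{prop:f-map-props}(a), is that each representable $\C(X, m)$ is injective on objects and fully faithful, and hence in particular monic in $\cat{Cat}$; this representable monicity is what drives the whole argument.

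For the $1$-cell filling, given a square $(u, v)$ with $u \colon A \to C$, $v \colon B \to D$ and $vg = mu$, I would precompose with $f$ to obtain the square $(uf, v)$ against $gf$, from whose acuteness I extract a unique $w \colon B \to C$ with $mw = v$ and $wgf = uf$. The missing identity $wg = u$ then follows by observing $m(wg) = vg = mu$ and cancelling on the left by injectivity on objects of $\C(A, m)$; uniqueness of $w$ against $g$ is inherited from uniqueness against $gf$. The $2$-cell diagonal fill is handled in exactly parallel fashion: given a compatible pair $(\beta, \alpha)$ of $2$-cells with $\beta g = m\alpha$, the filler $\gamma$ supplied by $gf$-orthogonality automatically satisfies $m\gamma = \beta$ and $\gamma g f = \alpha f$, and then $\gamma g = \alpha$ is recovered from $m(\gamma g) = m\alpha$ using the faithfulness of $\C(A, m)$ on $2$-cells, this being a direct consequence of $m$ being fully faithful.

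There is no substantial obstacle beyond keeping the orthogonality data for $gf$ and $g$ correctly aligned. The content of the proposition lies in the recognition that the two defining properties of a full monic---injectivity on objects and full faithfulness---package together exactly the two cancellations demanded by the $1$-cell and $2$-cell parts of orthogonality. By way of contrast, the analogous cancellation statement genuinely fails for $\F_\mathrm{bo}$-strong epis, since a merely fully faithful $m$ need not have monic representables, blocking the step $m(wg) = mu \implies wg = u$; this is precisely what distinguishes full monics from fully faithful maps in the present argument.
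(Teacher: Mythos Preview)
Your proposal is correct and follows exactly the route the paper takes. The paper's proof is a single sentence---``This is an easy consequence of the fact that full monics are monic''---and what you have written is precisely the unpacking of that sentence: each $\C(X,m)$ is monic in $\cat{Cat}$ (injective on objects and faithful), which supplies the two cancellation steps you describe for the $1$-cell and $2$-cell fills.
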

\begin{proof}
This is an easy consequence of the fact that full monics are monic.
\end{proof}
 
\begin{Prop}\label{prop:ker-quot-so-cat}
Kernel--quotient factorisations for $\F_\mathrm{so}$ converge immediately in $\cat{Cat}$, where they are given by the (surjective on objects, injective on objects and fully faithful) factorisation of a functor; in particular, they are stable under pullback.
\end{Prop}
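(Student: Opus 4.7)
The plan is to bootstrap from the $\F_\mathrm{bo}$-case handled in Proposition~\ref{prop:ker-quot-bo-cat}, exploiting the fact that an $\F_\mathrm{so}$-quotient of an $\F_\mathrm{so}$-kernel is obtained from the corresponding $\F_\mathrm{bo}$-quotient by a further coidentifier, as explained in the preamble to the proposition.

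Given $f \colon A \to B$ in $\cat{Cat}$, Proposition~\ref{prop:ker-quot-bo-cat} provides the $\F_\mathrm{bo}$-factorisation $A \xrightarrow{g} A' \xrightarrow{m} B$, where $A'$ has $\mathrm{ob}(A') = \mathrm{ob}(A)$ and hom-sets $A'(x,y) = B(fx,fy)$, with $g$ bijective on objects and $m$ fully faithful. The $\F_\mathrm{so}$-quotient of the $\F_\mathrm{so}$-kernel of $f$ is then obtained from $A'$ by coidentifying the $2$-cell $\theta \circ Xj$, whose component at a pair $(x,y) \in A \times_B A$ (necessarily with $fx = fy$) is the morphism $1_{fx} \colon x \to y$ in $A'$. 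I would compute this coidentifier directly by verifying its universal property in $\cat{Cat}$: a functor $\bar q \colon A' \to Q$ inverts $\theta \circ Xj$ precisely when each such $1_{fx} \colon x \to y$ is sent to an identity in $Q$, which forces $\bar q x = \bar q y$ whenever $fx = fy$ and makes the action on morphisms $B(fx, fy) \to Q(\bar q x, \bar q y)$ depend only on $fx$, $fy$ and the arrow between them. Such data is precisely a functor out of the full image $\im(f) \subseteq B$, so the universal coidentifier is $A' \to \im(f)$.

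Composing with $g$ yields a factorisation $f = f_2 \circ f_1$ where $f_1 \colon A \to \im(f)$ is surjective on objects and $f_2 \colon \im(f) \hookrightarrow B$ is injective on objects and fully faithful. The latter being an $\F_\mathrm{so}$-monic, the kernel--quotient factorisation converges immediately and is exactly the claimed (so, ioff) factorisation. For pullback stability I would note that $\F_\mathrm{so}$-monics are closed under pullback by Proposition~\ref{prop:f-map-props}(e), while surjectivity on objects is pullback stable in $\cat{Cat}$ since the object functor $\mathrm{ob} \colon \cat{Cat} \to \cat{Set}$ preserves pullbacks and surjections are pullback stable in $\cat{Set}$; hence the (so, ioff) factorisation of $f$ is preserved by pullback along any functor.

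The main obstacle, though routine, is the coidentifier computation: while intuitively clear that collapsing $x$ and $y$ when $fx = fy$ produces $\im(f)$, rigorously verifying the universal property in $\cat{Cat}$ requires careful attention to how morphisms must be identified by the generated congruence, and to the fact that no further identifications on morphisms beyond those predicted are forced.
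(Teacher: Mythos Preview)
Your proposal is correct and follows essentially the same route as the paper: both obtain the $\F_\mathrm{so}$-quotient of a kernel by first forming the $\F_\mathrm{bo}$-quotient and then imposing the extra identification coming from $Xj$, arriving at the full image of $f$. The only difference is organisational: the paper's proof is a one-line forward reference to the explicit description of $\F_\mathrm{so}$-quotients of arbitrary $\F_\mathrm{so}$-congruences given in the proof of Proposition~\ref{prop:mixed-cateads}, whereas you carry out the special case of kernels directly (and also spell out the pullback-stability argument, which the paper leaves implicit).
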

\begin{proof}
This follows immediately from the explicit descriptions of $\F_\mathrm{so}$-quotients of $\F_\mathrm{so}$-congruences in $\cat{Cat}$ given in the proof of Proposition~\ref{prop:mixed-cateads} below.
\end{proof}

 \begin{Cor}\label{cor:ker-quot-so-2topos}
Kernel--quotient factorisations for $\F_\mathrm{so}$ are stable and converge immediately in any $\F_\mathrm{so}$-regular $2$-category (in particular, in any $2$-topos); thus, the classes of $\F_\mathrm{so}$-strong epis, $\F_\mathrm{so}$-quotients and effective $\F_\mathrm{so}$-quotients coincide and are pullback-stable in any $\F_\mathrm{so}$-regular $2$-category.
\end{Cor}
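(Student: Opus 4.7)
The plan is to deduce this corollary as a direct instance of the general machinery already established, mirroring the argument used for Corollary~\ref{cor:ker-quot-bo-2topos}. The enrichment base is $\V = \cat{Cat}$, so the ``base'' $\V$-category in which we need immediate convergence and pullback-stability of $\F_\mathrm{so}$-kernel--quotient factorisations is $\cat{Cat}$ itself, and this is precisely the content of the preceding Proposition~\ref{prop:ker-quot-so-cat}. Proposition~\ref{prop:v-topos-phi-exact} then guarantees that every $2$-topos is $\F_\mathrm{so}$-regular (in fact $\Phi$-exact for every class of lex-weights), which accounts for the parenthetical assertion about $2$-toposes.

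With these two inputs in hand, the substantive work is done by Proposition~\ref{prop:ker-quot-immediate-fregular}: part~(a) applied to $\V = \cat{Cat}$ and $\F = \F_\mathrm{so}$ yields immediate convergence of kernel--quotient factorisations in any $\F_\mathrm{so}$-regular $2$-category $\C$, while part~(b) applied in the same way gives pullback-stability of these factorisations in $\C$ and moreover pullback-stability of effective $\F_\mathrm{so}$-quotient maps.

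Finally, for the coincidence of the three classes of maps, I appeal to Proposition~\ref{prop:fkera}: once $\F_\mathrm{so}$-kernel--quotient factorisations are known to converge immediately in $\C$, that proposition delivers an $(\F_\mathrm{so}\text{-quotient}, \F_\mathrm{so}\text{-monic})$ factorisation system on $\C$ and identifies the classes of $\F_\mathrm{so}$-strong epis, $\F_\mathrm{so}$-quotients, and effective $\F_\mathrm{so}$-quotients. Combined with the pullback-stability of any one of these (equivalent) classes from the previous paragraph, this yields all remaining conclusions.

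No genuine obstacle is anticipated: the result is essentially a bookkeeping corollary that bundles Propositions~\ref{prop:v-topos-phi-exact}, \ref{prop:fkera}, \ref{prop:ker-quot-immediate-fregular}, and~\ref{prop:ker-quot-so-cat}. The only point requiring a moment's care is verifying that the hypotheses of Proposition~\ref{prop:ker-quot-immediate-fregular} (immediate convergence and pullback-stability in $\V$) are both genuinely supplied by Proposition~\ref{prop:ker-quot-so-cat}, and that ``$\F$-regular'' as used in that general result specialises correctly to the $\Phi_{\F_\mathrm{so}}^{\mathrm{reg}}$-exactness meaning intended here; both are immediate from the definitions in Section~\ref{sec:freg}. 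The proof can therefore reasonably be written as a one-line citation, exactly in the style of Corollary~\ref{cor:ker-quot-bo-2topos}.
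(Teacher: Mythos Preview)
Your proposal is correct and follows essentially the same approach as the paper, which simply cites Propositions~\ref{prop:v-topos-phi-exact}, \ref{prop:ker-quot-immediate-fregular} and~\ref{prop:ker-quot-so-cat}. Your additional explicit invocation of Proposition~\ref{prop:fkera} for the coincidence of the three classes is a reasonable elaboration that the paper leaves implicit in the word ``thus''.
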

\begin{proof}
By Propositions~\ref{prop:v-topos-phi-exact}, \ref{prop:ker-quot-immediate-fregular} and \ref{prop:ker-quot-so-cat}.
\end{proof}
But once again, in a general $2$-category, we have that:
 \begin{Prop}\label{prop:mixed-codescent-not-effective}
$\F_\mathrm{so}$-quotient maps need not be effective, even in a locally finitely presentable (and hence complete and cocomplete) $2$-category.
\end{Prop}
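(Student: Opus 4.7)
The plan is to adapt the counterexample of Proposition~\ref{prop:codescent-not-effective}: I will show that, for the maps considered there, the refinement from $\F_\mathrm{bo}$ to $\F_\mathrm{so}$ is \emph{invisible}, so that the same arithmetic obstruction applies. Once again I work in $\cat{Ab}\text-\cat{Cat}$, which is locally finitely presentable (and hence complete and cocomplete), and consider an $\cat{Ab}$-functor $\phi \colon \R \to \Ss$ coming from a ring homomorphism $\phi \colon R \to S$ between one-object $\cat{Ab}$-categories.

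First I would identify the $\F_\mathrm{so}$-kernel of $\phi$. Its underlying $\F_\mathrm{bo}$-kernel-data is exactly as computed in Proposition~\ref{prop:codescent-not-effective}; the new piece is the pullback $A \times_B A$, which is the one-object $\cat{Ab}$-category with hom-ring $\{(r_1,r_2) \in R \oplus R : \phi(r_1) = \phi(r_2)\}$, together with the map $j \colon A \times_B A \to \phi | \phi$ picking out the object $1 \in S$ and sending a pair $(r_1,r_2)$ to itself, viewed as an endomorphism of~$1$.

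The key step---which, though mostly formal, is the one requiring verification---is to observe that the $\F_\mathrm{so}$-quotient of this kernel coincides with its $\F_\mathrm{bo}$-quotient. Equivalently, I would show that the additional constraint $\theta \cdot Xj = 1$ holds automatically for any $\F_\mathrm{bo}$-codescent cocone $(C, x, \gamma, \theta)$. Since $A \times_B A$ has only one object $\star$, the whiskered $2$-cell $\theta \cdot Xj$ has a single component $\theta(1) \in C(x,x)$, which is forced to equal $1_x$ by the $\F_\mathrm{bo}$-cocycle condition $\theta i = 1$. Hence the coidentifier step producing the $\F_\mathrm{so}$-quotient from the $\F_\mathrm{bo}$-quotient is vacuous, and the $\F_\mathrm{so}$-quotient of the $\F_\mathrm{so}$-kernel of $\phi$ is again the Ab-functor $\psi \colon \R \to \Ss'$ with
\[
\Ss'(\star,\star) = R[x_s \mid s \in S] \,/\, \langle x_s x_t - x_{st},\, x_{\phi(r)} - r\rangle
\]
from Proposition~\ref{prop:codescent-not-effective}.

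By Proposition~\ref{prop:effectivity-ker-quot} it now suffices to produce a single non-effective $\F_\mathrm{so}$-kernel. If the $\F_\mathrm{so}$-kernel of $\phi$ were effective then it would agree with the $\F_\mathrm{so}$-kernel of $\psi$; in particular, the components at the object $2$ of $\F_\mathrm{so}$ would agree, giving an isomorphism $\phi | \phi \cong \psi | \psi$ of $\cat{Ab}$-categories and so, on object sets, a bijection $S \cong S'$. Specialising to the inclusion $\phi \colon \mathbb F_2 \to \mathbb F_4$ of Proposition~\ref{prop:codescent-not-effective} yields the contradiction $|S| = 4 \neq 8 = |S'|$, completing the proof.
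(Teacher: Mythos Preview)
Your proposal is correct and follows essentially the same approach as the paper's proof: reduce to Proposition~\ref{prop:codescent-not-effective} by showing that for one-object $\cat{Ab}$-categories the extra $\F_\mathrm{so}$-constraint $\theta \cdot Xj = 1$ is vacuous (the single component being $\theta(1) = 1$, already forced by the cocycle axiom), so the $\F_\mathrm{so}$- and $\F_\mathrm{bo}$-quotients of the kernel of $\phi$ coincide. The paper argues this slightly more tersely but the content is identical.
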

(Note that this corrects an error in~\cite[\S1.14]{Street1982Two-dimensional}, wherein it is claimed that $\F_\mathrm{so}$-quotient maps are always effective.)
\begin{proof}
As in the proof of Proposition~\ref{prop:codescent-not-effective}, we consider the $2$-category $\cat{Ab}\text-\cat{Cat}$. Let $\phi \colon \R \to \Ss$ be an $\cat{Ab}$-functor between one-object $\cat{Ab}$-categories; we claim that the $\F_\mathrm{so}$-quotient of its $\F_\mathrm{so}$-kernel coincides with the $\F_\mathrm{bo}$-quotient of its $\F_\mathrm{bo}$-kernel. Thus if the $\F_\mathrm{bo}$-kernel of $\phi$ is not effective, then neither is its $\F_\mathrm{so}$-kernel; and so the result follows by taking the counterexample from Proposition~\ref{prop:codescent-not-effective}.
To prove the claim, observe that to give a cocone with vertex $\C$ under the $\F_\mathrm{so}$-kernel of $\phi$ is to give a cocone under the $\F_\mathrm{bo}$-kernel of $\phi$ for which the composite
\[
\cd[@R-1.7em]{
& & \R \ar[dr] \\
\R \times_{\Ss} \R \ar[r] & \phi | \phi \ar[ur]^d \ar[dr]_c \dtwocell{rr}{\theta} & & \C \\ &
&\R \ar[ur]}
\]
is an identity $2$-cell. Now $\R \times_{\Ss} \R$ has a single object, which is sent to the multiplicative unit of $S$ in $\phi | \phi$; and so this condition states that $\theta(1) = 1$ in $\C(x,x)$. Since this condition is already verified by a cocone under the $\F_\mathrm{bo}$-kernel, the $\F_\mathrm{so}$-quotient and the $\F_\mathrm{bo}$-quotient coincide as claimed.
\end{proof}

The following result gives an elementary characterisation of the $\F_\mathrm{so}$-congruences; the conditions it isolates were first stated in~\cite[\S1.8]{Street1982Two-dimensional}.
\begin{Prop}\label{prop:mixed-cateads}
A diagram $X \in [\K_\mathrm{so}, \C]$ is an $\F_\mathrm{so}$-congruence if and only if:
\begin{enumerate}[(a)]
\item Its restriction to an object of $[\K_\mathrm{bo}, \C]$ is an $\F_\mathrm{bo}$-congruence;
\item $X(dj), X(cj) \colon X2' \rightrightarrows X1$ is an equivalence relation in $\C$;
\item $Xj$ is full monic;
\item The graph morphism
\begin{equation*}
\cd[@C+2em]{
X2' \ar[r]^{Xj} \ar@<-3pt>[d]_{X(dj)} \ar@<3pt>[d]^{X(cj)} &
X2 \ar@<-3pt>[d]_{Xd} \ar@<3pt>[d]^{Xc} &
\\
X1 \ar[r]_{1} & X1
}
\end{equation*}
is an internal functor.
\end{enumerate}
$\F_\mathrm{so}$-congruences are effective in $\cat{Cat}$, and hence in every $\F_\mathrm{so}$-exact $2$-category.
\end{Prop}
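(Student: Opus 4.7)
The proof proceeds along the lines of Proposition~\ref{prop:cateads}. By combining Propositions~\ref{prop:f-congruence-generating} and~\ref{prop:f-effective-cong}, it suffices to exhibit a small set $\Ss$ of morphisms between finitely presentable objects of $[\K_\mathrm{so}, \cat{Cat}]$ such that (i) an object $X \in [\K_\mathrm{so}, \C]$ inverts $\{h, X\}$ for each $h \in \Ss$ precisely when (a)--(d) hold, (ii) every $\F_\mathrm{so}$-kernel in $\cat{Cat}$ satisfies (a)--(d), and (iii) every $X \in [\K_\mathrm{so}, \cat{Cat}]$ satisfying (a)--(d) is the $\F_\mathrm{so}$-kernel of its own $\F_\mathrm{so}$-quotient. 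Effectiveness in any $\F_\mathrm{so}$-exact $2$-category will then follow from effectiveness in $\cat{Cat}$ by Proposition~\ref{prop:ker-quot-immediate-fregular}(c).

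Parts (i) and (ii) are routine. For (i) we enlarge the generating set used in the proof of Proposition~\ref{prop:cateads} (which handles (a)) with further maps encoding: the equivalence relation axioms for $X(dj), X(cj) \colon X2' \rightrightarrows X1$ from (b); the invertibility of the comparisons $\delta_{Xj}$ and $\gamma_{Xj}$ witnessing full monicity in (c); and the preservation by $Xj$ of identities and composites in the internal category $X$ from (d). For (ii), condition (a) holds for the $\F_\mathrm{so}$-kernel of $f \colon A \to B$ by Proposition~\ref{prop:cateads}; (b) holds because $A \times_B A \rightrightarrows A$ is the kernel-pair of $f$; while (c) and (d) follow by direct inspection using the explicit description of $j$ in $\cat{Cat}$.

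The substantive work is in (iii). Given $X$ satisfying (a)--(d), Proposition~\ref{prop:cateads} applied to its underlying $\F_\mathrm{bo}$-data yields an $\F_\mathrm{bo}$-quotient $q_0 \colon X1 \to Q_0$, identity on objects, obtained by applying the functor $\mathrm{ob} \colon \cat{Cat} \to \cat{Set}$ to $X$ viewed as an internal category. The $\F_\mathrm{so}$-quotient $q \colon X1 \to Q$ is then the composite of $q_0$ with the coidentifier $k \colon Q_0 \to Q$ of the $2$-cell $\theta_0.Xj \colon q_0 . X(dj) \Rightarrow q_0 . X(cj)$ witnessing the extra $\F_\mathrm{so}$-cocycle condition. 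On objects, $k$ imposes precisely the equivalence relation of (b)---making $q$ surjective on objects---while on morphisms, it identifies each $Xj(e)$ with an identity and propagates this identification under composition.

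It remains to verify that the induced comparison from $X$ into the $\F_\mathrm{so}$-kernel of $q$ is invertible. The $2'$-position comparison $X2' \to X1 \times_Q X1$ is an isomorphism by (b) together with the surjectivity-on-objects and fullness of $q$. The $2$-position comparison $X2 \to q \mathord{\mid} q$ requires showing that each morphism of $Q$ between arbitrary endpoints $x, y \in X1$ is represented by a unique object of $X2$: existence uses the fullness of $q_0$ and (d), while uniqueness amounts to showing that if two parallel elements of $X2$ descend to the same morphism of $Q$ then they are already equal in $X2$, which reduces via (c) and (d) to a normal-form analysis of the identifications on morphisms of $Q_0$ generated by the action of $Xj$. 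Once these comparisons are in place, the $3$-position comparison is automatic from (a). The main obstacle is this bookkeeping for the $2$-position comparison, where conditions (c) and (d) must conspire to prevent any collapsing of morphisms in $Q$ beyond what $X2'$ already records.
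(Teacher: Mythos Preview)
Your proposal follows the same overall strategy as the paper: reduce to effectiveness in $\cat{Cat}$ via Propositions~\ref{prop:f-congruence-generating} and~\ref{prop:f-effective-cong}, construct the $\F_\mathrm{so}$-quotient as the $\F_\mathrm{bo}$-quotient followed by a further identification, and then verify that $X$ is the $\F_\mathrm{so}$-kernel of the result. The high-level outline is sound.

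Where your account diverges from the paper's is in the description of the second step $k \colon Q_0 \to Q$. You invoke the coidentifier abstractly and defer the content to a ``normal-form analysis''. The paper instead gives $Q$ explicitly: one quotients $\mathrm{ob}\,Q_0$ by the equivalence relation $\sim$ coming from (b), and quotients morphisms by declaring $\alpha \colon x \to y$ and $\beta \colon w \to z$ equivalent precisely when $x \sim w$, $y \sim z$, and $\phi_{yz}\,\alpha = \beta\,\phi_{xw}$, where the $\phi_{xy}$ are the canonical morphisms of $Q_0$ supplied by (c) and (d). This conjugation description is the real point: it is exactly what is needed to verify by inspection that $X2 \to q\mid q$ is invertible, and without it your ``normal-form analysis'' would have to rediscover the same formula. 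So your sketch is not wrong, but the substance has been pushed into an unspecified step.

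A minor correction: in your $2'$-position argument you appeal to the ``fullness of $q$'', but fullness is not the relevant property for showing $X2' \cong X1 \times_Q X1$. What is needed is that $k$ identifies objects of $Q_0$ exactly along $\sim$ and no further (this uses that $\sim$ is already an equivalence relation by (b)), together with the full monicity of $Xj$ from (c) to handle the morphism part of the pullback.
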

\begin{proof}
As before, we may easily construct
% begin by constructing 
 a set $\Ss$ of morphisms between finitely presentable objects of $[\K_\mathrm{so}, \cat{Cat}]$ such that an object $X \in [\K_\mathrm{so}, \C]$
inverts $\{h, X\}$ for each $h \in \Ss$ just when it satisfies (a)--(d).
%For (a), we take the left Kan extension along the inclusion $\K_\mathrm{bo} \to \K_{\mathrm{so}}$ 
%of the set of morphisms from the proof of Proposition~\ref{prop:cateads}. For (b), we take (left Kan extensions of) morphisms as described at the end of Section 2 above; for (c), we take the comparison morphisms between the objects forming the mixed kernels of $1_{\K(2', \thg)} \colon \K(2', \thg) \to \K(2', \thg)$ and $\K(j, \thg) \colon \K(2', \thg) \to \K(2, \thg)$ in $[\K, \cat{Cat}]^\op$. Finally, for (d), we take two morphisms $e \colon \phi_e \to \psi_e$ and $h \colon \phi_h \to \psi_h$ where:
%\begin{itemize}
%\item $\phi_e$ is freely generated by an object $x \in \phi_e(2')$ satisfying $dj(x) = cj(x)$, and $e$ is the quotient map imposing the additional equation $j(x) = rdj(x)$.
%\item $\phi_h$ is freely generated by objects $x, y, z \in \phi_h(2')$ satisfying $cj(x) = dj(y)$, $dj(x) = dj(z)$ and $cj(y) = cj(z)$, and $h$ is the map which adjoins to $\phi_h$ an object $w \in \psi_h(3)$ satisfying $p(w) = j(x)$, $q(w) = j(y)$ and $m(w) = j(z)$.
%\end{itemize}
Every $\F_\mathrm{so}$-kernel in $\cat{Cat}$ satisfies (a)--(d), so that $\Ss$ is in fact a set of $\F_\mathrm{so}$-congruence axioms; and the desired characterisation of the $\F_\mathrm{so}$-congruences will follow from Propositions~\ref{prop:f-congruence-generating} and~\ref{prop:f-effective-cong} if we can show that every $X \in [\K_\mathrm{so}, \cat{Cat}]$ satisfying (a)--(d) is in fact the $\F_\mathrm{so}$-kernel of its own $\F_\mathrm{so}$-quotient.

This is proven as~\cite[Theorem~2.3]{Street1982Two-dimensional};  let us once more recall the outline of the proof. Given $X \in [\K_\mathrm{so}, \cat{Cat}]$ satisfying (a)--(d), its $\F_\mathrm{so}$-quotient in $\cat{Cat}$ is constructed as follows. By (a), $X$ has an underlying $\F_\mathrm{bo}$-congruence; we start by forming the $\F_\mathrm{bo}$-quotient $q \colon X1 \to Q$ of this. Now by (b), we obtain from $X2' \rightrightarrows X1$ an equivalence relation $\sim$ on the set of objects of $Q$; and by (c) and (d), we have an identity-on-objects functor from the category $E$ instantiating this equivalence relation into $Q$. Thus, whenever $x \sim y \in Q$, there is a specified morphism $\phi_{xy} \colon x \to y$ of $Q$, such that $\phi_{xx} = 1_x$ and $\phi_{yz} \circ \phi_{xy} = \phi_{xz}$. We now define the $\F_\mathrm{so}$-quotient of $X$ to be the composite of $q \colon X_1 \to Q$ with the functor $k \colon Q \to R$ obtained by quotienting the object set of $Q$ by $\sim$,
and quotienting the morphism set by the equivalence relation for which $\alpha \colon x \to y$ and $\beta \colon w \to z$ are related just when $x \sim w$ and $y \sim z$ and the square
\begin{equation*}
\cd{
x \ar[r]^{\phi_{xw}} \ar[d]_\alpha &
w \ar[d]^\beta \\
y \ar[r]_{\phi_{yz}} & z
}
\end{equation*}
commutes. Direct calculation shows that $X$ is the $\F_\mathrm{so}$-kernel of $kq$, as required.
\end{proof}

\subsection{(Bijective on objects and full, faithful)}
Consider the $2$-category $\F_\mathrm{bof}$ generated by the left-hand graph in:
\begin{equation*}
\cd[@C+1em]{
 2 \ar@/^12pt/[r]^-{u} \ar@/_12pt/[r]_-{v} \dtwocell[0.32]{r}{\alpha} \dtwocell[0.6]{r}{\beta} & 1 \ar[r]^w & 0
} \qquad \quad
\cd[@C+1em]{
 \mathrm{Eq}(f) \ar@/^12pt/[r]^{u} \ar@/_12pt/[r]_{v} \dtwocell[0.32]{r}{\alpha} \dtwocell[0.6]{r}{\beta} & A \ar[r]^f & B\rlap{ ,}
}
\end{equation*}
subject to the relation $w\alpha = w\beta$. 
From this we obtain the following kernel--quotient system. The $\F_\mathrm{bof}$-kernel of a map 
$f \colon A \to B$ in a finitely complete $\C$ is given as on the right above;
in $\cat{Cat}$, $\mathrm{Eq}(f)$ is the category with objects, parallel pairs of morphisms $(a, b \colon x \rightrightarrows y)$ in $A$ such that $f(a) = f(b)$ in $B$; at such an object, the functors $u$ and $v$ take values $x$ and $y$ respectively, whilst $\alpha$ and $\beta$ have respective components $a$ and $b$.
%
%For any suitably complete and cocomplete $2$-category $\C$, we obtain from this the kernel--quotient adjunction $K \dashv Q$, where $K \colon [\mathbf 2, \C] \to [\K, \C]'$ sends 
%$f$ to its \emph{equikernel}: the limit of $f$ weighted by the diagram in $\cat{Cat}$ on the left below:
%%the $2$-functor $W \in [\mathbf 2, \cat{Cat}]$ picking out the unique identity-on-objects functor 
%\begin{equation*}
%(0 \rightrightarrows 1) \ \ \rightarrow \ \  (0 \rightarrow 1) \qquad \qquad \qquad
%\cd[@C+2em]{
% \!\!(0 \rightarrow 1) \ar@{<-}@/^12pt/[r]^-{0} \ar@{<-}@/_12pt/[r]_-{1} \dtwocell[0.44]{r}{} \dtwocell[0.62]{r}{} & (\ast)
%}\ \text,
%\end{equation*}
%It thus consists of a diagram
%\begin{equation*}
%\cd[@C+2em]{
% \!\mathrm{Eq}(f) \ar@/^12pt/[r]^-{p_1} \ar@/_12pt/[r]_-{p_2} \dtwocell[0.35]{r}{\alpha_1} \dtwocell[0.6]{r}{\alpha_2} & C \ar[r]^f & D
%}
%\end{equation*}
%in $\K$, universal such that $f \alpha_1 = f \alpha_2$. 
The $\F_\mathrm{bof}$-quotient of kernel-data $X \in [\K_\mathrm{bof}, \C]$ is its \emph{coequifier}: the universal $1$-cell $q \colon X1 \to Q$ with $q.X\alpha = q.X\beta$.

A morphism $f \colon A \to B$ of $\C$ is an $\F_\mathrm{bof}$-monic just when the canonical comparison map $A^\mathbf 2 \to \mathrm{Eq}(f)$ is invertible. We call such morphisms \emph{faithful}; when $\C = \cat{Cat}$, they are precisely the faithful functors, whilst in the general $\C$, they are the morphisms 
 $f$ for which $\C(X, f)$ is faithful for all $X \in \C$. 
 %We call the $\F_\mathrm{bof}$-quotient maps \emph{coequifier morphisms}, and call the corresponding $\F_\mathrm{bof}$-strong epis the \emph{full objectives}. 
 
\begin{Prop}\label{prop:ker-quot-bof-cat}
Kernel--quotient factorisations for $\F_\mathrm{bof}$ converge immediately in $\cat{Cat}$, where they are given by the (bijective on objects and full, faithful) factorisation of a functor; in particular, they are stable under pullback.
\end{Prop}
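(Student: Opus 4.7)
The plan is to compute the coequifier in $\cat{Cat}$ explicitly and identify the resulting factorisation directly. Given $f \colon A \to B$, the $\F_{\mathrm{bof}}$-kernel is the parallel pair $u,v \colon \mathrm{Eq}(f) \rightrightarrows A$ equipped with the natural transformations $\alpha, \beta$ whose component at $(x,y,a,b) \in \mathrm{Eq}(f)$ is $a$ and $b$ respectively. The coequifier of $\alpha,\beta$ in $\cat{Cat}$ is the universal $q \colon A \to Q$ with $q\alpha = q\beta$; a standard calculation shows that $Q$ is the category with the same objects as $A$ and with $Q(x,y) = A(x,y)/{\sim}$, where ${\sim}$ is the relation $a \sim b \iff f(a) = f(b)$. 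Since $f$ is a functor, this relation is already an equivalence relation on each hom-set and is compatible with composition, so no generation is needed. Thus $q$ is bijective on objects and full.

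Next I would check that $f$ factors as $f = \bar f \circ q$ for a unique $\bar f \colon Q \to B$, and that $\bar f$ is faithful, so exhibiting the second half of the $\F_{\mathrm{bof}}$-kernel--quotient factorisation as an $\F_{\mathrm{bof}}$-monic. The factorisation follows from the universal property of the coequifier since $f\alpha = f\beta$ by the defining property of $\mathrm{Eq}(f)$. For faithfulness, if $[a], [b] \in Q(x,y)$ satisfy $\bar f[a] = \bar f[b]$, then $f(a) = f(b)$ in $B$, so $a \sim b$ and $[a] = [b]$. This shows that $\F_{\mathrm{bof}}$-kernel--quotient factorisations converge immediately in $\cat{Cat}$, and by the essential uniqueness of $(\F_{\mathrm{bof}}\text{-quotient}, \F_{\mathrm{bof}}\text{-monic})$ factorisations from Proposition~\ref{prop:fkera}, this coincides with the (bijective on objects and full, faithful) factorisation.

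For pullback stability, it suffices to show that the class of functors which are bijective on objects and full is stable under pullback in $\cat{Cat}$. Given such a $q \colon A \to Q$ and any $g \colon B \to Q$, form the pullback $P = A \times_Q B$ in $\cat{Cat}$; an object is a pair $(a,b)$ with $qa = gb$, and morphisms are compatible pairs. For $b \in B$, surjectivity of $q$ on objects yields $a \in A$ with $qa = gb$, so the projection $P \to B$ is bijective on objects (uniqueness following from the pullback description); fullness of $q$ similarly lifts an arrow $b \to b'$ in $B$ to a compatible pair in $P$. The result then follows; alternatively, one may deduce stability from the general theory of orthogonal factorisation systems on a locally cartesian closed $2$-category once the factorisation is established.

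The main obstacle is not conceptual but bookkeeping: verifying that the pointwise coequifier construction really yields the coequifier in $\cat{Cat}$, and that $\sim$ as defined is already closed under composition (so that no free closure is needed in forming $Q$). Both are direct checks, so the argument should proceed smoothly.
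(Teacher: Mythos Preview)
Your proof is correct and follows essentially the same approach as the paper: the paper's proof simply defers to the explicit computation of $\F_{\mathrm{bof}}$-quotients of $\F_{\mathrm{bof}}$-congruences carried out in the proof of Proposition~\ref{prop:congruences}, where one finds precisely your description of $Q$ as the quotient of $A$ by the hom-wise equivalence relation $a \sim b \iff f(a)=f(b)$, together with the observation that the induced map to $B$ is faithful. Your direct verification of pullback-stability of bijective-on-objects-and-full functors is a small addition (the paper leaves this implicit), but it is straightforward and correct.
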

\begin{proof}
This follows immediately from the explicit descriptions of $\F_\mathrm{bof}$-kernels and of $\F_\mathrm{bof}$-quotients of $\F_\mathrm{bof}$-congruences in $\cat{Cat}$ given in the proof of Proposition~\ref{prop:congruences} below.
\end{proof}

\begin{Cor}\label{cor:ker-quot-bof-2topos}
Kernel--quotient factorisations for $\F_\mathrm{bof}$ are stable and converge immediately in any $\F_\mathrm{bof}$-regular $2$-category (in particular, in any $2$-topos); thus, the classes of $\F_\mathrm{bof}$-strong epis and $\F_\mathrm{bof}$-quotient maps coincide and are pullback-stable in any $\F_\mathrm{bof}$-regular $2$-category.
\end{Cor}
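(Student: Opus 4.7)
The plan is to follow the template provided by the two preceding corollaries (Cor.~\ref{cor:ker-quot-bo-2topos} and~\ref{cor:ker-quot-so-2topos}), whose proofs are essentially identical and consist of chaining together three earlier results. Since no new ingredient appears to be needed for the $\F_\mathrm{bof}$ case, I do not anticipate any substantive obstacle; the only thing to verify is that each link in the chain applies cleanly.

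First I would appeal to Proposition~\ref{prop:v-topos-phi-exact} to observe that every $2$-topos is $\Phi$-exact for every class of lex-weights $\Phi$, and in particular for $\Phi = \Phi_{\F_\mathrm{bof}}^{\mathrm{reg}}$; so a $2$-topos is automatically $\F_\mathrm{bof}$-regular, which accounts for the parenthetical clause in the statement. Next I would apply Proposition~\ref{prop:ker-quot-bof-cat}, which supplies the two key hypotheses about the base $\V = \cat{Cat}$: $\F_\mathrm{bof}$-kernel--quotient factorisations converge immediately in $\cat{Cat}$, and they are stable under pullback there. These are exactly the hypotheses of parts (a) and (b) of Proposition~\ref{prop:ker-quot-immediate-fregular}, and applying that proposition transfers both properties from $\cat{Cat}$ to any $\F_\mathrm{bof}$-regular $2$-category $\C$, also yielding pullback-stability of effective $\F_\mathrm{bof}$-quotients in $\C$.

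Finally, immediate convergence of factorisations in $\C$ puts us in the setting of Proposition~\ref{prop:fkera}, which identifies the classes of $\F_\mathrm{bof}$-strong epis, $\F_\mathrm{bof}$-quotient maps and effective $\F_\mathrm{bof}$-quotient maps; combining this identification with the pullback-stability already established gives the claimed pullback-stability of this common class. In short, the proof will read: \emph{By Propositions~\ref{prop:v-topos-phi-exact}, \ref{prop:ker-quot-immediate-fregular} and~\ref{prop:ker-quot-bof-cat}}, exactly as for the preceding two corollaries.
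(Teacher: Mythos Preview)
Your proposal is correct and matches the paper's own proof exactly: the paper's argument reads simply ``By Propositions~\ref{prop:v-topos-phi-exact}, \ref{prop:ker-quot-immediate-fregular} and~\ref{prop:ker-quot-bof-cat}'', just as you anticipated. Your additional remark that Proposition~\ref{prop:fkera} is what justifies the ``thus'' clause in the statement is a helpful unpacking of what is left implicit in the paper.
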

\begin{proof}
By Propositions~\ref{prop:v-topos-phi-exact}, \ref{prop:ker-quot-immediate-fregular} and \ref{prop:ker-quot-bof-cat}.
\end{proof}
In the preceding result, we have not made mention of the \emph{effective} $\F_\mathrm{bof}$-quotients. This is because, by contrast to the preceding examples, we have:
\begin{Prop}
All $\F_\mathrm{bof}$-quotient maps are effective.
\end{Prop}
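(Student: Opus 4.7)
The plan is to invoke Proposition~\ref{prop:effectivity-ker-quot}, which reduces the statement to showing that every $\F_\mathrm{bof}$-kernel is effective; that is, for each $f \colon A \to B$, the unit $\eta_{Kf} \colon Kf \to KQKf$ is invertible. Since this unit is the identity at the apex $A$, the only nontrivial component is a canonical map $\mathrm{Eq}(f) \to \mathrm{Eq}(q)$, where $q \defeq QKf \colon A \to Q$ is the coequifier of the universal 2-cells $\alpha_f, \beta_f \colon u_f \Rightarrow v_f$; it is this map I must show to be invertible.

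I would use the two relevant universal properties from both sides. The defining equation $f\alpha_f = f\beta_f$ of the $\F_\mathrm{bof}$-kernel $\mathrm{Eq}(f)$ combined with the coequifier universal property of $q$ produces a unique factorisation $f = mq$ for some $m \colon Q \to B$. On the other side, the coequifier property immediately gives $q\alpha_f = q\beta_f$, so the universal property of $\mathrm{Eq}(q)$ as $\F_\mathrm{bof}$-kernel yields the forward map $\eta \colon \mathrm{Eq}(f) \to \mathrm{Eq}(q)$, and a direct check will show this is the nontrivial component of $\eta_{Kf}$. Conversely, the universal 2-cells of $\mathrm{Eq}(q)$ satisfy $f\alpha_q = mq\alpha_q = mq\beta_q = f\beta_q$, so the universal property of $\mathrm{Eq}(f)$ produces a map $\mathrm{Eq}(q) \to \mathrm{Eq}(f)$ in the opposite direction.

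That these two arrows are mutually inverse then follows from the uniqueness clauses in the kernel universal properties: each composite, viewed as a map out of $\mathrm{Eq}(f)$ or $\mathrm{Eq}(q)$, agrees with the identity on the defining pair of 2-cells, and hence must equal the identity. I do not anticipate any serious obstacle; the only point requiring mild care is checking that the forward map constructed from the universal property of $\mathrm{Eq}(q)$ genuinely coincides with the component of $\eta_{Kf}$ at the object $2$, which is again a matter of uniqueness applied to the defining data of the kernel.
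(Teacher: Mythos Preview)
Your proposal is correct. Both you and the paper reduce via Proposition~\ref{prop:effectivity-ker-quot} to showing that every $\F_\mathrm{bof}$-kernel is effective, i.e., that $\eta_{Kf}$ is invertible; the difference lies in how this invertibility is established.

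The paper exploits a structural observation: for any $f \colon A \to B$, the equikernel $\mathrm{Eq}(f)$ sits as a \emph{subobject} of $A^{\mathbb P}$ (with $\mathbb P$ the parallel pair category). Consequently, for any factorisation $g = hf$, the induced comparison $\mathrm{Eq}(f) \to \mathrm{Eq}(g)$ lies over $A^{\mathbb P}$ with both legs monic, and is therefore itself monic. Applying this to $f = QKg$ and $h = \epsilon_g$ shows that $K\epsilon$ is pointwise monic; since the triangle identity already provides the section $\eta K$, one concludes that $K\epsilon$ and $\eta K$ are invertible. Your argument instead builds the inverse to $\eta_{Kf}$ by hand: the factorisation $f = mq$ coming from the coequifier property gives $f\alpha_q = f\beta_q$, and hence a map $\mathrm{Eq}(q) \to \mathrm{Eq}(f)$, which is then checked to be a two-sided inverse by the uniqueness clauses of the two limit universal properties. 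In effect, your backward map is precisely the component of $K\epsilon_f$, so you are verifying directly that $\eta_{Kf}$ and $K\epsilon_f$ are mutual inverses, rather than deducing this from monicity plus the triangle identity. Your route is slightly more hands-on but avoids the auxiliary observation about $A^{\mathbb P}$; the paper's route is shorter once that observation is in place.
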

\begin{proof}
The equikernel of $f \colon A \to B$ can be constructed as a subobject of $A^{\mathbb P}$ (where $\mathbb P$ is the parallel pair category $\bullet \rightrightarrows \bullet$). Consequently, 
given a commutative triangle of morphisms as on the left in
\begin{equation*}
\cd{
 & A \ar[dl]_f \ar[dr]^g \\
 B \ar[rr]_-h & & C
} \qquad \qquad \quad
\cd[@!C@C-1.8em@R-0.5em]{
\mathrm{Eq}(f) \ar[rr]^-{\mathrm{Eq}(1, h)} \ar[dr] & &
\mathrm{Eq}(g) \ar[dl] \\ &
A^{\mathbb P}
}
\end{equation*}
we have on taking equikernels a commutative diagram as on the right. Both diagonal arrows are monomorphisms, whence also the top arrow; so in particular, taking $f = QKg$ and $h = \epsilon_g$, we conclude that $K\epsilon \colon KQK \Rightarrow K$ is monomorphic. But it is also split epimorphic, with section $\eta K$, and so both $K \epsilon$ and $\eta K$ are invertible. Thus all $\F_\mathrm{bof}$-kernels are effective, whence, by Proposition~\ref{prop:effectivity-ker-quot}, so too are all $\F_\mathrm{bof}$-quotient maps.
\end{proof}

We now identify the $\F_\mathrm{bof}$-congruences.
\begin{Prop}\label{prop:congruences}
A diagram $X \in [\K_\mathrm{bof}, \C]$ is an $\F_\mathrm{bof}$-congruence if and only if:
\begin{enumerate}[(a)]
\item The induced map  $(X\alpha, X\beta) \colon X2 \rightarrow X1^{\mathbb P}$ (with $\mathbb P$ the parallel pair category $\bullet \rightrightarrows \bullet$) is full monic;\vskip0.5\baselineskip
\item The induced pair
\begin{equation}\label{eq:equiv-relation-diagram}
\cd[@C-1em]{
X2 \ar@<2.5pt>[rr]^-{X\alpha} \ar@<-2.5pt>[rr]_-{X\beta} \ar[dr]_{(Xu,Xv)} & &
X1^\mathbf 2 \ar[dl]^{(d,c)} \\ & X1 \times X1
}
\end{equation}
exhibits $X2$ as an equivalence relation on $X1^\mathbf 2$ in the slice category $\C / X1 \times X1$;\vskip0.5\baselineskip
\item The graph $(Xu,Xv) \colon X2 \rightrightarrows X1$ bears a (necessarily unique) structure of internal category making the maps $X\alpha$ and $X\beta$ of~\eqref{eq:equiv-relation-diagram} into identity-on-objects internal functors.
\end{enumerate}
$\F_\mathrm{bof}$-congruences are effective in $\cat{Cat}$, and hence in every $\F_\mathrm{bof}$-exact $2$-category.
\end{Prop}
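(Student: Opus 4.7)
The plan is to follow the template established by the proofs of Propositions~\ref{prop:cateads} and~\ref{prop:mixed-cateads}. First I would construct a small set $\Ss$ of morphisms between finitely presentable objects of $[\K_\mathrm{bof}, \cat{Cat}]$ such that an object $X \in [\K_\mathrm{bof}, \C]$ inverts $\{h, X\}$ for all $h \in \Ss$ precisely when it satisfies (a)--(c). Conditions (b) and (c) admit standard finite-limit-sketch encodings for an equivalence relation with compatible internal-category structure, whilst condition (a) is encoded in the same way as the full monicity of $Xj$ was in Proposition~\ref{prop:mixed-cateads}. Direct inspection of the explicit description of $\mathrm{Eq}(f)$ given earlier in this subsection then shows that every $\F_\mathrm{bof}$-kernel in $\cat{Cat}$ satisfies (a)--(c), so that $\Ss$ is a set of $\F_\mathrm{bof}$-congruence axioms.

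By Propositions~\ref{prop:f-congruence-generating} and~\ref{prop:f-effective-cong}, both the characterisation of $\F_\mathrm{bof}$-congruences and their effectivity in $\cat{Cat}$ will follow if I show that every $X \in [\K_\mathrm{bof}, \cat{Cat}]$ satisfying (a)--(c) is the $\F_\mathrm{bof}$-kernel of its own $\F_\mathrm{bof}$-quotient. I would do so by constructing the $\F_\mathrm{bof}$-quotient of such an $X$ explicitly: by (a), the full monic $X2 \hookrightarrow X1^{\mathbb P}$ realises $X2$ as a full subcategory of parallel pairs in $X1$; by (b), the image defines a hom-wise equivalence relation $\sim$ on the morphisms of $X1$; and by (c), $\sim$ is compatible with composition, so that the identity-on-objects functor $q \colon X1 \to X1/{\sim}$ is well-defined. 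The universal property of coequifiers in $\cat{Cat}$ now identifies $q$ as the coequifier of $X\alpha, X\beta$, hence as the $\F_\mathrm{bof}$-quotient of $X$. Finally, then, the effectivity in every $\F_\mathrm{bof}$-exact $2$-category follows from Proposition~\ref{prop:ker-quot-immediate-fregular}(c).

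It remains to verify that the $\F_\mathrm{bof}$-kernel $\mathrm{Eq}(q)$ of the so-constructed $q$ coincides with $X$. At the level of objects, an object of $\mathrm{Eq}(q)$ is a parallel pair $(a, b \colon x \rightrightarrows y)$ in $X1$ with $a \sim b$, which by the injective-on-objects half of (a) is precisely an object of $X2$; at the level of morphisms, the fully-faithful half of (a) matches the homs of $X2$ with the homs of $\mathrm{Eq}(q)$, both being subcategories of $X1^{\mathbb P}$ carved out by the same condition; and the structure maps $u, v, \alpha, \beta$ visibly agree on both sides. The main obstacle I anticipate is the bookkeeping for (a): one must unwind the $\cat{Cat}$-enriched weighted limit $\{h, X\}$ to verify that full monicity of a canonical comparison map is indeed the orthogonality to a set of arrows between finitely presentable objects. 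This is, however, directly parallel to the corresponding step for $Xj$ in Proposition~\ref{prop:mixed-cateads}, and presents no essentially new difficulty.
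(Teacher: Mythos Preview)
Your proposal is correct and follows essentially the same approach as the paper: construct a set $\Ss$ of congruence axioms encoding (a)--(c), observe that $\F_\mathrm{bof}$-kernels satisfy them, and then apply Propositions~\ref{prop:f-congruence-generating} and~\ref{prop:f-effective-cong} by explicitly building the quotient $X1 \to X1/{\sim}$ and checking that its equikernel recovers $X$. The paper's version is slightly more condensed---it summarises the data of an $X$ satisfying (a)--(c) as ``$X1$ together with a composition-compatible equivalence relation on each hom-set'' and leaves the final verification as clear---but the argument is the same.
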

\begin{proof}
As in the proofs of Propositions~\ref{prop:cateads} and~\ref{prop:mixed-cateads}, we may find a set $\Ss$ of $\F_\mathrm{bof}$-congruence axioms such that an object $X \in [\K_\mathrm{bof}, \C]$ inverts $\{h, X\}$ for each $h \in \Ss$ just when it satisfies (a)--(c); and the desired characterisation of the $\F_\mathrm{bof}$-congruences will follow from Propositions~\ref{prop:f-congruence-generating} and~\ref{prop:f-effective-cong} if we can show that every $X \in [\K_\mathrm{bof}, \cat{Cat}]$ satisfying (a)--(c) is in fact the $\F_\mathrm{bof}$-kernel of its own $\F_\mathrm{bof}$-quotient. 

Now, an $X \in [\K_\mathrm{bof}, \cat{Cat}]$ satisfying (a)--(c) determines and is determined by the category $X1$ together with an equivalence relation $\sim$ on each hom-set $X1(x,y)$ which is compatible with composition, in that if $f \sim f' \in X1(x,y)$ and $g \sim g' \in X1(y,z)$, then $gf \sim g'f' \in X1(x,z)$. In these terms, the $\F_\mathrm{bof}$-quotient of $X$ is $q \colon X1 \to Q$, where $Q$ is the category with the same objects as $X1$ and hom-sets $Q(x,y) = X1(x,y) / \mathord\sim$, and $q$ is the evident identity-on-objects quotient functor. On the other hand, the $\F_\mathrm{bof}$-kernel of a functor $F \colon \C \to \D$ is the $X \in [\K_\mathrm{bof}, \cat{Cat}]$ determined by imposing on each $\C(x,y)$ the equivalence relation for which $f \sim f'$ just when $Ff = Ff'$. It is clear from this that every $X \in [\K_\mathrm{bof}, \cat{Cat}]$ satisfying (a)--(c) is the $\F_\mathrm{bof}$-kernel of its $\F_\mathrm{bof}$-quotient as required.
\end{proof}

\section{Elementary descriptions of $2$-dimensional regularity and exactness}\label{sec:elementary}
In this section, we give elementary characterisations of the notions of $\F_\mathrm{so}$-, $\F_\mathrm{bo}$-, and $\F_\mathrm{bof}$-regularity and exactness. We shall do so by applying Theorem~\ref{thm:phi-exact-embedding}; in preparation for which,  we will need to understand how $2$-toposes---here meaning subcategories of a presheaf $2$-category reflective via a finite-limit-preserving reflector---can be constructed from two-dimensional sites. The paper~\cite{Street1982Two-dimensional} gives a thorough treatment of this question; we now summarise the relevant results together with such extensions as will be necessary for our applications.

\subsection{Two-dimensional sheaf theory}
Let $\C$ be a small $2$-category. By a \emph{family} in $\C$, we mean a collection $(f_i \colon U_i \to U \mid i \in I)$ of morphisms with common codomain. Any such family generates a $2$-\emph{sieve} on $U$---a full subobject $m_\mathbf f \colon \phi_\mathbf f \rightarrowtail YU$ in $[\C^\op, \cat{Cat}]$---obtained as the second half of the $\F_\mathrm{so}$-kernel--quotient factorisation
\begin{equation}\label{eq:cotuple-factorisation}
\cd[@!C@-1em]{
\sum_i YU_i \ar[rr]^{\mathbf f \defeq \spn{Yf_i}_{i \in I}} \ar[dr]_{e_\mathbf f} & & YU\rlap{ .}\\
&  \phi_\mathbf f \ar@{ >->}[ur]_{m_\mathbf f}
}
\end{equation}
A presheaf $X \in [\C^\op, \cat{Cat}]$ is said to \emph{satisfy the sheaf condition with respect to $(f_i)$} if it is orthogonal to the associated $2$-sieve $m_\mathbf f$. If $j$ is a collection of families in $\C$, we say that $X$ is a \emph{$j$-sheaf} if it satisfies the sheaf condition with respect to each $(f_i) \in j$, and write $\cat{Sh}_j(\C)$ for the full sub-$2$-category of $[\C^\op, \cat{Cat}]$ on the $j$-sheaves.

By a \emph{$2$-site}, we mean a small, finitely complete $2$-category $\C$ together with a \emph{Grothendieck pretopology} on the underlying ordinary category of $\C$: thus, a collection $j$ of families, called \emph{covering families}, satisfying the following three closure axioms:
\begin{enumerate}
\item[(C)] Given a covering family $(f_i \colon U_i \to U \mid i \in \I)$ and a morphism $g \colon V \to U$, the family $(g^\ast(f_i) \colon V \times_U U_i \to V \mid i \in \I)$ is also covering;\vskip0.5\baselineskip
\item[(M)] For all $U \in \C$,  $(1_U \colon U \to U)$ is covering;\vskip0.5\baselineskip
\item[(L)] Given a covering family $(f_i \colon U_i \to U \mid i \in \I)$, and for each $i\in I$, a covering family $(g_{ik} \colon U_{ik} \to U_i \mid \ell \in I_i)$, the family $(f_i g_{ik} \colon U_{ik} \to U \mid i \in I, k \in I_i)$ is also covering.
\end{enumerate}

\begin{Thm}
For any $2$-site $(\C, j)$, the $2$-category $\cat{Sh}_j(\C)$ is reflective in $[\C^\op, \cat{Cat}]$ via a $2$-functor $L$ which preserves finite limits; in particular, $\cat{Sh}_j(\C)$ is a $2$-topos.
\end{Thm}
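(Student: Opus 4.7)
The plan is to construct the reflector $L$ and then verify it preserves finite limits, following the general template of two-dimensional sheaf theory developed in~\cite{Street1982Two-dimensional}. Since the sheaf condition is $\cat{Cat}$-orthogonality to the small class $\Sigma = \{m_\mathbf{f} : \mathbf{f} \in j\}$ of $2$-sieve inclusions, and $[\C^\op, \cat{Cat}]$ is a locally finitely presentable $\cat{Cat}$-category, an enriched small object argument produces the reflective adjunction $L \dashv I \colon \cat{Sh}_j(\C) \hookrightarrow [\C^\op, \cat{Cat}]$ with no further input beyond smallness of $\Sigma$. This secures reflectivity; the substantive work lies in establishing left-exactness of $L$.

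For this I would give an explicit two-dimensional plus construction. Define $X^+$ on objects of $\C$ by
\begin{equation*}
X^+(U) = \colim_{\mathbf{f} \in J(U)} [\C^\op, \cat{Cat}](\phi_\mathbf{f}, X) \rlap{ ,}
\end{equation*}
where $J(U)$ is the category whose objects are $j$-covering families of $U$ and whose morphisms are refinements. Axiom (M) ensures $J(U)$ is nonempty via the identity cover, while axiom (L) supplies cocones over finite subdiagrams, making $J(U)$ filtered. Because filtered colimits commute with finite limits in $\cat{Cat}$, and all relevant limits and colimits in $[\C^\op, \cat{Cat}]$ are computed pointwise, the assignment $(-)^+$ assembles into a finite-limit-preserving $2$-endofunctor on $[\C^\op, \cat{Cat}]$, equipped with a $2$-natural comparison $\eta_X \colon X \to X^+$. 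One then shows, as in the classical $1$-dimensional case, that $(-)^+$ sends presheaves to separated presheaves and separated presheaves to sheaves, so that the composite $(-)^{++}$ is already the sheafification $L$; alternatively, a transfinite iteration along the lines of the enriched small object argument suffices. Since $L$ is a filtered colimit of finite-limit preservers, it too preserves finite limits.

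Pullback stability of covers, namely axiom (C), is what makes the plus construction well-behaved with respect to the $2$-sieves: the pullback of $m_\mathbf{f}$ along a morphism of representables $Yh \colon YV \to YU$ is the $2$-sieve inclusion $m_{h^\ast \mathbf{f}}$ generated by $h^\ast\mathbf{f}$, which is covering by (C); for a general morphism $g \colon X \to YU$, one reduces to the representable case by writing $X$ as a colimit of representables and invoking cocontinuity of pullback in $[\C^\op, \cat{Cat}]$. Equivalently, one may bypass the explicit plus construction and simply verify, via the enriched analogue of the Cassidy--H\'ebert--Kelly criterion, that the class $\Sigma^\ast = \{f : Lf \text{ invertible}\}$ is pullback-stable; the same reduction to representables combined with closure of $\Sigma^\ast$ under colimits in the arrow category then finishes the job. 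The principal obstacle is confirming convergence of the plus-construction iteration in the strict $2$-categorical setting, where one must work with $\cat{Cat}$-orthogonality rather than merely biorthogonality; axioms (M), (L) and (C) are precisely the data needed to push this through.
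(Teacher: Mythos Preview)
The paper does not actually prove this theorem: its proof consists entirely of a citation to Theorem~3.8 of~\cite{Street1982Two-dimensional}, together with the observation that Street's argument, phrased for Grothendieck topologies (sieves), carries over unchanged to pretopologies (covering families). Your proposal, by contrast, sketches the substance of Street's argument itself---reflectivity via a small-object or orthogonality argument, and left-exactness via a plus construction over a filtered diagram of covers. So the two are not really comparable as proofs; you are supplying what the paper defers to the literature.

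Your sketch is broadly sound but one point deserves care. The assertion that $({-})^{++}$ already yields the sheafification is true for $\cat{Set}$-valued presheaves, but for $\cat{Cat}$-valued presheaves the situation is more delicate: the $\cat{Cat}$-enriched orthogonality condition involves an isomorphism of categories rather than a bijection of sets, and the usual two-step argument (first separate, then sheafify) does not obviously terminate in two steps. Street's own treatment iterates the construction, and your hedge ``alternatively, a transfinite iteration \ldots\ suffices'' is the honest statement; I would drop the claim that two applications are enough unless you can point to a specific argument for the strict $2$-categorical case. A second, minor imprecision: your indexing category $J(U)$ of covering \emph{families} with refinements need not be filtered on the nose (parallel refinements between families need not be coequalised), whereas the poset of generated covering $2$-\emph{sieves} ordered by reverse inclusion is directed by axioms (C) and (L). Since your diagram values $[\C^\op,\cat{Cat}](\phi_\mathbf{f},X)$ depend only on the sieve $\phi_\mathbf{f}$, you may as well index over sieves and avoid the issue.
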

\begin{proof}
This is~\cite[Theorem 3.8]{Street1982Two-dimensional}; the notion of $2$-site used there is phrased in terms of Grothendieck topologies (involving sieves) rather than Grothendieck pretopologies (involving covering families), but an examination of the proof shows that it carries over unchanged.
\end{proof}
In practice, we often specify topologies by starting with a class of families satisfying (C), and then closing off under (M) and (L). The following result (which does not appear in~\cite{Street1982Two-dimensional}) shows that, just as in the one-dimensional case, this process does not alter the notion of sheaf.
\begin{Prop}\label{prop:site-reduce}
Let $\C$ be a finitely complete $2$-category, and let $j$ be a collection of covering families satisfying (C). If $\bar \jmath$ denotes the closure of $j$ under (M) and (L), then $\cat{Sh}_j(\C) = \cat{Sh}_{\bar \jmath}(\C)$.
\end{Prop}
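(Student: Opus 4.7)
One inclusion is trivial: $\cat{Sh}_{\bar\jmath}(\C) \subseteq \cat{Sh}_j(\C)$ because $j \subseteq \bar\jmath$. For the converse, fix a $j$-sheaf $X$ and consider the class $\mathcal S_X$ of families $\mathbf f$ with $X \mathbin\bot m_{\mathbf f}$; the plan is to show that $\mathcal S_X$ is closed under operations (M) and (L). Since $\mathcal S_X$ contains $j$ by hypothesis, this gives $\mathcal S_X \supseteq \bar\jmath$, and hence that $X$ is a $\bar\jmath$-sheaf.

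Closure under (M) is immediate: $\langle Y(1_U)\rangle = 1_{YU}$, which is trivially its own $\F_\mathrm{so}$-kernel--quotient factorisation, so $m_{(1_U)} = 1_{YU}$, and every presheaf is orthogonal to identities.

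For closure under (L), given $\mathbf f = (f_i\colon U_i \to U)_{i \in I}$ in $\mathcal S_X$ and $\mathbf g_i = (g_{ik}\colon U_{ik} \to U_i)_{k \in I_i}$ in $\mathcal S_X$ for each $i$, set $\mathbf h = (f_i g_{ik})_{i,k}$. Decomposing piecewise and factoring each part via~\eqref{eq:cotuple-factorisation} yields
\[
\textstyle \sum_{i,k} YU_{ik} \xrightarrow{\sum_i e_{\mathbf g_i}} \sum_i \phi_{\mathbf g_i} \xrightarrow{\sum_i m_{\mathbf g_i}} \sum_i YU_i \xrightarrow{e_\mathbf f} \phi_\mathbf f \xrightarrow{m_\mathbf f} YU.
\]
Further factoring the central composite $e_\mathbf f \circ \sum_i m_{\mathbf g_i}$ as $m' \circ e'$ with $e'$ acute and $m'$ full monic---which is possible because $[\C^\op, \cat{Cat}]$, being a $2$-topos, is $\F_\mathrm{so}$-regular by Corollary~\ref{cor:ker-quot-so-2topos}---and using that acutes are closed under composition and coproducts by Proposition~\ref{prop:f-map-props}, the uniqueness of the (acute, full monic) factorisation forces $m_\mathbf h = m_\mathbf f \circ m'$. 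Since orthogonality is closed under composition and $X \mathbin\bot m_\mathbf f$, it now suffices to show $X \mathbin\bot m'$.

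This last step proceeds by the classical sheaf-gluing argument. Given $\alpha\colon \phi_\mathbf h \to X$, restriction along the canonical map $\phi_{\mathbf g_i} \to \phi_\mathbf h$ induced by $Yf_i$ supplies, for each $i$, a morphism $\phi_{\mathbf g_i} \to X$, which by $X \mathbin\bot m_{\mathbf g_i}$ extends uniquely to $\beta_i\colon YU_i \to X$; the combined $\langle \beta_i\rangle\colon \sum_i YU_i \to X$ descends through $e_\mathbf f$ to the desired extension $\phi_\mathbf f \to X$ of $\alpha$ exactly when the $\beta_i$ satisfy the compatibility condition encoded by $e_\mathbf f$ on each pullback $U_i \times_U U_j$ and on the associated $2$-cells. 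The main obstacle is verifying this compatibility. As in the $1$-categorical case, axiom (C) is the crucial input: the pullbacks of $\mathbf g_i$ and $\mathbf g_j$ to $U_i \times_U U_j$ are themselves $j$-covers, and separatedness of $X$ against them reduces the compatibility of $\beta_i$ and $\beta_j$ to the compatibility of the original $\alpha$-data on triple pullbacks $U_{ik} \times_U U_{jl}$, which is built into the fact that $\alpha$ is a morphism in the presheaf $2$-category. The two-dimensional aspect---the matching of $2$-cells in $X$---is handled identically, using again the pullback-stability of $\F_\mathrm{so}$-kernel--quotient factorisations in $[\C^\op, \cat{Cat}]$ from Corollary~\ref{cor:ker-quot-so-2topos}. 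Uniqueness of the extension follows from uniqueness of each $\beta_i$ together with $e_\mathbf f$ being epic, as it is acute.
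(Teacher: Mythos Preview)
There is a genuine gap in the closure-under-(L) step. You define $\mathcal S_X$ as the class of families $\mathbf f$ with $X \mathbin\bot m_{\mathbf f}$, and then in the gluing argument you assert that ``the pullbacks of $\mathbf g_i$ and $\mathbf g_j$ to $U_i \times_U U_j$ are themselves $j$-covers''. But the $\mathbf g_i$ are only assumed to lie in $\mathcal S_X$, not in $j$; and nothing you have established shows that $\mathcal S_X$ is closed under pullback. Without knowing that $X$ is orthogonal to the sieves generated by these pullback families, the compatibility of the locally-extended data $\beta_i$ over $U_i \times_U U_j$ cannot be deduced, and the gluing step collapses. This is precisely the point where (C) has to be built into the argument, and your class $\mathcal S_X$ is too weak to carry it.

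The paper repairs this by working instead with the class $j_X$ of families \emph{all of whose pullbacks} generate sieves orthogonal to $X$. This class contains $j$ (because $j$ satisfies (C) and $X$ is a $j$-sheaf) and is tautologically stable under pullback, so the (L)-closure argument goes through. The paper also replaces your hand-waved two-dimensional gluing by a uniform argument: the comparison map $v \colon \phi_{\mathbf{fg}} \to \phi_{\mathbf f}$ (your $m'$) is exhibited as an $\F_\mathrm{so}$-quotient of a map of $\F_\mathrm{so}$-kernels, each component of which is shown to be orthogonal to $X$ by writing it as a coproduct of pullbacks of the $m_{\mathbf g_i}$'s; closure of the orthogonal class under colimits then gives $v \mathbin\bot X$ directly. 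This sidesteps the need to verify $2$-cell compatibilities by hand.
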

\begin{proof}
Let $X \in [\C^\op, \cat{Cat}]$ be a $j$-sheaf, and let $j_X$ denote the class of all families $(f_i)$ such that $X$ satisfies the sheaf axiom with respect to every pullback $(g^\ast f_i)$. We must show that $\bar \jmath \subset j_X$. Clearly $j \subset j_X$, and it is immediate that $j_X$ is closed under (M); it remains to show closure under (L). Thus given $\mathbf f = (f_i \colon U_i \to U \mid i \in I) \in j_X$ and for each $i \in I$, $\mathbf g_i = (g_{ik} \colon U_{ik} \to U_i \mid k \in I_i) \in j_X$, we must show that $\mathbf{fg} = (f_ig_{ik} \mid i \in I, k \in I_i)$ is in $j_X$. Form the commutative diagram
\[
\cd[@R-0.2em]{
\Sigma_{ik} YU_{ik} \ar@{=}[r] \ar@{->>}[d]_{e_{\mathbf g}} &
\Sigma_{ik} YU_{ik} \ar[r]^{\mathbf g} \ar@{->>}[d]_{e_{\mathbf{fg}}} &
\Sigma_{i} YU_i \ar@{->>}[d]^{e_{\mathbf{f}}} \\
\Sigma_i \phi_{\mathbf g_i} \ar@{->>}[r]^u \ar@{ >->}[d]_{m_{\mathbf g}} &
\phi_{\mathbf{fg}} \ar@{ >->}[r]^v \ar@{ >->}[d]_{m_{\mathbf{fg}}} &
\phi_{\mathbf f} \ar@{ >->}[d]^{m_{\mathbf{f}}} \\
\Sigma_{i} YU_i \ar[r]_{\mathbf f}
& YU \ar@{=}[r] 
& YU
}
\]
in $[\C^\op, \cat{Cat}]$, where $\mathbf g = \Sigma_i \mathbf g_i$, $m_\mathbf g = \Sigma_i m_{\mathbf g_i}$, $e_\mathbf g = \Sigma_i e_{\mathbf g_i}$ and $u$ and $v$ are the unique induced maps. The maps marked $\twoheadrightarrow$ and $\rightarrowtail$ are acute and full monic respectively, either by assumption or by the standard properties of orthogonal classes.
Now since $m_\mathbf f \mathbin \bot X$, to show that $m_\mathbf{fg} \mathbin \bot X$, it suffices to show that $v \mathbin \bot X$. Observe that the map $(m_\mathbf g, 1) \colon \mathbf f.m_\mathbf g \to \mathbf f$ in $[\C^\op, \cat{Cat}]^\mathbf 2$ gives a map of $\F_\mathrm{so}$-kernels:
%\[
%\cd{\Sigma_i \phi_{\mathbf g_i} \ar[r]^{\mathbf f.m_\mathbf g} \ar[d]_{m_\mathbf g} & YU \ar@{=}[d] \\
%\Sigma_i YU_i \ar[r]_{\mathbf f} & YU} \qquad = \qquad 
%\cd{\Sigma_i \phi_{\mathbf g_i} \ar@{->>}[r]^{u} \ar[d]_{m_\mathbf g} & \phi_{\mathbf f g} \ar[d]^{v} \ar@{ >->}[r]^{m_\mathbf{fg}}& YU \ar@{=}[d] \\
%\Sigma_i YU_i \ar@{->>}[r]_-{e_\mathbf f} & \phi_f \ar@{ >->}[r]_{m_\mathbf f} & YU}
%\]
%Thus on forming the diagram of $\F_\mathrm{so}$-kernels:
%exhibits $v$ as the $\F_\mathrm{so}$-quotient of the $\F_\mathrm{so}$-kernel of $(\mathbf f.m_\mathbf g, \mathbf f) \colon m_\mathbf g \to 1_{YU}$ in $[\C^\op, \cat{Cat}]^\mathbf 2$. Thus to show that $v \bot X$, it suffices to show that each component of the map of $\F_\mathrm{so}$-kernels $K(m_\mathbf g, 1) \colon K(f.m_\mathbf g) \to K(\mathbf f)$ is orthogonal to $X$.
%
\[
\cd[@R-1.5em@C-1em]{
& R[\mathbf{f}m_{\mathbf g}] \ar[dr]^{j} \ar[dd]^(0.25)y
\\
\mathbf{f}m_{\mathbf g} \mid \mathbf{f}m_{\mathbf g} \mid \mathbf{f}m_{\mathbf g} \ar@<6pt>[rr]^(0.4){p} \ar[rr]|(0.4){m} \ar@<-6pt>[rr]_(0.4){q} \ar[dd]^{x} & &
\mathbf{f}m_{\mathbf g} \mid \mathbf{f}m_{\mathbf g} \ar@<6pt>[rr]^-{d} \ar@{<-}[rr]|-{i} \ar@<-6pt>[rr]_-{c} \ar[dd]^{w} & &
\Sigma_{i}\phi_{\mathbf{g}_i} \ar[dd]^{m_\mathbf g}\\
& R[\mathbf{f}] \ar[dr]^j 
\\
\mathbf{f} \mid \mathbf{f} \mid \mathbf{f} \ar@<6pt>[rr]^(0.4){p} \ar[rr]|(0.4){m} \ar@<-6pt>[rr]_(0.4){q} & &
\mathbf{f} \mid \mathbf{f} \ar@<6pt>[rr]^-{d} \ar@{<-}[rr]|-{i} \ar@<-6pt>[rr]_-{c} & &
\Sigma_{i}YU_{i} \rlap{ .}}
\]
As $[\C^\op, \cat{Cat}]$ is $\F_\mathrm{so}$-regular, the $\F_\mathrm{so}$-quotients of the two rows are $\phi_\mathrm{fg}$ and $\phi_\mathrm f$ respectively, and it's easy to see that the induced comparison map is  $v \colon \phi_{\mathrm{fg}} \to \phi_f$. Since the class of maps orthogonal to $X$ is closed under colimits, we may conclude that $v \mathbin \bot X$ so long as $\{m_\mathbf g, w, y, z\} \mathbin \bot X$. Now $m_\mathbf g = \Sigma_i {m_{\mathbf g_i}}$ is orthogonal to $X$ because each $m_{\mathbf g_i}$ is so; as for $w$, we can write it as a composite $w = w_2w_1$ of pullbacks:
\[
\cd{
\mathbf{f}m_\mathbf{g} \mid \mathbf{f}m_\mathbf{g} \ar[r]^d \ar[d]_{w_1} & \Sigma_i \phi_{\mathbf g_i}\ar[d]^{m_\mathbf g} \\
\mathbf{f} \mid \mathbf{f}m_\mathbf{g} \ar[r]_{d} & \Sigma_i YU_i} \qquad \text{and} \qquad
\cd{\mathbf{f} \mid \mathbf{f}m_\mathbf{g} \ar[r]^{c} \ar[d]_{w_2} & \Sigma_i \phi_{\mathbf g_i}\ar[d]^{m_\mathbf g} \\
\mathbf{f} \mid \mathbf{f} \ar[r]_c & \Sigma_i YU_i\rlap{ .}} 
\]
Considering $w_2$, we see that $\mathbf f \mid \mathbf f \cong \Sigma_{i, i'} Y(f_i \mid f_{i'})$; it follows that $w_2$ is a coproduct $\Sigma_{i,i'} m_{\mathbf h_{i,i'}}$ with each $\mathbf h_{i,i'}$ a pullback of some $\mathbf g_k$. Since each $\mathbf g_k \in j_X$, also each $\mathbf h_{i,i'} \in j_X$, and so 
each $m_{\mathbf h_{i,i'}} \mathbin \bot X$. Thus $w_2 \mathbin \bot X$ and similarly $w_1 \bot X$, whence $w \mathbin \bot X$ as required. Corresponding arguments show that $\{y,z\} \mathbin \bot X$.
\end{proof}

The following result is stated only for \emph{singleton} covers merely as a convenience; there is a corresponding version for general covering families---harder to state, though scarcely harder to prove---but we will not need it in this paper.
\begin{Prop}\label{prop:yoneda-makes-mixed-codescent}
For any $2$-site $(\C, j)$, the composite
\begin{equation*}
\C \xrightarrow Y [\C^\op, \cat{Cat}] \xrightarrow L \cat{Sh}_j(\C)
\end{equation*}
sends singleton covers to effective $\F_\mathrm{so}$-quotient maps.
\end{Prop}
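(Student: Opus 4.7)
The plan is to combine two ingredients: that the reflector $L$ inverts each sheaf-orthogonal $m_{\mathbf f}$, and that $\cat{Sh}_j(\C)$ is $\F_\mathrm{so}$-regular. First, for a singleton cover $f \colon U' \to U$, the construction~\eqref{eq:cotuple-factorisation} degenerates into an $\F_\mathrm{so}$-kernel--quotient factorisation $Yf = m_f \circ e_f$ of $Yf$ in $[\C^\op, \cat{Cat}]$, where $e_f$ is an $\F_\mathrm{so}$-quotient map and $m_f$ is full monic. By the very definition of $j$-sheaf, every object of $\cat{Sh}_j(\C)$ is orthogonal to $m_f$.

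Next I would apply the reflector $L$ to this factorisation. Since $L$ is a left adjoint $2$-functor, Proposition~\ref{prop:f-map-props}(f) gives that $Le_f$ is an $\F_\mathrm{so}$-quotient in $\cat{Sh}_j(\C)$. On the other hand, Lemma~\ref{lem:orth}(ii) applied to the adjunction $L \dashv R$ (with $R$ the inclusion) shows that $Lm_f$ is orthogonal to every object of $\cat{Sh}_j(\C)$; hence Lemma~\ref{lem:orth}(i) forces $Lm_f$ to be invertible. Thus $LYf = Lm_f \circ Le_f$ is, up to postcomposition with an isomorphism, the $\F_\mathrm{so}$-quotient map $Le_f$, and so is itself an $\F_\mathrm{so}$-quotient map.

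To finish, I would observe that $\cat{Sh}_j(\C)$ is a $2$-topos, and hence $\F_\mathrm{so}$-regular by Propositions~\ref{prop:v-topos-phi-exact} and~\ref{prop:f-exact-f-regular}. Corollary~\ref{cor:ker-quot-so-2topos} then identifies the classes of $\F_\mathrm{so}$-quotient maps and effective $\F_\mathrm{so}$-quotient maps in $\cat{Sh}_j(\C)$, so $LYf$ is an effective $\F_\mathrm{so}$-quotient, as required.

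The argument is quite direct and I do not anticipate a serious obstacle; the only subtlety worth flagging is the appeal to Lemma~\ref{lem:orth}(i)--(ii) to conclude that $L$ sends sheaf-orthogonal maps to isomorphisms. The restriction to singleton covers is convenient precisely because the $\F_\mathrm{so}$-kernel--quotient factorisation of $\mathbf{f} = \spn{Yf_i}$ in~\eqref{eq:cotuple-factorisation} is then just the factorisation of $Yf$ itself; the general case would additionally require computing this $\F_\mathrm{so}$-quotient for a genuine coproduct, which is the reason that case is deferred.
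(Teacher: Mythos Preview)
Your proof is correct and follows essentially the same line as the paper's: factorise $Yf$ as $m_f \circ e_f$, apply $L$, and use that $Lm_f$ is invertible. The only difference is in how effectivity is obtained: the paper notes that $L$, being a left exact left adjoint, preserves the $\F_\mathrm{so}$-kernel--quotient factorisation itself, so $Le_f$ is already the first half of the kernel--quotient factorisation of $LYf$ and hence $LYf$ is effective as soon as $Lm_f$ is invertible; you instead use only the left adjointness of $L$ to get an $\F_\mathrm{so}$-quotient and then upgrade to effective via Corollary~\ref{cor:ker-quot-so-2topos}. Both work; the paper's route is marginally more direct in that it avoids the appeal to $\F_\mathrm{so}$-regularity of $\cat{Sh}_j(\C)$, while yours has the virtue of making explicit why $Lm_f$ is invertible via Lemma~\ref{lem:orth}.
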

%Here a family of maps $(f_i \colon A_i \to B)$ in $\C$ with common codomain is called \emph{jointly acute} if, for all full monics $g \colon C \to D$, the square
%\begin{equation*}
%\cd[@C+2em]{
%  \C(B,C) \ar[r]^-{\C(B,g)} \ar[d]_{(\C(f_i,C))_{ i \in I}} &
%  \C(B,D) \ar[d]^{(\C(f_i,D))_{ i \in I}} \\
%  \prod_i \C(A_i,C) \ar[r]_-{\prod_i \C(A_i,g)} & \prod_i \C(A_i,D)
%}
%\end{equation*}
%is a pullback. If the coproduct $\sum_i A_i$ exists in $\C$, this is clearly equivalent to asking that the cotupling $\spn{f_i}_{i \in I} \colon \sum_i A_i \to B$ should be acute.

\begin{proof}
For a singleton cover $f \colon V \to U$ in $\C$, the factorisation~\eqref{eq:cotuple-factorisation} is an $\F_\mathrm{so}$-kernel--quotient factorisation of $Yf$ in $[\C^\op, \cat{Cat}]$; since $L$ preserves small colimits and finite limits, $LYf = Le_f.Lm_f$ is also an $\F_\mathrm{so}$-kernel--quotient factorisation; but since $m_f$ is a covering $2$-sieve, $Lm_f$ is invertible, whence $LYf$ is an effective $\F_\mathrm{so}$-quotient.
%
%
%
%thus $e$ is acute, whence also $Le$. On the other hand, $m$ is a covering $2$-sieve, and so $Lm$ is invertible. 
%
%
%Thus $LYf = Lm.Le$ by the reflector $L$. Thus $L(\spn{Yf_i}_{i \in I}) = Lm.Le$ is acute, and since $L(\spn{Yf_i}_{i \in I}) \cong \spn{LYf_i}_{i \in I}$, as $L$ preserves colimits, we conclude that $(LYf_i)$ is a jointly acute family, as required.
\end{proof}

A $2$-site is called \emph{subcanonical} if every representable $2$-functor is a $j$-sheaf. For such a $2$-site, the functor $LY \colon \C \to \cat{Sh}_j(\C)$ is actually a factorisation of the Yoneda embedding through the full inclusion $\cat{Sh}_j(\C) \rightarrowtail [\C^\op, \cat{Cat}]$, and, as such, is fully faithful.
A $2$-site is subcanonical just when for every covering $2$-sieve $m \colon \phi \rightarrowtail YU$, we have isomorphisms of categories
\begin{equation*}
\C(U,K) \xrightarrow{Y} [\C^\op, \cat{Cat}](YU, YK) \xrightarrow{(\thg) \circ m} [\C^\op, \cat{Cat}](\phi, YK)
\end{equation*}
for every $K \in \C$; which is to say that $m$ exhibits $U$ as the colimit $\phi \star 1_\C$. 

\begin{Prop}\label{prop:cover-singleton-subcanonical}
Let $\J$ be a class of maps in the finitely complete $2$-category $\C$, and let $j$ be the smallest Grothendieck pretopology on $\C_0$ for which each $f \in \J$ is  a singleton cover. Then the $2$-site $(\C, j)$ is subcanonical if and only if $\J$ is composed of pullback-stable effective $\F_\mathrm{so}$-quotient maps.
\end{Prop}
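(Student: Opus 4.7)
The plan is to combine Proposition~\ref{prop:site-reduce} with a Yoneda translation, converting the sheaf condition against representables into the effective $\F_{\mathrm{so}}$-quotient property in $\C$.

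For the first reduction, I would let $\bar{\J}$ denote the class of singletons $\{g\}$ with $g$ a pullback of some $f \in \J$; this $\bar{\J}$ is automatically closed under axiom (C), and its $(M)$- and $(L)$-closure is again pullback-closed, so by minimality it coincides with $j$. Proposition~\ref{prop:site-reduce} then yields $\cat{Sh}_j(\C) = \cat{Sh}_{\bar{\J}}(\C)$, so that $(\C, j)$ is subcanonical precisely when $YK \mathbin\bot m_f$ in $[\C^{\op}, \cat{Cat}]$ for every $f \in \bar{\J}$ and every $K \in \C$.

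The main step is then to translate this orthogonality into the assertion that each such $f$ is an effective $\F_{\mathrm{so}}$-quotient in $\C$. For a singleton $f \colon V \to U$, the factorisation~\eqref{eq:cotuple-factorisation} is the $\F_{\mathrm{so}}$-kernel--quotient factorisation of $Yf$ in $[\C^{\op}, \cat{Cat}]$ (by Corollary~\ref{cor:ker-quot-so-2topos}), so $\phi_f \cong QK(Yf)$; and since $Y$ preserves finite limits, $K(Yf) \cong Y(Kf)$. Using the universal property of the $\F_{\mathrm{so}}$-quotient together with the fully-faithfulness of $Y$, the hom-category $[\phi_f, YK]$ identifies canonically with the category of $\F_{\mathrm{so}}$-cocones from $Kf$ into $K$ in $\C$ itself; under this identification, the map $\C(U, K) \cong [YU, YK] \to [\phi_f, YK]$ obtained by precomposition with $m_f$ sends $u \colon U \to K$ to the postcomposition of the canonical cocone with $u$. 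Hence $YK \mathbin\bot m_f$ for all $K$ is exactly the universal property exhibiting $U$, with its canonical cocone, as the $\F_{\mathrm{so}}$-quotient of $Kf$---that is, $f$ being an effective $\F_{\mathrm{so}}$-quotient in $\C$.

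Combining the two reductions, $(\C, j)$ is subcanonical if and only if every $f \in \bar{\J}$ is an effective $\F_{\mathrm{so}}$-quotient in $\C$, equivalently if and only if every $f \in \J$ together with all its pullbacks has this property, i.e., $\J$ consists of pullback-stable effective $\F_{\mathrm{so}}$-quotients. The step requiring the most care will be the identification of $[\phi_f, YK]$ with $\F_{\mathrm{so}}$-cocones in $\C$: this hinges on pointwise computation of $K$ and $Q$ in $[\C^{\op}, \cat{Cat}]$, together with fully-faithfulness of $Y$ to descend $\F_{\mathrm{so}}$-cocones in presheaves between $Y$-images to $\F_{\mathrm{so}}$-cocones in $\C$, and to match the canonical cocone exhibiting $\phi_f$ with the $Y$-image of the canonical cocone from $Kf$ to $U$.
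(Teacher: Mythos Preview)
Your proposal is correct and close in spirit to the paper's proof, with the core translation---that $YK \mathbin\bot m_f$ for all $K$ is precisely the statement that $f$ is an effective $\F_\mathrm{so}$-quotient in $\C$---being essentially the same argument the paper gives (phrased there in the language of weighted colimits: $m_f$ exhibits $U$ as $\phi_f \star 1_\C$). Your use of Proposition~\ref{prop:site-reduce} to reduce to the pullback-closure $\bar{\J}$ is likewise what the paper does for the ``if'' direction.

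The one genuine difference is in the ``only if'' direction. You run the same translation backwards, obtaining a symmetric argument: subcanonical $\Rightarrow$ each $f \in \bar{\J}$ satisfies the sheaf condition against representables $\Rightarrow$ each such $f$ is an effective $\F_\mathrm{so}$-quotient. The paper instead invokes Proposition~\ref{prop:yoneda-makes-mixed-codescent}: in the subcanonical case the restricted Yoneda embedding $\C \to \cat{Sh}_j(\C)$ is fully faithful and sends singleton covers to effective $\F_\mathrm{so}$-quotients, and then one observes that a fully faithful, finite-limit-preserving functor \emph{reflects} effective $\F_\mathrm{so}$-quotients. Your route is more direct and avoids passing through the sheaf $2$-topos; the paper's route has the virtue of making explicit the general reflection principle, which is reused elsewhere. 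Either way the content is the same. Your identification of the minor verification needed (that the $(M)$--$(L)$-closure of $\bar{\J}$ is again $(C)$-closed, via ``pullback of a composite is a composite of pullbacks'') is accurate and routine.
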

\begin{proof}
In one direction, if $(\C, j)$ is subcanonical, then by the preceding result, the restricted Yoneda embedding $Y \colon \C \to \cat{Sh}_j(\C)$ sends every map in $\J$ to a stable effective $\F_\mathrm{so}$-quotient; but as $Y$ preserves finite limits and is fully faithful, it reflects stable effective $\F_\mathrm{so}$-quotients, and so $\J$ is composed solely of such maps.

In the converse direction, it suffices by Proposition~\ref{prop:site-reduce} to check that representables satisfy the sheaf condition for $(f)$ whenever $f \colon V \to U$ is a pullback of a map in $\J$.
In this situation, the induced $2$-sieve $m_f \colon \phi_f \rightarrowtail YU$ is as before the second half of the $\F_\mathrm{so}$-kernel--quotient factorisation of $Yf \colon YV \to YU$ in $[\C^\op, \cat{Cat}]$; and so $\phi_f$ is the $\F_\mathrm{so}$-quotient of $K(Yf) \cong Y(Kf)$. Since taking weighted colimits is cocontinuous in the weight insofar as it is defined, and colimits by representable weights are given by evaluation at the representing object, we conclude that the colimit $\phi_f \star 1_\C$, if it exists, must be the $\F_\mathrm{so}$-quotient of the $\F_\mathrm{so}$-kernel of $f$. Thus to say that $m_f$ exhibits $U$ as $\phi_f \star 1_\C$ is to say that $f$ is the $\F_\mathrm{so}$-quotient of its own $\F_\mathrm{so}$-kernel, that is, an effective $\F_\mathrm{so}$-quotient map, which is so by assumption.
\end{proof}

%\begin{Prop}\label{prop:cover-singleton-subcanonical}
%A $2$-site $(\C, j)$ with topology generated by singleton covers is subcanonical just when each singleton cover is an effective $\F_\mathrm{so}$-quotient map in $\C$.
%\end{Prop}
%\begin{proof}
%For a singleton cover $f \colon V \to U$, the induced $2$-sieve $m \colon \phi \rightarrowtail YU$ is, as in the proceeding proof, the second half of the $\F_\mathrm{so}$-kernel--quotient factorisation of $Yf \colon YV \to YU$ in $[\C^\op, \cat{Cat}]$; and so $\phi$ is the $\F_\mathrm{so}$-quotient of $K(Yf) \cong Y(Kf)$. Since taking weighted colimits is cocontinuous in the weight insofar as it is defined, and colimits by representable weights are given by evaluation at the representing object, we conclude that the colimit $\phi \star 1_\C$, if it exists, must be the $\F_\mathrm{so}$-quotient of the $\F_\mathrm{so}$-kernel of $f$. Thus to say that $m$ exhibits $U$ as $\phi \star 1_\C$ is to say that $f$ is the $\F_\mathrm{so}$-quotient of its own $\F_\mathrm{so}$-kernel, that is, an effective $\F_\mathrm{so}$-quotient map.
%\end{proof}

\subsection{(Surjective on objects, injective on objects and fully faithful)}

\begin{Thm}\label{thm:so-regular}
A $2$-category $\C$ with finite limits and $\F_\mathrm{so}$-quotients of $\F_\mathrm{so}$-kernels is $\F_\mathrm{so}$-regular if and only if $\F_\mathrm{so}$-quotient maps are effective and stable under pullback.
\end{Thm}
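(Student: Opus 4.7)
The plan is to prove the two directions separately, with the ``only if'' direction being immediate from earlier results, and the ``if'' direction proceeding by an embedding argument.

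For the forward implication, suppose $\C$ is $\F_\mathrm{so}$-regular. Then by Corollary~\ref{cor:ker-quot-so-2topos}, $\F_\mathrm{so}$-kernel--quotient factorisations converge immediately and are stable under pullback in $\C$; in particular, the three classes of $\F_\mathrm{so}$-strong epi, $\F_\mathrm{so}$-quotient, and effective $\F_\mathrm{so}$-quotient map coincide, and this common class is closed under pullback, as required.

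For the converse, assume that $\F_\mathrm{so}$-quotient maps in $\C$ are effective and pullback-stable. First reduce to the small case via Proposition~\ref{prop:phi-small-reduction}, noting that $\Phi_{\F_\mathrm{so}}^\mathrm{reg}$ is a small class of lex-weights by Proposition~\ref{prop:ker-quot-lex-weights}, and that the hypotheses of effectivity and pullback-stability descend to any small full replete subcategory closed under finite limits and $\F_\mathrm{so}$-quotients of $\F_\mathrm{so}$-kernels (since all the relevant constructions are computed as in $\C$). Thus we may assume $\C$ is small. We then invoke Theorem~\ref{thm:phi-exact-embedding}, and so it suffices to construct a fully faithful $2$-functor $\C \to \E$ into a $2$-topos $\E$ which preserves finite limits and $\F_\mathrm{so}$-quotients of $\F_\mathrm{so}$-kernels.

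To build this, let $\J$ be the class of $\F_\mathrm{so}$-quotient maps in $\C$; by hypothesis this is a class of pullback-stable effective $\F_\mathrm{so}$-quotient maps. Let $j$ be the smallest Grothendieck pretopology on $\C_0$ making each $f \in \J$ a singleton cover: Proposition~\ref{prop:cover-singleton-subcanonical} tells us that $(\C, j)$ is a subcanonical $2$-site, so that $LY \colon \C \to \cat{Sh}_j(\C)$ is a fully faithful, finite-limit-preserving embedding into the $2$-topos $\cat{Sh}_j(\C)$. It remains to check that $LY$ preserves $\F_\mathrm{so}$-quotients of $\F_\mathrm{so}$-kernels. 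So let $X$ be an $\F_\mathrm{so}$-kernel in $\C$ with $\F_\mathrm{so}$-quotient $q \colon X1 \to Q$; then $q \in \J$ by construction, so by Proposition~\ref{prop:yoneda-makes-mixed-codescent}, $LYq$ is an effective $\F_\mathrm{so}$-quotient map in $\cat{Sh}_j(\C)$. Since $LY$ preserves finite limits, it preserves $\F_\mathrm{so}$-kernels, so $K(LYq) \cong LY(Kq) \cong LY(X)$, where the second isomorphism uses that $q$ is an effective $\F_\mathrm{so}$-quotient of $X$ by the assumed effectivity. Therefore $LYq$ is the $\F_\mathrm{so}$-quotient of $LY(X)$, as required.

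The step I expect to be the main subtlety is the verification that $LY$ preserves $\F_\mathrm{so}$-quotients of $\F_\mathrm{so}$-kernels: this rests on combining the effectivity hypothesis (which forces $X \cong Kq$) with Proposition~\ref{prop:yoneda-makes-mixed-codescent} (which shows $LYq$ is an effective $\F_\mathrm{so}$-quotient in the sheaf topos, and hence the quotient of its own kernel). Everything else is bookkeeping: the small-reduction step, the verification of subcanonicity via Proposition~\ref{prop:cover-singleton-subcanonical}, and the appeal to Theorem~\ref{thm:phi-exact-embedding}.
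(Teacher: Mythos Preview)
Your proof is correct and follows essentially the same approach as the paper's: the ``only if'' direction via Corollary~\ref{cor:ker-quot-so-2topos}, the reduction to the small case via Proposition~\ref{prop:phi-small-reduction}, the construction of the subcanonical $\F_\mathrm{so}$-regular topology via Proposition~\ref{prop:cover-singleton-subcanonical}, and the appeal to Theorem~\ref{thm:phi-exact-embedding}. The only difference is cosmetic: where you verify directly that $LY$ preserves $\F_\mathrm{so}$-quotients of $\F_\mathrm{so}$-kernels by unwinding effectivity on both sides, the paper packages this step by invoking Proposition~\ref{prop:preserve-quotients-preserve-colims}(a) to reduce to showing that $LY$ preserves $\F_\mathrm{so}$-quotient \emph{maps}, which then follows from Proposition~\ref{prop:yoneda-makes-mixed-codescent}; your argument is essentially an inline version of that proposition's proof.
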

\begin{proof}
The ``only if'' direction is contained in Corollary~\ref{cor:ker-quot-so-2topos}. For the ``if'' direction, it suffices by Proposition~\ref{prop:phi-small-reduction} and the remarks following, to prove that a \emph{small} $\C$ in which $\F_\mathrm{so}$-quotient maps are effective and stable is $\F_\mathrm{so}$-regular. By Theorem~\ref{thm:phi-exact-embedding}, it suffices to exhibit a full embedding into a $2$-topos which preserves finite limits and $\F_\mathrm{so}$-quotients of $\F_\mathrm{so}$-kernels. So consider on $\C$ the \emph{$\F_\mathrm{so}$-regular topology} generated by taking all $\F_\mathrm{so}$-quotient maps as singleton covers. By assumption, every such $f$ is effective and stable under pullback, and thus by Proposition~\ref{prop:cover-singleton-subcanonical}, the topology they generate makes $\C$ into a subcanonical $2$-site. We thus have a fully faithful embedding $\C \to \cat{Sh}(\C)$ which preserves all limits; it remains to show that it preserves $\F_\mathrm{so}$-quotients of $\F_\mathrm{so}$-kernels. Since $\F_\mathrm{so}$-quotient maps are effective in $\C$ (by assumption) and in $\cat{Sh}(\C)$ (since it is a $2$-topos), it suffices by Proposition~\ref{prop:preserve-quotients-preserve-colims}(a) to show that $\C \to \cat{Sh}(\C)$ preserves $\F_\mathrm{so}$-quotient maps; but this is so by the definition of $j$, Proposition~\ref{prop:yoneda-makes-mixed-codescent} and Corollary~\ref{cor:ker-quot-so-2topos}.
\end{proof}
\begin{Rk}
In~\cite[\S 1.19]{Street1982Two-dimensional}, a finitely complete $2$-category is defined to be regular if each morphism admits an (acute, full monic) factorisation, and acute morphisms are stable under pullback. Theorem~1.22 of~\cite{Street1982Two-dimensional} claims that, in any such $2$-category, each acute morphism is an effective $\F_\mathrm{so}$-quotient; given which, Street's definition would coincide with ours. Unfortunately, the proof of Theorem~1.22 contains an error\footnote{The erroneous sentence reads ``Since $f = us$, it follows that $\mathbf E(s) \cong \mathbf E(f)$''.} and the result is in fact false. To see this, observe that in the $2$-category $\cat{Ab}\text-\cat{Cat}$, factorising an $\cat{Ab}$-functor through its full image yields pullback-stable (acute, full monic) factorisations, but that by Proposition~\ref{prop:mixed-codescent-not-effective}, not every acute map in $\cat{Ab}\text-\cat{Cat}$ is an effective $\F_\mathrm{so}$-quotient.
\end{Rk}

\begin{Thm}
A $2$-category $\C$ with finite limits and $\F_\mathrm{so}$-quotients of $\F_\mathrm{so}$-congruences is $\F_\mathrm{so}$-exact if and only if it is $\F_\mathrm{so}$-regular and $\F_\mathrm{so}$-congruences are effective.
\end{Thm}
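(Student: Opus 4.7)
The plan is to mirror closely the proof of Theorem~\ref{thm:so-regular}, swapping ``kernels'' for ``congruences'' throughout and invoking part (b) of Proposition~\ref{prop:preserve-quotients-preserve-colims} in place of part (a).

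For the ``only if'' direction, suppose $\C$ is $\F_\mathrm{so}$-exact. Then $\C$ is $\F_\mathrm{so}$-regular by Proposition~\ref{prop:f-exact-f-regular}, whilst the effectivity of $\F_\mathrm{so}$-congruences in $\C$ is the final assertion of Proposition~\ref{prop:mixed-cateads}.

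For the ``if'' direction, assume $\C$ is $\F_\mathrm{so}$-regular, has $\F_\mathrm{so}$-quotients of $\F_\mathrm{so}$-congruences, and that these congruences are effective. By Proposition~\ref{prop:phi-small-reduction} and the remarks following it, we may reduce to the case where $\C$ is small; then by Theorem~\ref{thm:phi-exact-embedding} it suffices to exhibit a full embedding of $\C$ into a $2$-topos which preserves finite limits and $\F_\mathrm{so}$-quotients of $\F_\mathrm{so}$-congruences. Equip $\C$ with the $\F_\mathrm{so}$-regular topology, whose singleton covers are the $\F_\mathrm{so}$-quotient maps. Since $\C$ is $\F_\mathrm{so}$-regular, Corollary~\ref{cor:ker-quot-so-2topos} tells us that these maps are effective and pullback-stable $\F_\mathrm{so}$-quotients; so by Proposition~\ref{prop:cover-singleton-subcanonical} this topology is subcanonical, and yields a fully faithful, finite-limit-preserving embedding $Y \colon \C \to \cat{Sh}(\C)$ into the $2$-topos of sheaves.

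It remains to show that $Y$ preserves $\F_\mathrm{so}$-quotients of $\F_\mathrm{so}$-congruences. $\F_\mathrm{so}$-congruences admit $\F_\mathrm{so}$-quotients in $\C$ by hypothesis, and in $\cat{Sh}(\C)$ because the latter is a $2$-topos and so $\F_\mathrm{so}$-exact by Proposition~\ref{prop:v-topos-phi-exact}. Moreover, $\F_\mathrm{so}$-congruences are effective in $\C$ by assumption and in $\cat{Sh}(\C)$ by Proposition~\ref{prop:mixed-cateads} applied to the $\F_\mathrm{so}$-exact $\cat{Sh}(\C)$. Hence by Proposition~\ref{prop:preserve-quotients-preserve-colims}(b), it is enough to check that $Y$ preserves $\F_\mathrm{so}$-quotient morphisms; and this follows exactly as in the final line of the proof of Theorem~\ref{thm:so-regular}, from the definition of the topology on $\C$, Proposition~\ref{prop:yoneda-makes-mixed-codescent}, and Corollary~\ref{cor:ker-quot-so-2topos}.

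The main obstacle is purely organisational: one must line up the hypotheses so that Proposition~\ref{prop:preserve-quotients-preserve-colims}(b) can be applied on both sides of the embedding, which forces us to note that $\cat{Sh}(\C)$, being a $2$-topos, is itself $\F_\mathrm{so}$-exact and hence has effective congruences admitting $\F_\mathrm{so}$-quotients. Once this is in place there is no new content beyond what was already done for $\F_\mathrm{so}$-regularity.
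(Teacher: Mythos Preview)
Your proof is correct and follows exactly the route the paper takes: the ``only if'' direction via Propositions~\ref{prop:f-exact-f-regular} and~\ref{prop:mixed-cateads}, and the ``if'' direction by rerunning the argument of Theorem~\ref{thm:so-regular} with the $\F_\mathrm{so}$-regular topology but invoking part~(b) of Proposition~\ref{prop:preserve-quotients-preserve-colims} in place of part~(a). The paper's own proof is a two-line sketch pointing to precisely these ingredients, so your expanded version matches it faithfully.
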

\begin{proof}
The ``only if'' direction is contained in Propositions~\ref{prop:f-exact-f-regular} and~\ref{prop:mixed-cateads}. The ``if'' direction is argued as in the preceding result, taking sheaves again for the $\F_\mathrm{so}$-regular topology but now using part (b) rather than part (a) of Proposition~\ref{prop:preserve-quotients-preserve-colims}.
\end{proof}
\begin{Rk}
Modulo the discrepancy noted in the preceding remark, this agrees with the definition of exact $2$-category in~\cite[\S 2.1]{Street1982Two-dimensional}.
\end{Rk}

\subsection{(Bijective on objects, fully faithful)}
\begin{Lemma}\label{lemma:eff-codesc-eff-mixed}
In any finitely complete $2$-category, effective $\F_\mathrm{bo}$-quotient maps are effective $\F_\mathrm{so}$-quotient maps.
\end{Lemma}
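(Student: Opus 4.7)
The plan is to show that if $f \colon A \to B$ is an effective $\F_\mathrm{bo}$-quotient, then the universal $\F_\mathrm{bo}$-codescent cocone exhibiting $f$ as $Q_\mathrm{bo}K_\mathrm{bo}(f)$ automatically refines to a universal $\F_\mathrm{so}$-codescent cocone exhibiting $f$ as $Q_\mathrm{so}K_\mathrm{so}(f)$. The argument is purely formal once one checks that the supplementary $\F_\mathrm{so}$-axiom is satisfied by the canonical cocone for free.

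First I would recall that $K_\mathrm{so}(f)$ extends $K_\mathrm{bo}(f)$ by adjoining the pullback $A \times_B A$ together with the canonical comparison $j \colon A \times_B A \to f \mathord{\mid} f$. By the universal property of the comma object, $j$ is determined by the pullback projections $\pi_1, \pi_2 \colon A \times_B A \to A$ (giving $dj = \pi_1$, $cj = \pi_2$) together with a $2$-cell $f\pi_1 \Rightarrow f\pi_2$; since $f\pi_1 = f\pi_2$ by definition of pullback, this $2$-cell is the identity. Writing $\lambda \colon fd \Rightarrow fc$ for the canonical comma $2$-cell on $f \mathord{\mid} f$, this gives $\lambda \cdot j = 1_{f\pi_1}$. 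This small identification is the crux of the argument and the only substantive step I expect to encounter.

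Next I would invoke the hypothesis: effectivity of $f$ as an $\F_\mathrm{bo}$-quotient says precisely that $(f, \lambda)$ is the universal $\F_\mathrm{bo}$-codescent cocone on $K_\mathrm{bo}(f)$. The previous paragraph shows it satisfies the extra axiom $\lambda \cdot j = 1$ required of an $\F_\mathrm{so}$-codescent cocone on $K_\mathrm{so}(f)$. For universality among such cocones, any $\F_\mathrm{so}$-codescent cocone $(q, \mu)$ is in particular an $\F_\mathrm{bo}$-codescent cocone, hence factors uniquely as $q = hf$ and $\mu = h\lambda$; the equation $\mu \cdot j = h \cdot (\lambda \cdot j) = h \cdot 1 = 1$ required by the $\F_\mathrm{so}$-condition is then automatic, so no extra constraint is imposed and the factorisation $h$ remains uniquely determined. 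Thus $(f, \lambda)$ exhibits $f$ as $Q_\mathrm{so}K_\mathrm{so}(f)$, which is the claim. The only real obstacle throughout is the identification $\lambda \cdot j = 1$; once that is in hand, passage from $\F_\mathrm{bo}$-universality to $\F_\mathrm{so}$-universality is formal.
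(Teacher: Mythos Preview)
Your argument is correct and is genuinely different from the paper's proof. The paper passes to the presheaf $2$-category $[\C^\op,\cat{Cat}]$, factorises $Yf$ as a pointwise (bijective on objects) map followed by a pointwise (surjective on objects, fully faithful) map followed by a pointwise full monic, and then uses orthogonality to representables together with the cancellativity of orthogonality along epimorphisms to deduce the result. Your proof instead stays inside $\C$ and exploits the observation that the canonical comma $2$-cell $\lambda$ satisfies $\lambda \cdot j = 1$; from this, the $\F_\mathrm{so}$-universality of $(f,\lambda)$ follows formally from its $\F_\mathrm{bo}$-universality, since the category of $\F_\mathrm{so}$-cocones sits as a full subcategory of $\F_\mathrm{bo}$-cocones and the isomorphism $\C(B,C) \cong \text{Cocones}_{\F_\mathrm{bo}}$ already lands in it. Your route is more elementary and transparent; the paper's route connects the lemma to the sheaf-theoretic machinery used throughout Section~6, which may explain the choice. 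One small point worth making explicit for completeness: the enriched universal property concerns an isomorphism of hom-\emph{categories}, so you should note that morphisms of $\F_\mathrm{so}$-cocones coincide with morphisms of the underlying $\F_\mathrm{bo}$-cocones (the extra object $2'$ carries no $2$-cell data), ensuring the full subcategory inclusion really is full; but this is immediate from the description of $\F_\mathrm{so}$-quotients in the paper.
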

\begin{proof}
Let $\C$ be a $2$-category with finite limits, and $f \colon A \to B$ an effective $\F_\mathrm{bo}$-quotient map. In the presheaf $2$-category $[\C^\op, \cat{Cat}]$, we may factorise $Yf \colon YA \to YB$ as
\begin{equation*}
YA \xrightarrow{g_1} \phi_1 \xrightarrow{g_2} \phi_2 \xrightarrow{g_3} YB
\end{equation*}
where $g_1$ is pointwise bijective on objects, $g_2$ is pointwise surjective on objects and fully faithful, and $g_3$ is pointwise injective on objects and fully faithful. By the argument of Proposition~\ref{prop:cover-singleton-subcanonical}, $f$ is an effective $\F_\mathrm{so}$-quotient map if and only if $g_3 \mathbin \bot YK$ for each $K \in \K$; analogously,  $f$ is an effective $\F_\mathrm{bo}$-quotient map if and only if $g_3 g_2 \mathbin \bot YK$ for each $K \in \K$. But since $g_2$ is epimorphic, $g_3 g_2 \mathbin \bot YK$ implies $g_3 \mathbin \bot YK$ by the standard cancellativity properties of orthogonality classes.
\end{proof}
\begin{Prop}\label{prop:codescent-in-boregular}
Given $f \colon A \to B$ in an $\F_\mathrm{bo}$-regular $2$-category $\C$, the following are equivalent:
\begin{enumerate}[(i)]
\item $f$ is an effective $\F_\mathrm{bo}$-quotient map;
\item $f$ and $\delta_f \colon A \to A \times_B A$ are effective $\F_\mathrm{bo}$-quotient maps;
\item $f$ and $\delta_f$ are stable effective $\F_\mathrm{so}$-quotient maps;
\item $f$ and $\delta_f$ are acute.
\end{enumerate}
\end{Prop}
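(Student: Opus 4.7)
The plan is to prove the cycle (i)$\Rightarrow$(ii)$\Rightarrow$(iii)$\Rightarrow$(iv)$\Rightarrow$(i), with (ii)$\Rightarrow$(i) trivial. For (ii)$\Rightarrow$(iii), Corollary~\ref{cor:ker-quot-bo-2topos} gives pullback-stability of effective $\F_\mathrm{bo}$-quotient maps in any $\F_\mathrm{bo}$-regular $2$-category, and Lemma~\ref{lemma:eff-codesc-eff-mixed} says these are effective $\F_\mathrm{so}$-quotients; so pullbacks of $f$ and $\delta_f$ are effective $\F_\mathrm{so}$-quotients, yielding (iii). For (iii)$\Rightarrow$(iv), every effective $\F_\mathrm{so}$-quotient is an $\F_\mathrm{so}$-strong epi by Proposition~\ref{prop:f-map-props}(b), which is the definition of acute.

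For (i)$\Rightarrow$(ii), I would reduce to the small case via Proposition~\ref{prop:phi-small-reduction} and then embed $\C$ fully faithfully into a $2$-topos $\E$ via a finite-limit- and effective-$\F_\mathrm{bo}$-quotient-preserving functor $F$, using Theorem~\ref{thm:phi-exact-embedding}. In $\E$, a reflective subcategory of some $[\B^\op, \cat{Cat}]$ with finite-limit-preserving reflector $L_\E$, the implication (i)$\Rightarrow$(ii) reduces pointwise to the tautology in $\cat{Cat}$ that the diagonal of a bijective-on-objects functor is bijective on objects: factorising $if$ pointwise as (pointwise bo, pointwise ff) and applying $L_\E$ exhibits $f$ (up to iso) as $L_\E(e)$ for $e$ pointwise bo, and since $\delta_e$ is also pointwise bo and $L_\E$ preserves finite limits and colimits, $\delta_f = L_\E(\delta_e)$ is effective $\F_\mathrm{bo}$-quotient in $\E$. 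To pull this back to $\C$, factorise $\delta_f = m' e'$ in $\C$ (with $e'$ effective $\F_\mathrm{bo}$-quotient and $m'$ fully faithful), apply $F$, and use the uniqueness of such factorisations in $\E$ together with the fully faithfulness of $F$ to conclude $m'$ invertible.

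The main obstacle is (iv)$\Rightarrow$(i). Use $\F_\mathrm{bo}$-regularity to factorise $f = me$ with $e$ effective $\F_\mathrm{bo}$-quotient and $m$ fully faithful; the goal is to show $m$ is invertible. First, $f$ acute gives $m$ acute by Proposition~\ref{prop:strongcanc}. Next, consider the commutative square
\[
\cd{
A \ar[r]^-{e} \ar[d]_{\delta_f} & P \ar[d]^{\delta_m} \\
A \times_B A \ar[r]_-{e \times_B e} & P \times_B P\rlap{ ,}
}
\]
whose bottom map is the canonical comparison between $1$-kernel pairs. By writing $e \times e$ as a composite of pullbacks of $e$ and then $e \times_B e$ as a further pullback of $e \times e$ along $P \times_B P \hookrightarrow P \times P$, the pullback-stability and closure under composition of effective $\F_\mathrm{bo}$-quotients in an $\F_\mathrm{bo}$-regular $\C$ make $e \times_B e$ itself an effective $\F_\mathrm{bo}$-quotient, and in particular acute. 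Combined with $\delta_f$ acute, the composite $\delta_m \circ e = (e \times_B e) \circ \delta_f$ is acute, and Proposition~\ref{prop:strongcanc} yields $\delta_m$ acute. Separately, a direct representable check in $\cat{Cat}$ shows that for $m$ fully faithful, $\delta_m$ is injective on objects and fully faithful, hence full monic; Proposition~\ref{prop:f-map-props}(a) then transfers this to $\C$. Thus $\delta_m$ is both acute and full monic, so by orthogonality $\delta_m$ is invertible, making $m$ full monic; one final application of orthogonality between $m$ acute and $m$ full monic gives $m$ invertible and $f \cong e$ effective $\F_\mathrm{bo}$-quotient. The key technical point is the identification of $e \times_B e$ as an effective $\F_\mathrm{bo}$-quotient, which depends crucially on the product and pullback closure of this class in an $\F_\mathrm{bo}$-regular $2$-category.
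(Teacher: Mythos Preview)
Your proof is correct and follows essentially the same route as the paper's. The arguments for (ii)$\Rightarrow$(iii)$\Rightarrow$(iv) are identical, and your (iv)$\Rightarrow$(i) matches the paper's almost step for step: factorise $f = me$, show $m$ acute by cancellation, show $\delta_m$ is full monic (the paper says this follows from $m$ faithful, you do a representable check---same content), then show $\delta_m$ acute via the square involving $e \times_B e$, conclude $\delta_m$ invertible hence $m$ monic hence full monic hence invertible. Your decomposition of $e \times_B e$ as a pullback of $e \times e$ is a slight variant of the paper's direct decomposition $e \times_B e = (C \times_B e) \circ (e \times_B A)$, but equally valid.

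The only substantive difference is in (i)$\Rightarrow$(ii). The paper works directly with the embedding $Z \colon \C \to \Phi_\mathrm{bo}(\C)$ and its left exact left adjoint $L$, which exist for arbitrary (possibly large) $\F_\mathrm{bo}$-regular $\C$: it proves the implication in $\cat{Cat}$, then $\P\B$, then $\Phi_\mathrm{bo}(\B)$, and finally transfers to $\C$ by noting that $L$ applied to the kernel--quotient factorisation of $Zf$ in $\Phi_\mathrm{bo}(\C)$ gives that of $f$ in $\C$. Your route instead reduces to the small case and embeds into a $2$-topos via Theorem~\ref{thm:phi-exact-embedding}. This is sound, but the reduction step via Proposition~\ref{prop:phi-small-reduction} deserves a word of justification: you need that any $f$ lies in some \emph{small} full replete subcategory closed under finite limits and $\F_\mathrm{bo}$-quotients of $\F_\mathrm{bo}$-kernels, and that the inclusion reflects the property of being an effective $\F_\mathrm{bo}$-quotient. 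Both hold, but the paper's approach avoids this detour entirely.
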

\begin{proof}
(i) $\Rightarrow$ (ii) is clear when $\C = \cat{Cat}$, and hence also when $\C = \P\B$, since limits and colimits in $\P \B$ are pointwise. Consider next the case $\C = \Phi_\mathrm{bo}(\B)$.  Because $\Phi_\mathrm{bo}(\B)$ is closed in $\P \B$ under finite limits and $\F_\mathrm{bo}$-quotients of $\F_\mathrm{bo}$-kernels, it follows from Proposition~\ref{prop:preserve-quotients-preserve-colims} that any effective $\F_\mathrm{bo}$-quotient $f \in \Phi_\mathrm{bo}(\B)$ remains such in $\P \B$. Thus $\delta_f$ is an effective $\F_\mathrm{bo}$-quotient map in $\P \B$, whence also in the full subcategory $\Phi_\mathrm{bo}(\B)$ as required. Finally, let $f$ be an effective $\F_\mathrm{bo}$-quotient map in the arbitrary $\F_\mathrm{bo}$-regular $\C$. Take its image under the embedding $Z \colon \C \to \Phi_\mathrm{bo}(\C)$, and form the kernel--quotient factorisation for $\F_\mathrm{bo}$, given by $Zf = hg$, say. Since $g$ is an effective $\F_\mathrm{bo}$-quotient map in $\Phi_\mathrm{bo}(\C)$, so is $\delta_g$; now applying $L$ to the factorisation $Zf = hg$ yields the corresponding factorisation in $\C$, whence $Lh$ is invertible and so $f \cong Lg$. But now $\delta_f \cong \delta_{Lg} \cong L(\delta_g)$, like $\delta_g$, is an effective $\F_\mathrm{bo}$-quotient map as required.

This proves (i) $\Rightarrow$ (ii); now (ii) $\Rightarrow$ (iii) is Lemma~\ref{lemma:eff-codesc-eff-mixed} together with Corollary~\ref{cor:ker-quot-bo-2topos}, (iii) $\Rightarrow$ (iv) is Proposition~\ref{prop:f-map-props}(b), and it remains to prove (iv) $\Rightarrow$ (i).
So suppose that $f \colon A \to B$ and $\delta_f \colon A \to A \times_B A$ are both acute; we must show that $f$ is an effective $\F_\mathrm{bo}$-quotient map. Thus, on forming the $\F_\mathrm{bo}$-kernel--quotient factorisation
\begin{equation*}
f = A \xrightarrow e C \xrightarrow m B\rlap{ ,}
\end{equation*}
we must verify that $m$ is an isomorphism. $f$ is assumed acute, and $e$ is an $\F_\mathrm{bo}$-quotient, hence an $\F_\mathrm{bo}$-strong epi, hence acute; thus $m$ is also acute by Proposition~\ref{prop:f-map-props}(d). But $m$ is also fully faithful, because by Corollary~\ref{cor:ker-quot-bo-2topos}, kernel--quotient factorisations in $\C$ converge immediately. It is now enough to show that $m \colon C \to B$ is monic; for then it will be full monic and acute, whence invertible.
To show monicity is equally to show that the diagonal $\delta_{m} \colon C \to C \times_{B} C$ is invertible. Since $m$ is faithful, it follows easily that $\delta_{m}$ is full monic; so it is enough to show that $\delta_m$ is also acute. Consider the square
\begin{equation*}
\cd{
A \ar[r]^-{\delta_{f}} \ar[d]_e & A \times_{B} A \ar[d]^{e \times_{B} e} \\
C \ar[r]_-{\delta_m} & C \times_{B} C\rlap{ .}
}
\end{equation*}
In it, $e$ is an $\F_\mathrm{bo}$-quotient map, whence also the pullback $e \times_{B} A$, since $\F_\mathrm{bo}$-quotient maps in $\C$ are stable by Corollary~\ref{cor:ker-quot-bo-2topos}; 
similarly the pullback $C \times_{B} e$ is an $\F_\mathrm{bo}$-quotient map. Hence $e \times_{B} e = (C \times_{B} e) \circ (e \times_{B} A)$ is a composite of acute maps, and so acute.
$\delta_f$ is acute by assumption, and so the common diagonal of the square is acute; since $e$ is acute, it follows by Proposition~\ref{prop:f-map-props}(d) that $\delta_m$ is acute as required.
\end{proof}
\begin{Rk}\label{rk:geometric}
The correspondence between (i) and (iii) in this proposition can be understood in terms of ``geometric $2$-logic''. The statement that a morphism $f$ in a $2$-category $\C$ is a stable effective $\F_\mathrm{so}$-quotient map can be interpreted as saying that, in the internal logic of $\C$ equipped with its canonical topology, $f$ is surjective on objects. Correspondingly, the statement that $\delta_f$ be a stable effective $\F_\mathrm{so}$-quotient can be interpreted as saying that $f$ is injective on objects. Thus the equivalence of (i) and (iii) says that in a $\F_\mathrm{bo}$-regular $2$-category, ``a map is an $\F_\mathrm{bo}$-quotient if and only if it is surjective on objects and injective on objects''.
\end{Rk}

\begin{Thm}\label{thm:bo-char}
A $2$-category with finite limits and $\F_\mathrm{bo}$-quotients of $\F_\mathrm{bo}$-kernels is $\F_\mathrm{bo}$-regular just when $\F_\mathrm{bo}$-quotient maps are effective and stable under pullback, and whenever $f \colon A \to B$ is an $\F_\mathrm{bo}$-quotient map, so also is $\delta_f \colon A \to A \times_B A$.
\end{Thm}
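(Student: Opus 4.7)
The ``only if'' direction combines two facts already at hand: Corollary~\ref{cor:ker-quot-bo-2topos} supplies stability and effectivity of $\F_\mathrm{bo}$-quotient maps in any $\F_\mathrm{bo}$-regular $2$-category, while the implication (i)$\Rightarrow$(ii) of Proposition~\ref{prop:codescent-in-boregular} supplies the condition that $\delta_f$ is an $\F_\mathrm{bo}$-quotient whenever $f$ is.

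For the ``if'' direction, my plan is to mimic the structure of the proof of Theorem~\ref{thm:so-regular}. By Proposition~\ref{prop:phi-small-reduction} one may assume $\C$ is small; then by Theorem~\ref{thm:phi-exact-embedding} it suffices to exhibit a full embedding of $\C$ into a $2$-topos preserving finite limits and $\F_\mathrm{bo}$-quotients of $\F_\mathrm{bo}$-kernels. The natural candidate is the sheaf embedding $LY \colon \C \to \cat{Sh}_j(\C)$ for the smallest pretopology $j$ on $\C_0$ in which every $\F_\mathrm{bo}$-quotient map appears as a singleton cover.

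The first step is to show that $j$ is subcanonical, which by Proposition~\ref{prop:cover-singleton-subcanonical} reduces to showing that every $\F_\mathrm{bo}$-quotient map is a pullback-stable effective $\F_\mathrm{so}$-quotient. Pullbacks of $\F_\mathrm{bo}$-quotients are $\F_\mathrm{bo}$-quotients by hypothesis, and these are then effective $\F_\mathrm{bo}$-quotients by hypothesis; Lemma~\ref{lemma:eff-codesc-eff-mixed} promotes them to effective $\F_\mathrm{so}$-quotients. Once subcanonicity is in hand, $LY$ is fully faithful and finite-limit-preserving, and by Proposition~\ref{prop:preserve-quotients-preserve-colims}(a)---applicable because $\F_\mathrm{bo}$-quotients of $\F_\mathrm{bo}$-kernels are effective in $\C$ by hypothesis and in the $\F_\mathrm{bo}$-regular $2$-topos $\cat{Sh}_j(\C)$---it will preserve such quotients provided it preserves $\F_\mathrm{bo}$-quotient morphisms.

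The crux, and the place where the diagonal hypothesis earns its keep, is thus showing that $LY$ sends each $\F_\mathrm{bo}$-quotient morphism $f$ to an effective $\F_\mathrm{bo}$-quotient in $\cat{Sh}_j(\C)$. Now Proposition~\ref{prop:yoneda-makes-mixed-codescent} together with Corollary~\ref{cor:ker-quot-so-2topos} ensures that $LYf$ is a stable effective $\F_\mathrm{so}$-quotient; and since $LY$ preserves finite limits we have $\delta_{LYf} \cong LY(\delta_f)$, so the hypothesis that $\delta_f$ is itself an $\F_\mathrm{bo}$-quotient in $\C$ allows the same reasoning to be applied to conclude that $\delta_{LYf}$ is also a stable effective $\F_\mathrm{so}$-quotient. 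Invoking the implication (iii)$\Rightarrow$(i) of Proposition~\ref{prop:codescent-in-boregular} in the $\F_\mathrm{bo}$-regular $\cat{Sh}_j(\C)$ then delivers that $LYf$ is an effective $\F_\mathrm{bo}$-quotient, completing the argument. The main obstacle I anticipate is precisely this final passage from $\F_\mathrm{so}$- to $\F_\mathrm{bo}$-quotient in the target $2$-topos: without the diagonal hypothesis one would be stranded at the $\F_\mathrm{so}$-level, so the hypothesis on $\delta_f$ is what makes the embedding strategy go through.
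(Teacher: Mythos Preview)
Your proposal is correct and follows essentially the same approach as the paper's proof: the same $\F_\mathrm{bo}$-regular topology, the same subcanonicity check via Lemma~\ref{lemma:eff-codesc-eff-mixed} and Proposition~\ref{prop:cover-singleton-subcanonical}, the same reduction via Proposition~\ref{prop:preserve-quotients-preserve-colims}(a), and the same use of Proposition~\ref{prop:codescent-in-boregular} (the paper phrases the last step via (iv)$\Rightarrow$(i) using ``acute'' rather than your (iii)$\Rightarrow$(i), but this is immaterial).
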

\begin{proof}
The ``only if'' direction is contained in Corollary~\ref{cor:ker-quot-bo-2topos} and Proposition~\ref{prop:codescent-in-boregular}. 
For the ``if'' direction, it suffices by Proposition~\ref{prop:phi-small-reduction} and the remarks following, to prove it only for a \emph{small} $\C$ satisfying the stated hypotheses. By Theorem~\ref{thm:phi-exact-embedding}, it suffices to exhibit a full embedding of $\C$ into a $2$-topos which preserves finite limits and  $\F_\mathrm{bo}$-quotients of $\F_\mathrm{bo}$-kernels. Consider on $\C$ the \emph{$\F_\mathrm{bo}$-regular topology} generated by taking every $\F_\mathrm{bo}$-quotient map as a singleton cover. By assumption, these covers are stable under pullback, and are effective $\F_\mathrm{so}$-quotients by Lemma~\ref{lemma:eff-codesc-eff-mixed}; thus the topology they generate makes $\C$ into a subcanonical $2$-site. So we obtain a fully faithful embedding $\C \to \cat{Sh}(\C)$ which preserves all limits; it remains to show that it preserves  $\F_\mathrm{bo}$-quotients of $\F_\mathrm{bo}$-kernels. Since  $\F_\mathrm{bo}$-quotient maps are effective in $\C$ (by assumption) and in $\cat{Sh}(\C)$ (since it is a $2$-topos), it suffices by Proposition~\ref{prop:preserve-quotients-preserve-colims}(a) to show that $Y \colon \C \to \cat{Sh}(\C)$ preserves $\F_\mathrm{bo}$-quotient maps. But if $f$ is an $\F_\mathrm{bo}$-quotient map in $\C$, then so also is $\delta_f$, and so by the definition of $j$ and Proposition~\ref{prop:yoneda-makes-mixed-codescent}, both $Yf$ and $Y\delta_f \cong \delta_{Yf}$ are acute in the $2$-topos $\cat{Sh}(\C)$; whence, by Proposition~\ref{prop:codescent-in-boregular}, $Yf$ is an $\F_\mathrm{bo}$-quotient map.
\end{proof}

\begin{Rk}
The condition that $\delta_f$ be an $\F_\mathrm{bo}$-quotient whenever $f$ is so is substantive. Indeed, if we view $\cat{Set}$ as a locally discrete $2$-category, then $\F_\mathrm{bo}$-kernels therein are simply extended kernel-pair diagrams, and the quotient of $X \in [\K_\mathrm{bo}, \cat{Set}]$ is simply the coequaliser of $Xd,Xc \colon X2 \rightrightarrows X1$. It follows that $\F_\mathrm{bo}$-quotients are regular epimorphisms; as such, they are effective and stable under pullback. However, if $f \colon A \to B$ is a regular epimorphism, then the diagonal $\delta_f \colon A \to A \times_B A$ cannot be so unless $f$ is actually invertible. Thus $\cat{Set}$, which satisfies all the other hypotheses for $\F_\mathrm{bo}$-regularity, does not verify this one. We can understand this failure in terms of Remark~\ref{rk:geometric}: in the internal geometric $2$-logic of $\cat{Set}$, ``$\F_\mathrm{bo}$-quotients are surjective on objects, but not injective on objects''.
\end{Rk}

\begin{Thm}\label{thm:boex-char}
A $2$-category $\C$ with finite limits and $\F_\mathrm{bo}$-quotients of  $\F_\mathrm{bo}$-congruences is $\F_\mathrm{bo}$-exact just when it is $\F_\mathrm{bo}$-regular and $\F_\mathrm{bo}$-congruences in $\C$ are effective.
\end{Thm}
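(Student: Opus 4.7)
The plan is to mirror the structure of the preceding proof of Theorem~\ref{thm:bo-char}, adapting it from kernels to congruences in exactly the way that the $\F_\mathrm{so}$-exact theorem adapts its regular counterpart. The ``only if'' direction is immediate: Proposition~\ref{prop:f-exact-f-regular} gives that every $\F_\mathrm{bo}$-exact $\C$ is $\F_\mathrm{bo}$-regular, while the effectiveness of $\F_\mathrm{bo}$-congruences in any $\F_\mathrm{bo}$-exact $2$-category is the final assertion of Proposition~\ref{prop:cateads}.

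For the ``if'' direction, I would first apply Proposition~\ref{prop:phi-small-reduction} (together with the remarks following it) to reduce to the case of a small $\C$, and then invoke Theorem~\ref{thm:phi-exact-embedding}, which says it is enough to exhibit a finite-limit- and $\F_\mathrm{bo}$-quotient-of-$\F_\mathrm{bo}$-congruence-preserving full embedding of $\C$ into some $2$-topos. The natural candidate is again the sheaf embedding for the \emph{$\F_\mathrm{bo}$-regular topology} on $\C$ (singleton covers are the $\F_\mathrm{bo}$-quotient maps), already studied in the proof of Theorem~\ref{thm:bo-char}. By the $\F_\mathrm{bo}$-regularity hypothesis, Proposition~\ref{prop:codescent-in-boregular} and Lemma~\ref{lemma:eff-codesc-eff-mixed}, these covers are stable effective $\F_\mathrm{so}$-quotients, whence by Proposition~\ref{prop:cover-singleton-subcanonical} the topology is subcanonical, yielding a fully faithful, finite-limit-preserving embedding $\C \hookrightarrow \cat{Sh}(\C)$.

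The remaining task is to show that this embedding preserves $\F_\mathrm{bo}$-quotients of $\F_\mathrm{bo}$-congruences, and this is where the congruence-effectiveness hypothesis enters decisively. Since $\F_\mathrm{bo}$-congruences are effective in $\C$ (by assumption) and in $\cat{Sh}(\C)$ (which is a $2$-topos, hence $\F_\mathrm{bo}$-exact by Proposition~\ref{prop:v-topos-phi-exact}, whence congruences are effective there by Proposition~\ref{prop:cateads}), part~(b) of Proposition~\ref{prop:preserve-quotients-preserve-colims} reduces the problem to showing that the embedding preserves $\F_\mathrm{bo}$-quotient morphisms. But this is exactly what was verified in the second half of the proof of Theorem~\ref{thm:bo-char}: given an $\F_\mathrm{bo}$-quotient $f$ in $\C$, both $f$ and $\delta_f$ are acute, hence so are their images under $Y$; then Proposition~\ref{prop:codescent-in-boregular} in the $2$-topos $\cat{Sh}(\C)$ upgrades $Yf$ to an $\F_\mathrm{bo}$-quotient.

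I anticipate the only subtle point is the book-keeping in this final step: one must be certain that the condition on $\delta_f$ being an $\F_\mathrm{bo}$-quotient (needed to apply Proposition~\ref{prop:codescent-in-boregular} in $\cat{Sh}(\C)$) is automatic from $\F_\mathrm{bo}$-regularity of $\C$, which indeed it is by Theorem~\ref{thm:bo-char}. Everything else is a mechanical substitution of part~(b) for part~(a) of Proposition~\ref{prop:preserve-quotients-preserve-colims}, exactly as in the $\F_\mathrm{so}$-exact analogue, and no new geometric input is required.
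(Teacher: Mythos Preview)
Your proposal is correct and follows exactly the approach the paper takes: the ``only if'' direction via Propositions~\ref{prop:f-exact-f-regular} and~\ref{prop:cateads}, and the ``if'' direction by rerunning the argument of Theorem~\ref{thm:bo-char} with sheaves for the $\F_\mathrm{bo}$-regular topology, substituting part~(b) for part~(a) of Proposition~\ref{prop:preserve-quotients-preserve-colims}. Your expanded account of the final step---checking that the embedding preserves $\F_\mathrm{bo}$-quotient maps via $\delta_f$ and Proposition~\ref{prop:codescent-in-boregular}---is exactly what the paper leaves implicit in the phrase ``argued as in the preceding result''.
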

\begin{proof}
The ``only if'' direction is contained in Propositions~\ref{prop:f-exact-f-regular} and~\ref{prop:cateads}. The ``if'' direction is argued as in the preceding result, again taking sheaves for the $\F_\mathrm{bo}$-regular topology but now using part (b) rather than part (a) of Proposition~\ref{prop:preserve-quotients-preserve-colims}.
\end{proof}

\subsection{(Bijective on objects and full, faithful)}

\begin{Prop}\label{prop:coeq-in-bofregular}
If $f \colon A \to B$ is a morphism in an $\F_\mathrm{bof}$-regular $2$-category $\C$, then the following are equivalent:
\begin{enumerate}[(i)]
\item $f$ is an (effective) $\F_\mathrm{bof}$-quotient map;
\item $f$, $\delta_f \colon A \to A \times_B A$ and $\gamma_f \colon A^\mathbf 2 \to f \mathord{\mid} f$ are stable effective $\F_\mathrm{so}$-quotient maps;
\item $f$, $\delta_f$ and $\gamma_f$ are acute.
\end{enumerate}
\end{Prop}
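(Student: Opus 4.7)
The plan is to follow the template of Proposition~\ref{prop:codescent-in-boregular}, with the new ingredient $\gamma_f$ handled in strict parallel to $\delta_f$. For (i) $\Rightarrow$ (ii), I would verify the implication first in $\cat{Cat}$---where $f$ bijective on objects and full makes $f$ surjective on objects, $\delta_f$ an isomorphism, and $\gamma_f$ surjective on objects by the fullness of $f$---and then propagate the result successively to $\P \B$ (where all relevant data is pointwise), to $\Phi_\mathrm{bof}(\B)$ (which is closed in $\P \B$ under finite limits and $\F_\mathrm{bof}$-quotients of $\F_\mathrm{bof}$-kernels), and finally to any $\F_\mathrm{bof}$-regular $\C$ via its finite-limit-preserving reflector $L \colon \Phi_\mathrm{bof}(\C) \to \C$, exactly as in the cited proposition. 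The implication (ii) $\Rightarrow$ (iii) is immediate from Proposition~\ref{prop:f-map-props}(b).

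The substance lies in (iii) $\Rightarrow$ (i). I would form the $\F_\mathrm{bof}$-kernel--quotient factorisation $f = mq$ and aim to show $m$ invertible. As every $\F_\mathrm{so}$-monic is faithful, the $\F_\mathrm{bof}$-quotient $q$ is in particular an $\F_\mathrm{so}$-strong epi, i.e., acute; so cancellation (Proposition~\ref{prop:strongcanc}) forces $m$ acute too. The next step is to show $\delta_m$ and $\gamma_m$ acute. In $\C$, $\F_\mathrm{bof}$-quotients are pullback-stable (Corollary~\ref{cor:ker-quot-bof-2topos}) and closed under composition, and hence (by the familiar argument noted after Proposition~\ref{prop:sifted}) closed under finite products. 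So $q \times_B q \colon A \times_B A \to C \times_B C$ is a composite of pullbacks of $q$ and hence an $\F_\mathrm{bof}$-quotient; likewise the canonical comparison $\tilde q \colon f \mathord{\mid} f \to m \mathord{\mid} m$, which is the pullback of $q \times q$ along $m \mathord{\mid} m \to C \times C$, is an $\F_\mathrm{bof}$-quotient. Both are therefore acute. The identities $(q \times_B q)\, \delta_f = \delta_m\, q$ and $\tilde q \circ \gamma_f = \gamma_m \circ q^{\mathbf 2}$ then let me apply cancellation once more to conclude that $\delta_m$ and $\gamma_m$ are acute.

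To finish, since $\F_\mathrm{bof}$-kernel--quotient factorisations converge immediately in $\C$ (Corollary~\ref{cor:ker-quot-bof-2topos}), $m$ is faithful; and faithfulness of $m$ forces both $\delta_m$ and $\gamma_m$ to be full monic, as one verifies representably by direct calculation in $\cat{Cat}$. Combined with the acuteness already established, $\delta_m$ and $\gamma_m$ are then invertible: invertibility of $\gamma_m$ says that $m$ is fully faithful, while invertibility of $\delta_m$ says that $m$ is monic, whence $m$ is full monic. Being simultaneously acute and full monic, $m$ is invertible, and so $f \cong q$ is an $\F_\mathrm{bof}$-quotient map, as required. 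The main obstacle to expect is identifying the right pullback squares relating $\delta_f, \gamma_f$ to $\delta_m, \gamma_m$, together with the key observation that faithfulness of $m$ automatically upgrades the acute maps $\delta_m$ and $\gamma_m$ to full monic ones, thereby converting acuteness into invertibility.
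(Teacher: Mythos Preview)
Your proposal is correct and follows essentially the same route as the paper's proof: the (i) $\Rightarrow$ (ii) step is propagated from $\cat{Cat}$ through $\P\B$ and $\Phi_\mathrm{bof}(\B)$ to a general $\F_\mathrm{bof}$-regular $\C$ exactly as in Proposition~\ref{prop:codescent-in-boregular}, and the (iii) $\Rightarrow$ (i) step hinges on the same two observations---faithfulness of $m$ makes $\delta_m$ and $\gamma_m$ full monic, while pullback-stability of $\F_\mathrm{bof}$-quotients combined with cancellation (Proposition~\ref{prop:strongcanc}) makes them acute. The only cosmetic difference is that the paper refers the monicity of $m$ back to Proposition~\ref{prop:codescent-in-boregular} and then treats $\gamma_m$ separately, whereas you run the $\delta_m$ and $\gamma_m$ arguments in parallel; the content is identical.
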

\begin{proof}
The proof of (i) $\Rightarrow$ (ii) is identical in form to that in Proposition~\ref{prop:codescent-in-boregular}; (ii) $\Rightarrow$ (iii) is, again, Proposition~\ref{prop:f-map-props}(b); it remains to prove (iii) $\Rightarrow$ (i). So suppose that $f \colon A \to B$,  $\delta_f \colon A \to A \times_B A$ and $\gamma_f \colon A^\mathbf 2 \to f \mathord{\mid} f$ are all acute; we must show that $f$ is an $\F_\mathrm{bof}$-quotient map. Forming the kernel--quotient factorisation
\begin{equation*}
f = A \xrightarrow e C \xrightarrow m B
\end{equation*}
for $\F_\mathrm{bof}$, we must show that $m$ is an isomorphism. We argue as before: since $f$ is acute so is $m$, and so it suffices to show that $m$ is full monic. Certainly, it is faithful, because by Corollary~\ref{cor:ker-quot-bof-2topos}, kernel--quotient factorisations in $\C$ converge immediately. Now the argument of Proposition~\ref{prop:codescent-in-boregular} shows that $m$ is monic; it remains to show that it is fully faithful. To do so is equally to show that $\gamma_{m} \colon C^\mathbf 2 \to m \mathord{\mid} m$ is invertible. Since $m$ is faithful, it follows easily that $\gamma_{m}$ is full monic; so it's enough to show that it is also acute. Consider the commutative diagram:
\begin{equation*}
\cd{
A^\mathbf 2 \ar[r]^-{\gamma_{f}} \ar[d]_{e^\mathbf 2} & f \mathord{\mid} f \ar[d]^{e \mathord{\mid} e} \ar[r] & A \times A \ar[d]^{e \times e} \\
C^\mathbf 2 \ar[r]_-{\gamma_m} & m \mathord{\mid} m \ar[r] & C \times C\rlap{ .}
}
\end{equation*}
Since $e$ is an $\F_\mathrm{bof}$-quotient map, and such morphisms  are stable in $\C$ by Corollary~\ref{cor:ker-quot-bof-2topos}, both the pullbacks $e \times A$ and $C \times e$
are $\F_\mathrm{bof}$-quotient maps. Thus $e \times e$ is a composite of $\F_\mathrm{bof}$-quotient maps; since the right-hand square is a pullback, $e \mathord{\mid} e$ is also a composite of $\F_\mathrm{bof}$-quotient maps, and as such is acute. $\gamma_f$ is acute by assumption, and so the common diagonal of the square is acute; thus, by Proposition~\ref{prop:strongcanc}, $\gamma_m$ is acute as required.
\end{proof}
\begin{Rk}
As in Remark~\ref{rk:geometric}, the equivalence of (i) and (ii) can be interpreted in terms of geometric $2$-logic. The statement that $\gamma_f$ be a stable effective $\F_\mathrm{so}$-quotient is the statement that the internal logic of $\C$ sees $f$ as full. Thus the equivalence of (i) and (ii) says that ``$\F_\mathrm{bof}$-quotients are precisely the maps which are surjective on objects, injective on objects, and full''.
\end{Rk}

\begin{Thm}
A $2$-category $\C$ with finite limits and $\F_\mathrm{bof}$-quotients of $\F_\mathrm{bof}$-kernels is $\F_\mathrm{bof}$-regular if and only if, whenever $f \colon A \to B$ is an $\F_\mathrm{bof}$-quotient map in $\C$, each of the maps
\begin{equation}\label{eq:singleton}
f \colon A \to B \qquad \text{and} \qquad \delta_f \colon A \to A \times_B A \qquad \text{and} \qquad \gamma_f \colon A^\mathbf 2 \to f \mathord{\mid} f
\end{equation}
is a pullback-stable effective $\F_\mathrm{so}$-quotient map.
\end{Thm}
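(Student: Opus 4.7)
The proof follows closely the template of Theorem~\ref{thm:bo-char}. The ``only if'' direction is essentially a combination of Corollary~\ref{cor:ker-quot-bof-2topos} (stability of $\F_\mathrm{bof}$-quotients in an $\F_\mathrm{bof}$-regular $2$-category) with Proposition~\ref{prop:coeq-in-bofregular}, which already characterises $\F_\mathrm{bof}$-quotient maps in an $\F_\mathrm{bof}$-regular $2$-category as precisely those $f$ for which $f$, $\delta_f$ and $\gamma_f$ are stable effective $\F_\mathrm{so}$-quotient maps.

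For the ``if'' direction, I reduce to the case of a small $\C$ using Proposition~\ref{prop:phi-small-reduction} and the subsequent remarks. By Theorem~\ref{thm:phi-exact-embedding}, it then suffices to produce a full embedding of $\C$ into a $2$-topos which preserves finite limits and $\F_\mathrm{bof}$-quotients of $\F_\mathrm{bof}$-kernels. The natural candidate is sheaves for the \emph{$\F_\mathrm{bof}$-regular topology} on $\C$, generated by declaring each $\F_\mathrm{bof}$-quotient map to be a singleton cover. By hypothesis, every such $f$ is a pullback-stable effective $\F_\mathrm{so}$-quotient map, so Proposition~\ref{prop:cover-singleton-subcanonical} gives that $(\C, j)$ is a subcanonical $2$-site; hence $Y \colon \C \to \cat{Sh}_j(\C)$ is a fully faithful, limit-preserving embedding into a $2$-topos.

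It remains to show $Y$ preserves $\F_\mathrm{bof}$-quotients of $\F_\mathrm{bof}$-kernels. Since all $\F_\mathrm{bof}$-quotient maps are effective in both $\C$ and $\cat{Sh}_j(\C)$ (by the proposition immediately preceding Proposition~\ref{prop:congruences}), by Proposition~\ref{prop:preserve-quotients-preserve-colims}(a) it is enough to verify that $Y$ preserves $\F_\mathrm{bof}$-quotient maps. So let $f$ be an $\F_\mathrm{bof}$-quotient map in $\C$. By hypothesis, $f$, $\delta_f$ and $\gamma_f$ are all stable effective $\F_\mathrm{so}$-quotient maps in $\C$, hence by Proposition~\ref{prop:yoneda-makes-mixed-codescent} (applied with the topology $j$) their images $Yf$, $Y\delta_f \cong \delta_{Yf}$ and $Y\gamma_f \cong \gamma_{Yf}$ are effective $\F_\mathrm{so}$-quotient maps in $\cat{Sh}_j(\C)$; as $\cat{Sh}_j(\C)$ is a $2$-topos, Corollary~\ref{cor:ker-quot-so-2topos} then ensures these are in fact acute. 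Proposition~\ref{prop:coeq-in-bofregular}, applied in the $\F_\mathrm{bof}$-regular $2$-topos $\cat{Sh}_j(\C)$, now yields that $Yf$ is an $\F_\mathrm{bof}$-quotient map, as required.

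I expect no substantial obstacle: the only point requiring a little care is the observation that $Y$ preserves $\delta_f$ and $\gamma_f$ (which is automatic as $Y$ preserves all finite limits) together with tracking which preservation properties are granted by which of the cited results. The parallel with Theorem~\ref{thm:bo-char} is essentially perfect, with the single extra morphism $\gamma_f$ handled in exactly the same manner as $\delta_f$ was there.
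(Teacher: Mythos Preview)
Your overall strategy matches the paper's, but there is a genuine gap in your choice of topology. You take the $\F_\mathrm{bof}$-regular topology to be generated by the $\F_\mathrm{bof}$-quotient maps alone, and then attempt to apply Proposition~\ref{prop:yoneda-makes-mixed-codescent} to conclude that $Y\delta_f$ and $Y\gamma_f$ are effective $\F_\mathrm{so}$-quotients in $\cat{Sh}_j(\C)$. But that proposition only tells you that $Y$ sends \emph{singleton covers of $j$} to effective $\F_\mathrm{so}$-quotient maps. In your topology, $\delta_f$ and $\gamma_f$ are not declared to be covers, and there is no reason they should lie in the pretopology generated by the $\F_\mathrm{bof}$-quotient maps: they are only assumed to be stable effective $\F_\mathrm{so}$-quotients, not $\F_\mathrm{bof}$-quotients, nor pullbacks or composites thereof. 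So your invocation of Proposition~\ref{prop:yoneda-makes-mixed-codescent} for $\delta_f$ and $\gamma_f$ is unjustified.

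The paper avoids this by defining the $\F_\mathrm{bof}$-regular topology to be generated by \emph{all three} of $f$, $\delta_f$ and $\gamma_f$ (for each $\F_\mathrm{bof}$-quotient map $f$) as singleton covers. Since by hypothesis each of these is a pullback-stable effective $\F_\mathrm{so}$-quotient, Proposition~\ref{prop:cover-singleton-subcanonical} still gives subcanonicity; and now Proposition~\ref{prop:yoneda-makes-mixed-codescent} applies directly to each of the three maps, so that $Yf$, $\delta_{Yf}$ and $\gamma_{Yf}$ are all acute in $\cat{Sh}(\C)$, whence Proposition~\ref{prop:coeq-in-bofregular} finishes the argument. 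Once you make this correction to the generating covers, your proof is essentially identical to the paper's.
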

\begin{proof}
The ``only if'' direction is contained in Corollary~\ref{cor:ker-quot-bof-2topos} and Proposition~\ref{prop:coeq-in-bofregular}. 
For the ``if'' direction, it suffices by Proposition~\ref{prop:phi-small-reduction} and the remarks following, to prove it only for a \emph{small} $\C$ satisfying the stated hypotheses. By Theorem~\ref{thm:phi-exact-embedding}, it suffices to exhibit a full embedding of $\C$ into a $2$-topos which preserves finite limits and  $\F_\mathrm{bof}$-quotients of $\F_\mathrm{bof}$-kernels.  Consider on $\C$ the \emph{$\F_\mathrm{bof}$-regular topology} generated by taking every morphism of the form $f$, $\delta_f$ and $\gamma_f$, for $f$ an $\F_\mathrm{bof}$-quotient map, as a singleton cover. By assumption, these covers are stable effective $\F_\mathrm{so}$-quotient maps, and so the topology they generate makes $\C$ into a subcanonical $2$-site. So we obtain a fully faithful embedding $\C \to \cat{Sh}(\C)$ which preserves all limits; it remains to show that it preserves  $\F_\mathrm{bof}$-quotients of $\F_\mathrm{bof}$-kernels. Since  $\F_\mathrm{bof}$-quotient maps are always effective, it suffices by Proposition~\ref{prop:preserve-quotients-preserve-colims}(a) to show that $Y \colon \C \to \cat{Sh}(\C)$ preserves $\F_\mathrm{bof}$-quotient maps. But if $f$ is such a map in $\C$, then by the definition of $j$ and Proposition~\ref{prop:yoneda-makes-mixed-codescent}, each of $Yf$, $Y\delta_f \cong \delta_{Yf}$ and $Y(\gamma_f) \cong \gamma_{Yf}$ is acute; and so by Proposition~\ref{prop:codescent-in-boregular}, $Yf$ is an $\F_\mathrm{bof}$-quotient as required.
\end{proof}
\begin{Rk}
The hypotheses of this theorem are substantive. Consider in the $2$-category $\cat{Ab}\text-\cat{Cat}$ the morphism $\phi \colon \F_2 \to 1$, where $1$ is the terminal $\cat{Ab}$-category and $\F_2$ the one-object $\cat{Ab}$-category on the field $\mathbb F_2$. This $\phi$ is an $\F_\mathrm{bof}$-quotient map, being the coequifier of the two possible $2$-cells $\alpha, \beta \colon \psi \Rightarrow \psi \colon \I \to \F_2$ (with $\I$ the unit $\cat{Ab}$-category and $\psi$ the unique $\cat{Ab}$-functor into $\F_2$). Tracing through the argument of Proposition~\ref{prop:mixed-codescent-not-effective}, we see that, although $\phi$ is an effective $\F_\mathrm{so}$-quotient, the diagonal of its kernel-pair $\F_2 \to \F_2 \times \F_2$ is not so.
\end{Rk}

\begin{Thm}
A $2$-category $\C$ with finite limits and $\F_\mathrm{bof}$-quotients of $\F_\mathrm{bof}$-congruences is $\F_\mathrm{bof}$-exact if and only if it is $\F_\mathrm{bof}$-regular and $\F_\mathrm{bof}$-congruences are effective in $\C$.
\end{Thm}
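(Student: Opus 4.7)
The plan is to argue in direct parallel to the proofs of the corresponding $\F_\mathrm{so}$-exact and $\F_\mathrm{bo}$-exact characterisations. For the ``only if'' direction, assume $\C$ is $\F_\mathrm{bof}$-exact. Then $\C$ is $\F_\mathrm{bof}$-regular by Proposition~\ref{prop:f-exact-f-regular}, and every $\F_\mathrm{bof}$-congruence in $\C$ is effective by Proposition~\ref{prop:congruences}. This gives the easy direction.

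For the ``if'' direction, suppose $\C$ has finite limits, admits $\F_\mathrm{bof}$-quotients of $\F_\mathrm{bof}$-congruences, is $\F_\mathrm{bof}$-regular, and every $\F_\mathrm{bof}$-congruence in $\C$ is effective. By Proposition~\ref{prop:phi-small-reduction} and the discussion following it, we may restrict to the case when $\C$ is small. By Theorem~\ref{thm:phi-exact-embedding}, it then suffices to produce a fully faithful embedding of $\C$ into a $2$-topos which preserves finite limits and $\F_\mathrm{bof}$-quotients of $\F_\mathrm{bof}$-congruences.

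To do this, I would re-use the \emph{$\F_\mathrm{bof}$-regular topology} from the proof of the preceding regularity theorem: that is, the topology on $\C$ generated by declaring each of $f$, $\delta_f$ and $\gamma_f$ (for $f$ an $\F_\mathrm{bof}$-quotient map) to be a singleton cover. As noted there, this makes $\C$ into a subcanonical $2$-site, so that the restricted Yoneda embedding $\C \to \cat{Sh}(\C)$ is fully faithful, preserves all limits, and (by the characterisation of $\F_\mathrm{bof}$-quotients via acuteness given in Proposition~\ref{prop:coeq-in-bofregular} together with Propositions~\ref{prop:yoneda-makes-mixed-codescent} and~\ref{prop:codescent-in-boregular}) sends $\F_\mathrm{bof}$-quotient maps to $\F_\mathrm{bof}$-quotient maps. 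It remains to verify that this embedding preserves $\F_\mathrm{bof}$-quotients of $\F_\mathrm{bof}$-congruences, and here we apply part~(b) of Proposition~\ref{prop:preserve-quotients-preserve-colims} in place of part~(a): this is legitimate because $\F_\mathrm{bof}$-congruences in $\C$ are effective by hypothesis, whilst $\F_\mathrm{bof}$-congruences in the $2$-topos $\cat{Sh}(\C)$ are effective by Propositions~\ref{prop:v-topos-phi-exact} and~\ref{prop:congruences}. Applying that criterion reduces the preservation of $\F_\mathrm{bof}$-quotients of $\F_\mathrm{bof}$-congruences to the preservation of $\F_\mathrm{bof}$-quotient maps, which we have already noted.

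There is no real obstacle here beyond recognising which results to plug in: the structure of the argument mirrors that of Theorem~\ref{thm:boex-char}, and the only substantive change is to replace ``$\F_\mathrm{bo}$'' by ``$\F_\mathrm{bof}$'' throughout and to invoke Proposition~\ref{prop:congruences} in place of Proposition~\ref{prop:cateads} to know that $\F_\mathrm{bof}$-congruences in a $2$-topos are effective.
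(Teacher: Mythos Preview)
Your proposal is correct and follows essentially the same route as the paper: the ``only if'' direction via Propositions~\ref{prop:f-exact-f-regular} and~\ref{prop:congruences}, and the ``if'' direction by re-using the $\F_\mathrm{bof}$-regular topology and swapping part~(a) for part~(b) of Proposition~\ref{prop:preserve-quotients-preserve-colims}. One small slip: where you invoke Proposition~\ref{prop:codescent-in-boregular} to deduce that $Yf$ is an $\F_\mathrm{bof}$-quotient in $\cat{Sh}(\C)$, you mean Proposition~\ref{prop:coeq-in-bofregular} (the $\F_\mathrm{bof}$ analogue), but the argument you describe is the right one.
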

\begin{proof}
The ``only if'' direction is contained in Propositions~\ref{prop:f-exact-f-regular} and~\ref{prop:congruences}. The ``if'' direction is argued as in the preceding result, taking again sheaves for the $\F_\mathrm{bof}$-regular topology, but now using part (b) rather than part (a) of Proposition~\ref{prop:preserve-quotients-preserve-colims}.
\end{proof}

\section{Relationships between the notions}\label{sec:relationships}
In the following section, we will give a range of examples of the regularity and exactness notions introduced above; and in order to do so efficiently, it will be useful to study the interrelations between them. We begin by relating $\F_\mathrm{so}$-regularity and $\F_\mathrm{bo}$-regularity. There is no direct implication; as we shall see in the following section, $\cat{Set}$, seen as a locally discrete $2$-category is $\F_\mathrm{so}$-regular but not $\F_\mathrm{bo}$-regular, whilst if $\E$ is a $1$-category which is not regular, then $\cat{Cat}(\E)$ is $\F_\mathrm{bo}$-regular but not $\F_\mathrm{so}$-regular. However, there is something we can say. Let us call a finitely complete $2$-category \emph{ff-regular} if the kernel-pair of any fully faithful map admits a fully faithful coequaliser, and fully faithful regular epis are stable under pullback.

\begin{Prop}\label{prop:reg-ff}
Any $\F_\mathrm{so}$-regular $2$-category is ff-regular; any $\F_\mathrm{bo}$-regular and ff-regular $2$-category is $\F_\mathrm{so}$-regular.
\end{Prop}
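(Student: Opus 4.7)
The plan is to handle each implication separately, relying in both on the observation that, for a fully faithful $f$, the $\F_\mathrm{so}$-quotient of its $\F_\mathrm{so}$-kernel reduces to the ordinary $1$-categorical coequaliser of its kernel-pair. The reason: for ff $f \colon A \to B$ we have $f \mathord{\mid} f \cong A^\mathbf 2$, so the $\F_\mathrm{bo}$-quotient of the $\F_\mathrm{bo}$-kernel of $f$ is trivially $1_A$, and hence computing the $\F_\mathrm{so}$-quotient reduces to a coidentifier along $Xj \colon A \times_B A \to A^\mathbf 2$. Because the full-monic second factor $m$ of any such $\F_\mathrm{so}$-factorisation is faithful and sends the relevant $2$-cell (which lifts an identity in $B$) to the identity, the $2$-cell content of the coidentifier is forced by the $1$-cell content, so the coidentifier collapses to the coequaliser of $A \times_B A \rightrightarrows A$. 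The first implication is then immediate: if $\C$ is $\F_\mathrm{so}$-regular, for each ff $f$ the coequaliser of its kernel-pair exists (as the $\F_\mathrm{so}$-quotient of its $\F_\mathrm{so}$-kernel) and is itself ff by cancellativity (Proposition~\ref{prop:f-map-props}(e)) applied to the resulting $\F_\mathrm{so}$-factorisation $f = me$; ff regular epis are then precisely the ff effective $\F_\mathrm{so}$-quotients, and so are pullback-stable by Corollary~\ref{cor:ker-quot-so-2topos} together with the pullback-stability of ff maps.

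For the converse, let $\C$ be $\F_\mathrm{bo}$-regular and ff-regular. By Theorem~\ref{thm:so-regular} it suffices to show $\F_\mathrm{so}$-quotients of $\F_\mathrm{so}$-kernels exist in $\C$, are effective, and are pullback-stable. Given $f \colon A \to B$ I construct its $\F_\mathrm{so}$-factorisation as a composite of the two given ones: first the $\F_\mathrm{bo}$-factorisation $f = m_1 e_1$ via $\F_\mathrm{bo}$-regularity, then the ff coequaliser $e_2 \colon C \to D$ of the kernel-pair of $m_1$ via ff-regularity, together with the induced $m_2 \colon D \to B$ satisfying $m_1 = m_2 e_2$. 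The composite $f = m_2 \cdot (e_2 e_1)$ is the candidate; by the first paragraph's analysis applied now to the ff map $m_1$, the factor $e_2$ is the $\F_\mathrm{so}$-quotient of the $\F_\mathrm{so}$-kernel of $m_1$, and since precomposing a so-map with a bo-map yields a so-map, $e_2 e_1$ is the $\F_\mathrm{so}$-quotient of the $\F_\mathrm{so}$-kernel of $f$.

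The main obstacle is verifying that $m_2$ is a full monic. For this I would use the embedding $Z \colon \C \to \E$ into a $2$-topos supplied by $\F_\mathrm{bo}$-regularity through Theorem~\ref{thm:phi-exact-embedding}: $Z$ is lex, preserves $\F_\mathrm{bo}$-quotients of $\F_\mathrm{bo}$-kernels, and $\E$ is itself $\F_\mathrm{so}$-regular. Applying $Z$ to our construction and running the first paragraph's analysis inside the $\F_\mathrm{so}$-regular $\E$ identifies $Z(e_2 e_1)$ and $Zm_2$ with the two halves of the genuine $\F_\mathrm{so}$-factorisation of $Zf$; hence $Zm_2$ is a full monic in $\E$, and since $Z$ is a lex embedding and full-monicity is cut out by finite limits, $m_2$ is itself full monic in $\C$. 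Effectivity of $e_2 e_1$ then follows from the observation that $f$ and $e_2 e_1$ share the same $\F_\mathrm{so}$-kernel, precisely because $m_2$ is full monic; while pullback-stability follows by decomposing any pullback of $e_2 e_1$ into a composite of pullbacks of $e_1$ (stable by $\F_\mathrm{bo}$-regularity) and of $e_2$ (stable by ff-regularity).
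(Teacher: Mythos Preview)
Your first direction is essentially the paper's argument, though you lean on a sketchy collapse of the coidentifier to the coequaliser where the paper isolates this as a separate Lemma~\ref{lemma:eqrel-lemma}; the content is the same and your cancellativity argument for the fully-faithfulness of the quotient is a legitimate alternative to the paper's use of effectivity.

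For the converse there is a genuine gap. You invoke the embedding $Z \colon \C \to \E$ supplied by $\F_\mathrm{bo}$-regularity via Theorem~\ref{thm:phi-exact-embedding}, and then claim that ``running the first paragraph's analysis inside $\E$ identifies $Z(e_2 e_1)$ and $Zm_2$ with the two halves of the genuine $\F_\mathrm{so}$-factorisation of $Zf$''. But this embedding is only guaranteed to preserve $\F_\mathrm{bo}$-quotients of $\F_\mathrm{bo}$-kernels; it has no reason to preserve the fully faithful coequaliser $e_2$. Concretely: the $\F_\mathrm{so}$-factorisation of $Zf$ in $\E$ is obtained by first forming $Zf = Zm_1 \cdot Ze_1$ (this step is fine, as $Z$ preserves the $\F_\mathrm{bo}$-factorisation) and then taking the coequaliser in $\E$ of the kernel-pair of $Zm_1$. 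Since $Z$ is lex, that kernel-pair is $Z$ of the kernel-pair of $m_1$; but $Z$ need not send its coequaliser $e_2$ to the coequaliser in $\E$. So you cannot conclude that $Zm_2$ is full monic, and hence cannot deduce that $m_2$ is.

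The paper repairs exactly this point by replacing the $\F_\mathrm{bo}$-regular embedding with a richer one: it equips $\C$ with the subcanonical topology generated by the $\F_\mathrm{bo}$-quotient maps \emph{together with} the fully faithful regular epis (these are stable effective $\F_\mathrm{so}$-quotients by Lemma~\ref{lemma:eqrel-lemma} and the ff-regularity hypothesis), and embeds into sheaves for that topology. This embedding then sends both $e_1$ and $e_2$ to acute maps, after which the argument you sketch goes through. Your strategy---build the $\F_\mathrm{so}$-factorisation as a composite and verify its properties via an embedding---is the same as the paper's, but you need the correctly chosen embedding for it to work.
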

The following lemma will be crucial to the proof:
\begin{Lemma}\label{lemma:eqrel-lemma}
Let $(s,t) \colon E \rightrightarrows A$ be an equivalence relation in a $2$-category, with reflexivity $r \colon A \to E$, say.
If $s$ (equivalently, $t$) is fully faithful, there is a unique invertible $2$-cell $\theta \colon s \Rightarrow t$ such that $\theta r = 1_{1_A}$.
The map $j \colon E \to A^\mathbf 2$ induced by $\theta$ is part of an $\F_\mathrm{so}$-congruence
\begin{equation}\label{eq:ff-kernel}
\cd{
& E \ar[d]_j
\\
A^\mathbf 3 \ar@<6pt>[r]^-{p} \ar[r]|-{m} \ar@<-6pt>[r]_-{q} &
A^\mathbf 2 \ar@<6pt>[r]^-{d} \ar@{<-}[r]|-{i} \ar@<-6pt>[r]_-{c} &
A
}
\end{equation}
and the following colimits, if existing, coincide: the $\F_\mathrm{so}$-quotient of~\eqref{eq:ff-kernel}; the coidentifier of $\theta \colon s \Rightarrow t$; and the coequaliser of $(s,t)$.
\end{Lemma}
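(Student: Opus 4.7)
For the existence and uniqueness of $\theta$, I would reduce to the case $\C = \cat{Cat}$ via the Yoneda embedding: applying $\C(X, \thg)$ for each $X \in \C$ yields an equivalence relation $(\C(X, s), \C(X, t)) \colon \C(X, E) \rightrightarrows \C(X, A)$ in $\cat{Cat}$ with $\C(X, s)$ fully faithful. In $\cat{Cat}$, for each object $(a, b) \in \C(X, E)$, full faithfulness of $s$ yields a unique lift of $1_a$ to a morphism $(1_a, \theta^X_{(a, b)}) \colon (a, a) \to (a, b)$ in $\C(X, E)$; the second components assemble into a natural transformation $\theta^X \colon \C(X, s) \Rightarrow \C(X, t)$, with naturality following by lift-uniqueness applied to $r(\phi) = (\phi, \phi)$ for an arbitrary $\phi \colon a \to a'$. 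Invertibility follows from the dual construction via $t = s\sigma$, which is fully faithful since $\sigma^2 = 1_E$. Naturality in $X$ yields, by $2$-Yoneda, the desired $\theta \colon s \Rightarrow t$ with $\theta r = 1_{1_A}$, and uniqueness under this normalization is built into the construction.

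To show that diagram~\eqref{eq:ff-kernel} is an $\F_\mathrm{so}$-congruence, I would verify the four conditions of Proposition~\ref{prop:mixed-cateads}. Conditions~(a) and~(b) are immediate: $A^\mathbf 3 \rightrightarrows A^\mathbf 2 \rightrightarrows A$ is the $\F_\mathrm{bo}$-kernel of $1_A$ and so an $\F_\mathrm{bo}$-congruence by Proposition~\ref{prop:f-ker-is-f-cong}, and $(s, t)$ is an equivalence relation by hypothesis. For~(c), full monicity of $j$ reduces representably to injectivity on objects (from $(d, c) j = (s, t)$ being monic) combined with full faithfulness (derived from the full faithfulness of $s$ together with the naturality of $\theta$). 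For~(d), identity preservation $jr = i$ follows from $\theta r = 1$ and the universal property of $A^\mathbf 2$; composition preservation reduces by the same universal property to the cocycle $\theta_{(a, c)} = \theta_{(b, c)} \circ \theta_{(a, b)}$ for composable $(a, b), (b, c) \in E$. I would derive this by computing the internal composite $\mu_E((1_{(a, b)}, (1_b, \theta_{(b, c)})))$, which equals $(1_a, \theta_{(b, c)}) \colon (a, b) \to (a, c)$ in $E$, and comparing with the unique $s$-lift of $1_a$.

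For the coincidence of the three colimits, the key observation is that $A^\mathbf 3 \rightrightarrows A^\mathbf 2 \rightrightarrows A$ is the $\F_\mathrm{bo}$-kernel of $1_A$, whose $\F_\mathrm{bo}$-quotient is $A$ itself; codescent cocones $(Q, q, \theta')$ on this diagram therefore correspond to morphisms $q \colon A \to Q$ with $\theta'$ forced to equal $q\lambda$, where $\lambda \colon d \Rightarrow c$ is the universal 2-cell of $A^\mathbf 2$. The $\F_\mathrm{so}$-extension condition $\theta' j = 1$ then reads $q\theta = 1$, identifying the $\F_\mathrm{so}$-quotient of~\eqref{eq:ff-kernel} with the coidentifier of $\theta$. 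For the coincidence of the coidentifier with the coequaliser, given $q$ with $qs = qt$ I would use full faithfulness of $s$ to lift, for each $e \colon X \to E$, the identity $1_{se}$ to a unique 2-cell $\alpha \colon r \cdot se \Rightarrow e$ in $\C(X, E)$; the interchange law applied to $\alpha$ and $\theta$, combined with $\theta r = 1$, yields $t\alpha = \theta \cdot e$, so $qs = qt$ gives $q(s\alpha) = q(t\alpha)$ and hence $1_{qse} = q\theta \cdot e$, from which $q\theta = 1$ follows on taking $X = E$ and $e = 1_E$. The main obstacle I anticipate is the cocycle computation in part~(d), which requires identifying the correct composable pair in the internal category $E$ to invoke the uniqueness of $s$-lifts; the coequaliser/coidentifier coincidence then becomes a clean application of lift-uniqueness and interchange combined with the normalization $\theta r = 1$.
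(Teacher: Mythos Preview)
Your argument is correct, but it takes a noticeably different route from the paper's. The paper's key observation is that $r \colon A \to E$ is an \emph{equivalence}: from $sr = 1_A$ and full faithfulness of $s$ one extracts an invertible $2$-cell $rs \cong 1_E$. This single fact drives the rest of the proof almost effortlessly. The existence and uniqueness of $\theta$ with $\theta r = 1_{1_A}$ is immediate (precompose with the equivalence $r$); full faithfulness of $j$ is obtained by factoring $j$ through the fully faithful $A^{\mathbf I} \to A^{\mathbf 2}$ and cancelling fully faithful maps; the composition axiom~(d) is verified by noting that $(r,r) \colon A \to E \times_A E$ is an equivalence (being a section of the surjective equivalence $\pi_1$), so the two displayed $2$-cells agree once they agree after precomposing with $(r,r)$; and the coidentifier/coequaliser coincidence is the one-line computation $f\theta r = 1_{fs} r \Rightarrow f\theta = 1_{fs}$ by cancelling the equivalence $r$.

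You instead work representably in $\cat{Cat}$ throughout, building $\theta$ componentwise from unique $s$-lifts of identities, verifying the cocycle for~(d) by an explicit computation with the internal composition of $E$, and deriving the coequaliser/coidentifier coincidence via an interchange argument with the lift $\alpha \colon rse \Rightarrow e$. This is more hands-on but perfectly sound, and has the advantage of making the structure of $\theta$ transparent; the paper's approach is terser and avoids any case-by-case work, at the cost of requiring one to spot that $r$ is an equivalence. One small point: for invertibility of $\theta$ you should make explicit that the dually constructed $\theta' \colon t \Rightarrow s$ satisfies $\theta'\theta = 1_s$ and $\theta\theta' = 1_t$ by the uniqueness clause applied to $s$ and $t$ respectively---this is implicit in your sketch but worth stating.
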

\begin{proof}
We have $sr = 1_A$, whence $1_s \colon srs \Rightarrow s1_E$, and so by full fidelity of $s$ an  invertible $2$-cell $rs \cong 1$; thus $r$ is an equivalence, and so from the $2$-cell $1_{1_A} \colon sr \Rightarrow tr$ we deduce the presence of a unique $2$-cell $\theta \colon s \Rightarrow t$ with $\theta r = 1_{1_A}$.
We now show that the induced $j \colon E \to A^\mathbf 2$ makes~\eqref{eq:ff-kernel} an $\F_\mathrm{so}$-congruence. First, $j$ is monic because $(s,t)$ is monic and $(d,c)j = (s,t)$. To see that it is fully faithful, note that it factors through the fully faithful $h \colon A^{\mathbf I} \to A^\mathbf 2$ (where $\mathbf I$ is the free category on an isomorphism); and now
in the decomposition
\[
s = E \xrightarrow {k} A^{\mathbf I} \xrightarrow{h} A^\mathbf 2 \xrightarrow{d} A
\]
we have $s$ fully faithful by assumption, and $dh$ an equivalence, hence fully faithful; whence $k$ is fully faithful, and so also $hk = j$ as desired. 
Finally, we check that $j$ induces a bijective-on-objects internal functor $(E \rightrightarrows A) \to (A^\mathbf 2 \rightrightarrows A)$. The equality $\theta r = 1_{1_A}$ shows that it preserves identities; as for composition, we must show that
\[
\cd{
E \times_A E \ar[r]_-{c} & E \ar@/^8pt/[r]^{s} \dtwocell{r}{\theta} \ar@/_8pt/[r]_{t}  & A
} \qquad = \qquad
\cd[@C+0.5em@R-1em]{
& E \ar@/^8pt/[dr]^{s} \dtwocell{dr}{\theta} \ar@/_8pt/[dr]_(0.4){t}  \\
E \times_A E \ar[ur]^-{\pi_1} \ar[dr]_-{\pi_2} & & A \\
& E \ar@/^8pt/[ur]^(0.4){s} \dtwocell{ur}{\theta} \ar@/_8pt/[ur]_{t}
}\]
holds (where $c$ witnesses the transitivity of $E$). This is trivially true on precomposing with $(r,r) \colon A \to E \times_A E$, so we will be done so long as $(r,r)$ is an equivalence. But $\pi_1.(r,r) = r \colon A \to E$ is an equivalence, and so is $\pi_1 \colon E \times_A E \to E$, as it is the pullback of the surjective equivalence $s$ along $t$. Thus by two-out-of-three $(r,r)$ is an equivalence and so the displayed equality obtains.

For the final sentence of the proposition, note that, in any $2$-category, the $\F_\mathrm{bo}$-congruence along the bottom of~\eqref{eq:ff-kernel} has $1_A \colon A \to A$ as its quotient; whence the $\F_\mathrm{so}$-quotient of~\eqref{eq:ff-kernel} is equally the coidentifier of $\theta \colon s \Rightarrow t$. To show that this is in turn the same as the coequaliser of $s$ and $t$, we must show that any $f \colon A \to B$ with $fs = ft$ also has $f\theta = 1_{fs}$. But $f\theta r = f1_{1_A} = 1_{fs} r$ whence $f\theta = 1_{fs}$.
\end{proof}

We are now ready to give:

\begin{proof}[Proof of Proposition~\ref{prop:reg-ff}]
Suppose first that $\C$ is $\F_\mathrm{so}$-regular. If $f \colon A \to B$ is fully faithful in $\C$, then $f \mathord \mid f \cong A^\mathbf 2$ and $f \mathord \mid f \mathord \mid f \cong A^\mathbf 3$, whence the $\F_\mathrm{so}$-kernel of $f$ is a congruence of the form~\eqref{eq:ff-kernel}, with $E$ the kernel-pair of $f$. So by Lemma~\ref{lemma:eqrel-lemma}, the $\F_\mathrm{so}$-quotient $q \colon A \to Q$ of this congruence is equally the coequaliser of $f$'s kernel-pair; moreover, as $\F_\mathrm{so}$-kernels are effective in $\C$, we have $q \mathord \mid q \cong A^\mathbf 2$ and so $q$ is fully faithful. Finally, fully faithful regular epis are pullback-stable because, by Lemma~\ref{lemma:eqrel-lemma}, they are exactly the fully faithful effective $\F_\mathrm{so}$-quotients, and so stable since $\C$ is $\F_\mathrm{so}$-regular.

Conversely, let $\C$ be $\F_\mathrm{bo}$-regular and ff-regular; without loss of generality we assume it is also small.  Since fully faithful regular epis are effective $\F_\mathrm{so}$-quotients (by Lemma~\ref{lemma:eqrel-lemma}) and stable (by assumption), the topology on $\C$ generated by these maps together with the $\F_\mathrm{bo}$-regular topology is subcanonical; thus we have a full embedding $J \colon \C \to \cat{Sh}(\C)$. As the given topology contains the $\F_\mathrm{bo}$-regular one, the argument of Theorem~\ref{thm:bo-char} shows that $J$ preserves quotients of $\F_\mathrm{bo}$-kernels, whence $\C$ is closed in $\cat{Sh}(\C)$ under $\F_\mathrm{bo}$-kernel--quotient factorisations; moreover, $J$ sends fully faithful regular epis to fully faithful $\F_\mathrm{so}$-quotients, thus to fully faithful regular epis, and so adapting the argument of Proposition~\ref{prop:preserve-quotients-preserve-colims}, we conclude that $\C$ is closed in $\cat{Sh}(\C)$ under coequalisers of  kernel-pairs of fully faithful maps. To show that $\C$ is $\F_\mathrm{so}$-regular, it now suffices to show that $\C$ is closed in $\cat{Sh}(\C)$ under $\F_\mathrm{so}$-kernel--quotient factorisations. So let $f \colon A \to D$ in $\C$; we factorise it in $\cat{Sh}(\C)$ as
\[
\cd[@R-2em@C-0.7em]{
f = A \ar[rr]^-{g} \ar[dr]_{k} & & C  \ar[rr]^-{h} &  & D \\
& B \ar[ur]_\ell
}
\]
where
$f = h g$ is an $\F_\mathrm{so}$-kernel--quotient factorisation, and $g = \ell k$ is an $\F_\mathrm{bo}$-kernel--quotient factorisation. We must show that $C \in \C$. Now $h$ is full monic since $\cat{Sh}(\C)$ is $\F_\mathrm{so}$-regular, whence $f = (h \ell) k$ is also an $\F_\mathrm{bo}$-kernel--quotient factorisation, and so $B \in \C$ since $A$ and $D$ are. Next, $\ell$ is fully faithful since $\cat{Sh}(\C)$ is $\F_\mathrm{bo}$-regular; it is also acute, since $g$ is, whence $\ell$ is an effective $\F_\mathrm{so}$-quotient. Thus, by Lemma~\ref{lemma:eqrel-lemma}, $\ell$ is a regular epi; it is thus the coequaliser of its own kernel-pair. But since $h$ is monic, this is equally the coequaliser of the kernel-pair of $h\ell$; and as $h\ell$ is fully faithful, and $\C$ is closed in $\cat{Sh}(\C)$ under coequalisers of kernel-pairs of fully faithful maps, we conclude that $C \in \C$ since $B$ and $D$ are.
\end{proof}

We now turn to the relationship between $\F_\mathrm{so}$ and $\F_\mathrm{bo}$-exactness; and here we obtain a slightly tighter correspondence. Let us call a $2$-category \emph{ff-exact} if it is ff-regular, and every fully faithful equivalence relation (one whose source and target maps are fully faithful) admits a fully faithful coequaliser and is effective.
\begin{Prop}\label{prop:soexact-boexact}
A $2$-category is $\F_\mathrm{so}$-exact if and only if it is $\F_\mathrm{bo}$-exact and ff-exact.
\end{Prop}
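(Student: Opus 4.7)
The plan is to prove both directions by treating $\F_\mathrm{so}$-congruences as compound objects consisting of an $\F_\mathrm{bo}$-congruence together with a compatible fully faithful equivalence relation joined via the full monic $Xj$, with Lemma~\ref{lemma:eqrel-lemma} serving as the translation principle between the two views. The regularity halves of each direction reduce directly to Proposition~\ref{prop:reg-ff}: $\F_\mathrm{so}$-regular is equivalent to $\F_\mathrm{bo}$-regular plus ff-regular under the relevant existence hypotheses.

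For the $(\Rightarrow)$ direction, suppose $\C$ is $\F_\mathrm{so}$-exact. Proposition~\ref{prop:reg-ff} gives ff-regularity immediately. For the effectiveness clauses of ff-exactness, given a fully faithful equivalence relation $(s,t)\colon E\rightrightarrows A$ in $\C$, Lemma~\ref{lemma:eqrel-lemma} presents it as the ``$X2'$-piece'' of an $\F_\mathrm{so}$-congruence whose underlying $\F_\mathrm{bo}$-congruence is the trivial nerve on $A$ with $X2=A^{\mathbf 2}$; applying $\F_\mathrm{so}$-exactness to this congruence and translating back through the lemma supplies both an effective fully faithful coequaliser of $(s,t)$ and effectiveness of $E$. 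For $\F_\mathrm{bo}$-exactness, given an $\F_\mathrm{bo}$-congruence $\bar X$ in $\C$, I would embed $\C$ into an $\F_\mathrm{so}$-exact $2$-topos $\T$ via the $\F_\mathrm{so}$-regular topology of $\C$; inside $\T$ the $\F_\mathrm{bo}$-quotient $p$ of $\bar X$ exists and its $\F_\mathrm{so}$-kernel is an $\F_\mathrm{so}$-congruence $Y$ extending $\bar X$ with $Y2'=X1\times_P X1$. I would then identify $Y2'$ intrinsically inside $\C$ as the full monic part of the $\F_\mathrm{so}$-kernel--quotient factorisation of $Xi\colon X1\to X2$, verifying~(a)--(d) of Proposition~\ref{prop:mixed-cateads} from the cathead structure of $\bar X$. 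Granted this, the $\F_\mathrm{so}$-quotient of $Y$ provided by $\F_\mathrm{so}$-exactness coincides with the $\F_\mathrm{bo}$-quotient of $\bar X$, since the extra cocycle condition $\theta\cdot Yj=1$ reduces by the acuteness of the factor $X1\to Y2'$ to the standard cocycle $\theta\cdot Xi=1$; effectiveness of $Y$ then yields effectiveness of $\bar X$ on restriction to $\K_\mathrm{bo}$, and the $\delta_f$-condition of Theorem~\ref{thm:bo-char} transports from $\T$ via Proposition~\ref{prop:codescent-in-boregular}.

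For the $(\Leftarrow)$ direction, assume $\C$ is $\F_\mathrm{bo}$-exact and ff-exact, hence $\F_\mathrm{so}$-regular by Proposition~\ref{prop:reg-ff}. To show effectiveness of an arbitrary $\F_\mathrm{so}$-congruence $X$, I would construct its $\F_\mathrm{so}$-quotient in two stages following the $\cat{Cat}$-template of the proof of Proposition~\ref{prop:mixed-cateads}. First, $\F_\mathrm{bo}$-exactness supplies an effective $\F_\mathrm{bo}$-quotient $p\colon X1\to P$ of $\bar X$, with $X2\cong p\mathord{|}p$. Second, I form a fully faithful equivalence relation $E''\rightrightarrows P$ on $P$ by taking the full monic part of the $\F_\mathrm{so}$-kernel--quotient factorisation of the composite $X2'\to X1\times X1\xrightarrow{p\times p}P\times P$; the equivalence relation and compatibility structures of $X2'$ descend to $E''$ along this factorisation, with full faithfulness of its projections inherited from the full monic map into $P\times P$. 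Third, ff-exactness yields an effective fully faithful coequaliser $q_1\colon P\to Q$ of $E''$. The composite $q_1 p$ is the $\F_\mathrm{so}$-quotient of $X$, with cocone conditions including $\theta\cdot Xj=1$ verified using $X2\cong p\mathord{|}p$ and the full monic structure of $Xj$. Effectiveness of $X$ then follows because the $\F_\mathrm{so}$-kernel of $q_1 p$ recovers $\bar X$ on $\K_\mathrm{bo}$ (using fully faithfulness of $q_1$) and recovers $X2'$ on the ``$2'$''-component via the effective reconstruction $E''\cong P\times_Q P$ lifted along $p\times p$.

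The main obstacle is verifying compatibility~(d) of Proposition~\ref{prop:mixed-cateads} on both sides: in $(\Rightarrow)$, constructing the equivalence relation, symmetry, transitivity, and internal-functor data on $Y2'$ using only the cathead $\bar X$ (where the equifier-like description of $X1\times_P X1$ has no direct analogue); and in $(\Leftarrow)$, tracking how the data of $X2'$ descends through $p\times p$ to $E''$ and then reassembles into the pullback $P\times_Q P$ so that the $\F_\mathrm{so}$-kernel of $q_1 p$ exactly recovers~$X$. In both cases the key technical device is the embedding of the ambient $2$-category into an $\F_\mathrm{so}$-exact $2$-topos where these constructions become transparent, combined with the fact that the relevant pieces are characterised by finite limits and specified colimits preserved by the embedding.
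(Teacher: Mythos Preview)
Your $(\Rightarrow)$ direction is essentially the paper's argument: ff-exactness via Lemma~\ref{lemma:eqrel-lemma}, and $\F_\mathrm{bo}$-exactness by embedding into the $2$-topos of sheaves for the $\F_\mathrm{so}$-regular topology and identifying $Y2'$ as the interposing object of the (acute, full monic) factorisation of $Xi \colon X1 \to X2$. The paper phrases this as a closure argument rather than building $Y$ first and then taking its $\F_\mathrm{so}$-quotient, but the content is the same.

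In the $(\Leftarrow)$ direction there is a genuine gap. Your construction of $E''$ as the full monic part of the $\F_\mathrm{so}$-factorisation of $X2' \to P \times P$ does \emph{not} yield a fully faithful equivalence relation on $P$. A full monic $E'' \rightarrowtail P \times P$ means the inclusion is fully faithful as a map into the product, but this does not make the individual projections $E'' \to P$ fully faithful: already in $\cat{Cat}$, the full image of $X2' \to P \times P$ is the full subcategory of $P \times P$ on the relevant object-pairs, whose projections are almost never fully faithful. Consequently ff-exactness does not apply to your $E''$, and there is no reason for its coequaliser to recover $Q$ or for the resulting kernel-pair to match $X2'$.

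The correct construction, which the paper uses, is the $\F_\mathrm{bo}$-factorisation of one of the composites $X2' \to X1 \xrightarrow{p} P$. Working in the $2$-topos, the map $h \times_Q h \colon X2' \cong X1 \times_Q X1 \to P \times_Q P$ is a composite of pullbacks of the $\F_\mathrm{bo}$-quotient $h$, hence an $\F_\mathrm{bo}$-quotient; and the projection $s \colon P \times_Q P \to P$ is fully faithful because it is a pullback of the fully faithful $k$. Thus $X2' \to P \times_Q P \to P$ is the $\F_\mathrm{bo}$-kernel--quotient factorisation, placing $P \times_Q P$ in $\C$ and giving it fully faithful projections automatically. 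With this correction your two-stage strategy goes through and matches the paper's closure argument.
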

\begin{proof}
We first prove that if $\C$ is $\F_\mathrm{so}$-exact, then it is ff-exact. Certainly it is ff-regular by the preceding result; moreover, given a fully faithful equivalence relation $E \rightrightarrows A$ in $\C$, we may by Lemma~\ref{lemma:eqrel-lemma} form its coequaliser as the $\F_\mathrm{so}$-quotient of the associated $\F_\mathrm{so}$-congruence~\eqref{eq:ff-kernel}; and effectivity of this $\F_\mathrm{so}$-congruence says in particular that $E$ is the kernel-pair of its quotient, and so an effective equivalence relation.

We now show that any $\F_\mathrm{so}$-exact $\C$ is $\F_\mathrm{bo}$-exact. Without loss of generality, we take $\C$ to be small; equipping it with the $\F_\mathrm{so}$-regular topology, we obtain an $\F_\mathrm{so}$-exact embedding $\C \to \cat{Sh}(\C)$, and it suffices to show that $\C$ is closed in $\cat{Sh}(\C)$ under quotients of $\F_\mathrm{bo}$-congruences. 
%Since finite limits and quotients of $\F_\mathrm{so}$-congruences in $\Phi_{\F_\mathrm{so}}^\mathrm{ex}(\C)$ are obtained pointwise from those in $\cat{Cat}$, it suffices to show that quotients of $\F_\mathrm{bo}$-congruences in $\cat{Cat}$ can be calculated from finite limits and quotients of $\F_\mathrm{so}$-congruences. 
Let $X \in [\K_\mathrm{bo},  \C]$ be an $\F_\mathrm{bo}$-congruence, and form its quotient $q \colon X1 \to Q$ in $\cat{Sh}(\C)$; we must show that $Q$ actually lies in $\C$. Since $q$ is acute, it is an effective $\F_\mathrm{so}$-quotient map in $\cat{Sh}(\C)$, and so is the $\F_\mathrm{so}$-quotient of its $\F_\mathrm{so}$-kernel $V$; thus, if we can show that each vertex of $V$ lies in $\C$, it will then follow that $q$ does too, since $\C$ is closed in $\cat{Sh}(\C)$ under quotients of $\F_\mathrm{so}$-congruences. 
Now, the underlying $\F_\mathrm{bo}$-congruence of $V$ is the $\F_\mathrm{bo}$-kernel of $q$: but this is simply $X$, since $\F_\mathrm{bo}$-congruences are effective in $\cat{Sh}(\C)$, and thus $V1$, $V2$ and $V3$ all lie in $\C$; it remains to show the same for $V2'$. But we have that
\[
Vi = V1 \xrightarrow{} V2' \xrightarrow{V j} V 2
\]
in $\cat{Sh}(\C)$, where the first component is the map witnessing the relation $V2' \rightrightarrows V1$ as reflexive. This map is equally the diagonal $X1 \to X1 \times_Q X1$ of $q$'s kernel-pair, and thus acute by Proposition~\ref{prop:codescent-in-boregular}, since $q$ is an effective $\F_\mathrm{bo}$-quotient map; on the other hand, $Vj$ is full monic because $V$ is an $\F_\mathrm{so}$-congruence. Thus the above is an $\F_\mathrm{so}$-kernel--quotient factorisation in $\cat{Sh}(\C)$, and so $V2'$ lies in $\C$ since $V1$ and $V2$ do.

Finally, we show that if $\C$ is $\F_\mathrm{bo}$-exact and ff-exact, then it is $\F_\mathrm{so}$-exact. Without loss of generality, we assume $\C$ is small; now by arguing as in the proof of the preceding proposition, we can find an $\F_\mathrm{bo}$-exact and ff-exact embedding $\C \to \cat{Sh}(\C)$, and to complete the proof, it will suffice to show that $\C$ is closed in $\cat{Sh}(\C)$ under quotients of $\F_\mathrm{so}$-congruences. So let $X \in [\K_\mathrm{so}, \C]$ be an $\F_\mathrm{so}$-congruence, and form its $\F_\mathrm{so}$-quotient $q \colon X1 \to Q$ in $\cat{Sh}(\C)$; we must show that $Q$ lies in $\C$. Take an $\F_\mathrm{bo}$-kernel--quotient factorisation $q = kh \colon X1 \to P \to Q$; note that the $\F_\mathrm{bo}$-kernel of $q$ is the underlying $\F_\mathrm{bo}$-congruence of $X$, whence $P$ is the $\F_\mathrm{bo}$-quotient of this congruence and thus lies in $\C$. Now $k$ is acute, since $q$ is, and hence an effective $\F_\mathrm{so}$-quotient; it is also fully faithful by $\F_\mathrm{bo}$-exactness of $\cat{Sh}(\C)$, and hence is a fully faithful regular epi. It thus suffices to show that the fully faithful kernel-pair $P \times_Q P \rightrightarrows P$ of $k$ lies in $\C$, as then $Q$, its coequaliser, will too. So consider the serially commuting diagram of kernel-pairs
\[
\cd{
X1 \times_Q X1 \ar@<3pt>[r]^-{s'} \ar@<-3pt>[r]_-{t'} \ar[d]_{h \times_Q h} & X1 \ar[d]^h \\
P \times_Q P \ar@<3pt>[r]^-{s} \ar@<-3pt>[r]_-{t} & P \rlap{ .}
}
\]
We noted earlier that $P \in \C$; but as $q$ is the $\F_\mathrm{so}$-quotient of the effective $\F_\mathrm{so}$-congruence $X$, we have $X1 \times_Q X1 \cong X2'$ in $\C$ too. Now $h \times_Q h = h \times_Q 1 \circ 1 \times_Q h$ is a composite of $\F_\mathrm{bo}$-quotient maps in $\cat{Sh}(\C)$, and so an $\F_\mathrm{bo}$-quotient; on the other hand, $s$ is fully faithful since $k$ is, and so $s.h \times_Q h$ is an $\F_\mathrm{bo}$-kernel--quotient factorisation of $hs'$; thus $P \times_Q P$ lies in $\C$, since $P$ and $X1 \times_Q X1$ do.
\end{proof}

Finally, we consider the relationship with $\F_\mathrm{bof}$-regularity.

\begin{Prop}\label{prop:boe-sor-bofr}
Any $\F_\mathrm{bo}$-exact and $\F_\mathrm{so}$-regular $2$-category is $\F_\mathrm{bof}$-regular.
\end{Prop}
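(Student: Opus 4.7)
The strategy is to verify the elementary characterisation of $\F_\mathrm{bof}$-regularity just proved: that $\C$ admits $\F_\mathrm{bof}$-quotients of $\F_\mathrm{bof}$-kernels, and that every $\F_\mathrm{bof}$-quotient map $f$ has $f$, $\delta_f$, and $\gamma_f$ stable effective $\F_\mathrm{so}$-quotients. By Proposition~\ref{prop:phi-small-reduction} I may take $\C$ small, and then embed it into a $2$-topos $\cat{Sh}_j(\C)$ via the topology $j$ generated by taking all $\F_\mathrm{bo}$- and $\F_\mathrm{so}$-quotient maps as singleton covers. $\F_\mathrm{bo}$-regularity (from Proposition~\ref{prop:f-exact-f-regular}) combined with Lemma~\ref{lemma:eff-codesc-eff-mixed}, and $\F_\mathrm{so}$-regularity via Corollary~\ref{cor:ker-quot-so-2topos}, ensure that these covers are stable effective $\F_\mathrm{so}$-quotients; Proposition~\ref{prop:cover-singleton-subcanonical} then makes the site subcanonical, yielding a fully faithful lex embedding $J$. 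By Proposition~\ref{prop:preserve-quotients-preserve-colims}, $J$ preserves $\F_\mathrm{bo}$-quotients of $\F_\mathrm{bo}$-kernels and of $\F_\mathrm{bo}$-congruences (using $\F_\mathrm{bo}$-exactness on both sides), and $\F_\mathrm{so}$-quotients of $\F_\mathrm{so}$-kernels.

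For the first condition, given $f \colon A \to B$ in $\C$, I would form the $\F_\mathrm{bo}$-factorisation $f = me \colon A \to C \to B$, and then the $\F_\mathrm{so}$-factorisation $A^\mathbf{2} \to Y \to e \mathord{\mid} e$ of $\gamma_e$. The span $Y \rightrightarrows A$ inherits from the catead $e \mathord{\mid} e$ an $\F_\mathrm{bo}$-congruence structure $\tilde Y \in [\K_\mathrm{bo}, \C]$ with $\tilde Y 1 = A$ and $\tilde Y 2 = Y$: the catead composition $e \mathord{\mid} e \mathord{\mid} e \to e \mathord{\mid} e$ restricts to a morphism $Y \times_A Y \to Y$ because the composite $A^\mathbf{2} \times_A A^\mathbf{2} \to e \mathord{\mid} e$ factors through $Y$, and the so-quotient $A^\mathbf{2} \times_A A^\mathbf{2} \twoheadrightarrow Y \times_A Y$ (a pullback, hence a stable so-quotient) is orthogonal to the iff-mono $Y \hookrightarrow e \mathord{\mid} e$; similar arguments handle identities and the discrete fibration conditions. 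Form the $\F_\mathrm{bo}$-quotient $q \colon A \to D$ of $\tilde Y$ in $\C$ using $\F_\mathrm{bo}$-exactness. Applying $J$ and invoking its preservation properties, a direct calculation in $\cat{Cat}$ transferred via the internal logic of the $2$-topos $\cat{Sh}(\C)$ shows that $Jq$ is the $\F_\mathrm{bof}$-quotient of the $\F_\mathrm{bof}$-kernel of $Jf$ there; by full faithfulness of $J$, the same universal property holds for $q$ in $\C$.

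For the second condition, given an $\F_\mathrm{bof}$-quotient map $f$ in $\C$, its image $Jf$ is $\F_\mathrm{bof}$-quot in $\cat{Sh}(\C)$, so by Proposition~\ref{prop:coeq-in-bofregular} applied there, each of $Jf$, $J\delta_f \cong \delta_{Jf}$ and $J\gamma_f \cong \gamma_{Jf}$ is a stable effective $\F_\mathrm{so}$-quotient. Using $\F_\mathrm{so}$-regularity of $\C$ to form the corresponding $\F_\mathrm{so}$-quotients of their $\F_\mathrm{so}$-kernels in $\C$, and using that $J$ preserves such quotients and is fully faithful, the effectivity and stability properties reflect back, giving that $f$, $\delta_f$, and $\gamma_f$ are stable effective $\F_\mathrm{so}$-quotients in $\C$.

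The principal obstacle is the verification that $Y$ extends to an $\F_\mathrm{bo}$-congruence $\tilde Y$: while the $\cat{Cat}$-level picture is transparent---the image of $\gamma_e$ consists of arrows of the form $ea$, manifestly closed under composition and identities of $e \mathord{\mid} e$---making this rigorous in the general $\C$ requires repeated invocation of the orthogonality between $\F_\mathrm{so}$-quotients and iff maps, together with the stability of $\F_\mathrm{so}$-quotients under pullback supplied by $\F_\mathrm{so}$-regularity.
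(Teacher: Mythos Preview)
Your approach is correct and leads to the same conclusion, but it is organised quite differently from the paper's proof. Both arguments embed $\C$ into a sheaf $2$-topos $\cat{Sh}(\C)$ using (essentially) the $\F_\mathrm{so}$-regular topology, and both ultimately hinge on the observation that the $\F_\mathrm{so}$-image of $\gamma_f \colon A^\mathbf 2 \to f \mathord \mid f$ furnishes the arrow-object of the catead whose $\F_\mathrm{bo}$-quotient is the desired $\F_\mathrm{bof}$-quotient. The difference is in the order of operations. The paper forms the $\F_\mathrm{bof}$-factorisation $f = me$ \emph{in $\cat{Sh}(\C)$} first, and then shows that the interposing object lies in $\C$: since $e$ is an $\F_\mathrm{bo}$-quotient map in the $2$-topos, it is the $\F_\mathrm{bo}$-quotient of its own $\F_\mathrm{bo}$-kernel, whose arrow-object $e \mathord \mid e$ is identified with the $\F_\mathrm{so}$-image of $\gamma_f$ via the factorisation $\gamma_f = (m \mathord \mid m) \circ \gamma_e$ (with $m \mathord \mid m$ full monic because $m$ is faithful). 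This sidesteps entirely your ``principal obstacle'': the catead structure on $Y$ comes for free, being an $\F_\mathrm{bo}$-kernel in $\cat{Sh}(\C)$, and one need only check that its vertices lie in $\C$. Your route, by contrast, builds the catead $\tilde Y$ explicitly in $\C$ and then verifies the universal property via $J$; this is more laborious but has the merit of giving a concrete recipe for the $\F_\mathrm{bof}$-quotient purely in terms of the $\F_\mathrm{so}$- and $\F_\mathrm{bo}$-operations available in $\C$. Note also that your initial $\F_\mathrm{bo}$-factorisation step is redundant: since $m$ is fully faithful, $e \mathord \mid e \cong f \mathord \mid f$ and $\gamma_e = \gamma_f$, so you may work with $\gamma_f$ directly.
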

\begin{proof}
Let $\C$ be $\F_\mathrm{bo}$-exact and $\F_\mathrm{so}$-regular; without loss of generality, we assume it is also small. On taking sheaves on $\C$ for its $\F_\mathrm{so}$-regular topology, we obtain an  embedding $\C \to \cat{Sh}(\C)$ into a $2$-topos; the embedding is clearly $\F_\mathrm{so}$-regular, but in fact also $\F_\mathrm{bo}$-exact, by the argument of Theorem~\ref{thm:boex-char}, since the $\F_\mathrm{so}$-regular topology contains the $\F_\mathrm{bo}$-regular one. To show that $\C$ is $\F_\mathrm{bof}$-regular, it now suffices to show that $\C$ is closed in the $2$-topos $\cat{Sh}(\C)$ under $\F_\mathrm{bof}$-kernel--quotient factorisations. %quotients of $\F_\mathrm{bof}$-congruences. 
Given a morphism $f \colon C \to D$ in $\C$, let $f = me$ be its $\F_\mathrm{bof}$-kernel--quotient factorisation in $\cat{Sh}(\C)$, and let $X$ be the $\K_\mathrm{bo}$-kernel of $e$. Since $e$ is an $\F_\mathrm{bo}$-strong epi, it is an effective $\F_\mathrm{bo}$-quotient map in the $2$-topos $\cat{Sh}(\C)$, and so the $\F_\mathrm{bo}$-quotient of $X$ is again $e$; arguing as in the previous proof, it now suffices to show that each vertex of $X$ lies in $\C$, as then the $\F_\mathrm{bo}$-quotient $e$ will do so too. Clearly $X1 = C$ lies in $\C$; as for $X2 = e \mathord \mid e$, consider the factorisation
\[
\gamma_f = C^\mathbf 2 \xrightarrow{\gamma_e} e \mathord \mid e \xrightarrow{m \mathord \mid m} f \mathord \mid f
\]
in $\cat{Sh}(\C)$. Because $m$ is faithful, $m \mathord \mid m$ is easily seen to be full monic; on the other hand, $\gamma_e$ is acute by Proposition~\ref{prop:coeq-in-bofregular}, since $e$ is an $\F_\mathrm{bof}$-quotient map in $\cat{Sh}(\C)$. Thus the above is an $\F_\mathrm{so}$-kernel--quotient factorisation in $\cat{Sh}(\C)$, and so $X2 = e \mathord \mid e$ lies in $\C$ since $C^\mathbf 2$ and $f \mathord \mid f$ do. Finally, $X3$ lies in $\C$ as it is the pullback $X2 \times_{X1} X2$.
\end{proof}

\section{Examples}\label{sec:examples}
In this final section, we exhibit various classes of $2$-categories as instances of our two-dimensional regularity and exactness notions.

\subsection{$2$-toposes}\label{subsec:2topos}
Of course, by Proposition~\ref{prop:v-topos-phi-exact} every $2$-topos is $\F_\mathrm{so}$-exact, $\F_\mathrm{bo}$-exact and $\F_\mathrm{bof}$-exact: a fact we have used extensively in the preceding sections. $\cat{Cat}$ and every presheaf $2$-category $[\C^\op, \cat{Cat}]$ are $2$-toposes; more generally, if $\E$ is any Grothendieck topos, then $\cat{Cat}(\E)$, the $2$-category of categories internal to $\E$, is a $2$-topos; for indeed, if $\E$ is reflective in $[\D^\op, \cat{Set}]$ via a left-exact reflector, then $\cat{Cat}(\E)$ is reflective in $\cat{Cat}([\D^\op, \cat{Set}]) \cong [\D^\op, \cat{Cat}]$ via a left-exact reflector, and hence a $2$-topos. 

\subsection{Locally discrete $2$-categories}
A $2$-category is called \emph{locally discrete} if its only $2$-cells are identities.
\begin{Prop}\label{prop:discrete-exactness}
Let $\C$ be a finitely complete, locally discrete $2$-category, and let $\C_0$ be its underlying ordinary category. Then:
\begin{enumerate}[(a)]
\item $\C$ is $\F_\mathrm{so}$-regular if and only if $\C_0$ is regular;
\item $\C$ is $\F_\mathrm{bo}$-regular or $\F_\mathrm{bo}$-exact if and only if $\C_0$ is a preorder;
\item $\C$ is $\F_\mathrm{so}$-exact if and only if $\C_0$ is a preorder;
\item $\C$ is always $\F_\mathrm{bof}$-regular and $\F_\mathrm{bof}$-exact.
\end{enumerate}
\end{Prop}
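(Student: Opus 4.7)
The key observation is that in a locally discrete $2$-category $\C$, the $2$-dimensional constructions entering into each $\F_\mathrm{so}$-, $\F_\mathrm{bo}$- and $\F_\mathrm{bof}$-kernel/quotient collapse to $1$-dimensional ones. Any $2$-cell $\alpha \colon u \Rightarrow v$ in $\C$ must be an identity, forcing $u = v$; comma objects $f \mathord{|} f$ reduce to pullbacks $A \times_B A$; cotensors $A^\mathbf 2$ and $X1^{\mathbb P}$ collapse back to $A$ and $X1$; coinserter, coequifier and coidentifier constructions all become ordinary coequalisers; and fully faithful maps in $\C$ are simply monics of $\C_0$ (since full fidelity of the functor $\C(Y, f)$ between discrete hom-categories is automatic). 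I will use this dictionary throughout.

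For (a), the $\F_\mathrm{so}$-kernel of $f$ thus reduces to the usual kernel-pair data and its $\F_\mathrm{so}$-quotient to the ordinary coequaliser; in particular $\F_\mathrm{so}$-quotient maps are precisely the regular epis of $\C_0$. Since in any finitely complete category every regular epi is the coequaliser of its kernel pair, effectivity is automatic, and Theorem~\ref{thm:so-regular} collapses to the statement that $\C_0$ has coequalisers of kernel pairs and that regular epis are pullback-stable, i.e.\ that $\C_0$ is regular.

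For (b), the same reduction makes $\F_\mathrm{bo}$-quotient maps in $\C$ into regular epis of $\C_0$; Theorem~\ref{thm:bo-char} then requires additionally that $\delta_f \colon A \to A \times_B A$ be a regular epi whenever $f$ is. Since $\delta_f$ is always a split mono, this forces $\delta_f$ iso and hence $f$ monic. For arbitrary $g \colon A \to B$ in $\C_0$ the coequaliser $q$ of its kernel pair is then a regular epi, hence monic, hence iso (any regular epi that is mono is iso, as it coequalises an identified pair); this forces the kernel-pair projections to coincide, whence $g$ itself is monic. Thus every morphism of $\C_0$ is monic; applying this to the terminal map $! \colon X \to 1$ and using that any parallel $u, v \colon Y \rightrightarrows X$ satisfy $!u = !v$, we get $u = v$, so $\C_0$ is a preorder. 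Conversely, in a finitely complete preorder all kernel pairs are trivial, regular epis are isos and diagonals are isos, so Theorem~\ref{thm:bo-char} is satisfied. The same analysis gives $\F_\mathrm{bo}$-exactness in the preorder case, since any $\F_\mathrm{bo}$-congruence $X$ has source $=$ target $\colon X2 \to X1$ split by $i$, forcing $X2 \cong X1$ and exhibiting $X$ as the trivial kernel of $\id_{X1}$.

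For (c), Proposition~\ref{prop:soexact-boexact} gives $\F_\mathrm{so}$-exact $\iff$ $\F_\mathrm{bo}$-exact and ff-exact; by (b) the $\F_\mathrm{bo}$-exactness condition is exactly "$\C_0$ is a preorder", and conversely ff-exactness is immediate in a preorder since every map is then fully faithful and every relevant coequaliser is trivial. For (d), each $\F_\mathrm{bof}$-kernel in the locally discrete $\C$ is trivial (the $2$-cells $\alpha, \beta$ must be identities, forcing $u = v$), so every $\F_\mathrm{bof}$-quotient of kernel-data is an iso; the hypotheses of the $\F_\mathrm{bof}$-regularity theorem thus hold vacuously. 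For exactness, in any $\F_\mathrm{bof}$-congruence $X$ the collapsed structure map $u \colon X2 \to X1$ is simultaneously monic (condition (a) of Proposition~\ref{prop:congruences}) and split by the identity-picking map $i$ of the internal-category structure of (c), hence iso; so $X$ is trivial and is therefore the $\F_\mathrm{bof}$-kernel of its own $\F_\mathrm{bof}$-quotient. The main obstacle throughout is part (b)---extracting "every morphism is monic" from the diagonal condition and then lifting this to the preorder property via the terminal map.
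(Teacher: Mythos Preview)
Your proof is correct and follows essentially the same route as the paper's: the same dictionary (comma objects become pullbacks, coequifiers/coinserters become coequalisers, fully faithful becomes monic), the same reduction of the $\F_\mathrm{so}$ case to ordinary regularity, the same use of the diagonal condition in Theorem~\ref{thm:bo-char} to force $\F_\mathrm{bo}$-quotient maps to be isomorphisms, and the same triviality argument for congruences. The only notable variation is in (c): the paper uses Proposition~\ref{prop:soexact-boexact} for one implication and argues the converse directly (preorder $\Rightarrow$ $\F_\mathrm{so}$-regular with only trivial $\F_\mathrm{so}$-congruences), whereas you use the equivalence $\F_\mathrm{so}$-exact $\iff$ $\F_\mathrm{bo}$-exact $+$ ff-exact symmetrically in both directions---a slightly cleaner packaging of the same content.
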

\begin{proof}
For (a), observe that the $\F_\mathrm{so}$-kernel of an arrow $f$ in the locally discrete $\C$ is the extended kernel-pair diagram
\[
\cd{
& A \times_B A \ar@{=}[d]
\\
A \times_B A \times_B A \ar@<6pt>[r]^-{p} \ar[r]|-{m} \ar@<-6pt>[r]_-{q} &
A \times_B A \ar@<6pt>[r]^-{d} \ar@{<-}[r]|-{i} \ar@<-6pt>[r]_-{c} &
A\rlap{ ,}
}
\]
whilst the $\F_\mathrm{so}$-quotient of $X \in [\K_\mathrm{so}, \C]$ is simply the coequaliser of $(Xd, Xc) \colon X2 \rightrightarrows X1$. It follows that $\C$ admits $\F_\mathrm{so}$-quotients of $\F_\mathrm{so}$-kernels just when $\C_0$ admits coequalisers of kernel-pairs. In this situation, the $\F_\mathrm{so}$-quotient maps are the regular epimorphisms; as such they are always effective, and will be stable under pullback in $\C$ just when they are so in $\C_0$; whence $\C$ is $\F_\mathrm{so}$-regular just when $\C_0$ is regular.

For (b), note that $\F_\mathrm{bo}$-kernels in $\C$, like $\F_\mathrm{so}$-kernels, are just extended kernel-pair diagrams, and $\F_\mathrm{bo}$-quotients, just coequalisers; it follows that the $\F_\mathrm{bo}$-quotient maps, like the $\F_\mathrm{so}$-quotient maps, are the regular epimorphisms. Now if $\C$ is $\F_\mathrm{bo}$-regular, then for every $\F_\mathrm{bo}$-quotient map $f \colon A \to B$, the diagonal $\delta_f \colon A \to A \times_B A$ is also an $\F_\mathrm{bo}$-quotient map, hence regular epi. Since $\delta_f$ is always monic, it is thus invertible: which is to say that $f$ itself is monic. Since $f$ is also regular epi, it must be invertible: and thus the $\F_\mathrm{bo}$-quotient maps in $\C$ are precisely the isomorphisms. It follows that every map of $\C$ is fully faithful; thus every map of $\C_0$ is monic, which, in combination with a terminal object, forces $\C_0$ to be a preorder. Conversely, if $\C_0$ is a preorder, then the $\F_\mathrm{bo}$-quotient maps are simply the isomorphisms, and so satisfy the requisite stability properties for $\C$ to be $\F_\mathrm{bo}$-regular. Moreover, all $\F_\mathrm{bo}$-congruences in $\C$ are trivial, and so admit effective $\F_\mathrm{bo}$-quotients; whence $\C$ is $\F_\mathrm{bo}$-exact.

For (c), if $\C$ is $\F_\mathrm{so}$-exact, then it is $\F_\mathrm{bo}$-exact by Proposition~\ref{prop:soexact-boexact}, whence $\C_0$ is a preorder by (b); conversely, if 
%note that a $\F_\mathrm{so}$-congruence in the locally discrete $\C$ is precisely a bijective-on-objects internal functor $\cat E \to \cat X$ whose domain is an equivalence relation. In particular, for any $X \in \C$, we have the indiscrete internal category $\nabla X = (\pi_1, \pi_2) \colon X \times X \rightrightarrows X$, the discrete equivalence relation $\Delta X = (1,1) \colon X \rightrightarrows X$, and the bijective-on-objects internal functor $\Delta X \to \nabla X$ acting on morphisms by the diagonal. If $\C$ is $\F_\mathrm{so}$-exact, then the $\F_\mathrm{so}$-congruence this represents must be the $\F_\mathrm{so}$-kernel of its own $\F_\mathrm{so}$-quotient; thus by the description of $\F_\mathrm{so}$-kernels given in (a) above, the comparison $\Delta X \to \nabla X$ must be invertible, which is to say that the diagonal $X \to X \times X$ is invertible. But this says that any two maps $Y \to X$ must agree; since $X$ was arbitrary, we conclude that $\C_0$ is a preorder. Conversely, if 
$\C_0$ is a preorder, then it is regular, so $\C$ is $\F_\mathrm{so}$-regular. Furthermore, the only internal categories in $\C$ are the trivial ones; whence the only $\F_\mathrm{so}$-congruences are trivial, and as such, are effective: so $\C$ is $\F_\mathrm{so}$-exact.

Finally, for (d), note that, since any two parallel two cells in $\C$ are equal, all $\F_\mathrm{bof}$-quotients exist, and the $\F_\mathrm{bof}$-quotient maps are the isomorphisms. It follows immediately that $\C$ satisfies the elementary conditions characterisating $\F_\mathrm{bof}$-regularity. Finally, the only $\F_\mathrm{bof}$-congruences are again trivial, and as such are easily effective; whence $\C$ is also $\F_\mathrm{bof}$-exact.
\end{proof}

%Given a $2$-category $\C$, let us call an object $X \in \C$ \emph{discrete} if $\C(A, X)$ is a discrete category for every $A \in \C$, and write $\cat{Disc}(\C) \subset \C$ for the full sub-$2$-category spanned by the discrete objects. Note that $\cat{Disc}(\C)$ is locally discrete.
%
%\begin{Prop}
%If $\C$ is an $\F_\mathrm{so}$-regular $2$-category, then so too is $\cat{Disc}(\C)$; whence $\cat{Disc}(\C)_0$ is a regular $1$-category.
%\end{Prop}
%\begin{proof}
%It's easy to see that $\cat{Disc}(\C)$ is closed under all limits in $\C$; we claim that it is moreover closed under $\F_\mathrm{so}$-quotients of $\F_\mathrm{so}$-kernels. For indeed, on forming the kernel--quotient factorisation $f = me$ of an arrow $f \colon A \to B$ in $\cat{Disc}(\C)$, the interposing object $C$ must again be discrete, since it admits a full monic $m \colon C \to B$ to the discrete $B$. It follows that $\cat{Disc}(\C)$ inherits $\F_\mathrm{so}$-regularity from $\C$. The final clause follows from the preceding Proposition.
%\end{proof}
%For example, from the $\F_\mathrm{so}$-regularity of $\cat{Cat}$ we deduce regularity of $\cat{Set}$, and from $\F_\mathrm{so}$-regularity of $[\A, \cat{Cat}]$ (for $\A$ a locally discrete $2$-category) we deduce regularity of $[\A_0, \cat{Set}]$.

\subsection{Models of finite product theories}\label{subsec:finite-product-theories}
Because $\F_\mathrm{so}$-, $\F_\mathrm{bo}$- and $\F_\mathrm{bof}$-quotient maps are stable under finite products in $\cat{Cat}$, we may apply Proposition~\ref{prop:sifted} to obtain:
\begin{Prop}\label{prop:fpexact}
Let $\A$ be a small $2$-category with finite products.
If the $2$-category $\C$ is $\F_\mathrm{so}$-, $\F_\mathrm{bo}$- or $\F_\mathrm{bof}$-regular or exact, then so is $\cat{FP}(\A, \C)$.
\end{Prop}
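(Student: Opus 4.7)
The strategy is to invoke Proposition~\ref{prop:sifted} in the case $\V = \cat{Cat}$ for each of the three kernel--quotient systems $\F \in \{\F_\mathrm{so}, \F_\mathrm{bo}, \F_\mathrm{bof}\}$. To do so, one must verify, in each case, the running hypothesis that $\F$-quotient maps are closed under finite products in $\cat{Cat}$; and then, depending on whether the regular or the exact assertion is being proved, that $\F$-quotients or $\F$-congruences are effective in $\cat{Cat}$.

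For the product-closure hypothesis, I would apply the discussion that immediately follows Proposition~\ref{prop:sifted}: it suffices to know that kernel--quotient factorisations converge immediately in $\cat{Cat}$ (so that $\F$-quotient maps coincide with $\F$-strong epis and are closed under composition and contain identities) and that they are stable under pullback (whence product-closure follows by the argument $f \times g = (B \times g)\circ(f \times C)$ given after Proposition~\ref{prop:sifted}). Both conditions are provided by Propositions~\ref{prop:ker-quot-bo-cat}, \ref{prop:ker-quot-so-cat}~and~\ref{prop:ker-quot-bof-cat}, one for each of $\F_\mathrm{bo}$, $\F_\mathrm{so}$ and $\F_\mathrm{bof}$.

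For the regular half of the proposition, I further need that $\F$-quotients are effective in $\cat{Cat}$. This is immediate from Proposition~\ref{prop:fkera} applied to $\C = \cat{Cat}$: since kernel--quotient factorisations converge immediately in $\cat{Cat}$, every $\F$-quotient map in $\cat{Cat}$ is in fact an effective $\F$-quotient. Proposition~\ref{prop:sifted}(a) then gives that $\cat{FP}(\A, \C)$ is $\F$-regular whenever $\C$ is. For the exact half, I instead need that $\F$-congruences are effective in $\cat{Cat}$; this is supplied by the final sentences of Propositions~\ref{prop:cateads}, \ref{prop:mixed-cateads}~and~\ref{prop:congruences}, so Proposition~\ref{prop:sifted}(b) delivers the result.

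The proof is therefore purely assembly: no individual step presents any difficulty, and there is no real obstacle beyond correctly pairing each of the six assertions (three systems $\times$ regular/exact) with the cited facts about $\cat{Cat}$ that license the application of Proposition~\ref{prop:sifted}.
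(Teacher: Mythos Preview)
Your proposal is correct and follows exactly the paper's approach: the paper's proof is the single sentence ``Because $\F_\mathrm{so}$-, $\F_\mathrm{bo}$- and $\F_\mathrm{bof}$-quotient maps are stable under finite products in $\cat{Cat}$, we may apply Proposition~\ref{prop:sifted}'', and your write-up simply unpacks this by spelling out how the product-closure and effectivity hypotheses of Proposition~\ref{prop:sifted} are discharged using the results you cite.
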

The scope of this result is quite considerable, as the following result indicates.
\begin{Prop}\label{prop:examples-fp-theories}
Consider any of the following notions: strict monoidal categories; monoidal categories; braided or symmetric monoidal categories; categories with finite products; categories with finite coproducts; distributive categories; pointed categories; categories with a zero object; categories equipped with a monad; bicategories with a fixed object set; $\cat{Cat}$-operads; pseudo-$\cat{Cat}$-operads (in which composition is only associative up to coherent $2$-cells); (pseudo-)$\cat{Cat}$-multicategories with a fixed object set; monoidal globular categories in the sense of~\cite{Batanin1998Monoidal}; pairs $(A,B)$ of a monoidal category $A$ with a lax action on a category $B$. 
In each case, the $2$-category whose objects are instances of that notion and whose morphisms are strict structure-preserving maps is $\F_\mathrm{so}$-, $\F_\mathrm{bo}$- and $\F_\mathrm{bof}$-exact.
More generally, if a $2$-category $\C$ possesses any one of our regularity or exactness properties, then so too does the $2$-category of instances of any of the above notions in $\C$ with strict structure-preserving maps; thus, for example, the $2$-category of internal monoidal categories and strict monoidal internal functors in a Grothendieck topos is $\F_\mathrm{so}$-, $\F_\mathrm{bo}$- and $\F_\mathrm{bof}$-exact.
\end{Prop}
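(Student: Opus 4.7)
The plan is to recognise each listed notion as the $2$-category of models in $\C$ of a suitable small finite-product $2$-category $\A$, that is, as $\cat{FP}(\A, \C)$, and then to invoke Proposition~\ref{prop:fpexact}. For the first assertion of the proposition, it suffices to specialise to $\C = \cat{Cat}$, which by Section~\ref{subsec:2topos} is a $2$-topos and hence $\F_\mathrm{so}$-, $\F_\mathrm{bo}$- and $\F_\mathrm{bof}$-exact by Proposition~\ref{prop:v-topos-phi-exact}. The ``more generally'' clause is then immediate from the same identification, applied now to an arbitrary $\C$ with the property in question. For the concluding example, we use once more Section~\ref{subsec:2topos}: for any Grothendieck topos $\E$, the $2$-category $\cat{Cat}(\E)$ is itself a $2$-topos and so exact in all three senses, whereupon taking $\A$ to be the theory of monoidal categories and applying Proposition~\ref{prop:fpexact} yields the conclusion.

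The substantive step is therefore to exhibit, for each listed notion, a small finite-product $2$-category $\A$ and an isomorphism between $\cat{FP}(\A, \C)$ and the $2$-category of instances of the notion in $\C$ with strict structure-preserving maps. For the strict one-sorted structures---strict monoidal categories, categories with finite (co)products, distributive categories, pointed categories, categories with a zero object, and categories equipped with a monad---we take $\A$ to be locally discrete, obtained as the finite-product-completion of the appropriate ordinary Lawvere theory; for instance, the theory of monads is the locally discrete PROP whose finite-product-preserving $2$-functors into $\C$ are monads in $\C$. For the strict multi-sorted or indexed notions---bicategories with fixed object set, $\cat{Cat}$-operads, $\cat{Cat}$-multicategories with fixed object set, and monoidal globular categories---we use an evident many-sorted finite-product-completion indexed over the fixed set of objects or arities. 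For the genuinely two-dimensional notions---monoidal, braided and symmetric monoidal categories, distributive categories, pseudo-$\cat{Cat}$-operads, pseudo-$\cat{Cat}$-multicategories, and pairs of a monoidal category with a lax action on another---the $2$-category $\A$ must in addition contain generating $2$-cells for the coherent associators, unitors, braidings and action cells, subject to the usual coherence equations, and the required $\A$ exists as the bicolimit of the appropriate sketch in $\V\text-\cat{Lex}$, much as in the construction of the universal congruence in the proof of Proposition~\ref{prop:ker-quot-lex-weights}.

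The main obstacle is not conceptual but bookkeeping: one must verify in each case that a finite-product-preserving $2$-functor $\A \to \C$ really does correspond bijectively to an interpretation in $\C$ of the sorts, operations, natural transformations and equations axiomatising the notion, with morphisms of $\cat{FP}(\A, \C)$ corresponding to \emph{strict} structure-preserving maps. This is routine in every case, being just a careful unpacking of the universal property of the finite-product-completion. Once the identifications are in hand, Proposition~\ref{prop:fpexact} delivers all of the claimed exactness conclusions, both for $\C = \cat{Cat}$ and for an arbitrary $\C$ with the relevant property; in particular, the special case $\C = \cat{Cat}(\E)$ with $\A$ the theory of monoidal categories gives the final assertion about internal monoidal categories in a Grothendieck topos.
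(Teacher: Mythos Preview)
Your overall strategy is correct and essentially identical to the paper's: identify each notion as $\cat{FP}(\A, \C)$ for a suitable small finite-product $2$-category $\A$, then invoke Proposition~\ref{prop:fpexact}. The paper proceeds in exactly this way, working out one illustrative case explicitly and asserting that the others are similar.

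There is, however, a concrete error in your classification of which theories can be taken locally discrete. Several of the notions you place in the ``locally discrete'' group genuinely require $2$-cells in $\A$: categories with finite (co)products, distributive categories, categories with a zero object, and categories equipped with a monad. For instance, a monad in $\C$ consists of an object $X$, an endomorphism $T \colon X \to X$, and \emph{$2$-cells} $\mu \colon T^2 \Rightarrow T$ and $\eta \colon 1_X \Rightarrow T$; no locally discrete $\A$ can classify this. The paper's proof makes exactly this point with the case of finite coproducts: the classifying $\A$ is the free finite-product $2$-category on an object $X$, morphisms $c_n \colon X^n \to X$, and $2$-cells $\eta_n, \varepsilon_n$ exhibiting $c_n$ as left adjoint to the diagonal $X \to X^n$. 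Your proposed ``finite-product-completion of the appropriate ordinary Lawvere theory'' does not exist for these examples---there is no ordinary Lawvere theory whose $\cat{Cat}$-models are categories with chosen finite coproducts.

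This is not fatal: once you move these examples into your ``genuinely two-dimensional'' paragraph and construct $\A$ with the appropriate generating $2$-cells, the rest of your argument goes through unchanged. But as written, the explicit constructions you offer for roughly half of the listed notions are incorrect.
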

\begin{proof}
Take, for instance, the case of categories equipped with finite coproducts. Let $\A$ be the free $2$-category with finite products generated by an object $X$, morphisms $c_n \colon X^n \to X$ for each $n$, and $2$-cells $\eta_n$ and $\varepsilon_n$ witnessing that $c_n$ is left adjoint to the diagonal $X \to X^n$. Such an object $\A$ may be constructed as in the proof of Proposition~\ref{prop:ker-quot-lex-weights} from bicolimits of frees in $\cat{2}\text-\cat{Lex}$. Now $\cat{FP}(\A, \cat{Cat})$ is $2$-equivalent to the $2$-category of categories with finite coproducts and strict structure-preserving maps, whilst $\cat{FP}(\A, \C)$ is the $2$-category of objects with internal finite coproducts in $\C$ and strict maps. Similar arguments pertain for each of the other notions.
\end{proof}

\begin{Rk}
The restriction to \emph{strict} structure-preserving maps in this result is necessary, since the corresponding $2$-categories whose maps preserve the structure only up to isomorphism typically do not possess all finite $2$-categorical limits, but only finite pie limits in the sense of~\cite{Bird1989Flexible}; and without all finite limits, we cannot obtain the stability under \emph{strict} pullbacks of the various classes of maps required for the material of~\cite{Street1982Two-dimensional} to be applicable. To describe the exactness of $2$-categories of pseudomorphisms thus requires a \emph{bicategorical}\footnote{Another possibility would be to work with the $\F$-categories of~\cite{Lack2011Enhanced}.} analogue of the  theory, which, as indicated in the introduction, is outside the scope of this paper.

The strict structure-preserving maps are commonly held to be only of theoretical importance, which may appear to  limit severely the usefulness of our results. However, the restriction to strict maps appears entirely natural if we understand the structured objects under consideration as \emph{theories} rather than semantic domains. For example, the $2$-category $\cat{SymMonCat}_s$ of symmetric monoidal categories and strict maps can be understood as a $2$-category of generalised PROPs~\cite{Mac-Lane1965Categorical}, and its $\F_\mathrm{bof}$-, $\F_\mathrm{bo}$- and $\F_\mathrm{so}$-exactness now accounts for the possibility of extending a PROP by adding new equations between operations, new operations, and new equations between sorts. 
\end{Rk}
\begin{Rk}
The instances in $\cat{Cat}$ of each of the structures listed in the preceding result can also be captured as algebras for a \emph{strongly finitary} $2$-monad $\mathsf T$ on a $2$-category of the form $\cat{Cat}^X$ for some set $X$, and indeed the monads approach is more commonly used in describing such structure borne by categories. On the other hand the 2-category of algebras and strict algebra morphisms $\mathsf T\text-\cat{Alg}_s$ associated to such a 2-monad is equivalent to $\cat{FP}(\A, \cat{Cat})$ for a small 2-category $\A$ admitting finite products. This follows from the fact that $\cat{Cat}$ is a locally strongly finitely presentable 2-category---as proven, for instance, in~\cite[Proposition~8.31]{Bourke2010Codescent}---and the results of \cite{Lack2011Notions} which describe the correspondence between such 2-monads and finite product theories in a general enriched setting. Via this equivalence and Proposition~\ref{prop:fpexact} we conclude that the 2-category of algebras for a strongly finitary 2-monad on $\cat{Cat}^X$ is exact in each of our senses.
\end{Rk}%\begin{Rk}

\subsection{Internal categories}
Let $\E$ be a finitely complete category, viewed as a locally discrete $2$-category. We write $\cat{Cat}(\E)$ for the $2$-category of internal categories, internal functors and internal natural transformations in $\E$, and $\Delta$ for the embedding $2$-functor $\E \to \cat{Cat}(\E)$ sending $X$ to the discrete internal category on $X$. Note that $\cat{Cat}(\E)$ is finitely complete as a $2$-category.

\begin{Prop}
For any finitely complete $\E$, the $2$-category $\cat{Cat}(\E)$ is $\F_\mathrm{bo}$-exact.
\end{Prop}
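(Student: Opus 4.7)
The plan is to apply Theorem~\ref{thm:phi-exact-embedding} by exhibiting $\cat{Cat}(\E)$ as a finite-limit- and $\F_\mathrm{bo}$-quotient-of-$\F_\mathrm{bo}$-congruence-preserving full subcategory of an $\F_\mathrm{bo}$-exact $2$-category. I may assume $\E$ is small: if not, Proposition~\ref{prop:phi-small-reduction} reduces the task to checking $\F_\mathrm{bo}$-exactness of each small full subcategory of $\cat{Cat}(\E)$ closed under finite limits and $\F_\mathrm{bo}$-quotients of $\F_\mathrm{bo}$-congruences, and any such subcategory factors through $\cat{Cat}(\E_0)$ for a suitable small, finitely complete $\E_0 \subseteq \E$. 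Assuming $\E$ is small, put $\hat \E \defeq [\E^\op, \cat{Set}]$; since $\hat \E$ is a Grothendieck topos, the remarks in Section~\ref{subsec:2topos} show $\cat{Cat}(\hat \E)$ to be a $2$-topos, hence $\F_\mathrm{bo}$-exact. The Yoneda embedding $Y \colon \E \to \hat \E$ is fully faithful and finite-limit-preserving, so it induces a fully faithful, finite-limit-preserving $2$-functor $\tilde Y \defeq \cat{Cat}(Y) \colon \cat{Cat}(\E) \to \cat{Cat}(\hat \E)$.

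The key step is then to show that $\cat{Cat}(\E)$ admits $\F_\mathrm{bo}$-quotients of $\F_\mathrm{bo}$-congruences and that $\tilde Y$ preserves them. Given an $\F_\mathrm{bo}$-congruence $X \in [\K_\mathrm{bo}, \cat{Cat}(\E)]$, its image $\tilde Y X$ is an $\F_\mathrm{bo}$-congruence in $\cat{Cat}(\hat \E)$, since the characterising conditions of Proposition~\ref{prop:cateads} amount to invertibility of certain finite-limit comparisons and are thus preserved by $\tilde Y$. Invoking that same proposition now in $\cat{Cat}(\hat \E)$, its $\F_\mathrm{bo}$-quotient is computed by viewing $\tilde Y X$ as an internal category in $\cat{Cat}(\hat \E)$ and applying the pullback-preserving ``object of objects'' functor $(-)_0 \colon \cat{Cat}(\hat \E) \to \hat \E$. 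Since $Y$ is fully faithful and commutes with $(-)_0$, all of the data of this quotient already lie in the image of $\tilde Y$, whence the quotient descends to a cocone in $\cat{Cat}(\E)$ whose universal property transfers back from $\cat{Cat}(\hat \E)$ via the fully-faithfulness of $\tilde Y$. Theorem~\ref{thm:phi-exact-embedding} now yields $\F_\mathrm{bo}$-exactness of $\cat{Cat}(\E)$.

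I expect the main obstacle to be the careful verification, in the second paragraph, that the explicit $\F_\mathrm{bo}$-quotient construction of Proposition~\ref{prop:cateads}, stated there for $\cat{Cat}$, transfers cleanly to $\cat{Cat}(\hat \E)$: one must check that the internal-nerve and two-sided-discrete-fibration conditions hold for $\tilde Y X$ internally in $\hat \E$, and that the opcartesian-lifting recipe for the quotient really uses only finite-limit data drawn from the image of $\tilde Y$, so that the output lives in $\cat{Cat}(\E)$. Once this is in hand, the remainder of the argument is a routine application of the embedding theorem.
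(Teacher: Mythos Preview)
Your approach is correct and closely parallels the paper's, though with a different emphasis at the key step. Both arguments embed $\cat{Cat}(\E)$ fully faithfully into $[\E^\op, \cat{Cat}]$: you do this via $\cat{Cat}(\hat\E) \cong [\E^\op, \cat{Cat}]$ and the fully faithful $\cat{Cat}(Y)$, while the paper uses the singular functor $J = \cat{Cat}(\E)(\Delta\thg, 1)$---but these are the same embedding, and your route to full faithfulness is in fact simpler than the paper's (which invokes Kelly's density criterion~\cite[Theorem~5.19(iv)]{Kelly1982Basic}).

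The genuine difference lies in how closure under $\F_\mathrm{bo}$-quotients of congruences is established. You propose to verify directly that the explicit quotient recipe of Proposition~\ref{prop:cateads} internalises: the quotient has $Q_0 = (X1)_0$ and $Q_1 = (X2)_0$, with structure maps induced from those of $X$, and the quotient map $q \colon X1 \to Q$ is the identity on objects and is built from opcartesian liftings on morphisms. The obstacle you flag is real but tractable: the two-sided discrete fibration condition is expressible via invertibility of finite-limit comparisons, so the opcartesian-lifting assignment $(X1)_1 \to (X2)_0$ is genuinely a morphism of $\E$ (or, working pointwise in $[\E^\op, \cat{Cat}]$, the lifting maps are natural in $A$ by uniqueness of opcartesian lifts, hence representable by Yoneda). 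The paper sidesteps this verification by a different trick: it observes that any $\F_\mathrm{bo}$-congruence $X$ in $\cat{Cat}(\E)$ receives a pointwise bijective-on-objects map from the pointwise \emph{discrete} congruence $\Delta(X\thg)_0$, and that such maps induce isomorphisms on $\F_\mathrm{bo}$-quotients; since the discrete case is immediate, this handles the general case without ever analysing the quotient map $q$ explicitly. Your route is more hands-on and yields a concrete description of the quotient in $\cat{Cat}(\E)$; the paper's is slightly slicker in dodging the internal opcartesian-lifting check.
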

\begin{proof} If $\E$ is small, so too is $\cat{Cat}(\E)$; whence it suffices by Proposition~\ref{prop:phi-small-reduction} to prove the result when $\E$ is small.
The following observations are easily verified:
\begin{enumerate}[(a)]
\item  To give a pointwise discrete $\F_\mathrm{bo}$-congruence 
\begin{equation}\label{eq:pointwise-disc}
\cd{
\Delta X_2 \ar@<6pt>[r]^{} \ar[r]|{} \ar@<-6pt>[r]_{} &
\Delta X_1 \ar@<6pt>[r]^{} \ar@{<-}[r]|{} \ar@<-6pt>[r]_{} &
\Delta X_0 
}
\end{equation}
in $\cat{Cat}(\E)$ is to give the truncated nerve of an internal category $\cat X$ in $\E$; every such congruence admits an $\F_\mathrm{bo}$-quotient given by 
%is
%on the nerve of $\cat X$ is an $\F_\mathrm{bo}$-congruence in $\cat{Cat}(\E)$, whose $\F_\mathrm{bo}$-quotient is 
$\cat X$ together with the identity-on-objects internal functor $\Delta X_0 \to \cat X$.
\vskip0.5\baselineskip
\item For any $A \in \E$, $\cat{Cat}(\E)(\Delta A, \thg) \colon \cat{Cat}(\E) \to \cat{Cat}$ preserves $\F_\mathrm{bo}$-quotients of pointwise discrete $\F_\mathrm{bo}$-congruences as in (a). \vskip0.5\baselineskip
\item If $X, Y \in [\K , \cat{Cat}]$ are $\F_\mathrm{bo}$-congruences, and $\phi \colon X \to Y$ a pointwise bijective-on-objects transformation between them, then the induced map $Q \phi \colon QX \to QY$ on $\F_\mathrm{bo}$-quotients is invertible.
\end{enumerate}
From (a) and (b), we deduce by~\cite[Theorem~5.19(iv)]{Kelly1982Basic} that the singular functor $J = \cat{Cat}(\E)(\Delta, 1) \colon \cat{Cat}(\E) \to [\E^\op, \cat{Cat}]$ is fully faithful. $[\E^\op, \cat{Cat}]$ is $\F_\mathrm{bo}$-exact, since $\cat{Cat}$ is, and so it suffices by Theorem~\ref{thm:phi-exact-embedding} to show that $\cat{Cat}(\E)$ is closed in $[\E^\op, \cat{Cat}]$ under finite limits and $\F_\mathrm{bo}$-quotients of $\F_\mathrm{bo}$-congruences. Closure under finite limits is clear, since $\cat{Cat}(\E)$ has these and $J$ preserves them. On the other hand, given a congruence $X \in [\K_\mathrm{bo}, \cat{Cat}(\E)]$, we have a pointwise internally bijective-on-objects map $\phi \colon \Delta(X\thg)_0 \to X$ from a pointwise discrete congruence. It is evident that $J\phi$ is pointwise bijective-on-objects in $[\E^\op, \cat{Cat}]$, whence by (c), the $\F_\mathrm{bo}$-quotients of $J\Delta(X\thg)_0$ and $JX$ are isomorphic. But by (b), the former quotient lies in the essential image of $J$; whence the latter does too.
\end{proof}
\begin{Cor}
$\Delta \colon \E \to \cat{Cat}(\E)$ exhibits $\cat{Cat}(\E)$ as the $\F_\mathrm{bo}$-exact completion of $\E$.
\end{Cor}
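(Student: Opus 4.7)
The strategy is to identify $\cat{Cat}(\E)$ with $\Phi_\mathrm{bo}^{\mathrm{ex}}(\E) \subseteq \P\E$ under the singular functor $J = \cat{Cat}(\E)(\Delta\thg, \thg) \colon \cat{Cat}(\E) \to [\E^\op, \cat{Cat}]$, which the preceding proof established is a fully faithful, finite-limit-preserving embedding. A direct computation shows that $J\Delta A \cong YA$ for each $A \in \E$, since $\cat{Cat}(\E)(\Delta B, \Delta A) \cong \E(B, A)$. The plan is to show that $J(\cat{Cat}(\E)) = \Phi_\mathrm{bo}^{\mathrm{ex}}(\E)$ as full subcategories of $\P\E$; together with $J\Delta \cong Y$, this identifies $\Delta$ with the unit $\E \to \Phi_\mathrm{bo}^{\mathrm{ex}}(\E)$ of the $\F_\mathrm{bo}$-exact completion.

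For $\Phi_\mathrm{bo}^{\mathrm{ex}}(\E) \subseteq J(\cat{Cat}(\E))$: the image contains the representables (via $\Delta$) and is closed in $\P\E$ under finite limits. For closure under $\F_\mathrm{bo}$-quotients of $\F_\mathrm{bo}$-congruences, given such a congruence $X \in [\K_\mathrm{bo}, \P\E]$ with each vertex in the image of $J$, the full faithfulness and lex-ness of $J$ lift it to an $\F_\mathrm{bo}$-congruence $X' \in [\K_\mathrm{bo}, \cat{Cat}(\E)]$: the orthogonality conditions defining $\F_\mathrm{bo}$-congruences involve finite weighted limits, which $J$ both preserves and reflects. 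The $\F_\mathrm{bo}$-quotient $Q$ of $X'$ exists since $\cat{Cat}(\E)$ is $\F_\mathrm{bo}$-exact by the preceding proposition; and the argument in the final paragraph of that proof---reducing via a pointwise bijective-on-objects comparison to the pointwise discrete case and invoking observations (b) and (c)---shows that $JQ$ is the $\F_\mathrm{bo}$-quotient of $X = JX'$ in $\P\E$. Minimality of $\Phi_\mathrm{bo}^{\mathrm{ex}}(\E)$ then yields the inclusion.

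For the reverse inclusion $J(\cat{Cat}(\E)) \subseteq \Phi_\mathrm{bo}^{\mathrm{ex}}(\E)$: by observation (a) of the preceding proof, any $\cat X \in \cat{Cat}(\E)$ is the $\F_\mathrm{bo}$-quotient of its truncated nerve $N\cat X$, a pointwise discrete $\F_\mathrm{bo}$-congruence with vertices in $\Delta(\E)$; by observation (b), $J$ preserves this quotient, so $J\cat X$ is the $\F_\mathrm{bo}$-quotient in $\P\E$ of $JN\cat X$, a pointwise discrete $\F_\mathrm{bo}$-congruence on representables. Since $\Phi_\mathrm{bo}^{\mathrm{ex}}(\E)$ contains the representables and is closed under $\F_\mathrm{bo}$-quotients of $\F_\mathrm{bo}$-congruences, $J\cat X \in \Phi_\mathrm{bo}^{\mathrm{ex}}(\E)$. (In the large case, we appeal to Proposition~\ref{prop:phi-small-reduction} and run the same argument over small full replete finite-limit- and $\F_\mathrm{bo}$-quotient-closed subcategories.)

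The main obstacle is the first inclusion: showing that the image of $J$ is closed under $\F_\mathrm{bo}$-quotients of \emph{arbitrary} $\F_\mathrm{bo}$-congruences in $\P\E$, rather than only those transferred from $\cat{Cat}(\E)$. The resolution is that full faithfulness plus lex-ness of $J$ allow arbitrary such congruences on the image to be lifted back to $\cat{Cat}(\E)$, after which the preceding proposition's internal analysis---which itself reduces the general case to the pointwise discrete case via observation (c)---can be applied and transported back along $J$.
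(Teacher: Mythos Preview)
Your proposal is correct and follows essentially the same approach as the paper: identify the replete image of $J$ with $\Phi_{\F_\mathrm{bo}}^\mathrm{ex}(\E)$ by showing it contains the representables, is closed under finite limits and $\F_\mathrm{bo}$-quotients of $\F_\mathrm{bo}$-congruences (via the lifting-and-preservation argument from the preceding proposition), and that every object in it arises as such a quotient of a pointwise representable congruence. The paper's version is terser, simply citing the preceding proof for both closure and generation, but the substance is the same.
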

\begin{proof}
It suffices to show that the replete image of the fully faithful singular functor $J \colon \cat{Cat}(\E) \to [\E^\op, \cat{Cat}]$ is $\Phi_{\F_\mathrm{bo}}^\mathrm{ex}(\E)$. But the preceding proof shows that this replete image contains the representables and is closed under quotients of $\F_\mathrm{bo}$-congruences, and moreover, that every object in it is an $\F_\mathrm{bo}$-quotient of a pointwise representable $\F_\mathrm{bo}$-congruence.
\end{proof}

\begin{Prop}
If $\E$ is a regular $1$-category, then $\cat{Cat}(\E)$ is $\F_\mathrm{so}$-regular and $\F_\mathrm{bof}$-regular.
\end{Prop}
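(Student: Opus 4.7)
The plan is to deduce $\F_\mathrm{so}$-regularity of $\cat{Cat}(\E)$ from the embedding theorem (Theorem~\ref{thm:phi-exact-embedding}), whereupon $\F_\mathrm{bof}$-regularity will follow at once from Proposition~\ref{prop:boe-sor-bofr}, using the $\F_\mathrm{bo}$-exactness established in the preceding proposition. By Proposition~\ref{prop:phi-small-reduction} we may assume $\E$---and hence $\cat{Cat}(\E)$---to be small.

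Since $\E$ is regular, the regular topology on $\E$ is subcanonical, and so the Yoneda embedding $Z \colon \E \to \hat\E \defeq \cat{Sh}(\E, \mathrm{reg})$ into the resulting Grothendieck topos is fully faithful. It preserves finite limits, and also regular epimorphisms, since every regular epi in $\E$ is a cover in the regular topology and hence an effective epi in $\hat\E$. Applying $\cat{Cat}(\thg)$ produces a fully faithful, finite-limit-preserving $2$-functor $\cat{Cat}(Z) \colon \cat{Cat}(\E) \to \cat{Cat}(\hat\E)$, whose target is a $2$-topos by Section~\ref{subsec:2topos}, and so in particular $\F_\mathrm{so}$-exact.

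The crux is to exhibit $\cat{Cat}(\E)$ as closed in $\cat{Cat}(\hat\E)$ under $\F_\mathrm{so}$-kernel--quotient factorizations of morphisms of $\cat{Cat}(\E)$. Given such an $f \colon \cat A \to \cat B$, I would factor $f_0$ in $\E$ as a regular epi $q_0 \colon \cat A_0 \twoheadrightarrow I$ followed by a mono $m_0 \colon I \rightarrowtail \cat B_0$, and form $\cat C \in \cat{Cat}(\E)$ with object-of-objects $I$ and object-of-morphisms the pullback $\cat B_1 \times_{\cat B_0 \times \cat B_0} (I \times I)$. This gives a factorization $f = m q$ in $\cat{Cat}(\E)$ whose second factor $m$ is injective on objects and fully faithful, and whose first factor $q$ has object-component the chosen regular epi $q_0$. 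Since $Z$ preserves finite limits and regular epis, applying $\cat{Cat}(Z)$ yields the analogous ``full image'' factorization of $\cat{Cat}(Z)(f)$ in $\cat{Cat}(\hat\E)$; and in that $2$-topos this is precisely the $\F_\mathrm{so}$-kernel--quotient factorization. It follows that $\cat{Cat}(\E)$ admits $\F_\mathrm{so}$-quotients of $\F_\mathrm{so}$-kernels and that $\cat{Cat}(Z)$ preserves them, so Theorem~\ref{thm:phi-exact-embedding} delivers $\F_\mathrm{so}$-regularity of $\cat{Cat}(\E)$, and Proposition~\ref{prop:boe-sor-bofr} then yields $\F_\mathrm{bof}$-regularity.

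The principal obstacle is verifying that the explicit ``full image'' construction really is the $\F_\mathrm{so}$-kernel--quotient factorization in $\cat{Cat}(\hat\E)$. The cleanest route is via Theorem~\ref{thm:so-regular} applied to the $2$-topos $\cat{Cat}(\hat\E)$: the map $\cat{Cat}(Z)(q)$ has object-component the internal regular epi $Z(q_0)$ in the topos $\hat\E$, and one checks that such a morphism is a pullback-stable effective $\F_\mathrm{so}$-quotient map in $\cat{Cat}(\hat\E)$ by internalising the corresponding computation in $\cat{Cat}$.
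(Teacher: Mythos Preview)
Your approach is correct in outline but takes a genuinely different route from the paper's. The paper does not embed into a $2$-topos at this stage; instead it proves directly that $\cat{Cat}(\E)$ is \emph{ff-regular}---i.e., that kernel-pairs of fully faithful maps admit fully faithful, pullback-stable coequalisers---by performing a (regular epi, mono) factorisation on the object component and pulling back the morphism component. Then $\F_\mathrm{so}$-regularity follows from Proposition~\ref{prop:reg-ff} (since $\cat{Cat}(\E)$ is already known to be $\F_\mathrm{bo}$-regular, indeed $\F_\mathrm{bo}$-exact), and $\F_\mathrm{bof}$-regularity from Proposition~\ref{prop:boe-sor-bofr} just as you do. The paper's route is more elementary: the ff-regularity check involves only ordinary regular-category manipulations and avoids needing to identify the $\F_\mathrm{so}$-quotient maps in any $2$-topos.

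Your route---constructing the full-image factorisation in $\cat{Cat}(\E)$ and showing it is preserved by the embedding into $\cat{Cat}(\hat\E)$---also works, but the step you flag as the ``principal obstacle'' is doing real work and deserves more than ``internalising the corresponding computation in $\cat{Cat}$''. What you need is that in $\cat{Cat}(\hat\E)$ a map whose object-component is regular epi is acute. The cleanest justification is not a direct effectivity computation but rather the observation (which the paper records immediately \emph{after} this proposition) that any factorisation system $(\L,\R)$ on a finitely complete $\E$ lifts to an enriched factorisation system ($\L$ on objects, $\R$ on objects and fully faithful) on $\cat{Cat}(\E)$; applying this with $(\L,\R)$ the (regular epi, mono) system on $\hat\E$ gives a factorisation system on $\cat{Cat}(\hat\E)$ whose right class is exactly the full monics, so its left class must coincide with the acute maps. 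Once that is in hand your argument goes through. The paper's approach trades this verification for the lighter one of ff-regularity, at the cost of invoking the machinery of Proposition~\ref{prop:reg-ff}.
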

\begin{proof}
It suffices to prove that $\cat{Cat}(\E)$ is ff-regular, as then $\F_\mathrm{so}$-regularity follows from Proposition~\ref{prop:reg-ff} and $\F_\mathrm{bof}$-regularity  from Proposition~\ref{prop:boe-sor-bofr}.
%As before, we may assume that $\E$ is small. By Proposition~\ref{prop:discrete-exactness}(a), $\E$ is an $\F_\mathrm{so}$-regular $2$-category. Thus, as in the proof of Theorem~\ref{thm:so-regular}, there is a full embedding $W \colon \E \to \cat{Sh}(\E)$, where $\cat{Sh}(\E)$ is the $2$-category of sheaves for the $\F_\mathrm{so}$-regular topology. It is easy to see that $X \in [\E^\op, \cat{Cat}]$ is a sheaf just when
%%, for any $\F_\mathrm{so}$-quotient (i.e., regular epi) $f$ in $\E$, $X$ sends the colimit defining the $\F_\mathrm{so}$-quotient of $Kf$ (i.e., coequaliser of its kernel-pair) to a limit in $\cat{Cat}$. Thus $X$ is a sheaf just when 
%it sends coequalisers of kernel-pairs to equalisers in $\cat{Cat}$; whence $\cat{Sh}(\E) \cong \cat{Cat}(\cat{Sh}(\E_0))$, where $\cat{Sh}(\E_0)$ is the category of ($1$-dimensional) sheaves on $\E_0$ equipped with its regular topology.This topology is subcanonical, and so we have a factorisation as on the left below; applying $\cat{Cat}(\thg)$ yields a factorisation as on the right.
%\[
%Y = \E_0 \xrightarrow{} \cat{Sh}(\E_0) \xrightarrow{} [\E_0^\op, \cat{Set}] \qquad \quad
%J = \cat{Cat}(\E) \xrightarrow{L} \cat{Sh}(\E) \xrightarrow{K} [\E^\op, \cat{Cat}]\rlap{ .}
%\]
%We will show that $L$ exhibits $\cat{Cat}(\E)$ as closed in $\cat{Sh}(\E)$ under coequalisers of kernel-pairs of fully faithful morphisms.
 Given $f \colon \cat X \to \cat Y$ fully faithful in $\cat{Cat}(\E)$, we factorise it as on the left in
\[
\cd[@C+0.5em]{
X_1 \ar@{->>}[r]^{e_1} \ar@<-3pt>[d]_{d_X} \ar@<3pt>[d]^{c_X} &
Z_1 \ar@{ (->}[r]^{m_1}  \ar@<-3pt>[d]_{d_Z} \ar@<3pt>[d]^{c_Z} &
Y_1 \ar@<-3pt>[d]_{d_Y} \ar@<3pt>[d]^{c_Y} \\
X_0 \ar@{->>}[r]_-{e_0}  &
Z_0 \ar@{ (->}[r]_-{m_0}   &
Y_0
} \qquad
\cd[@C+0.2em]{
X_1 \ar[d]_{(d_X, c_X)} \ar[r]^{e_1} &
Z_1 \ar[d]_{(d_Z, c_Z)} \pushoutcorner \ar[r]^{m_1} & Y_1 \ar[d]^{(d_Y, c_Y)} \\
X_0 \times X_0 \ar[r]_{e_0 \times e_0} & Z_0 \times Z_0 \ar[r]_{m_0 \times m_0} & Y_0 \times Y_0\rlap{ ,}
}
\]
where $f_0 = e_0m_0$ is a (regular epi, mono) factorisation in $\E_0$, and $Z_1$, $d_Z$ and $c_Z$ are obtained via a pullback as in the right-hand rectangle of the second diagram above; this method of definition easily implies that $e$ and $m$ are internal functors. Note that, as the large rectangle in this diagram is also a pullback, the left-hand one is too; whence $e_1$ is regular epi, and $e \colon \cat X \to \cat Z$ is fully faithful. Now $e$ is the pointwise coequaliser of the kernel pair of $f$, and so \emph{a fortiori} the coequaliser in $\cat{Cat}(\E)$. Thus $\cat{Cat}(\E)$ admits fully faithful coequalisers of kernel-pairs of fully faithful morphisms; stability under pullback follows from the  stability of regular epis in $\E_0$.
\end{proof}

In the situation of the preceding proposition, we may identify explicitly the $\F_\mathrm{so}$-quotients and the $\F_\mathrm{bof}$-quotients in $\cat{Cat}(\E)$. It is easy to see that if a $1$-category $\E$ with finite limits admits a factorisation system $(\L, \R)$, then the $2$-category $\cat{Cat}(\E)$ admits two factorisation systems ($\L$ on objects, $\R$ on objects and fully faithful) and (bijective on objects and $\L$ on morphisms, locally $\R$). In particular, if $\E$ is a regular $1$-category then it admits the factorisation system (regular epi, mono), and now the two induced factorisation systems on $\cat{Cat}(\E)$ have as their corresponding right classes the $\F_\mathrm{so}$-monics and $\F_\mathrm{bof}$-monics respectively. It follows that the $\F_\mathrm{so}$-quotients (=acute maps) in $\cat{Cat}(\E)$ are those which are regular epi on objects, and that the $\F_\mathrm{bof}$-quotients are those which are bijective on objects and regular epi on morphisms.

\begin{Prop}
If $\E$ is a Barr-exact $1$-category, then $\cat{Cat}(\E)$ is $\F_\mathrm{so}$-exact and $\F_\mathrm{bof}$-exact.
\end{Prop}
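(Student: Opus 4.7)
The plan is to deduce both exactness properties from the elementary characterisations established earlier in this section and in Section~\ref{sec:elementary}. For $\F_{\mathrm{so}}$-exactness I would invoke Proposition~\ref{prop:soexact-boexact}: since the preceding proposition guarantees that $\cat{Cat}(\E)$ is $\F_{\mathrm{bo}}$-exact and ff-regular, what remains is the outstanding ff-exactness condition, namely that every fully faithful equivalence relation $s,t \colon \cat{E} \rightrightarrows \cat{X}$ in $\cat{Cat}(\E)$ admits a fully faithful coequaliser and is effective. Applying the limit-preserving $2$-functors $(\thg)_0, (\thg)_1 \colon \cat{Cat}(\E) \to \E$ produces equivalence relations $E_i \rightrightarrows X_i$ in $\E$; Barr-exactness of $\E$ then supplies effective quotients $q_i \colon X_i \to Q_i$, and I would construct an internal category $\cat{Q}$ with $Q_0, Q_1$ as its object of objects and object of morphisms by descending the source, target, identity and composition maps of $\cat{X}$ along the regular epimorphisms $q_0, q_1$. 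Full faithfulness of the resulting comparison $q \colon \cat{X} \to \cat{Q}$ would then follow from a pullback calculation using that $X_0 \times X_0 \to Q_0 \times Q_0$ is a regular epimorphism (by regularity of $\E$) together with the identification $E_1 \cong (E_0 \times E_0) \times_{X_0 \times X_0} X_1$ (coming from ff of $s,t$); effectivity of the kernel-pair of $q$ would then follow immediately from Barr-exactness.

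For $\F_{\mathrm{bof}}$-exactness I would use the corresponding characterisation: $\F_{\mathrm{bof}}$-regularity (already established) together with effectivity of $\F_{\mathrm{bof}}$-congruences. By Proposition~\ref{prop:congruences}, an $\F_{\mathrm{bof}}$-congruence in $\cat{Cat}(\E)$ is an internal category $\cat{X}$ equipped with an equivalence relation $R \rightrightarrows X_1$ in $\E$ that is compatible with source, target and composition. Since $\E$ is Barr-exact, this equivalence relation has an effective quotient $r \colon X_1 \to Q_1$ in $\E$, and descending the structure maps of $\cat{X}$ along $r$ equips $Q_1$ with an internal category structure $\cat{Q}$ on the same object of objects $X_0$. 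The evident bijective-on-objects map $\cat{X} \to \cat{Q}$ is then the $\F_{\mathrm{bof}}$-quotient of the congruence, and effectivity of $R$ in $\E$ translates directly into effectivity of the $\F_{\mathrm{bof}}$-kernel of this quotient.

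The one substantive technical point in both arguments will be the descent of the composition operation along the regular epimorphisms produced by Barr-exactness: one must verify that the relevant equivalence relation is stable under the fibre product $X_1 \times_{X_0} X_1$, so that composition descends coherently to a map $Q_1 \times_{Q_0} Q_1 \to Q_1$ satisfying associativity and unit laws. This is a routine diagram chase in the regular $1$-category $\E$, relying on pullback-stability of regular epimorphisms, and accounts for essentially all of the remaining work.
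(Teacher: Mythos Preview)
Your treatment of $\F_\mathrm{so}$-exactness is essentially the paper's: both invoke Proposition~\ref{prop:soexact-boexact} to reduce to ff-exactness, then quotient a fully faithful equivalence relation levelwise in $\E$ and check the result is an internal category. The paper phrases this via nerves and Segal maps, using the Grothendieck lemma that a square between exact forks with pullbacks on the left is a pullback on the right; your ``descent of composition along $q_1 \times_{q_0} q_1$'' is the same argument in different clothing, and you will need that lemma (or an equivalent) to carry it out---it is not quite a routine diagram chase.

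For $\F_\mathrm{bof}$-exactness your route diverges genuinely from the paper's. You propose a direct construction: quotient $X_1$ by the equivalence relation $R$, descend the category structure, and read off effectivity from Barr-exactness of $\E$. The paper instead embeds $\cat{Cat}(\E)$ fully into the $2$-topos $\cat{Cat}(\cat{Sh}(\E))$ and shows closure under $\F_\mathrm{bof}$-congruence quotients there, by rewriting such a quotient as an $\F_\mathrm{bo}$-quotient of a pointwise discrete congruence whose vertices are shown to lie in $\cat{Cat}(\E)$ via an $\F_\mathrm{so}$-factorisation argument. Your approach is more elementary and self-contained, but you must still verify the $2$-categorical universal property: that your $q \colon \cat X \to \cat Q$ really is the coequifier of $X\alpha, X\beta$ in $\cat{Cat}(\E)$, which amounts to checking that ``$f$ coequalises $X\alpha, X\beta$'' for an internal functor $f$ translates precisely to ``$f_1$ coequalises $R \rightrightarrows X_1$'' in $\E$. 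This is true but needs unpacking of what an internal natural transformation looks like and how the congruence data in Proposition~\ref{prop:congruences} sits over $X_1$. The paper's embedding argument buys you this universal property for free (it holds in the $2$-topos and reflects along the full embedding), at the cost of the extra machinery of the sheaf topos and the somewhat indirect closure argument.
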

\begin{proof}
To prove $\F_\mathrm{so}$-exactness of $\cat{Cat}(\E)$, it suffices by Proposition~\ref{prop:soexact-boexact} to show ff-exactness.
%As before, we may assume that $\E$ is small. By Proposition~\ref{prop:discrete-exactness}(a), $\E$ is an $\F_\mathrm{so}$-regular $2$-category. Thus, as in the proof of Theorem~\ref{thm:so-regular}, there is a full embedding $W \colon \E \to \cat{Sh}(\E)$, where $\cat{Sh}(\E)$ is the $2$-category of sheaves for the $\F_\mathrm{so}$-regular topology. It is easy to see that $X \in [\E^\op, \cat{Cat}]$ is a sheaf just when
%%, for any $\F_\mathrm{so}$-quotient (i.e., regular epi) $f$ in $\E$, $X$ sends the colimit defining the $\F_\mathrm{so}$-quotient of $Kf$ (i.e., coequaliser of its kernel-pair) to a limit in $\cat{Cat}$. Thus $X$ is a sheaf just when 
%it sends coequalisers of kernel-pairs to equalisers in $\cat{Cat}$; whence $\cat{Sh}(\E) \cong \cat{Cat}(\cat{Sh}(\E_0))$, where $\cat{Sh}(\E_0)$ is the category of ($1$-dimensional) sheaves on $\E_0$ equipped with its regular topology.This topology is subcanonical, and so we have a factorisation as on the left below; applying $\cat{Cat}(\thg)$ yields a factorisation as on the right.
%\[
%Y = \E_0 \xrightarrow{} \cat{Sh}(\E_0) \xrightarrow{} [\E_0^\op, \cat{Set}] \qquad \quad
%J = \cat{Cat}(\E) \xrightarrow{L} \cat{Sh}(\E) \xrightarrow{K} [\E^\op, \cat{Cat}]\rlap{ .}
%\]
%We will show that $L$ exhibits $\cat{Cat}(\E)$ as closed in $\cat{Sh}(\E)$ under coequalisers of kernel-pairs of fully faithful morphisms.
Given $(s,t) \colon \cat E \to \cat X$ a fully faithful equivalence relation in $\cat{Cat}(\E)$, we obtain on taking nerves an equivalence relation $(Ns, Nt) \colon N\cat E \to N \cat X$ in $[\Delta^\op, \E]$. Since $\E_0$ is Barr-exact, $(Ns, Nt)$ admits a pointwise coequaliser $q \colon N\cat X \to Q$ of which it is the kernel-pair. We claim that in fact $Q$ is (isomorphic to) the nerve of an internal category $\cat Q$.
To show this, we must check that the Segal maps $Q_n \to Q_1 \times_{Q_0} \cdots \times_{Q_0} Q_1$ are invertible. We illustrate with the case $n = 2$; the argument for higher $n$ is identical. Consider first the diagram on the left in:
\[
\cd[@C+0.2em]{
E_1 \ar[d]_{(d, c)} \ar@<3pt>[r]^{s_1} \ar@<-3pt>[r]_{t_1} &
X_1 \ar[d]|{(d, c)} \ar@{->>}[r]^{q_1} \ar@{}[dr]|{\text{(1)}} & Q_1 \ar[d]^{(d, c)} \\
{E_0}^2 \ar@<3pt>[r]^{{s_0}^2} \ar@<-3pt>[r]_{{t_0}^2} & {X_0}^2 \ar@{->>}[r]_{{q_0}^2} & {Q_0}^2
} \qquad 
\cd[@C+1em]{
X_1 \times_{X_0} X_1 \ar@{}[dr]|{\text{(2)}} \ar[d]|{(d \pi_1, c \pi_1, c \pi_2)} \ar@{->>}[r]^{q_1 \times_{q_0} q_1} & Q_1 \times_{Q_0} Q_1 \ar[d]|{(d \pi_1, c \pi_1, c \pi_2)} \\
{X_0}^3 \ar@{->>}[r]_{{q_0}^3} & {Q_0}^3
}
\]
Both rows are exact forks (the bottom since regular epis are stable under products in $\E_0$) and both left-hand squares are pullbacks, since $s$ and $t$ are fully faithful; thus, by~\cite[Proposition~4.2]{Grothendieck1960Techniques}, the square (1) is also a pullback. It follows that the square (2) is a pullback. Similarly, in the diagram on the left of:
\[
\cd[@C+0.2em]{
E_2 \ar[d]_{(dp, cp, cq)} \ar@<3pt>[r]^{s_2} \ar@<-3pt>[r]_{t_2} &
X_2 \ar[d]|{(dp, cp, cq)} \ar@{->>}[r]^{q_2} \ar@{}[dr]|{\text{(3)}} & Q_2 \ar[d]^{(dp, cp, cq)} \\
{E_0}^3 \ar@<3pt>[r]^{{s_0}^3} \ar@<-3pt>[r]_{{t_0}^3} & {X_0}^3 \ar@{->>}[r]_{{q_0}^3} & {Q_0}^3
} \qquad 
\cd[@C+1em]{
X_2 \ar[r]^{q_2}  \ar[d]_{(p, q)} \ar@{}[dr]|{\text{(4)}} & Q_2 \ar[d]^{(p,q)} \\
X_1 \times_{X_0} X_1 \ar@{->>}[r]_-{q_1 \times_{q_0} q_1} & Q_1 \times_{Q_0} Q_1 \rlap{ .}
}
\]
both rows are exact forks, and both left-hand squares pullbacks, whence the square (3) is also a pullback. Finally, the square (4) is a pullback, since pasting with the pullback (2) yields the pullback (3). But the left-hand arrow of (4) is invertible, and since pullback along a stable regular epimorphism is conservative, we conclude that the right-hand arrow, the Segal map $Q_2 \to Q_1 \times_{Q_0} Q_1$, is invertible as required. The invertibility of the higher Segal maps follows similarly; thus $Q$ is the nerve of an internal category $\cat Q$. It follows that $q$ is an internal functor $\cat X \to \cat Q$, fully faithful since (1) is a pullback, and clearly an effective coequaliser of $(s,t)$ in $\cat{Cat}(\E)$.
Thus $\cat{Cat}(\E)$ is ff-exact, and hence $\F_\mathrm{so}$-exact. 

It remains to prove $\F_\mathrm{bof}$-exactness of $\cat{Cat}(\E)$. Let $\cat{Sh}(\E)$ be the $1$-topos of sheaves on $\E_0$ for the regular topology; thus we have a full embedding $\E \to \cat{Sh}(\E)$ preserving finite limits and regular epis, inducing a full embedding $\cat{Cat}(\E) \to \cat{Cat}(\cat{Sh}(\E))$ preserving finite limits and acute maps; it follows that $\cat{Cat}(\E)$ is closed in the $2$-topos $\cat{Cat}(\cat{Sh}(\E))$ under quotients of $\F_\mathrm{so}$- and $\F_\mathrm{bo}$-congruences. Since $\cat{Cat}(\cat{Sh}(\E))$ is moreover $\F_\mathrm{bof}$-exact, to complete the proof, it suffices to show that $\cat{Cat}(\E)$ is closed in $\cat{Cat}(\cat{Sh}(\E))$ under quotients of $\F_\mathrm{bof}$-congruences.

So given a congruence $X \in [\K_\mathrm{bof}, \cat{Cat}(\E)]$, form its $\F_\mathrm{bof}$-quotient $q \colon X1 \to Q$ in $\cat{Cat}(\cat{Sh}(\E))$, and the discrete cover $\phi \colon \Delta(X1)_0 \to X1$ of $X1$. The composite $q \phi \colon \Delta(X1)_0 \to Q$ is an effective $\F_\mathrm{bo}$-quotient, since both components are, and hence is the $\F_\mathrm{bo}$-quotient of its own $\F_\mathrm{bo}$-kernel $V$. Since $\cat{Cat}(\E)$ is closed in $\cat{Cat}(\cat{Sh}(\E))$ under quotients of $\F_\mathrm{bo}$-congruences, it now suffices to prove that $V$ lies in $\cat{Cat}(\E)$. Clearly $V1 = \Delta(X1)_0$ does; and $V3$ will do so as soon as $V2$ does; thus it suffices to show that $V2 = \Delta(q |q)_0$ lies in $\cat{Cat}(\E)$. Consider the diagram
\[
\cd[@-0.4em]{
\Delta(X1^\mathbf 2)_0 \ar[r]^{\Delta(\gamma_q)_0} \ar[d]_{\phi} & \Delta(q | q)_0 \ar[d]^\phi \\
X1^\mathbf 2 \ar[r]_{\gamma_q} & q | q
}
\]
in $\cat{Cat}(\cat{Sh}(\E))$. By Proposition~\ref{prop:coeq-in-bofregular}, $\gamma_q$ is acute since $q$ is a coequifier, and thus its object map is a regular epimorphism; thus the object map of $\Delta(\gamma_q)_0$ is also regular epi, whence $\Delta(\gamma_q)_0$ is acute. It is thus the quotient of its own $\F_\mathrm{so}$-kernel $W$ in $\cat{Cat}(\cat{Sh}(\E))$; since $\cat{Cat}(\E)$ is closed in $\cat{Cat}(\cat{Sh}(\E))$ under quotients of $\F_\mathrm{so}$-congruences, it now suffices to show that $W$ lies in $\cat{Cat}(\E)$. Clearly $W1 = \Delta(X1^\cat 2)_0$ is in $\cat{Cat}(\E)$; and it is easy to see that $W2 = W2' \cong \Delta(X2)_0$, so that $W2$ and $W2'$ lie in $\cat{Cat}(\E)$; whence finally $W3 = W2 \times_{W1} W2$ lies in $\cat{Cat}(\E)$, as required.
\end{proof}

Finally, combining the results of this section with those of Section~\ref{subsec:finite-product-theories}, we obtain:
\begin{Prop}
Let $\E$ be a finitely complete category. For each of the notions listed in Proposition~\ref{prop:examples-fp-theories}, the $2$-category of internal instances of that notion in $\cat{Cat}(\E)$---with strict structure-preserving maps---is $\F_\mathrm{bo}$-exact. It is moreover $\F_\mathrm{so}$- and $\F_\mathrm{bof}$-regular if $\E$ is regular, and $\F_\mathrm{so}$- and $\F_\mathrm{bof}$-exact if $\E$ is Barr-exact.
\end{Prop}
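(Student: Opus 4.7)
The proposition follows from a direct combination of the three preceding results of this subsection with Proposition~\ref{prop:fpexact}. The plan is as follows.

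First, I would invoke the identification, established in the proof of Proposition~\ref{prop:examples-fp-theories}, by which each of the listed notions arises from a small $2$-category with finite products $\A$, in the sense that for any finitely complete $2$-category $\C$, the $2$-category of internal instances of the notion in $\C$---with strict structure-preserving maps---is $2$-equivalent to $\cat{FP}(\A, \C)$. Specialising to $\C = \cat{Cat}(\E)$, this reduces the statement to verifying the appropriate regularity or exactness properties of $\cat{FP}(\A, \cat{Cat}(\E))$.

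The three claims then follow by matching the three preceding propositions of this subsection against Proposition~\ref{prop:fpexact}. For the first assertion, since $\cat{Cat}(\E)$ is $\F_\mathrm{bo}$-exact whenever $\E$ is finitely complete, Proposition~\ref{prop:fpexact} yields $\F_\mathrm{bo}$-exactness of $\cat{FP}(\A, \cat{Cat}(\E))$. For the second assertion, when $\E$ is regular, $\cat{Cat}(\E)$ is $\F_\mathrm{so}$- and $\F_\mathrm{bof}$-regular, and these properties transfer to $\cat{FP}(\A, \cat{Cat}(\E))$ by a further application of Proposition~\ref{prop:fpexact}. For the third, when $\E$ is Barr-exact, $\cat{Cat}(\E)$ is $\F_\mathrm{so}$- and $\F_\mathrm{bof}$-exact, and one final application of Proposition~\ref{prop:fpexact} finishes the argument.

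No essential obstacle is expected: the only non-mechanical input is the identification of internal instances in $\cat{Cat}(\E)$ with finite-product-preserving $2$-functors out of a suitable $\A$, and this identification is already implicit in the argument used for Proposition~\ref{prop:examples-fp-theories}. Beyond this, the proof is a juxtaposition of cited results, with the key structural fact being that Proposition~\ref{prop:fpexact} applies uniformly to each of the six notions of regularity and exactness under consideration.
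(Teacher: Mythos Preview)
Your proposal is correct and matches the paper's approach exactly: the paper presents this proposition without proof, merely introducing it with ``combining the results of this section with those of Section~\ref{subsec:finite-product-theories}'', which is precisely the combination of the three propositions on $\cat{Cat}(\E)$ with Proposition~\ref{prop:fpexact} via the $\cat{FP}(\A,\thg)$ identification that you spell out.
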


\bibliographystyle{acm}

\bibliography{bibdata}

\end{document}